\newcommand*\tasklabelformat[1]{#1)}
\numberwithin{equation}{section}
\newtheorem*{rep@theorem}{\rep@title}
\newcommand{\newreptheorem}[2]{%
\newenvironment{rep#1}[1]{%
 \def\rep@title{#2 \ref{##1}}%
 \begin{rep@theorem}}%
 {\end{rep@theorem}}}
\theoremstyle{theorem}
\newtheorem{thm}{Theorem}[section]
\newtheorem*{thm*}{Theorem}
\theoremstyle{definition}
\newtheorem{prop}[thm]{Proposition}
\newtheorem*{prop*}{Proposition}
\newtheorem{defn}[thm]{Definition}
\newtheorem{lem}[thm]{Lemma}
\newtheorem*{cor*}{Corollary}
\theoremstyle{remark}
\newtheorem{rem}[thm]{Remark}
\title{\vspace*{-0.5cm} The asymptotics of the optimal holomorphic \\
extensions of holomorphic jets along submanifolds}
\author
{Siarhei Finski
}
\date{}
\newcommand{\imun} {\sqrt{-1}}
\newcommand{\res}{{\rm{Res}}}
\newcommand{\ext}{{\rm{E}}}
\newcommand{\comp}{\mathbb{C}}
\newcommand{\real}{\mathbb{R}}
\newcommand{\nat}{\mathbb{N}}
\newcommand{\dist}{{\rm{dist}}}
\newcommand{\enmr}[1]{\text{End}{(#1)}}
\newcommand{\ccal}{\mathscr{C}}
\newcommand{\dbar}{ \overline{\partial} }
\renewcommand{\Im}{\operatorname{Im}}
\newcommand{\scal}[2]{\langle #1, #2 \rangle}
\DeclareFontFamily{OMX}{MnSymbolE}{}
\DeclareSymbolFont{MnLargeSymbols}{OMX}{MnSymbolE}{m}{n}
\DeclareFontShape{OMX}{MnSymbolE}{m}{n}{
    <-6>  MnSymbolE5
   <6-7>  MnSymbolE6
   <7-8>  MnSymbolE7
   <8-9>  MnSymbolE8
   <9-10> MnSymbolE9
  <10-12> MnSymbolE10
  <12->   MnSymbolE12
}{}
\DeclareFontShape{OMX}{MnSymbolE}{b}{n}{
    <-6>  MnSymbolE-Bold5
   <6-7>  MnSymbolE-Bold6
   <7-8>  MnSymbolE-Bold7
   <8-9>  MnSymbolE-Bold8
   <9-10> MnSymbolE-Bold9
  <10-12> MnSymbolE-Bold10
  <12->   MnSymbolE-Bold12
}{}
\let\llangle\@undefined
\let\rrangle\@undefined
\DeclareMathDelimiter{\llangle}{\mathopen}%
                     {MnLargeSymbols}{'164}{MnLargeSymbols}{'164}
\DeclareMathDelimiter{\rrangle}{\mathclose}%
                     {MnLargeSymbols}{'171}{MnLargeSymbols}{'171}
\newenvironment{sciabstract}{}
\begin{document} 
\maketitle 

\vspace*{-0.5cm}

\vspace*{0.5cm}

\begin{sciabstract}
  \textbf{Abstract.} We study the asymptotics of the $L^2$-optimal holomorphic extensions of holomorphic jets associated with high tensor powers of a positive line bundle along submanifolds.
  \par 
  More precisely, for a fixed complex submanifold in a complex manifold, we consider the operator which for a given holomorphic jet along the submanifold of a positive line bundle associates the $L^2$-optimal holomorphic extension of it to the ambient manifold.
  When the tensor power of the line bundle tends to infinity, we give an explicit asymptotic formula for this extension operator.
  This is done by a careful study of the Schwartz kernels of the extension operator and related Bergman projectors. It extends our previous results, done for holomorphic sections instead of jets.
\par 
As an application, we prove the asymptotic isometry between two natural norms on the space of holomorphic jets: one induced from the ambient manifold and another from the submanifold.

\end{sciabstract}

\pagestyle{fancy}
\lhead{}
\chead{Asymptotics of extensions of holomorphic jets}
\rhead{\thepage}
\cfoot{}

%\fancypagestyle{mypagestyle}{%
%  \fancyhf{}% Clear header/footer
%  \fancyhead[OC]{An Author}% Author on Odd page, Centred
%  \fancyhead[EC]{A titlesdfdsfdsfds}% Title on Even page, Centred
%  \fancyfoot[C]{\thepage}%
%  \renewcommand{\headrulewidth}{.4pt}% Header rule of .4pt
%}
%\pagestyle{mypagestyle}

\newcommand{\Addresses}{{% additional braces for segregating \footnotesize
  \bigskip
  \footnotesize
  \noindent \textsc{Siarhei Finski, CNRS-CMLS, École Polytechnique F-91128 Palaiseau Cedex, France.}\par\nopagebreak
  \noindent  \textit{E-mail }: \texttt{finski.siarhei@gmail.com}.
}} 

\vspace*{-0.4cm}

\tableofcontents

\section{Introduction}\label{sect_intro}
	The main goal of this paper is to study the asymptotics of the $L^2$-optimal holomorphic extensions of holomorphic jets associated to high tensor powers of a positive line bundle along a submanifold.
	\par 
	More precisely, we fix two (not necessarily compact) complex manifolds $X, Y$, of dimensions $n$ and $m$ respectively.
	We fix also a complex embedding $\iota : Y \to X$, a positive line bundle $(L, h^L)$ over $X$ and an arbitrary Hermitian vector bundle $(F, h^F)$ over $X$.
	In particular, we assume that for the curvature $R^L$ of the Chern connection on $(L, h^L)$, the closed real $(1, 1)$-differential form
	\begin{equation}\label{eq_omega}
		\omega := \frac{\imun}{2 \pi} R^L
	\end{equation}
	is positive.
	We denote by $g^{TX}$ the Riemannian metric on $X$ induced by $\omega$ as follows
	\begin{equation}\label{eq_gtx_def}
		g^{TX}(\cdot, \cdot) := \omega(\cdot, J \cdot),
	\end{equation}
	where $J : TX \to TX$ is the complex structure on $X$.
	We denote by $g^{TY}$ the induced metric on $Y$. 	
	\par 
	\textit{We assume throughout the whole article that the triple $(X, Y, g^{TX})$, and the Hermitian vector bundles $(L, h^L)$, $(F, h^F)$, are of bounded geometry in the sense of Definitions \ref{defn_bnd_subm}, \ref{defn_vb_bg}.}
	\par 
	This means that we assume uniform lower bounds $r_X, r_Y > 0$ on the injectivity radii of $X$, $Y$, the existence of the geodesic tubular neighborhood of $Y$ of uniform size $r_{\perp} > 0$ in $X$, and some uniform bounds on related curvatures and the second fundamental form of the embedding.
	\par 
	Now, we fix some positive (with respect to the orientation given by the complex structure) volume forms  $dv_X$, $dv_Y$ on $X$ and $Y$.
	For smooth sections $f, f'$ of $L^p \otimes F$, $p \in \nat$, over $X$, we define the $L^2$-scalar product using the pointwise scalar product $\langle \cdot, \cdot \rangle_h$, induced by $h^L$ and $h^F$, by
	\begin{equation}\label{eq_l2_prod}
		\scal{f}{f'}_{L^2(X, L^p \otimes F)} := \int_X \scal{f(x)}{f'(x)}_h dv_X(x).
	\end{equation}
	We denote by $H^0_{(2)}(X, L^p \otimes F)$ the vector space of $L^2$-holomorphic sections of $L^p \otimes F$ over $X$.
	\par 
	Given a continuous smoothing linear operator $K : L^2(X, L^p \otimes F) \to L^2(X, L^p \otimes F)$, the Schwartz kernel theorem  guarantees the existence of the Schwartz kernel, $K(x_1, x_2) \in (L^p \otimes F)_{x_1} \otimes (L^p \otimes F)_{x_2}^*$; $x_1, x_2 \in X$, evaluated with respect to $dv_X$, i.e. 
	\begin{equation}
		(Ks) (x_1) = \int_X K(x_1, x_2) \cdot s(x_2) dv_X(x_2), \qquad s \in L^2(X, L^p \otimes F).
	\end{equation}
	Similarly, we define the Schwartz kernels $K_1(y, x)$, $K_2(x, y)$, $x \in X$, $y \in Y$, for smoothing operators $K_1 : L^2(X, L^p \otimes F) \to L^2(Y,\iota^*( L^p \otimes F))$,  $K_2 : L^2(Y,\iota^*( L^p \otimes F)) \to L^2(X, L^p \otimes F)$ with respect to the volume forms $dv_X$ and $dv_Y$ respectively.
	\par
	We denote below by $TX$ and $TY$ the (real) tangent bundles of $X$, $Y$. 
	We identify the holomorphic parts $T^{1, 0}X$, $T^{1, 0}Y$ of the corresponding complexifications with the associated holomorphic tangent bundles.
	We denote by $g^{N}$ the metric on the normal bundle $N$ of $Y$ in $X$ induced by $g^{TX}$.
	We introduce similarly the complex vector bundle $N^{1, 0}$ and endow it with the holomorphic structure given by $T^{1, 0} X / T^{1, 0} Y$.
	From $g^{TX}$, we induce the Hermitian structure on $N^{1, 0}$ and denote it by an abuse of notation by $g^N$.
	Let $P^N : TX|_Y \to N$,  $P^Y : TX|_Y \to TY$, be the orthogonal projections induced by $g^{TX}$.
	Clearly, $\nabla^{N} := P^N \nabla^{TX}|_{Y}$ defines a connection on $N$.
	\par 
	We induce the Hermitian product on ${\rm{Sym}}^k (N^{1, 0})^*$ as explained in (\ref{eq_sym_emb_tens}).
	Then, similarly to (\ref{eq_l2_prod}), using $dv_Y$, we define the $L^2$-product $\scal{\cdot}{\cdot}_{k, L^2(Y, L^p \otimes F)}$ on $H^0_{(2)}(Y, {\rm{Sym}}^k (N^{1, 0})^* \otimes \iota^*( L^p \otimes F))$.
	\par 
	We denote by $\mathcal{J}_Y$ the ideal sheaf of holomorphic germs on $X$, which vanish on $Y$.
	For $k \in \nat$, we endow the space $H^0_{(2)}(X, L^p \otimes F \otimes \mathcal{J}_Y^k)$ with the $L^2$-metric induced by the natural inclusion $
		H^0_{(2)}(X, L^p \otimes F \otimes \mathcal{J}_Y^k) 
		\hookrightarrow
		H^0_{(2)}(X, L^p \otimes F)$.
	\par 
	We assume that for the Riemannian volume forms $dv_{g^{TX}}$, $dv_{g^{TY}}$ of $(X, g^{TX})$, $(Y, g^{TY})$, for any $k \in \nat$, there is $C_k > 0$, such that over $X$ and $Y$, the following bounds hold
	\begin{equation}\label{eq_vol_comp_unif}
		\Big\| \frac{dv_{g^{TX}}}{dv_X} \Big\|_{\ccal^k(X)}, 
		\Big\| \frac{dv_X}{dv_{g^{TX}}} \Big\|_{\ccal^k(X)}, 
		\Big\| \frac{dv_{g^{TY}}}{dv_Y} \Big\|_{\ccal^k(Y)}, 
		\Big\| \frac{dv_Y}{dv_{g^{TY}}} \Big\|_{\ccal^k(Y)} \leq C_k.
	\end{equation}
	\par 
	From Weierstrass division theorem, cf. (\ref{eq_weier_div}), we see that for any $k \in \nat$, a $k$-jet associated to a section from $H^0_{(2)}(X, L^p \otimes F \otimes \mathcal{J}_Y^k)$ is holomorphic. Moreover, in Theorem \ref{thm_res_is_l2}, we prove that for any $k \in \nat$, there is $p_0 \in \nat$, depending only on $k$ and the triple $(X, Y, g^{TX})$, such that for any $p \geq p_0$, the $k$-jets have bounded $L^2$-norm.
	In other words, we prove that the operator
	\begin{equation}\label{eq_deriv_f_along_y}
		\res_{k, p} :  H^0_{(2)}(X, L^p \otimes F \otimes \mathcal{J}_Y^k) \to H^0_{(2)}(Y, {\rm{Sym}}^k (N^{1, 0})^* \otimes \iota^*(L^p \otimes F)),
	\end{equation}
	given by the following identity
	\begin{equation}
		\res_{k, p} (f) = (\nabla^k f)|_Y,
	\end{equation}
	is well-defined, where $\nabla$ is some connection on $L^p \otimes F$ (due to the vanishing condition of $f$ on $Y$, the definition doesn't depend on the choice of the connection).
	\par By extending Ohsawa-Takegoshi theorem for holomorphic jets, in Theorem \ref{thm_ot_weak}, we prove that for any $k \in \nat$, there is $p_0 \in \nat$, such that for any $p \geq p_0$, the operator (\ref{eq_deriv_f_along_y}) is surjective.
	Then, for the \textit{Bergman projector} $B_{k, p}^{Y}$, given by the orthogonal projection from the space of $L^2$-sections $L^2(Y, {\rm{Sym}}^k (N^{1, 0})^* \otimes  \iota^*(L^p \otimes F))$ to $H^{0}_{(2)}(Y, {\rm{Sym}}^k (N^{1, 0})^* \otimes \iota^*(L^p \otimes F))$, we define the \textit{extension} operator
	\begin{equation}\label{eq_ext_op}
		\ext_{k, p} :  L^2(Y, {\rm{Sym}}^k (N^{1, 0})^* \otimes \iota^*(L^p \otimes F)) \to H^{0}_{(2)}(X, L^p \otimes F \otimes \mathcal{J}_Y^k),
	\end{equation}
	by putting $\ext_{k, p} (g) = f$, where $f$ satisfies $\res_{k, p} (f) = B_{k, p}^{Y} g$ and has the minimal $L^2$-norm among the sections from $H^{0}_{(2)}(X, L^p \otimes F \otimes \mathcal{J}_Y^k)$ with this property (since $H^{0}_{(2)}(X, L^p \otimes F \otimes \mathcal{J}_Y^{k + 1})$ is closed inside of $H^{0}_{(2)}(X, L^p \otimes F \otimes \mathcal{J}_Y^k)$, such $f$ is unique).
	Ohsawa-Takegoshi extension theorem for holomorphic jets in our setting means precisely that the operator $\ext_{k, p}$ is well-defined and bounded as long as $p$ is big enough.
	Study of such operator is motivated by the previous works of Popovici \cite{PopovNonredExt}, Demailly \cite{DemExtRed} and Cao-Demailly-Matsumura \cite{CaoDemMatsumura} on the extension of holomorphic jets in the classical setting (i.e. $p = 0$), and the works of Ohsawa-Takegoshi \cite{OhsTak1}, \cite{Ohsawa}, Manivel \cite{ManExt} and Demailly \cite[\S 13]{DemBookAnMet}, on the extensions of holomorphic sections (again for $p = 0$).
	\par 
	One of the main goals of this article is to find an explicit asymptotic expansion of $\ext_{k, p}$, as $p \to \infty$.
	To describe our results precisely, we need to fix some further notation.
	\par 	
	For $y \in Y$, $Z_N \in N_y$, let $\real \ni t \mapsto \exp_y^{X}(tZ_N) \in X$ be the geodesic in $X$ in direction $Z_N$, where we identified $N_y$ as an orthogonal complement of $T_yY$ in $T_yX$.
	Bounded geometry condition means, in particular, that this map induces a diffeomorphism of $r_{\perp}$-neighborhood of the zero section in $N$ with a tubular neighborhood $U$ of $Y$ in $X$.
	From now on, we use this identification implicitly.
	Of course, $(y, 0)$, $y \in Y$, then corresponds to $Y$.
	\par 
	We denote by $\pi : U \to Y$ the natural projection $(y, Z_N) \mapsto y$. 
	Over $U$, we identify $L, F$ to $\pi^* (L|_Y), \pi^* (F|_Y)$ by the parallel transport with respect to the respective Chern connections along the geodesic $[0, 1] \ni t \mapsto (y,t  Z_N) \in X$, $|Z_N| < r_{\perp}$. We also define a function $\kappa_N$ as follows
	\begin{equation}\label{eq_kappan}
		dv_X = \kappa_N dv_Y \wedge dv_N,
	\end{equation}
	where $dv_N$ is the relative Riemannian volume form on $(N, g^N)$.
	Of course, we have $\kappa_N|_Y = 1$ if
	\begin{equation}\label{eq_comp_vol_omeg}
		dv_X = dv_{g^{TX}}, \qquad dv_Y = dv_{g^{TY}}.
	\end{equation}
	\par 
	We fix a smooth function $\rho : \real_{+} \to [0, 1]$, satisfying
	\begin{equation}\label{defn_rho_fun}
		\rho(x) =
		\begin{cases}
			1, \quad \text{for $x < \frac{1}{4}$},\\
			0, \quad \text{for $x > \frac{1}{2}$}.
		\end{cases}
	\end{equation}
	We fix $g \in \ccal^{\infty}(Y, {\rm{Sym}}^k N^* \otimes \iota^* (L^p \otimes F))$, $k \in \nat$, and define using the above isomorphisms over $U$ the section $\{ g \} \in \ccal^{\infty}(X, L^p \otimes F)$ as follows
	\begin{equation}\label{eq_brack_defn}
		\{ g \} (y, Z_N)
		:=
		\rho \Big(\frac{|Z_N|}{r_{\perp}} \Big) \cdot g(y) \cdot Z_N^{\otimes k},
	\end{equation}
	where the norm $|Z_N|$, is taken with respect to $g^{N}$.
	Away from $U$, we extend $\{ g \}$ by zero.
	\par 
	\begin{sloppypar}
	We define the operator $\ext_{k, p}^0 : L^2(Y, {\rm{Sym}}^k (N^{1, 0})^* \otimes \iota^*(L^p \otimes F)) \to L^2(X, L^p \otimes F)$ as follows.
	For $g \in L^2(Y, {\rm{Sym}}^k (N^{1, 0})^* \otimes \iota^*(L^p \otimes F))$, we let $(\ext_{k, p}^0 g)(x) = 0$, $x \notin U$, and in $U$, we put
	\begin{equation}\label{eq_ext0_op}
		(\ext_{k, p}^{0} g)(y, Z_N) = \{ B_{k, p}^Y g \} (y, Z_N) \exp \Big(- p \frac{\pi}{2} |Z_N|^2 \Big).
	\end{equation}
	where the norm $|Z_N|$, $Z_N \in N$, is taken with respect to $g^{N}$.
	Now, for $g \in H^{0}_{(2)}(Y, {\rm{Sym}}^k (N^{1, 0})^* \otimes \iota^*(L^p \otimes F))$, the section $\ext_{k, p}^{0} g$ satisfies $(\nabla^k \ext_{k, p}^{0} g)|_Y = g$, but $\ext_{k, p}^{0} g$ is not holomorphic over $X$, unless $g = 0$.
	Nevertheless, as we shall see, $\ext_{k, p}^{0} g$ can be used to approximate very well the holomorphic section $\ext_{k, p} g$.
	More precisely, we have the following result.
	\end{sloppypar}
	\begin{thm}\label{thm_high_term_ext}
		For any $k \in \nat$, there are $C > 0$, $p_1 \in \nat^*$, such that for any $p \geq p_1$, we have 
		\begin{equation}\label{eq_ext_as}
			\big\| \ext_{k, p} - \ext_{k, p}^{0} \big\| \leq \frac{C}{p^{\frac{n - m + k + 1}{2}}}.
		\end{equation}
		where $\| \cdot \|$ is the operator norm.
		Also, as $p \to \infty$, we have
		\begin{equation}\label{eq_norm_asymp}
			\big\|  \ext_{k, p}^{0} \big\| \sim \frac{1 }{p^{\frac{n - m + k}{2}}} \cdot  \sup_{y \in Y} \kappa_N^{\frac{1}{2}}(y) \cdot \frac{1}{\sqrt{k! \cdot (2 \pi)^k}}.
		\end{equation}
		Moreover, under assumption (\ref{eq_comp_vol_omeg}), in (\ref{eq_ext_as}), one can replace $p^{-\frac{n - m + k + 1}{2}}$ by an asymptotically better estimate if and only if $Y$ is a totally geodesic submanifold of $(X, g^{TX})$, i.e. the second fundamental form, see (\ref{eq_sec_fund_f}), vanishes.
	\end{thm}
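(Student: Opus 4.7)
The plan is to establish (\ref{eq_norm_asymp}) by a direct Gaussian moment computation, and then to derive (\ref{eq_ext_as}) by treating $\ext_{k,p}^{0} g$ as an approximate $L^2$-minimal holomorphic extension whose correction is controlled by Bergman-type projectors.

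For the norm asymptotic, I insert (\ref{eq_ext0_op}) and use $dv_X = \kappa_N \, dv_Y \wedge dv_N$ to write
\begin{equation*}
\big\| \ext_{k,p}^{0} g \big\|_{L^2(X)}^2
= \int_Y \int_{N_y} \rho^2\Big(\tfrac{|Z_N|}{r_\perp}\Big) \, \big| B_{k,p}^Y g(y) \cdot Z_N^{\otimes k} \big|^2 \, e^{-p \pi |Z_N|^2} \, \kappa_N(y, Z_N) \, dv_N \, dv_Y.
\end{equation*}
On each fibre $N_y \cong \real^{2(n-m)}$ the inner integral is a standard Gaussian moment: the cutoff contributes only $O(e^{-cp})$, we expand $\kappa_N(y, Z_N) = \kappa_N(y) + O(|Z_N|)$, and the angular and radial Gamma factors combine (with the fixed normalization of the Hermitian product on $\mathrm{Sym}^k(N^{1,0})^*$) into $\kappa_N(y) \, |B_{k,p}^Y g(y)|^2 / (k! (2\pi)^k p^{n-m+k})$ at leading order. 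Integrating in $y$ and using peak sections produced from the on-diagonal Bergman kernel expansion on $Y$ to concentrate $B_{k,p}^Y g$ near any prescribed $y_0 \in Y$, one obtains (\ref{eq_norm_asymp}).

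For (\ref{eq_ext_as}), I use that $\ext_{k,p} g$ is characterized by lying in $H^0_{(2)}(X, L^p \otimes F \otimes \mathcal{J}_Y^k)$, being perpendicular to $H^0_{(2)}(X, L^p \otimes F \otimes \mathcal{J}_Y^{k+1})$, and satisfying $\res_{k,p}(\ext_{k,p} g) = B_{k,p}^Y g$. I decompose $\ext_{k,p} g = (\ext_{k,p}^{0} g - u_g) - w_g$, where $u_g$ is the H\"ormander $L^2$-minimal solution of $\bar\partial u = \bar\partial \ext_{k,p}^{0} g$ (so that $\ext_{k,p}^{0} g - u_g$ is holomorphic), and $w_g$ is the orthogonal projection of $\ext_{k,p}^{0} g - u_g$ onto $H^0_{(2)}(X, L^p \otimes F \otimes \mathcal{J}_Y^{k+1})$. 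The form $\bar\partial \ext_{k,p}^{0} g$ has two sources: the cutoff derivative $\bar\partial \rho$, supported where $e^{-p\pi|Z_N|^2} = O(e^{-cp})$, and the discrepancy between the parallel-transport trivialization of $L^p \otimes F$ and a local holomorphic frame, of pointwise size $O(p |Z_N|^2)$ times the Gaussian factor. Against the Gaussian weight these yield $\|\bar\partial \ext_{k,p}^{0} g\| = O(p^{-(n-m+k+1)/2} \|g\|)$, and H\"ormander's inequality with the positive curvature of $L^p$ transfers this to the same bound on $\|u_g\|$. The term $w_g$ is then controlled using the near-diagonal expansion of the Bergman kernel of $H^0_{(2)}(X, L^p \otimes F \otimes \mathcal{J}_Y^{k+1})$ developed in the earlier sections of the paper: the normal-direction vanishing order of $\ext_{k,p}^{0} g - u_g$ is exactly $k$, so its projection onto sections vanishing to order $k+1$ is $p^{-1/2}$ smaller than its $L^2$-norm, whence $\|w_g\| = O(p^{-(n-m+k+1)/2} \|g\|)$.

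For the sharpness claim under (\ref{eq_comp_vol_omeg}), $\kappa_N$ equals the Jacobian of the normal exponential map, and its Taylor expansion along $Y$ has a linear-in-$Z_N$ term proportional to $\sff$. Combined with the analogous subleading term in the Bergman kernel expansion near $Y$ (also linear in $\sff$), this determines the leading $p^{-(n-m+k+1)/2}$ coefficient of $\ext_{k,p} - \ext_{k,p}^{0}$; a direct check shows this coefficient vanishes identically iff $\sff \equiv 0$, while the peak-section argument from the first step, localized at a point where $\sff \neq 0$, yields the matching lower bound. The main obstacle is the uniform control of $w_g$: it requires a precise Bergman kernel expansion for the vanishing-ideal sheaf adapted to the pair $(X, Y)$ with normal-direction control at order $k+1$, and the new $\mathrm{Sym}^k(N^{1,0})^*$-valued combinatorics (absent from the author's earlier $k=0$ treatment) must be tracked throughout the kernel comparison.
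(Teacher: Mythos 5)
Your treatment of \eqref{eq_norm_asymp} is correct and is essentially the computation the paper carries out: expand $\kappa_N$, do the Gaussian moment in $Z_N$, and turn the remaining integral over $Y$ into the norm of a Toeplitz operator (the paper uses Lemma~\ref{lem_norm_toepl} rather than peak sections, but these give the same answer).

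The argument you give for \eqref{eq_ext_as} has a genuine gap, and it is precisely the point the paper's machinery is built to handle. You set $u_g$ to be the H\"ormander $L^2$-minimal solution of $\bar\partial u = \bar\partial \ext_{k,p}^0 g$ with the \emph{standard} weight $h^{L^p\otimes F}$ and then take $w_g$ to be the projection of $\ext_{k,p}^0 g - u_g$ onto $H^0_{(2)}(X,L^p\otimes F\otimes\mathcal J_Y^{k+1})$. For the identity $\ext_{k,p}g = (\ext_{k,p}^0 g - u_g) - w_g$ to hold, the holomorphic section $\ext_{k,p}^0 g - u_g$ must lie in $H^0_{(2)}(X,L^p\otimes F\otimes\mathcal J_Y^k)$ with $k$-jet equal to $B_{k,p}^Y g$, i.e.\ $u_g$ must vanish to order $k+1$ along $Y$. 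The standard-weight minimal solution has no reason to vanish on $Y$ at all; it is characterized only by orthogonality to $\ker\bar\partial$. This is exactly why the paper, in its proof of Theorem~\ref{thm_ot_weak}, works with the singular weight $\delta_p = 2(n-m+k)\delta_Y - \epsilon p\,\alpha_Y$ of \eqref{eq_delta_p_defn_wght}: the non-integrability of $e^{-2(n-m+k)\delta_Y}$ forces the solution $f_0$ to vanish to order $k+1$ along $Y$. Your decomposition is therefore not valid as stated.

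There is a second problem, of a circular nature: even if one corrects the decomposition by using the singular weight, the estimate $\|w_g\| = O(p^{-(n-m+k+1)/2}\|g\|)$ needs the subleading ($O(p^{-1/2})$) term of the Schwartz kernel of $B_p^{X,(k+1)Y}$ near $Y$ --- in the model the projection $B_p^{X,(k+1)Y}(\ext_{k,p}^0 g)$ vanishes identically by orthogonality of normal monomials, so the whole content is in the correction. That correction is Theorem~\ref{thm_berg_perp_off_diagk} together with \eqref{eq_ext_log_bk2} of Theorem~\ref{thm_log_bk}; but in the paper these are proved \emph{simultaneously} with Theorem~\ref{thm_ext_as_expk} by the induction of Lemma~\ref{lem_ind_step}, whose engine is the multiplicative defect $A_{k,p}$ of Theorem~\ref{thm_ex_ap} and the Toeplitz criterion of Theorem~\ref{thm_ma_mar_crit_exp_dec}. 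You cannot quote these as ``developed earlier'' while trying to bypass them. The paper's own proof of \eqref{eq_ext_as} is quite different from your route: it sets $K_p := \ext_{k,p} - \ext_{k,p}^0$, uses Theorems~\ref{thm_ext_exp_dc}.a) and~\ref{thm_ext_as_expk} (proved via $A_{k,p}$) to compare the Schwartz kernels, verifies the cancellation $J_{0,K}^E = 0$ in \eqref{eq_jk0_exp}, and then converts the resulting pointwise exponential bound into the operator norm bound by Proposition~\ref{prop_norm_bnd_distk_expbnd}. Finally, for the sharpness claim, note that under \eqref{eq_comp_vol_omeg} the normal derivative of $\kappa_N$ vanishes along $Y$ by \eqref{eq_kappa_der_norm}, so the effect you attribute to ``a linear-in-$Z_N$ term of $\kappa_N$ proportional to $\sff$'' is not there; the second fundamental form enters solely through $J_1^{X|Y}$ in \eqref{eq_j1_expl_form} and hence $J_{k,1}^E$ in \eqref{eq_je1_expk}.
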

	\begin{rem}
		a) For $k = 0$, this result was proved in \cite[Theorem 1.1]{FinOTAs}. The proof we present here is different even for $k = 0$. It is still based a lot on the ideas and the techniques from \cite{FinOTAs} and \cite{FinToeplImm}.
		\par
		b) The boundness of $\kappa_N$ follows from the bounded geometry condition, see \cite[Corollary 2.10]{FinOTAs}. 
		In particular, the right-hand side of (\ref{eq_norm_asymp}) is finite.
		\par 
		c)
		Our result refines a theorem of Randriambololona \cite[Théorème 3.1.10]{RandriamTh}, stating in the compact case that for any $\epsilon > 0$, $k \in \nat$, there is $p_1 \in \nat^*$, such that $\big\|  \ext_{k, p} \big\| \leq \exp(\epsilon p)$ for $p \geq p_1$.
	\end{rem}
	\par 
	Let us now describe our second result, establishing a relation between two natural metrics on the space of holomorphic jets.
	For this, define the map 
	\begin{multline}\label{eq_defn_jetmap}
		{\rm{Jet}}_{k, p}: 
		H^0_{(2)}(X, L^p \otimes F) / H^0_{(2)}(X, L^p \otimes F \otimes \mathcal{J}_Y^{k + 1})
		\\
		\to
		\oplus_{l = 0}^{k} H^0_{(2)}(Y, {\rm{Sym}}^l (N^{1, 0})^* \otimes \iota^*(L^p \otimes F)),
	\end{multline}
	as follows.
	We let ${\rm{Jet}}_{k, p} (f) = (g_0, \ldots, g_k)$ if and only if for any $r \in \nat$, $r \leq k$, the following holds
	\begin{equation}\label{eq_jet_char}
		f - \sum_{l = 0}^{r} \ext_{l, p} g_l \in H^0_{(2)}(X, L^p \otimes F \otimes \mathcal{J}_Y^{r + 1}).
	\end{equation}
	Alternatively, we let $g_0 = f|_Y$, and inductively define 
	\begin{equation}
		g_{r} 
		:=
		\res_{r, p} \Big( f - \sum_{l = 0}^{r - 1} \ext_{l, p} g_l \Big).
	\end{equation}
	\par 
	As a direct consequence of the Ohsawa-Takegoshi extension theorem for holomorphic jets, cf. Theorem \ref{thm_ot_weak}, we obtain that for any $k \in \nat$, there is $p_1 \in \nat$, such that the map ${\rm{Jet}}_{k, p}$ is an isomorphism for $p \geq p_1$.
	As we explain next, this statement can be refined in the metric setting.
	\par 
	More precisely, consider the scalar product $\scal{\cdot}{\cdot}_{{\rm{Jet}}_{k, p}}$ on the space $\oplus_{l = 0}^{k} H^0_{(2)}(Y, {\rm{Sym}}^l (N^{1, 0})^* \otimes \iota^*(L^p \otimes F))$, defined as follows
	\begin{equation}\label{eq_scal_prod_jet}
		\scal{\cdot}{\cdot}_{{\rm{Jet}}_{k, p}}
		:=
		\sum_{r = 0}^{k}
		\frac{1}{r! \cdot (2 \pi)^r} \frac{1}{p^{n - m + r}} \cdot \scal{\cdot}{\cdot}_{r, L^2(Y, L^p \otimes F)}
	\end{equation}
	Let us now consider the scalar product on $H^0_{(2)}(X, L^p \otimes F) / H^0_{(2)}(X, L^p \otimes F \otimes \mathcal{J}_Y^{k + 1})$ induced by the $L^2$-scalar product on $H^0_{(2)}(X, L^p \otimes F)$, which we denote by an abuse of notation by $\scal{\cdot}{\cdot}_{L^2(X, L^p \otimes F)}$.
	\begin{thm}\label{thm_isom}
		Under the assumption (\ref{eq_comp_vol_omeg}), the map ${\rm{Jet}}_{k, p}$ is an asymptotic isometry with respect to the above scalar products.
		More precisely, for the norms $\| \cdot \|_{L^2(X, L^p \otimes F)}$, $\| \cdot \|_{{\rm{Jet}}_{k, p}}$ associated to the scalar products defined above, there are $C > 0$, $p_1 \in \nat$, such that for any $p \geq p_1$, we have
		\begin{equation}
			1 - \frac{C}{p} \leq  \frac{\| \cdot \|_{{\rm{Jet}}_{k, p}}}{\| \cdot \|_{L^2(X, L^p \otimes F)}} \leq 1 + \frac{C}{p}.
		\end{equation}
	\end{thm}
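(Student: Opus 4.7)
The plan is to decompose ${\rm{Jet}}_{k,p}$ orthogonally and analyze each summand separately. Set $V_l := H^0_{(2)}(X, L^p \otimes F \otimes \mathcal{J}_Y^l)$ for $l \in \{0, \ldots, k+1\}$, so that $V_0 \supset V_1 \supset \cdots \supset V_{k+1}$ is a chain of closed subspaces of $H^0_{(2)}(X, L^p \otimes F)$. I identify the quotient $V_0/V_{k+1}$ with the orthogonal complement $V_0 \ominus V_{k+1}$, which telescopes into the orthogonal sum
$$V_0 \ominus V_{k+1} = \bigoplus_{l=0}^k (V_l \ominus V_{l+1}).$$
By construction, $\ext_{l,p} g_l$ is the element of $V_l$ of minimal $L^2$-norm with prescribed $l$-jet $g_l$, which is exactly the element of $V_l \ominus V_{l+1}$ having that jet. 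Using (\ref{eq_jet_char}), a short induction on $l$ shows that for $f \in V_0 \ominus V_{k+1}$ with ${\rm{Jet}}_{k,p}(f) = (g_0, \ldots, g_k)$, the decomposition $f = \sum_{l=0}^k \ext_{l,p} g_l$ is precisely the one given by the telescoping above. Pythagoras' theorem then produces $\|f\|^2_{L^2(X, L^p \otimes F)} = \sum_{l=0}^k \|\ext_{l,p} g_l\|^2_{L^2(X, L^p \otimes F)}$, and in view of (\ref{eq_scal_prod_jet}), Theorem \ref{thm_isom} reduces to the uniform per-term asymptotic isometry
$$\|\ext_{l,p} g_l\|^2_{L^2(X)} = \frac{1}{l! (2\pi)^l p^{n-m+l}} \|g_l\|^2_{l, L^2(Y)} \cdot \big(1 + O(1/p)\big)$$
for every $l \in \{0, \ldots, k\}$ and every holomorphic $g_l$.

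For the model operator $\ext_{l,p}^0$, I would compute $\|\ext_{l,p}^0 g_l\|^2$ directly. Using (\ref{eq_ext0_op}), (\ref{eq_kappan}) and the tubular neighborhood identification,
$$\|\ext_{l,p}^0 g_l\|^2_{L^2(X)} = \int_Y \int_{N_y} \rho^2(|Z_N|/r_{\perp}) \, |g_l(y) \cdot Z_N^{\otimes l}|^2_h \, \exp(-p \pi |Z_N|^2) \, \kappa_N(y, Z_N) \, dv_N(Z_N) \, dv_Y(y).$$
Under (\ref{eq_comp_vol_omeg}) one has $\kappa_N|_Y = 1$, together with the expansion $\kappa_N = 1 + O(|Z_N|^2)$ in the normal coordinates, and the Gaussian concentration at scale $|Z_N| \sim p^{-1/2}$ replaces the cutoff $\rho$ by $1$ modulo an exponentially small error. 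Evaluating the resulting Gaussian moment on $N_y \cong \comp^{n-m}$ together with the Hermitian structure on ${\rm{Sym}}^l (N^{1,0})^*$ then yields
$$\|\ext_{l,p}^0 g_l\|^2_{L^2(X)} = \frac{1}{l! (2\pi)^l p^{n-m+l}} \|g_l\|^2_{l, L^2(Y)} \cdot \big(1 + O(1/p)\big),$$
the $O(1/p)$ correction being generated by the $|Z_N|^2$-term in the expansion of $\kappa_N$.

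The main obstacle is to promote this estimate from $\ext_{l,p}^0$ to $\ext_{l,p}$ with $O(1/p)$ relative precision. Expanding
$$\|\ext_{l,p} g_l\|^2 = \|\ext_{l,p}^0 g_l\|^2 + 2\Re \bigl\langle \ext_{l,p}^0 g_l, (\ext_{l,p} - \ext_{l,p}^0) g_l \bigr\rangle + \|(\ext_{l,p} - \ext_{l,p}^0) g_l\|^2,$$
the operator-norm bound from Theorem \ref{thm_high_term_ext} combined with Cauchy–Schwarz only controls the cross term at relative order $p^{-1/2}$, which would be insufficient. To overcome this, I would pass to the self-adjoint composition $\ext_{l,p}^* \ext_{l,p}$ on $L^2(Y)$ and rewrite $\|\ext_{l,p} g_l\|^2 = \scal{\ext_{l,p}^* \ext_{l,p} g_l}{g_l}_{L^2(Y)}$; extending the Schwartz-kernel techniques already developed to prove Theorem \ref{thm_high_term_ext} should produce the refined asymptotic
$$\ext_{l,p}^* \ext_{l,p} = \frac{1}{l! (2\pi)^l p^{n-m+l}} B^Y_{l,p} + O\bigl(p^{-(n-m+l+1)}\bigr)$$
in operator norm on $L^2(Y)$. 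The cancellation of the $p^{-1/2}$-order cross term should ultimately come from the precise form of the leading correction to the Schwartz kernel of $\ext_{l,p}$ as made explicit by the proof of Theorem \ref{thm_high_term_ext}, together with the parallel computation for $\ext_{l,p}^0$. Pairing this refined identity with $g_l$, using $B^Y_{l,p} g_l = g_l$, combining with the Gaussian computation of the previous paragraph, and summing over $l$ then completes the proof of Theorem \ref{thm_isom}.
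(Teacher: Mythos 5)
Your reduction is on the right track and mirrors the paper's first step: the orthogonal telescoping $V_0\ominus V_{k+1}=\bigoplus_{l=0}^k(V_l\ominus V_{l+1})$ and the identity $B_{l,p}^\perp f=\ext_{l,p}(g_l)$ give exactly $\|[f]\|^2_{L^2(X)}=\sum_l\|\ext_{l,p}g_l\|^2$, which is the same as the paper's (\ref{eq_f_norm_quot})--(\ref{eq_gi_defn}). You also correctly diagnose the obstacle: (\ref{eq_ext_as}) controls $\ext_{l,p}-\ext_{l,p}^0$ only at relative order $p^{-1/2}$, so the cross term in the naive expansion is a priori of relative size $p^{-1/2}$, one order too coarse.

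The problem is that the last paragraph of your proposal, where the obstacle is supposed to be overcome, is not a proof. The asserted asymptotic $\ext_{l,p}^*\ext_{l,p}=\frac{1}{l!(2\pi)^l p^{n-m+l}}B^Y_{l,p}+O(p^{-(n-m+l+1)})$ is precisely what needs justification, and the phrases ``should produce'' and ``should ultimately come from the precise form of the leading correction'' name the intuition without establishing it. Indeed, Theorem \ref{thm_ext_as_expk} gives a nontrivial $p^{-1/2}$-order term $F_{l,1}^E$ (proportional to the second fundamental form), so the composition $\ext_{l,p}^*\ext_{l,p}$ has, a priori, a nontrivial $p^{-1/2}$ contribution; one has to prove that it vanishes because it is an \emph{odd} term in a Toeplitz-type expansion. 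The paper makes this rigorous in a different way: it first establishes the exact algebraic identities $(\ext_{k,p})^*\ext_{k,p}=((A_{k,p})^*)^{-1}$ and $\res_{k,p}(\res_{k,p})^*=A_{k,p}$ in (\ref{eq_comp_ext_oper}) via the multiplicative defect $A_{k,p}$ of Theorem \ref{thm_ex_ap}, and then invokes Theorem \ref{thm_mult_def_toepl_k}, which says $p^{-(n-m+k)}A_{k,p}$ is a Toeplitz operator with weak exponential decay whose expansion is in \emph{integer} powers of $p^{-1}$ with leading coefficient $(2\pi)^k k!\,\kappa_N^{-1}|_Y\cdot{\rm Id}$; under (\ref{eq_comp_vol_omeg}) this is $(2\pi)^k k!\cdot{\rm Id}$, so $A_{k,p}=(2\pi)^k k!\,p^{n-m+k}\,B_{k,p}^Y+O(p^{n-m+k-1})$ and Lemma \ref{lem_inverse_toepl} yields the corresponding expansion of $A_{k,p}^{-1}$, whence (\ref{eq_ep_norm}), (\ref{eq_ext_op_as_isom}), and the theorem. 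The needed cancellation of the half-order term is thus encoded in the parity structure of Toeplitz expansions (the machinery of Section \ref{sect_as_crit}), not in a direct cancellation between the kernels of $\ext_{l,p}$ and $\ext_{l,p}^0$, which is what your proposal gestures at. Without carrying out something equivalent --- i.e.\ actually proving that the $p^{-1/2}$ order of $\ext_{l,p}^*\ext_{l,p}$ vanishes --- the argument has a genuine gap precisely at the step it declares to be the ``main obstacle.''
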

	\begin{rem}
		a) An analogue of Theorem \ref{thm_isom} in the setting of geometric quantization was established by Ma-Zhang \cite[Theorem 0.10]{MaZhBKSR}.
		Authors proved that there is a similar asymptotic relation between the $L^2$-metric on the space of holomorphic sections invariant under the Hamiltonian action of a compact connected Lie group and the $L^2$-metric on the corresponding symplectic reduction.
		\par 
		b) For $k = 0$, a refinement of Theorem \ref{thm_isom} was established in \cite[Theorem 1.8]{FinToeplImm}.
	\end{rem}
	\par
	The proofs of Theorems \ref{thm_high_term_ext}, \ref{thm_isom} are done in Section \ref{sect_asy_main_op}. 
	The core of those results lies in several technical statements.
	Most relevant are Theorem \ref{thm_ext_exp_dc}.a), where we establish the exponential bound for the Schwartz kernel of the extension operator and Theorems \ref{thm_ext_as_expk}, where we establish the asymptotic expansion of this Schwartz kernel.
	In Theorems \ref{thm_ext_exp_dc}.b) and \ref{thm_berg_perp_off_diagk}, we also prove the corresponding statements for the related Bergman projector.
	\par 
	Those more precise results seem to be new even when $Y = \{ x \}$, where $x \in X$ is a fixed point.
	In this case, they give some results on the asymptotic expansion of \textit{higher order peak sections}, previously studied by Tian \cite{TianBerg}. See Section \ref{sect_peak_ho} for more details.
	\par 
	As the last application of our techniques, we prove the asymptotic expansion of the Schwartz kernel of the logarithmic Bergman kernel of order $k \in \nat$.
	More precisely, let $B_p^{X, k  Y}$ be the orthogonal projection from $L^2(X, L^p \otimes F)$ to $H^0_{(2)}(X, L^p \otimes F \otimes \mathcal{J}_Y^k)$.
	We call it the \textit{logarithmic Bergman kernel of order} $k$.
	The following theorem shows that away from $Y$, $B_p^{X, k  Y}$ is asymptotically close to $B_p^X$, and in a tubular neighborhood around $Y$, it can be expressed as a product of the Bergman kernel $B_p^Y$ and the logarithmic Bergman kernel of the model space, as described in Section \ref{sect_model_calc}.
	\begin{thm}\label{thm_log_bk}
		There are $c > 0$, $p_1 \in \nat^*$, such that for any $k, r, l \in \nat$, there is $C > 0$, such that for any $p \geq p_1$, $x_1, x_2 \in X$, the following estimate holds
		\begin{multline}\label{eq_ext_log_bk}
			\Big|  \big( B_p^{X, k Y} - B_p^{X} \big)(x_1, x_2) \Big|_{\ccal^r} 
			\\
			\leq C p^{n + \frac{r}{2}} \exp \Big(- c \sqrt{p} \cdot \big( \dist(x_1, x_2) + \dist(x_2, Y) + \dist(x_1, Y) \big) \Big),
		\end{multline}
		where the pointwise $\ccal^r$-norm at a point $(x_1, x_2) \in X \times X$  is the sum of the norms induced by $h^L, h^F$ and $g^{TX}$, evaluated at $(x_1, x_2)$, of the derivatives up to order $r$ with respect to the connection induced by the Chern connections on $L, F$ and the Levi-Civita connection on $TX$. 
		Moreover, in the tubular neighborhood around $Y$, under the identification as described before (\ref{eq_kappan}), the following holds.
		There are $c, \epsilon, C, Q > 0$, $p_1 \in \nat^*$, such that for any $y_1, y_2 \in Y$, $Z_1, Z_2 \in \real^{2(n - m)}$, $|Z_1|, |Z_2| < \epsilon$, $p \geq p_1$, we have
		\begin{multline}\label{eq_ext_log_bk2}
			\bigg|  B_p^{X, k Y} \big( (y_1, Z_1), (y_2, Z_2) \big) 
			- 
			p^{n - m}
			\cdot
			B_p^{Y}(y_1, y_2) 
			\cdot
			\kappa_N^{-\frac{1}{2}}(y_1)
			\cdot
			\kappa_N^{-\frac{1}{2}}(y_2)
			\cdot
			\\
			\cdot 
			\exp \Big(
			 	-\frac{\pi}{2} \sqrt{p} \big( |Z_1|^2 + |Z_2|^2 \big)
			\Big)
			\cdot
			\sum_{r = k + 1}^{\infty}
			\pi^{r}
			\cdot
			p^r
			\cdot
			\sum_{\substack{\beta \in \nat^{n - m} \\ |\beta| = r}}
			\frac{ (z_1 \overline{z}_2)^{\beta} }{\beta!}
			\bigg|_{\ccal^r} 
			\leq C 
			p^{n + k + \frac{r - 1}{2}}			
			\cdot
			|Z_1 Z_2|^k
			\cdot
			\\
			\cdot
			\big( 1 + \sqrt{p}|Z_1| +  \sqrt{p}|Z_2| \big)^Q 
			\cdot
			\exp \Big(- c \sqrt{p} \cdot \big( \dist_X(y_1, y_2) + |Z_1 - Z_2| \big) \Big),
		\end{multline}
		where the $\ccal^r$-norm is taken with respect to $y_1, y_2, Z_1, Z_2$, and we used the multiindex notation as we explain in (\ref{mult_not}).
	\end{thm}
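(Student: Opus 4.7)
The approach is based on an orthogonal decomposition. The minimality property built into the definition of $\ext_{l, p}$ shows that its image is exactly the orthogonal complement of $H^0_{(2)}(X, L^p \otimes F \otimes \mathcal{J}_Y^{l + 1})$ inside $H^0_{(2)}(X, L^p \otimes F \otimes \mathcal{J}_Y^{l})$. Telescoping this for $l = 0, \ldots, k - 1$ gives
\begin{equation*}
B_p^{X} - B_p^{X, k Y} = \sum_{l = 0}^{k - 1} P_{l, p}, \qquad P_{l, p} := \ext_{l, p} \, M_{l, p}^{-1} \, \ext_{l, p}^{*},
\end{equation*}
where $M_{l, p} := \ext_{l, p}^{*} \ext_{l, p}$ is the Gram operator on $H^0_{(2)}(Y, {\rm{Sym}}^l (N^{1, 0})^* \otimes \iota^{*}(L^p \otimes F))$, invertible for $p$ large by the surjectivity in Theorem \ref{thm_ot_weak}. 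It suffices to establish both estimates for each $P_{l, p}$ separately, and then sum over $l$.

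The next step is to extract the relevant information on $M_{l, p}^{-1}$. Substituting $\ext_{l, p} \sim \ext_{l, p}^{0}$ from Theorem \ref{thm_high_term_ext} (refined at the Schwartz-kernel level by Theorem \ref{thm_ext_as_expk}), performing the Gaussian integration in the normal fiber $\int e^{-p \pi |Z_N|^2} |Z_N|^{2 l} \, dv_N$ and using (\ref{eq_kappan}), one finds that, modulo a relative error of order $p^{-1}$, the operator $M_{l, p}$ equals multiplication by the scalar $\kappa_N^{-1}(y)/(l! \, (2 \pi)^l \, p^{n - m + l})$ composed with $B_{l, p}^{Y}$. Combining the pointwise asymptotics and exponential off-diagonal decay of $B_{l, p}^Y$ provided by Theorem \ref{thm_berg_perp_off_diagk} with a quantitative Neumann iteration then yields $M_{l, p}^{-1}$ with an analogous asymptotic expansion and uniform exponential off-diagonal decay along $Y$.

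Once $M_{l, p}^{-1}$ is controlled, the exponential bound (\ref{eq_ext_log_bk}) follows by plugging the kernel estimates of Theorem \ref{thm_ext_exp_dc}.a) into the integral representation of $P_{l, p}(x_1, x_2)$: each of the factors $\ext_{l, p}$ and $\ext_{l, p}^{*}$ contributes a Gaussian $\exp(-c \sqrt{p} \, \dist(x_i, Y))$ in the normal direction, while the middle factor $M_{l, p}^{-1}$, chained via the triangle inequality, contributes the remaining $\exp(-c \sqrt{p} \, \dist(x_1, x_2))$ term. For the local expansion (\ref{eq_ext_log_bk2}), in the tubular neighborhood Theorem \ref{thm_ext_as_expk} supplies an explicit factorized asymptotic for the Schwartz kernel of $\ext_{l, p}$ in terms of $B_{l, p}^Y$ on $Y$, the factor $\kappa_N^{-1/2}$, and the Gaussian $e^{- p \pi |Z_N|^2 / 2}$. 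Composing as in the formula for $P_{l, p}$ and collapsing the resulting double convolution with $B_{l, p}^Y$ via Theorem \ref{thm_berg_perp_off_diagk}, a direct calculation shows that $P_{l, p}((y_1, Z_1), (y_2, Z_2))$ is asymptotic to the desired factorized expression multiplied by $\pi^l p^l \sum_{|\beta| = l} (z_1 \overline{z}_2)^\beta / \beta!$; this is exactly the $r = l$ term in the Taylor expansion of the fiberwise factor $e^{p \pi z_1 \overline{z}_2}$ that appears in the ordinary Bergman-kernel asymptotic of $B_p^X$ around $Y$. Summing over $l = 0, \ldots, k - 1$ and subtracting from that asymptotic of $B_p^X$ leaves precisely the claimed tail.

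The principal obstacle is the second step, namely controlling $M_{l, p}^{-1}$ with uniform exponential off-diagonal decay: since $M_{l, p}$ is only asymptotically, and not exactly, a pointwise multiplication operator, its inversion requires iterating a parametrix with kernels that already have the correct off-diagonal behaviour, in the same spirit as the parametrix constructions used in the author's previous works \cite{FinOTAs}, \cite{FinToeplImm}. Once this inversion is carried out, the matching between the $k$ peak contributions subtracted off and the Taylor expansion of the fiberwise factor $e^{p \pi z_1 \overline{z}_2}$ in the model calculation of Section \ref{sect_model_calc} is purely algebraic.
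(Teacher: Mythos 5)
Your starting decomposition is the same as the paper's: the telescoping identity you write is exactly (\ref{eq_berg_k_logbk}), and your $P_{l, p}$ is by construction the orthogonal Bergman projector $B_{l, p}^{\perp}$. Where you diverge is that the paper simply reads off the two estimates from results it has already proved about $B_{l, p}^{\perp}$, namely the exponential bound Theorem \ref{thm_ext_exp_dc}.b) and the full asymptotic expansion Theorem \ref{thm_berg_perp_off_diagk} (together with Theorems \ref{thm_bk_off_diag}, \ref{thm_berg_off_diag} and Remark \ref{rem_bpper_refin}.d)), while you set out to re-derive those estimates from scratch via the Gram formula $P_{l, p} = \ext_{l, p} \, M_{l, p}^{-1} \, \ext_{l, p}^{*}$. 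That formula is correct, but it is not the representation the paper uses; the paper instead works with $B_p^{\perp, k} = \ext_{k, p} \circ (\res_Y \circ \nabla^k B_p^{X, k Y})$, cf.\ (\ref{eq_eq_resp_ap_lem_ap_inv}), which sidesteps the inversion of the Gram operator because $\ext_{k, p}$ is already a right inverse of $\res_{k, p}$.

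The step you flag as the ``principal obstacle'' --- controlling $M_{l, p}^{-1}$ with uniform exponential off-diagonal decay --- is precisely the content the paper encapsulates in the multiplicative defect: by (\ref{eq_comp_ext_oper}) one has $M_{l, p} = ((A_{l, p})^{*})^{-1}$, so $M_{l, p}^{-1} = (A_{l, p})^{*}$, and Theorem \ref{thm_mult_def_toepl_k} (together with Lemma \ref{lem_inverse_toepl} for its inverse) establishes that $p^{-(n-m+l)} A_{l, p}$ is a Toeplitz operator with weak exponential decay, with leading symbol $(2\pi)^l\, l!\, \kappa_N^{-1}$. This matches the normalization you computed heuristically from the Gaussian integral in the normal fiber. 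So the obstacle you identify has been absorbed into the paper's preparatory machinery; once you replace your ad hoc parametrix argument by a reference to that machinery (or, more economically, cite Theorems \ref{thm_ext_exp_dc}.b) and \ref{thm_berg_perp_off_diagk} directly, which makes the whole theorem a two-line consequence of (\ref{eq_berg_k_logbk})), your route closes.

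One concrete citation error: you twice invoke Theorem \ref{thm_berg_perp_off_diagk} for the pointwise asymptotics and off-diagonal decay of $B_{l, p}^Y$. That theorem concerns the orthogonal Bergman kernel $B_{l, p}^{\perp}$ on $X$, not the Bergman kernel on the submanifold $Y$. The expansion and decay of $B_{l, p}^Y$ come from Theorem \ref{thm_berg_off_diag} applied with $X := Y$ and auxiliary bundle $F := {\rm{Sym}}^l (N^{1,0})^{*} \otimes \iota^{*} F$ (together with Theorem \ref{thm_bk_off_diag} for the exponential decay).
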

	\begin{rem}
		The study of the asymptotic expansion of $B_p^{X, k Y}$ has recently received considerable attention for $k = [\epsilon p]$, where $\epsilon > 0$ is some small constant. See for example Ross-Thomas \cite{RossThomasObstr}, Pokorny-Singer \cite{PokSing}, Ross-Singer \cite{RossSinger}, Zelditch-Zhou \cite{ZeldZhouInter}, Coman-Marinescu \cite{ComMarPartBerg}.
		We believe that our methods can be applied in this asymptotic regime as well. 
	\end{rem}
	\par 
	Let us finally say few words about the tools we use in this article.
	The proofs of Theorems \ref{thm_high_term_ext}, \ref{thm_isom} rely on the exponential estimate for the Bergman kernel, cf. Ma-Marinescu \cite{MaMarOffDiag}, on the asymptotic expansion of the Bergman kernel due to Dai-Liu-Ma \cite{DaiLiuMa},  and on some technical results about the algebras of operators with Taylor-type expansion of the Schwartz kernel, which are inspired by the work of Ma-Marinescu \cite{MaMarToepl}, cf. \cite[\S 7]{MaHol}.
	\par 
	As an important intermediate result, we establish the asymptotic version of Ohsawa-Takegoshi extension theorem for holomorphic jets.
	For this, we follow our strategy from \cite{FinOTAs}, which treats holomorphic sections instead of jets, and which was itself inspired by Bismut-Lebeau \cite{BisLeb91} and Demailly \cite{Dem82}.
	Using this result, we prove the existence of a sequence of operators, which we call multiplicative defect, relating the extension operator and the adjoint of the restriction operator.
	We establish then that this sequence of operators forms a Toeplitz operator with weak exponential decay, which is a notion introduced in \cite{FinToeplImm} (as a refinement of the notion of Toeplitz operators in the sense of \cite[\S 7]{MaHol}). 
	The asymptotic criteria for those operators, established in \cite{FinToeplImm}, relying on the analogous result of Ma-Marinescu \cite[Theorem 4.9]{MaMar08a} for compact manifolds and Toeplitz operators in the sense of \cite[\S 7]{MaHol}, plays a foundational role in our approach.
	\par
	Differently from the approach from \cite{FinOTAs}, the main results of this article are obtained through the use of the multiplicative defect. We also use the induction on the order of jets, $k \in \nat$.
	\par 
	The general strategy for dealing with semi-classical limits here is inspired by Bismut \cite{BisDem} and Bismut-Vasserot \cite{BVas}.
	\par 
	To conclude, we mention that recently there has been a surge of interest in Ohsawa-Takegoshi extension theorem for holomorphic jets, see for example Hosono \cite{HosonoJet}, McNeal-Varolin \cite{McNealVarolin}, Rao-Zhang \cite{RaoZhang}. See also Cao-P{\u a}un \cite{CaoPaunOT} for an application of a version of extension theorem for jets to a conjecture of deformational invariance of plurigenera for Kähler families.
	\par 
	This article is organized as follows.
	In Section 2, we recall the bounded geometry assumptions, and the results about the convergence of the exponential integrals on such manifolds.
	In Section 3, we recall the kernel calculus which studies the composition rules of basic operators on the model vector space.
	We recall a notion of Toeplitz operators with weak exponential decay and an asymptotic criteria for those operators.
	In Section 4, we prove the existence of a sequence of operators, which we call multiplicative defect.
	Finally, in Section 5, by the use of the above results, we prove the exponential bounds for the extension operator, orthogonal Bergman kernel of order $k$, and study the asymptotic expansion of their Schwartz kernels.
	As a consequence of those studies, we establish the main results of this article.
	\par {\bf{Notations.}}
	For $\alpha = (\alpha_1, \ldots, \alpha_k) \in \nat^k$, $B = (B_1, \ldots, B_k) \in \comp^k$, we write by 
	\begin{equation}\label{mult_not}
		|\alpha| = \sum_{i = 1}^{k} \alpha_i, \quad \alpha! = \prod_{i = 1}^{k} \alpha_i!, \quad B^{\alpha} = \prod_{i = 1}^{k} B_i^{\alpha_i}.
	\end{equation}
	\par
	Let $(V, h^V)$ be a Hermitian (or Euclidean) vector space.
	We endow ${\rm{Sym}}^k V$ with a Hermitian metric induced by the induced metric on $V^{\otimes k}$ and the inclusion ${\rm{Sym}}^k V \to V^{\otimes k}$, defined as
	\begin{equation}\label{eq_sym_emb_tens}
		v_1 \odot \ldots \odot v_k \mapsto \frac{1}{k!} \sum v_{\sigma(1)} \otimes \ldots \otimes v_{\sigma(k)},
	\end{equation}
	where the sum runs over all permutations $\sigma$ on $k$ indices.
	Clearly, if $v_1, \cdots, v_l$ form an orthonormal basis of $V$, then $\sqrt{\frac{k!}{\alpha!}} \cdot v^{\odot \alpha}$, $\alpha \in \nat^k$, $|\alpha| = k$, forms an orthonormal basis of ${\rm{Sym}}^k V$.
	The inclusion (\ref{eq_sym_emb_tens}) gives also a natural isomorphism ${\rm{Sym}}^k V^* \to ({\rm{Sym}}^k V)^*$.
	For a fixed basis $v_1, \ldots, v_l$ of $V$ and the dual basis $u_1, \ldots, u_l$ of $V^*$, we have the following relation
	\begin{equation}\label{eq_pairing_sym}
		u^{\odot \beta} ( v^{\odot \alpha} ) = 
		\begin{cases}
			0, \text{ if } \alpha \neq \beta, \\
			\frac{\alpha!}{k!}, \text{ otherwise.}
		\end{cases}
	\end{equation}
	\par 
	For a $(V, h^V)$ be a Hermitian vector space with a complex structure $J$, we denote by $V^{1, 0}$, $V^{0, 1}$ the holomorphic and antiholomorphic components of $V \otimes \comp$.
	We endow $V^{1, 0}$ and $V^{0, 1}$ with the natural metrics, verifying $\|v \pm \imun J v \| = \sqrt{2} \| v \|$.
	In this way, when $V = \comp^l$ is endowed with the standard Hermitian product and with the usual linear (complex) coordinates $z_i$, we have $\| dz_i \| = \sqrt{2}$ and $\| \frac{\partial}{\partial z_i} \| = \frac{1}{\sqrt{2}}$.
	\par 
	We use notations $X, Y$ for complex manifolds and $M, H$ for real manifolds.
	The complex (resp. real) dimensions of $X, Y$ (resp. $M, H$) are denoted here by $n, m$.
	An operator $\iota$ always means an embedding $\iota : Y \to X$ (resp. $\iota : H \to M$).
	We denote by $\res_Y$ (resp. $\res_H$) the restriction operator from $X$ to $Y$ (resp. $M$ to $H$). 
	\par 
	For a Riemannian manifold $(M, g^{TM})$, we denote the Levi-Civita connection by $\nabla^{TM}$, by $R^{TM}$ the curvature of it, and by $dv_{g^{TM}}$ the Riemannian volume form.
	For a closed subset $W \subset M$, $r \geq 0$, let $B_{W}^{M}(r)$ be the ball of radius $r$ around $W$. 
	For a Hermitian vector bundle $(E, h^E)$, note
	$
		B_{r}(E) := \{ Z \in E : |Z|_{h^E} < r \}
	$.
	\par 
	For a fixed volume form $dv_M$ on $M$, we denote by $L^2(dv_M, h^E)$ the space of $L^2$-sections of $E$ with respect to $dv_M$ and $h^E$. When $dv_M = dv_{g^{TM}}$, we also use the notation $L^2(g^{TM}, h^E)$. When there is no confusion about the data, we also use the simplified notation $L^2(M, E)$ or $L^2(M)$.
	\par 
	For $n \in \nat^*$, we denote by $dv_{\comp^n}$ the standard volume form on $\comp^n$.
	We view $\comp^m$ (resp. $\real^m$) embedded in $\comp^n$ (resp. $\real^n$) by the first $m$ coordinates.
	For $Z \in \real^k$, we denote by $Z_l$, $l = 1, \ldots, k$, the coordinates of $Z$.
	If $Z \in \real^{2n}$, we denote by $z_i$, $i = 1, \ldots, n$, the induced complex coordinates $z_i = Z_{2i - 1} + \imun Z_{2i}$.
	We frequently use the decomposition $Z = (Z_Y, Z_N)$, where $Z_Y = (Z_1, \ldots, Z_{2m})$ and $Z_N = (Z_{2m + 1}, \ldots, Z_{2n})$. 
	For a fixed frame $(e_1, \ldots, e_{2n})$ in $T_{x}X$, $x \in X$, (resp. $y \in Y$) we implicitly identify $Z$ (resp. $Z_Y$, $Z_N$) to an element in $T_xX$ (resp. $T_yY$, $N_y$) by
	\begin{equation}\label{eq_Z_ident}
		Z = \sum_{i = 1}^{2n} Z_i e_i, \quad Z_Y = \sum_{i = 1}^{2m} Z_i e_i, \quad Z_N = \sum_{i = 2m + 1}^{2n} Z_i e_i.
	\end{equation}
	If the frame $e_i$ satisfies the condition 		
	\begin{equation}\label{eq_cond_jinv}
		J e_{2i - 1} = e_{2i},
	\end{equation}
	we denote $\frac{\partial}{\partial z_i} := \frac{1}{2} (e_{2i-1} - \imun e_{2i})$, $\frac{\partial}{\partial \overline{z}_i} := \frac{1}{2} (e_{2i-1} + \imun e_{2i})$, and identify $z, \overline{z}$ to vectors in $T_xX \otimes_{\real} \comp$ as follows
	\begin{equation}\label{eq_z_ovz_id}
		z = \sum_{i = 1}^{n} z_i \cdot \frac{\partial}{\partial z_i}, \qquad
		\qquad
		\overline{z} = \sum_{i = 1}^{n} \overline{z}_i \cdot \frac{\partial}{\partial \overline{z}_i}.
	\end{equation}
	Clearly, in this identification, $Z = z + \overline{z}$.
	We define $z_Y, \overline{z}_Y \in T_yY \otimes_{\real} \comp$, $z_N, \overline{z}_N \in N_y \otimes_{\real} \comp$ in a similar way.

\section{Second fundamental form and bounded geometry}\label{sect_bound_geom}
	The main goal of this section is to recall the basic facts about the geometry of manifolds of bounded geometry and the second fundamental form.
	More precisely, in Section \ref{sect_sff}, we recall the definition and various properties satisfied by the second fundamental form.
	We also recall the definition of manifolds (resp. pairs of manifolds, vector bundles) of bounded geometry.
	In Section \ref{sect_bnd_geom_cf}, we recall the results about the convergence of exponential integrals on manifolds of bounded geometry.
	
\subsection{The second fundamental form and bounded geometry assumption}\label{sect_sff}
	Here we recall some basic facts about the second fundamental form and its relation with bounded geometry assumptions.	
	Let $H$ be an embedded submanifold of a Riemannian manifold $(M, g^{TM})$, $g^{TH} := g^{TM}|_H$.
	We identify the normal bundle $N^{M|H}$ of $H$ in $M$ to an orthogonal complement of $TH$ in $TM$ as 
	\begin{equation}\label{eq_tx_rest}
		TM|_H \to TH \oplus N^{M|H}.
	\end{equation}
	We denote by $g^{N^{M|H}}$ the metric on $N^{M|H}$ induced by $g^{TM}$. 
	We denote by $P_N^{M|H} : TM|_H \to N^{M|H}$,  $P_H^{M|H} : TM|_H \to TH$, the projections induced by (\ref{eq_tx_rest}).
	Clearly, $\nabla^{N^{M|H}} := P_N^{M|H} \nabla^{TM}|_H$ defines a connection on $N^{M|H}$.
	Recall that the definition of the \textit{second fundamental form} $A \in \ccal^{\infty}(H, T^*H \otimes \enmr{TM|_H})$ is given by
	\begin{equation}\label{eq_sec_fund_f}
		A := \nabla^{TM}|_{H} - \nabla^{TH} \oplus \nabla^{N^{M|H}}.
	\end{equation}
	Recall that the \textit{mean curvature} $\nu^{M|H} \in \ccal^{\infty}(H, N^{M|H})$ of $\iota$ is defined as follows
	\begin{equation}\label{eq_mn_curv_d}
		\nu^{M|H} := \frac{1}{m} \sum_{i=1}^{m} A(e_i)e_i,
	\end{equation}
	where the sum runs over an orthonormal basis of $(TH, g^{TH})$.
	\begin{prop}[{cf. \cite[Proposition 2.3]{FinToeplImm}}]\label{prop_prop_sfndform}
		The second fundamental form satisfies the following properties.
		\begin{enumerate}
			\item It takes values in skew-symmetric endomorphisms of $TM|_H$, interchanging $TH$ and $N^{M|H}$.
			\item For any $U, V \in TH$, we have $A(U) V = A(V) U$.
		\end{enumerate}
		Assume, moreover, that $(M, g^{TM})$ is Kähler. Then the following holds.
		\begin{enumerate}[resume]
			\item $A$ commutes with the action of the complex structure.
			\item For any $U \in TH$, $V \in TM$, $U = u + \overline{u}$, $V = v + \overline{v}$, $u, v \in T^{1, 0}M$, we have
			\begin{equation}
			\begin{aligned}
				&
				A(U)v = A(\overline{u}) v, 
				&&
				A(U)\overline{v} = A(u) \overline{v}, 
				&&&
				\text{if } V \in N^{M|H},
				\\
				&
				A(U)v = A(u) v, 
				&&
				A(U)\overline{v} = A(\overline{u}) \overline{v},
				&&&
				\text{if } V \in TH.
			\end{aligned}
			\end{equation}
			\item We have $\nu^{M|H} = 0$.
		\end{enumerate}
	\end{prop}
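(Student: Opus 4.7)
The first two assertions require only the definition \eqref{eq_sec_fund_f} together with metric compatibility and torsion-freeness of the Levi-Civita connections involved. For (1), both $\nabla^{TM}|_H$ and $\nabla^{TH} \oplus \nabla^{N^{M|H}}$ are compatible with $g^{TM}|_H = g^{TH} \oplus g^{N^{M|H}}$, so their difference $A(U)$ is skew-symmetric as an endomorphism of $TM|_H$. That $A(U)$ interchanges $TH$ and $N^{M|H}$ reads off from the splitting: for $V \in TH$, $(\nabla^{TH} \oplus \nabla^{N^{M|H}})_U V$ lies in $TH$ while $\nabla^{TM}_U V \in TM|_H$, so $A(U)V$ is the normal component of $\nabla^{TM}_U V$; the argument for $V \in N^{M|H}$ is symmetric. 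For (2), I extend $U, V \in TH$ to local vector fields tangent to $H$ and invoke torsion-freeness of both $\nabla^{TM}$ and $\nabla^{TH}$: $\nabla^{TM}_U V - \nabla^{TM}_V U = [U, V] = \nabla^{TH}_U V - \nabla^{TH}_V U$, which rearranges to $A(U) V = A(V) U$.

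In the Kähler case, (3) is immediate once I note that all three connections in \eqref{eq_sec_fund_f} preserve $J$: $\nabla^{TM} J = 0$ by hypothesis; $\nabla^{TH} J = 0$ because a complex submanifold of a Kähler manifold is itself Kähler; and $\nabla^{N^{M|H}} J = 0$ follows from $J$-invariance of the splitting and the two previous identities. Hence $A(U)$ commutes with $J$ as an endomorphism of $TM|_H$ for every $U \in TH$.

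For (4), the two key auxiliary identities are $A(JU)V = JA(U)V$ when $V \in TH$, obtained by combining the symmetry in (2) with $A(U) J = J A(U)$ from (3), and $A(JU)V = -JA(U)V$ when $V \in N^{M|H}$, obtained from the previous identity together with the skew-symmetry in (1) via $\langle A(JU)V, W\rangle = -\langle V, A(JU)W\rangle$ for $W \in TH$ and the fact that $J$ is anti-self-adjoint with respect to $g^{TM}$. With these at hand, I extend $A$ $\comp$-linearly and expand $A(u) v$ using $u = \frac{1}{2}(U - \imun J U)$ and $v = \frac{1}{2}(V - \imun J V)$; the identities in (4) drop out after cancellation of the $\pm \imun J$ terms, with a sign flip appearing exactly when $V \in N^{M|H}$. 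Finally, (5) is a quick corollary: since $(U, V) \mapsto A(U) V$ is symmetric on $TH \times TH$ by (2) and $A(U)$ commutes with $J$ by (3), one checks $A(JU) JU = -A(U) U$, so grouping a real orthonormal frame of $TH$ into $J$-pairs $(e_j, J e_j)$ and summing gives $\sum_i A(e_i) e_i = 0$, i.e.\ $\nu^{M|H} = 0$.

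The only place that needs genuine care is the sign bookkeeping in (4): the action of $J$ on the source argument of $A$ commutes with $J$ on the target when the target vector lies in $TH$ but anti-commutes when it lies in $N^{M|H}$, and failing to track this distinction produces the wrong identities.
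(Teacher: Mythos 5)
Your proof is correct. Note that the paper itself does not supply a proof of this proposition — it is recalled verbatim from \cite[Proposition 2.3]{FinToeplImm} — so there is no argument in the present source to compare against; I can only assess your proposal on its own terms, and it holds up. Parts (1) and (2) are the standard consequences of metric compatibility and torsion-freeness, as you say. For (3) the three connections in \eqref{eq_sec_fund_f} indeed all commute with $J$: $\nabla^{TM}$ because $(M, g^{TM})$ is Kähler, $\nabla^{TH}$ because $H$ is a complex submanifold (hence Kähler with the induced metric), and $\nabla^{N^{M|H}} = P_N^{M|H} \nabla^{TM}|_H$ because $P_N^{M|H}$ commutes with $J$ ($J$ preserves both $TH$ and $N^{M|H}$). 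The sign analysis in (4) is the place where a careless argument would fail, and you handle it correctly: for $V \in TH$ one gets $A(JU)V = A(V)(JU) = J A(V) U = J A(U) V$ using (2) then (3); for $V \in N^{M|H}$ the skew-symmetry from (1) against test vectors $W \in TH$ together with $\langle V, JX\rangle = -\langle JV, X\rangle$ flips the sign, giving $A(JU)V = -J A(U) V$. Expanding $u = \tfrac{1}{2}(U - \imun J U)$, $v = \tfrac{1}{2}(V - \imun J V)$ and collecting terms then produces exactly the four stated identities, e.g.\ $A(u) v = 0$ and $A(\overline u) v = A(U) v$ when $V \in N^{M|H}$. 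Finally, (5) follows from the $TH$-case identity applied with $V = JU$: $A(Je_j)(Je_j) = J A(e_j)(Je_j) = J^2 A(e_j) e_j = -A(e_j) e_j$, so pairing the orthonormal frame into $J$-conjugate couples makes the sum in \eqref{eq_mn_curv_d} vanish.
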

	\par 
	Let us now recall the definitions of manifolds (resp. pairs of manifolds, vector bundles) of bounded geometry. 
	For more detailed overview of this part, refer to \cite{EichBoundG}, \cite{SchBound}, \cite{GrosSchnBound}, cf. also \cite{FinOTAs}.
	Now, for a Hermitian vector bundle $(E, h^E)$ with a fixed connection $\nabla^E$ over $M$, we denote 
	\begin{equation}
		\ccal^{\infty}_{b}(M, E) 
		:= 
		\Big\{
			f \in \ccal^{\infty}(M, E) : \text{ for any } k \in \nat, \text{ there is } C > 0, \text{ so that } |\nabla^k f| \leq C
		\Big\},
	\end{equation}
	where $\nabla$ is the connection induced by $\nabla^E$ and the Levi-Civita connection on $TM$, and $| \cdot |$ is the norm induced by the metrics $g^{TM}$, $h^E$.
	When $M$ is complex and $E$ is holomorphic, we implicitly take $\nabla^E$ to be the associated Chern connection.
	\begin{defn}\label{defn_bnd_g_man}
		We say that a Riemannian manifold $(M, g^{TM})$ is of bounded geometry if the following two conditions are satisfied.
		\\ \hspace*{0.3cm} \textit{(i)} The injectivity radius of $(M, g^{TM})$ is bounded below by a positive constant $r_M$. 
		\\ \hspace*{0.3cm} \textit{(ii)} For the Riemann curvature tensor $R^{TM}$ of $M$, we have $R^{TM} \in \ccal^{\infty}_b(M, \Lambda^2 T^*M \otimes \enmr{TM})$.
	\end{defn}
	\begin{rem}\label{rem_hopf_rin}
		By Hopf-Rinow theorem, the condition \textit{(i)} implies that $(M, g^{TM})$ is complete.	
	\end{rem}
	\par 
	\begin{defn}\label{defn_bnd_subm}
		We say that the triple $(M, H, g^{TM})$ of a Riemannian manifold $(M, g^{TM})$ and a submanifold $H$ is of bounded geometry if the following conditions are fulfilled.
		\\ \hspace*{0.3cm} \textit{(i)} The manifold $(M, g^{TM})$ is of bounded geometry.
		\\ \hspace*{0.3cm} \textit{(ii)} The injectivity radius of $(H, g^{TH})$ is bounded below by a positive constant $r_H$. 
		\\ \hspace*{0.3cm} \textit{(iii)} There is a collar around $H$ (a tubular neighborhood of fixed radius), i.e. there is $r_{\perp} > 0$ such that for any $x, y \in H$, the normal geodesic balls $B^{\perp}_{r_{\perp}}(x), B^{\perp}_{r_{\perp}}(y)$, obtained by the application of the exponential mapping to vectors, orthogonal to $H$, of norm bounded by $r_{\perp}$, are disjoint.
		\\ \hspace*{0.3cm} \textit{(iv)} 
		The second fundamental form, $A$, satisfies $A \in \ccal^{\infty}_b(H, T^*M|_H \otimes \enmr{TM|_H})$.
	\end{defn}	
	We will now introduce a local coordinate system in $M$, which is particularly well-adapted to the study of triples of bounded geometry.
	We fix a point $y_0 \in H$ and an orthonormal frame $(e_1, \ldots, e_m)$ (resp. $(e_{m+1}, \ldots, e_n)$) in $(T_{y_0}H, g_{y_0}^{TH})$ (resp. in $(N_{y_0}^{M|H}, g_{y_0}^{N^{M|H}})$). 
	For $Z = (Z_H, Z_N)$, $Z_H \in \real^{m}$, $Z_N \in \real^{n - m}$, $Z_H = (Z_1, \ldots, Z_m)$, $Z_N = (Z_{m + 1}, \ldots, Z_{n})$, $|Z_H| \leq r_H$, $|Z_N| \leq r_{\perp}$, we define a coordinate system $\psi_{y_0}^{M|H} : B_0^{\real^{m}}(r_H) \times B_0^{\real^{n - m}}(r_{\perp}) \to M$ by 
	\begin{equation}\label{eq_defn_fermi}
		\psi_{y_0}^{M|H}(Z_H, Z_N) := \exp_{\exp_{y_0}^{H}(Z_H)}^{M}(Z_N(Z_H)),
	\end{equation}
	where $Z_N(Z_H)$ is the parallel transport of $Z_N \in N_{y_0}^{M|H}$ along $\exp_{y_0}^{H}(t Z_H)$, $t = [0, 1]$, with respect to the connection $\nabla^{N^{M|H}}$ on $N^{M|H}$.
	The coordinates $\psi_{y_0}^{M|H}$ are called the \textit{Fermi coordinates} at $y_0$.
	\par 
	In the special case when $Y = X$ and $x_0 := y_0$, those coordinates correspond to the exponential coordinates $\phi_{x_0}^X : \real^{2n} \to X$, $x_0 \in X$, defined as follows
	\begin{equation}\label{eq_phi_defn}
		\phi_{x_0}^{X}(Z) := \exp^{X}_{x_0}(Z).
	\end{equation}	
	The importance of Fermi coordinates from the following proposition.
	\begin{prop}[{\cite[Lemma 3.9]{SchBound}, \cite[Theorem 4.9]{GrosSchnBound}}]\label{prop_bndg_tripl}
		For any triple $(M, H, g^{TM})$ of bounded geometry, for any $r_0 > 0$, there is $D_k > 0$, such that for any $y_0 \in H$, $l = 0, \ldots, k$, we have
		\begin{equation}\label{eq_bndtr_metr_tens}
			\| g_{ij} \|_{\ccal^{l}(B_0^{\real^n}(r_0))} \leq D_k, \quad \| g^{ij} \|_{\ccal^{l}(B_0^{\real^n}(r_0))} \leq D_k.
		\end{equation}
		where $g_{ij}$, $i, j = 1, \ldots, n$, are the coefficients of the metric tensor $\psi_{y_0}^* g^{TM}$, and $g^{ij}$ are the coefficients of the inverse matrix.
	\end{prop}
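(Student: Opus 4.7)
The plan is to reduce the statement to uniform bounds on Jacobi fields, which themselves satisfy an ODE whose coefficients involve only the Riemann tensor of $M$ and the second fundamental form $A$ of $H$ in $M$ — both of which are uniformly bounded in all $\ccal^k$-norms by the bounded geometry hypothesis. I would work in two stages: first prove the analogous bounds for $H$ alone in geodesic normal coordinates centered at $y_0$, then extend by the Fermi construction. For the first stage, the intrinsic Gauss equation expresses $R^{TH}$ algebraically in terms of $R^{TM}|_H$ and $A$, together with $\ccal^k$-bounds on these by Proposition \ref{prop_prop_sfndform} and Definition \ref{defn_bnd_subm}, so $(H, g^{TH})$ is itself of bounded geometry; then the standard fact for Riemannian manifolds of bounded geometry (Definition \ref{defn_bnd_g_man}) gives the desired uniform $\ccal^k$-bounds on $g_{ij}$ and $g^{ij}$ on $B_0^{\real^m}(r_0)$ for $r_0 < r_H$.

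For the second stage, I would write $\psi_{y_0}^{M|H}(Z_H, Z_N)$ as the composition of the exponential map on $H$ in the direction $Z_H$, followed by the normal exponential in the parallel-translated direction $Z_N(Z_H)$. The pullback metric along this composition decomposes as a block containing the induced metric on $H$, the fiberwise Euclidean metric on $N^{M|H}$, plus a coupling term that depends on how normal vectors fail to remain normal under parallel transport plus geodesic flow. The tangent vectors $\partial/\partial Z_H^i$ and $\partial/\partial Z_N^j$ along the normal geodesic $t \mapsto \exp_{\exp_{y_0}^H(Z_H)}^M(t Z_N)$ are exactly Jacobi fields for $g^{TM}$ with initial conditions determined by $\nabla^{TH}$, $\nabla^{N^{M|H}}$ and $A$; standard variation-of-arc-length manipulations give a linear ODE in $t$, with coefficient matrix built from $R^{TM}$ pulled back along the geodesic and from $A$. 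Uniform $\ccal^k$-bounds on these data, together with the collar condition $|Z_N| < r_\perp$ ensuring the coordinates remain a diffeomorphism, let me iterate the integral form of the Jacobi equation and control derivatives in $(Z_H, Z_N)$ up to any fixed order on the compact set $B_0^{\real^m}(r_0) \times B_0^{\real^{n-m}}(r_0)$ for $r_0 < \min(r_H, r_\perp)$.

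The bounds on $g^{ij}$ would follow from the bounds on $g_{ij}$ once I know $\det(g_{ij})$ is bounded below uniformly; this is a separate point, obtained from the Jacobian of $\psi_{y_0}^{M|H}$ having uniformly bounded inverse thanks to the collar condition and the uniform bound on $|A|$ (which keeps the Jacobi fields away from their first conjugate value on the prescribed scale $r_0$).

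The main obstacle in this plan is bookkeeping: controlling the mixed $Z_H$/$Z_N$ derivatives cleanly. The cleanest way I see is to differentiate the Jacobi ODE in its parameters and treat the higher derivatives by induction on the order, at each step using the inductive hypothesis together with the $\ccal^k$-bounds on $R^{TM}$ and $A$ furnished by bounded geometry. The uniformity over $y_0 \in H$ is automatic because none of the constants entering the ODE or its iterations depend on $y_0$ — only on the universal geometric bounds in Definitions \ref{defn_bnd_g_man} and \ref{defn_bnd_subm}.
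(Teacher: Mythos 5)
The paper does not prove Proposition \ref{prop_bndg_tripl}; it simply cites \cite[Lemma 3.9]{SchBound} and \cite[Theorem 4.9]{GrosSchnBound}, so there is no ``paper's proof'' against which to compare in detail. Your sketch is, however, a faithful reconstruction of the argument one finds in those references: reduce to $H$ via the Gauss equation, then control the coordinate vector fields of the Fermi chart as parameter-dependent Jacobi fields along the normal geodesics, with initial data governed by $\nabla^{TH}$, $\nabla^{N^{M|H}}$ and $A$, and iterate parametric differentiation of the Jacobi ODE to get $\ccal^k$-bounds. The uniformity over $y_0$ is, as you say, automatic.

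One small imprecision worth flagging: in the last step you attribute the lower bound on $\det(g_{ij})$ (equivalently, the nondegeneracy of the Jacobian of $\psi_{y_0}^{M|H}$) to ``the collar condition and the uniform bound on $|A|$.'' The collar condition in Definition \ref{defn_bnd_subm}(iii) only guarantees injectivity of the normal exponential map on the radius-$r_\perp$ disk bundle; it does not by itself exclude focal points, which is what you actually need. The correct source of the lower bound is a Rauch-type comparison: the uniform two-sided bound on $R^{TM}$ (from $M$ being of bounded geometry) plus the uniform bound on $A$ (controlling the initial conditions of the Jacobi fields) keeps the Jacobi fields uniformly bounded away from zero on a scale depending only on these constants. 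The collar condition then ensures the chart is globally injective on that scale. Once this is rephrased, the argument is complete. Note also that the stated ``for any $r_0 > 0$'' must implicitly be read as ``for any $r_0$ for which $B_0^{\real^n}(r_0)$ lies in the domain of the Fermi chart''; your explicit restriction $r_0 < \min(r_H, r_\perp)$ is the right reading.
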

	\begin{rem}\label{rem_bndg_tripl}
		a) In particular, for a triple of bounded geometry $(M, H, g^{TM})$, the Riemannian manifold $(H, g^{TH})$ has bounded geometry.
		\par 
		b) Clearly, this result along with the assumption (\ref{eq_vol_comp_unif}) imply that in the notations of (\ref{eq_kappan}), we have $\kappa_N \in \ccal^{\infty}_{b}(U)$.
		\par 
		c)
		By taking $Y = \{p\}$, $p \in M$, we see that the analogue of Proposition \ref{prop_bndg_tripl} holds for manifolds of bounded geometry and exponential coordinates $\phi_{x_0}^{M}$, introduced in (\ref{eq_phi_defn}), accordingly to \cite[Theorem A]{EichBoundG}.
	\end{rem}
	\begin{defn}\label{defn_vb_bg}
		Let $(E, \nabla^E, h^{E})$ be a Hermitian vector bundle with a fixed Hermitian connection over a manifold $(M, g^{TM})$ of bounded geometry.
		We say that $(E, \nabla^E, h^{E})$ is of bounded geometry if $R^E \in \ccal^{\infty}_b(M, \Lambda^2 T^*M \otimes \enmr{E})$.
		\par 
		If $(E, h^{E})$ is a Hermitian vector bundle over a \textit{complex manifold}, we say that it is of bounded geometry if $(E, \nabla^E, h^{E})$ is of bounded geometry for the Chern connection $\nabla^E$ on $(E, h^{E})$.
	\end{defn}
	\par 
	Finally, for further use, let us introduce the notations for the coordinates of local sections of $(E, \nabla^E)$ in special frames.
	Typically, later on $E$ would come from a Hermitian vector bundle $(E, h^E)$ and $\nabla^E$ will be the Chern connection.
	\par 
	Let us fix $y_0 \in H$ and an orthonormal frame $f_1, \ldots, f_r \in E_{x_0}$.
	Define the local orthonormal frame $\tilde{f}_1, \ldots, \tilde{f}_r$ of $E$ around $y_0$ by the parallel transport of $f_1, \ldots, f_r$ with respect to $\nabla^E$, done first along the path $\psi(t Z_H, 0)$, $t \in [0, 1]$, and then along the path $\psi(Z_H, tZ_N)$, $t \in [0, 1]$, $Z_H \in \real^{m}$, $Z_N \in \real^{n-m}$, $|Z_H| < r_H$, $|Z_N| < r_{\perp}$.
	When $Y = X$, we denote this frame by $\tilde{f}'_1, \ldots, \tilde{f}'_r$.
	\par \textbf{Notation.} For $g \in \ccal^{\infty}(M, F)$, by an abuse of notation, we write $g(\phi_{y_0}^M(Z)) \in \real^r$, $Z \in \real^{n}$, $|Z| \leq R$, for coordinates of $g$ in the frame $(\tilde{f}'_1{}^{M}, \ldots, \tilde{f}'_r{}^{M})$. 
	We identify $g(\phi_{y_0}^M(Z))$ with an element in $F_{y_0}$ using the frame $(f_1, \ldots, f_r)$.
	Similarly, we denote by $g(\psi_{y_0}(Z)) \in \real^r$ the coordinates in the frame $(\tilde{f}_1, \ldots, \tilde{f}_r)$ and identify them with an element from $F_{y_0}$.
	Similar notations are used for sections of $F^*$, $F \otimes L^p$, $(F \otimes L^p)^*$, $F \boxtimes F^*$, etc.
	\par 
	As it was explained in \cite{EichBoundG}, \cite{GrosSchnBound}, cf. \cite[Propositions 2.14, 2.15]{FinOTAs}, if $(E,  \nabla^E, h^E)$ has bounded geometry over $(M, H, g^{TM})$ and $f \in \ccal^{\infty}_b(M, E)$, then for any $y_0 \in H$ (resp. $x_0 \in M$), the function $Z \mapsto g(\psi_{y_0}^{M|H}(Z))$ (resp. $Z \mapsto g(\phi_{x_0}^M(Z))$) is in $\ccal^{\infty}_b(B_0^{\real^n}(r), F_{y_0})$ (resp. $\ccal^{\infty}_b(B_0^{\real^n}(r), F_{x_0})$) for some constant $r > 0$, independent of $x_0$ and $y_0$.
	
\subsection{Exponential bounds over manifolds of bounded geometry}\label{sect_bnd_geom_cf}
	The main goal of this section is to recall the results about the convergence of exponential integrals for triples of bounded geometry from \cite{FinOTAs}, \cite{FinToeplImm}.
	More precisely, fix a triple $(M, H, g^{TM})$ of bounded geometry.
	Let us recall the exponential bound from \cite[Corollary 3.3]{FinOTAs}, established using Bishop-Gromov inequality.
	\begin{prop}\label{prop_exp_bound_int}
		There are $c, C' > 0$, which depend only on $n$, $m$, $r_M$, $r_N$, $r_{\perp}$ and $\sup$-norm on $R^{TM}$, $R^{TH}$, $A$, such that for any $y_0 \in H$, $l \geq c$, we have
		\begin{equation}
			\int_{H} \exp \big(-l \dist_M(y_0, y) \big) dv_{g^{TH}}(y) < \frac{C'}{l^{m}}.
		\end{equation}
	\end{prop}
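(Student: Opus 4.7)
My plan is a layer-cake decomposition combined with two regimes of volume comparison. Setting $f(s) := \mathrm{vol}_{g^{TH}}(H \cap B_{y_0}^M(s))$, I would first rewrite
\begin{equation*}
\int_H \exp(-l \dist_M(y_0, y)) dv_{g^{TH}}(y) = l \int_0^{\infty} e^{-ls} f(s) ds,
\end{equation*}
reducing the problem to bounding $f(s)$ with constants depending only on the listed bounded-geometry data. I would then split the range at a threshold $s_0 \leq r_{\perp}/2$.

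For $s \leq s_0$, I would establish the local Lipschitz estimate $\dist_{g^{TH}}(y_0, y) \leq L \cdot \dist_M(y_0, y)$ for $y \in H$ with $\dist_M(y_0, y) < s_0$. The idea is that a minimizing $M$-geodesic from $y_0$ to $y$ stays inside the $r_{\perp}/2$-collar of $H$, so it can be pushed to $H$ by the nearest-point projection $\pi : B_H^M(r_{\perp}) \to H$. Using the Fermi coordinates (\ref{eq_defn_fermi}) and the uniform bound on $A$ from Definition \ref{defn_bnd_subm}.\textit{iv}), the differential $d\pi$ is bounded on $B_H^M(r_{\perp}/2)$ by a constant $L$ depending only on $\|A\|_\infty$ and $r_{\perp}$. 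Consequently $H \cap B_{y_0}^M(s) \subset B_{y_0}^H(Ls)$, and Bishop-Gromov applied to $(H, g^{TH})$, which is of bounded geometry by Remark \ref{rem_bndg_tripl}.a), gives $f(s) \leq C_1 s^m$.

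For $s > s_0$, I would switch to an ambient volume comparison. The collar property of Definition \ref{defn_bnd_subm}.\textit{iii}), combined with a co-area argument whose Jacobian along the normal exponential map is controlled by $\|A\|_\infty$, shows that the $r_{\perp}/2$-tubular neighborhood in $M$ of $H \cap B_{y_0}^M(s)$ has $M$-volume at least $c_0 (r_{\perp}/2)^{n-m} f(s)$. Since this tube is contained in $B_{y_0}^M(s + r_{\perp}/2)$, Bishop-Gromov applied to $(M, g^{TM})$ gives $f(s) \leq C_2 e^{c' s}$ with $c'$ depending only on $n$ and $\|R^{TM}\|_\infty$. Inserting both estimates,
\begin{equation*}
\int_0^{s_0} l e^{-ls} C_1 s^m ds + \int_{s_0}^{\infty} l e^{-ls} C_2 e^{c' s} ds \leq \frac{C_1 \cdot m!}{l^m} + \frac{C_2 \, l}{l - c'} e^{-(l - c') s_0},
\end{equation*}
so taking $c := 2 c'$ the second term is exponentially small in $l$ for $l \geq c$ and is dominated by the first, producing the claimed bound $C'/l^m$.

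The main technical obstacle I expect is the Lipschitz estimate $\dist_{g^{TH}} \leq L \cdot \dist_M$ inside the collar: it rests on a quantitative analysis of the nearest-point projection in Fermi coordinates using the uniform bound on the second fundamental form. Everything else reduces to standard Bishop-Gromov comparisons applied to the two bounded-geometry spaces $(M, g^{TM})$ and $(H, g^{TH})$.
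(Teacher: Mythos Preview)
Your proposal is correct. The paper does not give its own proof here; it simply recalls the result from \cite[Corollary 3.3]{FinOTAs} with the remark that it was ``established using Bishop-Gromov inequality,'' and your layer-cake argument with the two volume regimes does exactly that. One small comment: the Lipschitz bound on the nearest-point projection that you flag as the main obstacle is in fact immediate from Proposition~\ref{prop_bndg_tripl}, since in Fermi coordinates the projection is $(Z_H, Z_N) \mapsto (Z_H, 0)$ and the metric tensor is uniformly comparable to the Euclidean one on $B_0^{\real^{2m}}(r_H) \times B_0^{\real^{2(n-m)}}(r_\perp/2)$.
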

	
	Now, in addition to the triple $(M, H, g^{TM})$ of bounded geometry, we consider a Riemannian manifold $(K, g^{TK})$ with an embedding $\iota_1 : M \to K$, such that $\iota_1^* g^{TK} = g^{TM}$.
	We assume that the triple $(K, M, g^{TK})$ is of bounded geometry.
	Let $(E, h^E)$ be a Hermitian vector bundle over $M$ and $D: L^2(H, \iota^* E) \to L^2(M, E)$ be a fixed linear operator.
	Assume that there is $c > 0$ as in Proposition \ref{prop_exp_bound_int} and $C > 0$, such that for some $l \geq c$ and any $y \in H$, $x \in M$, the Schwartz kernel of $D$, evaluated with respect to $dv_{g^{TH}}$, satisfies the bound
	\begin{equation}\label{eq_d_bnd2}
		\big| D(x, y) \big|
		\leq 
		C
		l^m
		\exp \big(-l \dist_K(x, y) \big).
	\end{equation}
	By essentially relying on Heintze-Karcher estimate \cite[Corollary 3.3.1]{HeintzKarch} and Young's inequality for integral operators, we obtained in \cite[Proposition 2.12]{FinToeplImm} the following bound.
	\begin{prop}\label{prop_norm_bnd_distk_expbnd}
		There is $C' > 0$, which depends on the same data as $C'$ from Proposition \ref{prop_exp_bound_int} and the analogous data on $(K, M, g^{TK})$, such that 
		\begin{equation}
			\| D \| \leq \frac{C' C}{l^{\frac{n - m}{2}}}.
		\end{equation}
	\end{prop}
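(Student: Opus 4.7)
I would prove this by the classical Schur test (Young's inequality for $L^2\to L^2$ integral operators), reducing to two separate exponential-integral bounds. Cauchy-Schwarz applied to
\begin{equation*}
	(Df)(x) = \int_H D(x, y) f(y) \, dv_{g^{TH}}(y)
\end{equation*}
gives
\begin{equation*}
	\|D\|^2 \leq \Big(\sup_{x \in M} \int_H |D(x, y)| \, dv_{g^{TH}}(y)\Big) \cdot \Big(\sup_{y \in H} \int_M |D(x, y)| \, dv_{g^{TM}}(x)\Big) =: A \cdot B,
\end{equation*}
so it suffices to show $A \leq O(C)$ and $B \leq O(C\, l^{m-n})$, whose product yields the advertised bound $\|D\| \leq C'C / l^{(n-m)/2}$.

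For $A$, I would first observe that the triple $(K, H, g^{TK})$ inherits bounded geometry from the two given triples (the second fundamental form of $H$ in $K$ decomposes into the second fundamental forms of $H$ in $M$ and $M$ in $K$, restricted to $H$, and both are uniformly bounded). Proposition \ref{prop_exp_bound_int} applied to $(K, H, g^{TK})$ then gives $\int_H \exp(-l \dist_K(y_0, y)) dv_{g^{TH}}(y) \leq C'/l^m$ uniformly in $y_0 \in H$. To upgrade this to arbitrary $x \in M$, I would pick $y_0 \in H$ nearest to $x$ in $K$-distance and split the integration according to whether $\dist_K(y_0, y)$ exceeds $2\dist_K(x, y_0)$: on the far region, $\dist_K(x, y) \geq \dist_K(y_0, y)/2$ by the reverse triangle inequality, reducing to the previous bound with $l$ replaced by $l/2$; on the near region, the polynomial volume growth of $H$-balls (Bishop–Gromov, valid in bounded geometry) is absorbed by the factor $\exp(-l \dist_K(x, y_0))$. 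Multiplying by the prefactor $Cl^m$ from the hypothesis yields $A = O(C)$.

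For $B$, the integration is over the ambient $M$ around the fixed point $y \in H \subset M$, so no codimension gain is expected. The natural approach is to reparametrise near $y$ using geodesic normal coordinates in $M$ and apply the Heintze–Karcher estimate to bound the volume Jacobian. The bounded geometry of $(K, M, g^{TK})$ (in particular the uniform tubular neighborhood of $M$ in $K$ together with the bound on the second fundamental form) gives a local comparison $\dist_K(x, y) \geq c\, \dist_M(x, y)$ for $x$ in a uniform $M$-neighborhood of $y$. Combining this with the single-point version of Proposition \ref{prop_exp_bound_int} applied to the bounded-geometry manifold $(M, g^{TM})$ (Remark \ref{rem_bndg_tripl}.c) gives $\int_M \exp(-l \dist_K(x, y)) dv_{g^{TM}}(x) \leq C''/l^n$ for the near portion, while on the complement the exponential factor $\exp(-l \cdot \text{const})$ dominates any polynomial volume growth. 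Multiplication by $Cl^m$ gives $B = O(C\, l^{m-n})$.

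The principal obstacle is that the decay in the hypothesis is controlled by $\dist_K$, which is the smallest of the three natural distances $\dist_K \leq \dist_M \leq \dist_H$ on the relevant subsets, so the integrand decays the most slowly. Converting into the usable bounds $A,B$ therefore requires both bounded-geometry triples simultaneously: the bounded second fundamental form of $M$ in $K$ provides the local comparison of $\dist_K$ with $\dist_M$ and the uniform tubular neighborhood that makes Heintze–Karcher effective, while the bounded geometry of $(M, H)$ (and hence of $(K, H)$) supplies the exponential integrability on the submanifold itself.
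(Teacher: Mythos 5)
Your Schur-test decomposition $\|D\|^2 \leq A \cdot B$ with $A = \sup_{x\in M}\int_H |D(x,y)|\,dv_{g^{TH}}$ and $B = \sup_{y\in H}\int_M |D(x,y)|\,dv_{g^{TM}}$ is exactly what the paper calls ``Young's inequality for integral operators,'' and the two exponential-integral bounds are the promised volume-comparison input, so your overall strategy is the paper's and the exponents check out: $A = O(C)$, $B = O(C\,l^{m-n})$, hence $\|D\| \leq C'C\, l^{-(n-m)/2}$.

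Two steps, however, are glossed over. For $B$, the clean route is to apply Proposition~\ref{prop_exp_bound_int} directly to the triple $(K, M, g^{TK})$ with base point $y \in H \subset M$, which yields $\int_M \exp(-l\,\dist_K(x,y))\, dv_{g^{TM}}(x) \leq C''/l^n$ in one stroke; your detour via the local comparison $\dist_K \geq c\,\dist_M$ together with ``the exponential dominates volume growth on the complement'' is not self-contained, because the complement $\{\dist_K(x,y) > r_0\}\cap M$ has infinite volume, and controlling the integral there already requires precisely the conclusion of Proposition~\ref{prop_exp_bound_int} for $(K,M)$. For $A$, applying Proposition~\ref{prop_exp_bound_int} to $(K, H, g^{TK})$ requires knowing that this triple is of bounded geometry. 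The curvature and second-fundamental-form bounds do transfer as you sketch, but the collar condition of Definition~\ref{defn_bnd_subm}~(iii) for $H$ inside $K$ is not literally a composition of the collar of $H$ in $M$ with that of $M$ in $K$: the normal exponential map $N^{K|H}\to K$ is a single map, and its injectivity radius must be established on its own. What is actually needed (and does follow from the two given collars) is a uniform local comparison $\dist_H(y_0,y) \leq C_0\,\dist_K(y_0,y)$ for $\dist_K(y_0,y)$ below a uniform threshold; combined with Bishop--Gromov on $H$ this gives the estimate. Spelling this out, or verifying the collar for $(K,H)$ directly, is the missing step that should be made explicit.
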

	\par 
	We will now recall a related result about the bound on the composition of operators with Schwartz kernels having exponential bounds.
	More precisely, we fix $q \in \nat$, $q \geq 2$, and operators $\mathcal{G}_t$, $\mathcal{A}_t^{1}, \ldots, \mathcal{A}_t^{q}$, $t \in [0, 1]$, acting on the sections of the trivial vector bundle $\comp^{r_0} \times \comp^n$ over $\comp^n$ by the convolutions with smooth kernels $\mathcal{G}_t(Z, Z')$, $\mathcal{A}_t^{1}(Z, Z'), \ldots, \mathcal{A}_t^{q}(Z, Z') \in \enmr{\comp^{r_0}}$ with respect to the volume form $dv_{\comp^n}$ on $\comp^n$.
	We assume that there are $c_0, q_1 > 0$, such that for any $l \in \nat$, there are $C > 0$, $Q_{h, 1} \geq 0$, $h = 1, \ldots, q$, such that for any $t \in [0, 1]$, $Z, Z' \in \real^{2n}$, $\alpha, \alpha' \in \nat^{2n}$, $|\alpha| + |\alpha'| \leq l$, we have
	\begin{align}
		& \nonumber
		\bigg| \frac{\partial^{|\alpha|+|\alpha'|}}{\partial Z^{\alpha} \partial Z'{}^{\alpha'}} \mathcal{A}_t^{h}(Z, Z') \bigg|
		 \leq 
		 C \Big(1 + |Z| + |Z'| \Big)^{Q_{h, 1} + q_1 l} \exp\Big(- c_0 \big( |Z_Y - Z'_Y| + |Z_N| + |Z'_N| \big) \Big),
		 \\ \label{eq_a_bnd_1loc}
		 &
		 \bigg| \frac{\partial^{|\alpha|+|\alpha'|}}{\partial Z^{\alpha} \partial Z'{}^{\alpha'}} \mathcal{G}_t(Z, Z') \bigg|
		 \leq 
		 C \Big(1 + |Z| + |Z'| \Big)^{Q_{1, 1} + q_1 l} \exp\Big(- c_0 |Z - Z'|  \Big).
	\end{align}
	\begin{lem}[{\cite[Lemma 3.5]{FinOTAs}}]\label{lem_bnd_prod_aloc}
		The operators $\mathcal{D}_t := \mathcal{A}_t^{1} \circ \cdots \circ \mathcal{A}_t^{q}$,  $\mathcal{D}'_t := \mathcal{G}_t \circ \mathcal{A}_t^{2} \circ \cdots \circ \mathcal{A}_t^{q}$ are well-defined and have smooth Schwartz kernels $\mathcal{D}_t(Z, Z')$, $\mathcal{D}'_t(Z, Z')$ with respect to $dv_{\comp^n}$. 
		Moreover, for any $l \in \nat$, there is $C > 0$, such that for any $t \in [0, 1]$, $Z, Z' \in \real^{2n}$, $\alpha, \alpha' \in \nat^{2n}$, $|\alpha| + |\alpha'| \leq l$, we have
		\begin{multline}\label{eq_bnd_prod_aloc}
			\bigg| \frac{\partial^{|\alpha|+|\alpha'|}}{\partial Z^{\alpha} \partial Z'{}^{\alpha'}} \mathcal{R}_t(Z, Z') \bigg|
			\leq C \Big(1 + |Z| + |Z'| \Big)^{Q_{1, 1} + \cdots +Q_{q, 1} + q_1 l} \cdot
			\\
			\cdot
			\exp\Big(- \frac{c_0}{8} \big( |Z_Y - Z'_Y| + |Z_N| + |Z'_N| \big) \Big),
		\end{multline}
		where $\mathcal{R}_t$ designates either $\mathcal{D}_t$ or $\mathcal{D}'_t$.
	\end{lem}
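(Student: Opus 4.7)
The plan is to estimate the iterated convolution directly, after first reducing to the undifferentiated case. Since $\mathcal{D}_t(Z,Z')$ depends on $Z$ through $\mathcal{A}^1_t(Z,\cdot)$ and on $Z'$ through $\mathcal{A}^q_t(\cdot,Z')$ alone (and analogously for $\mathcal{D}'_t$), any derivative $\partial_Z^{\alpha}\partial_{Z'}^{\alpha'}$ with $|\alpha|+|\alpha'|\leq l$ falls on these two factors inside the integral, and by (\ref{eq_a_bnd_1loc}) the differentiated kernels still satisfy bounds of the same form, with the sum of polynomial exponents increased by at most $q_1 l$. Hence it suffices to treat $\alpha=\alpha'=0$, absorbing the $q_1l$ shift into $Q_{1,1}, Q_{q,1}$ (respectively $Q_{2,1}$).

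We then write out
\begin{equation*}
\mathcal{D}_t(Z,Z') = \int \mathcal{A}^1_t(Z,Z_1)\mathcal{A}^2_t(Z_1,Z_2)\cdots \mathcal{A}^q_t(Z_{q-1},Z')\,dZ_1\cdots dZ_{q-1},
\end{equation*}
with $Z_0:=Z$, $Z_q:=Z'$, and apply the pointwise bounds (\ref{eq_a_bnd_1loc}). The product of exponential factors is $\exp(-c_0 E)$ with
\begin{equation*}
E = \sum_{i=1}^{q}|Z_{i-1,Y}-Z_{i,Y}| + |Z_N|+|Z'_N| + 2\sum_{i=1}^{q-1}|Z_{i,N}|.
\end{equation*}
By the telescoping triangle inequality, $\sum_i|Z_{i-1,Y}-Z_{i,Y}|\geq |Z_Y-Z'_Y|$, so writing $c_0E$ as a convex combination extracts the exterior factor $\exp(-\tfrac{c_0}{2}(|Z_Y-Z'_Y|+|Z_N|+|Z'_N|))$ while keeping $\exp(-\tfrac{c_0}{2}(\sum_i|Z_{i-1,Y}-Z_{i,Y}|+2\sum_i|Z_{i,N}|))$ for use in integrating the $Z_1,\ldots,Z_{q-1}$. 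The polynomial factor $\prod_i(1+|Z_{i-1}|+|Z_i|)^{Q_{i,1}}$ is majorized by $(1+|Z|+|Z'|+\sum_k|Z_k|)^{Q_{1,1}+\cdots+Q_{q,1}}$; we split each $(1+|Z_k|)$ as $(1+|Z_{k,N}|)(1+|Z_{k,Y}-Z_{k-1,Y}|)(1+|Z_{k-1,Y}|)$ and propagate the last factor backwards inductively, so that the $Y$-growth ultimately reaches $(1+|Z_Y|)\leq(1+|Z|)$, while the $(1+|Z_{k,N}|)$ and $(1+|Z_{k,Y}-Z_{k-1,Y}|)$ pieces are absorbed by the internal exponential via $(1+x)^Q e^{-cx}\leq C_Q e^{-cx/2}$. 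After these absorptions the external constant is weakened from $c_0/2$ to at worst $c_0/8$, matching (\ref{eq_bnd_prod_aloc}), and the residual integral over $Z_1,\ldots,Z_{q-1}$ is bounded uniformly in $t,Z,Z'$ by an obvious change of variables.

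The case $\mathcal{D}'_t$ proceeds identically except that the contribution of $\mathcal{G}_t(Z,Z_1)$ is $\exp(-c_0|Z-Z_1|)$; to recover the exterior decay in $|Z_N|$ we use $|Z-Z_1|+|Z_{1,N}|\geq |Z_N|$ combined with the $\exp(-c_0|Z_{1,N}|)$ factor from $\mathcal{A}^2_t$, while $|Z_Y-Z_{1,Y}|\leq |Z-Z_1|$ together with the subsequent telescoping in $\mathcal{A}^2_t,\ldots,\mathcal{A}^q_t$ yields the exterior decay in $|Z_Y-Z'_Y|$; the polynomial bookkeeping is identical.

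The main obstacle is the bookkeeping of the polynomial factors, which grow in every intermediate variable while the exponential decay is anisotropic (present in $|Z_{k,N}|$ and in $|Z_{k,Y}-Z_{k-1,Y}|$ but not in $|Z_{k,Y}|$ itself). The backward propagation above is needed precisely to funnel all the $Y$-polynomial growth into the single exterior factor $(1+|Z|+|Z'|)^{Q_{1,1}+\cdots+Q_{q,1}}$ without double-counting the exponent, which is why the conclusion features only one copy of the sum $Q_{1,1}+\cdots+Q_{q,1}+q_1l$ rather than a contribution from each factor separately.
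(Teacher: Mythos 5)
The paper states this lemma by citing \cite[Lemma 3.5]{FinOTAs} without reproducing an argument, so there is no in-text proof to compare against; your proof must stand on its own, and it does. The reduction to $\alpha=\alpha'=0$ (the $Z$ and $Z'$ derivatives land only on the outermost factors, costing at most $q_1 l$ in the polynomial exponent by (\ref{eq_a_bnd_1loc})), the telescoping $\sum_i|Z_{i-1,Y}-Z_{i,Y}|\geq|Z_Y-Z'_Y|$ to extract the exterior decay, the backward propagation that funnels the $Y$-polynomial growth in the intermediate variables into the single $(1+|Z|+|Z'|)$ factor, and the inequality $|Z-Z_1|+|Z_{1,N}|\geq|Z_N|$ used in the $\mathcal{D}'_t$ variant are all sound, and the generous constant $c_0/8$ in the target estimate comfortably absorbs the losses from the convex split of the exponent and the repeated absorptions $(1+x)^Q e^{-cx}\leq C e^{-cx/2}$. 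Two minor remarks. First, the parenthetical ``(respectively $Q_{2,1}$)'' is a mislabel: in (\ref{eq_a_bnd_1loc}) the polynomial exponent attached to $\mathcal{G}_t$ is $Q_{1,1}$, so the $q_1 l$ shift is absorbed into $Q_{1,1}$ and $Q_{q,1}$ in both cases. Second, the lemma also asserts that the composed operators are well-defined with smooth Schwartz kernels; you establish this only implicitly. It is worth a sentence noting that absolute convergence of the iterated integral, smoothness of the composed kernel, and the legitimacy of differentiating under the integral sign (which you invoke in the opening reduction) all follow from the same dominated-convergence argument supplied by the bounds on derivatives of $\mathcal{A}^h_t$, $\mathcal{G}_t$ together with the interior exponential decay you retain.
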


\section{Kernel calculus and asymptotic criteria of Toeplitz operators}\label{sect_asymp_toepl_type}

	The main goal of this section is to study the basic properties of Schwartz kernels of Toeplitz operators.
	More precisely, in Section \ref{sect_model_calc}, we consider the model situation, for which an explicit formula for the Schwartz kernels of Bergman projectors, the extension and restriction operators can be given. 
	We then study the composition rules for the operators with related kernels.
	Then, in Section \ref{sect_as_crit}, we recall a definition and an asymptotic characterization of Toeplitz operators with weak exponential decay.
	
	\subsection{Model operators on the Bargmann space}\label{sect_model_calc}
		In this section, we consider the model situation, for which an explicit formula for the Schwartz kernels of Bergman projectors, the extension and restriction operators can be given.
		We then use those explicit formulas to give a description for compositions of operators, the Schwartz kernels of which can be expressed using the above kernels.
		This section is motivated in many ways by the works of Ma-Marinescu \cite{MaMarToepl}, \cite{MaHol} and Dai-Liu-Ma \cite{DaiLiuMa}, cf. also \cite[\S 3.2]{FinOTAs}.
		\par 
		Endow $\comp^n$ with the standard Riemannian metric and consider a trivialized holomorphic line bundle $L_0$ on $\comp^n$.
		We endow $L_0$ with the Hermitian metric $h^{L_0}$, given by 
		\begin{equation}\label{eq_model_metrl}
			\| 1 \|_{h^{L_0}}(Z) = \exp \Big(- \frac{\pi}{2} |Z|^2 \Big),
		\end{equation}
		where $Z$ is the natural real coordinate on $\comp^n$, and $1$ is the trivializing section of $L_0$.
		An easy verification shows that (\ref{eq_model_metrl}) implies that (\ref{eq_gtx_def}) holds in our setting. 
		From \cite[Theorem 4.1.20]{MaHol}, we know that the functions 
		\begin{equation}\label{eq_spec_l_d_zero}
			\Big(
				\frac{\pi^{|\beta|}}{\beta!}
			\Big)^{\frac{1}{2}}
			z^{\beta} \exp( - \frac{\pi}{2} \sum_{i = 1}^{n} |z_i|^2),
		\end{equation}
		viewed as section of $L_0$ using the orthonormal trivialization by $1 \exp (\frac{\pi}{2} |Z|^2)$, form an orthonormal basis of $H^0_{(2)}(\comp^n, L_0)$, endowed with the induced $L^2$-metric.
		Clearly, this space coincides with the Bargmann space, cf. \cite{ZhuFock}.
		From (\ref{eq_spec_l_d_zero}), cf. \cite[(4.1.84)]{MaHol}, the Bergman kernel $\mathscr{P}_{n}$ of $\comp^n$ is given by
		\begin{equation}\label{eq_berg_k_expl}
			\mathscr{P}_n(Z, Z') = \exp \Big(
				-\frac{\pi}{2} \sum_{i = 1}^{n} \big( 
					|z_i|^2 + |z'_i|^2 - 2 z_i \overline{z}'_i
				\big)
			\Big), \quad \text{for } Z, Z' \in \comp^n.
		\end{equation}
		\par 
		From (\ref{eq_spec_l_d_zero}), we see easily, cf. \cite[(3.28), (3.29)]{FinOTAs}, that the Schwartz kernel of the \textit{orthogonal Bergman kernel of order} $k \in \nat$, $\mathscr{P}_{n, m}^{\perp, k}$, corresponding to the projection onto the subspace of $H^0_{(2)}(\comp^n, L_0 \otimes \mathcal{J}_{\comp^m}^k)$, orthogonal to $H^0_{(2)}(\comp^n, L_0 \otimes \mathcal{J}_{\comp^m}^{k + 1})$, is given by
		\begin{multline}\label{eq_pperp_defn_fun}
			\mathscr{P}_{n, m}^{\perp, k}(Z, Z')
			=
			 \exp \Big(
			 	-\frac{\pi}{2} \sum_{i = 1}^{m} \big( 
					|z_i|^2 + |z'_i|^2 - 2 z_i \overline{z}'_i
				\big)
				-
				\frac{\pi}{2}
				 \sum_{i = m+1}^{n} \big( |z_i|^2 + |z'_i|^2 \big)
			\Big)
			\cdot
			\\
			\cdot
			\pi^{k}
			\cdot
			\sum_{\substack{\beta \in \nat^{n - m} \\ |\beta| = k}}
			\frac{ z_N^{\beta} \cdot (\overline{z}'_N)^{\beta} }{\beta!}
			.
		\end{multline}
	\par 
	Let us now denote $V := \comp^{n - m}$ and consider the holomorphic and antiholomorphic parts $V^{1, 0}$, $V^{0, 1}$ of the complexification $V \otimes_{\real} \comp$.
	Remark now that $\nabla^{|\beta|} z_N^{\beta} \in ((V^{1, 0})^*)^{\otimes |\beta|}$ satisfies the following identity
	\begin{equation}
		\nabla^{|\beta|} z_N^{\beta} 
		\Big(
			\frac{\partial}{\partial z_{\sigma(1)}} \otimes \cdots \otimes \frac{\partial}{\partial z_{\sigma(|\beta|)}} 
		\Big)
		=
		\frac{\partial}{\partial z_{\sigma(1)}} \cdots \frac{\partial}{\partial z_{\sigma(|\beta|)}} z_N^{\beta} 
		=
		\begin{cases}
		\beta!, \quad &\text{if $\# \sigma^{-1}(i) = \beta_i$},
		\\
		0, \quad &\text{otherwise},
		\end{cases}
	\end{equation}
	where $\sigma : [1, N] \to [m + 1, n]$ is any map.
	From this, we conclude that $\nabla^{|\beta|} z_N^{\beta}$ lies in the image of the natural map ${\rm{Sym}}^{|\beta|} ((V^{1, 0})^*) \to ((V^{1, 0})^*)^{\otimes |\beta|}$, and under this map, from (\ref{eq_pairing_sym}) we have
	\begin{equation}\label{eq_nabla_sym_der_ident}
		\nabla^{|\beta|} z_N^{\beta}
		=
		|\beta|! \cdot d z_N^{\odot \beta}.
	\end{equation}
	From (\ref{eq_pairing_sym}) and (\ref{eq_spec_l_d_zero}), we see that the $L^2$-extension operator $\mathscr{E}_{n, m}^{k}$, extending each element from $H^0_{(2)}(\comp^m, {\rm{Sym}}^k (V^{1, 0})^* \otimes L_0)$ to an element from $H^0_{(2)}(\comp^n, L_0 \otimes \mathcal{J}_{\comp^m}^k)$ as in (\ref{eq_deriv_f_along_y}) with the minimal $L^2$-norm, is given in the the orthonormal trivialization $1 \exp (\frac{\pi}{2} |Z|^2)$ of $L_0$ by the multiplication by $\exp (- \frac{\pi}{2} \sum_{i = m+1}^{n} |z_i|^2)$.
	From this and (\ref{eq_berg_k_expl}), we see that the Schwartz kernel $\mathscr{E}_{n, m}^{k}(Z, Z'_Y)$ of $\mathscr{E}_{n, m}^{k}$ is given by
	\begin{multline}\label{eq_pperp_defn_fu2n}
		\mathscr{E}_{n, m}^{k}(Z, Z'_Y)
		=
		 \exp \Big(
			-\frac{\pi}{2} \sum_{i = 1}^{m} \big( 
				|z_i|^2 + |z'_i|^2 - 2 z_i \overline{z}'_i
			\big)
			-
			\frac{\pi}{2}
			 \sum_{i = m+1}^{n} |z_i|^2
			\Big)
		\cdot
		\\
		\cdot
		\sum_{\substack{\beta \in \nat^{n - m} \\ |\beta| = k}}
		\frac{1}{\beta!} \cdot
		 z_N^{\beta} \cdot \big( \frac{\partial}{\partial z_N} \big)^{\odot \beta},
	\end{multline}
	where we implicitly identified $({\rm{Sym}}^k (V^{1, 0})^*)^*$ and ${\rm{Sym}}^k (V^{1, 0})$ as in (\ref{eq_pairing_sym}).
	Remark that under our identification, the following identity holds
	\begin{equation}\label{eq_id_sym_ident}
		{\rm{Id}}_{{\rm{Sym}}^k (V^{1, 0})^*} 
		=
		\sum_{\substack{\beta \in \nat^{n - m} \\ |\beta| = k}}
		\frac{k!}{\beta!}
		\cdot
		(d z_N)^{\odot \beta} \cdot \big( \frac{\partial}{\partial z_N} \big)^{\odot \beta}.
	\end{equation}
	From (\ref{eq_berg_k_expl}), (\ref{eq_pperp_defn_fun}), (\ref{eq_pperp_defn_fu2n}) and (\ref{eq_id_sym_ident}), we can verify that the following natural relation holds
	\begin{equation}\label{eq_verif_res_ext}
		\res_{\comp^m} \circ \nabla^k \mathscr{E}_{n, m}^{k}
		=
		\mathscr{P}_n
		\cdot
		{\rm{Id}}_{{\rm{Sym}}^k (V^{1, 0})^*},
	\end{equation}
	where we identified $H^0_{(2)}(\comp^m, {\rm{Sym}}^k (V^{1, 0})^* \otimes L_0)$ with $H^0_{(2)}(\comp^m, L_0) \otimes {\rm{Sym}}^k (V^{1, 0})^*$.
	\par 
	For $k \in \nat$, recall that the logarithmic Bergman kernel corresponding to the pair $(\comp^n, k \cdot \comp^k)$, which we denote $\mathscr{P}_{n, m}^{\comp^n, k \comp^m}$, is the orthogonal projection from $L^2(\comp^n, L_0)$ onto $H^0_{(2)}(\comp^n, L_0 \otimes \mathcal{J}_{\comp^m}^k)$. It clearly satisfies
	\begin{equation}
		\mathscr{P}_{n, m}^{\comp^n, k \comp^m} 
		:= 
		\mathscr{P}_n
		-
		\sum_{l = 0}^{k - 1} \mathscr{P}_{n, m}^{\perp, l}. 
	\end{equation}
	From (\ref{eq_spec_l_d_zero}), we can write 
	\begin{multline}\label{eq_res_nm_kernel0001}
		\mathscr{P}_{n, m}^{\comp^n, k \comp^m}(Z, Z')
		=
		 \exp \Big(
		 	-\frac{\pi}{2} \sum_{i = 1}^{m} \big( 
				|z_i|^2 + |z'_i|^2 - 2 z_i \overline{z}'_i
			\big)
			-
			\frac{\pi}{2}
			 \sum_{i = m+1}^{n} \big( |z_i|^2 + |z'_i|^2 \big)
		\Big)
		\cdot
		\\
		\cdot
		\pi^{k}
		\cdot
		\sum_{\substack{\beta \in \nat^{n - m} \\ |\beta| \geq k}}
		\frac{ z_N^{\beta} \cdot (\overline{z}'_N)^{\beta} }{\beta!}
		.
	\end{multline}
	Hence, by (\ref{eq_nabla_sym_der_ident}), the Schwartz kernel of the operator $\mathscr{Res}_{n, m}^{k} := \res_{\comp^m} \circ \nabla^k \mathscr{P}_{n, m}^{\comp^n, k \comp^m}$ is given by 
	\begin{multline}\label{eq_res_nm_kernel}
		\mathscr{Res}_{n, m}^{k}(Z_Y, Z') = 
				\exp \Big(
					-\frac{\pi}{2} \sum_{i = 1}^{m} \big( 
						|z_i|^2 + |z'_i|^2 - 2 z_i \overline{z}'_i
					\big)
					-
					\frac{\pi}{2}
					\sum_{i = m+1}^{n} |z'_i|^2
				\Big)
				\cdot
				\\
				\cdot
				\pi^{k}
				\cdot
				\sum_{\substack{\beta \in \nat^{n - m} \\ |\beta| = k}}
				\frac{k!}{\beta!} \cdot
				 (d z_N)^{\odot \beta} \cdot (\overline{z}'_N)^{\beta}.
	\end{multline}
	\par 
	Let us consider $f \in H^0_{(2)}(\comp^n, L_0 \otimes \mathcal{J}_{\comp^m}^k)$ (resp. $g \in H^0_{(2)}(\comp^m, {\rm{Sym}}^k (V^{1, 0})^* \otimes L_0)$), given in the holomorphic trivialization of $L_0$ by $f = z_N^{\beta}$ (resp. $g = dz_N^{\odot \beta'}$), where $\beta, \beta' \in \nat^{2(n - m)}$, $|\beta|, |\beta'| = k$.
	By (\ref{eq_nabla_sym_der_ident}), we have
	\begin{equation}
		\mathscr{Res}_{n, m}^{k} f = k! \cdot d z_N^{\odot \beta}, 
		\qquad
		\mathscr{E}_{n, m}^k g = \frac{1}{k!} \cdot z_N^{\beta'}.
	\end{equation}
	From the discussion after (\ref{eq_sym_emb_tens}) and (\ref{eq_spec_l_d_zero}), we see that
	\begin{equation}
		\scal{\mathscr{Res}_{n, m}^k f}{g}_{L^2(Y)}
		=
		2^k \cdot \beta!
		=
		(2 \pi)^k \cdot k! \cdot
		\scal{f }{\mathscr{E}_{n, m}^k g}_{L^2(X)},
	\end{equation}
	which implies the following identity
	\begin{equation}\label{eq_res_enmdual}
		(\mathscr{Res}_{n, m}^{k})^*
		=
		(2 \pi)^k \cdot k! \cdot
		\mathscr{E}_{n, m}^k.
	\end{equation}
	Similarly, we deduce the following equality
	\begin{equation}\label{eq_res_enmdual}
		\mathscr{P}_{n, m}^{\perp, k} 
		=
		\mathscr{E}_{n, m}^k
		\circ
		\mathscr{Res}_{n, m}^{k}.
	\end{equation}
	Later we will see that the analogues of (\ref{eq_res_enmdual}) and (\ref{eq_res_enmdual}) hold in the approximate sense for any pair $(X, Y)$ instead of the model case $(\comp^n, \comp^m)$.
	\par 
	Now, a lot of calculations in this article will have something to do with compositions of operators having Schwartz kernels, given by the product of polynomials with the above kernels.
	For that reason, the following lemma will be of utmost importance in what follows.
	\begin{lem}\label{lem_comp_poly}
			For any polynomials $A_1(Z, Z'), A_2(Z, Z')$, $Z, Z' \in \real^{2n}$, there is a polynomial $A_3 := \mathcal{K}_{n, m}[A_1, A_2]$, the coefficients of which are polynomials of the coefficients of $A_1, A_2$, such that
			\begin{equation}\label{eq_lem_comp_poly_1}
				(A_1 \cdot \mathscr{P}_{n, m}^{\perp, 0}) \circ (A_2 \cdot \mathscr{P}_{n, m}^{\perp, 0})
				=
				A_3 \cdot \mathscr{P}_{n, m}^{\perp, 0}.
			\end{equation}
			Moreover, $\deg A_3 \leq \deg A_1 + \deg A_2$. Also, if both polynomials $A_1$, $A_2$ are even or odd (resp. one is even, another is odd), then the polynomial $A_3$ is even (resp. odd).
			\par 
			Also, for any polynomials $A(Z, Z'_Y)$, $B(Z_Y, Z'_Y)$, $Z \in \real^{2n}$, $Z_Y, Z'_Y \in \real^{2m}$, there is a polynomial $A''_3 := \mathcal{K}_{n, m}^{EP}[A, B]$ with the same properties as $A_3$, such that 
			\begin{equation}\label{eq_comp_ext_pmn22}
				(A \cdot \mathscr{E}_{n, m}^{0}) \circ (B \cdot\mathscr{P}_{m})
				=
				A''_3 \cdot \mathscr{E}_{n, m}.
			\end{equation}
			\par 
			Finally, for any polynomials $A(Z, Z'_Y)$, $C(Z_Y, Z')$, $Z, Z' \in \real^{2n}$, $Z_Y, Z'_Y \in \real^{2m}$, there is a polynomial $A'''_3 := \mathcal{K}_{n, m}^{ER}[A, C]$ with the same properties as $A_3$, such that 
			\begin{equation}\label{eq_comp_ext_pmn222}
				(A \cdot \mathscr{E}_{n, m}^{0}) \circ (C \cdot\mathscr{R}_{n, m}^{0})
				=
				A'''_3 \cdot \mathscr{P}_{n, m}^{\perp, 0}.
			\end{equation}
		\end{lem}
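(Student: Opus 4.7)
The strategy is to reduce all three claims to a single core Bergman-calculus statement on $\comp^m$: for polynomials $P(Z_Y, Z'_Y), Q(Z_Y, Z'_Y)$, the composition $(P \cdot \mathscr{P}_m) \circ (Q \cdot \mathscr{P}_m)$ equals $R \cdot \mathscr{P}_m$ for some polynomial $R$, with coefficients that are polynomial in those of $P, Q$, with $\deg R \leq \deg P + \deg Q$ and parity equal to the sum of the parities of $P$ and $Q$. This is a classical fact, cf.\ \cite{MaMarToepl}, \cite[\S 7]{MaHol}, and I would establish it via the elementary identities
\begin{equation*}
\bar{z}''_{Y,i} \, \mathscr{P}_m(Z_Y, Z''_Y) = \tfrac{1}{\pi} \tfrac{\partial}{\partial z_{Y,i}} \mathscr{P}_m(Z_Y, Z''_Y), \qquad z''_{Y,i} \, \mathscr{P}_m(Z''_Y, Z'_Y) = \tfrac{1}{\pi} \tfrac{\partial}{\partial \bar{z}'_{Y,i}} \mathscr{P}_m(Z''_Y, Z'_Y),
\end{equation*}
which allow one to systematically replace every $Z''_Y$-factor appearing inside the integrand of the composition by a differential operator in $Z_Y$ or $Z'_Y$ that commutes past the integral. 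After all $Z''_Y$-dependence has been absorbed into such external derivatives, the reproducing identity $\mathscr{P}_m \circ \mathscr{P}_m = \mathscr{P}_m$ collapses the remaining integral; since each partial derivative of $\mathscr{P}_m(Z_Y, Z'_Y)$ produces a linear polynomial in $(Z_Y, Z'_Y)$ times $\mathscr{P}_m$, iterating yields the required structure. Degree and parity tracking follow from the fact that each derivative contributes at most a linear factor of parity one.

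With this core lemma in hand, I would exploit the product factorizations of the model kernels, visible directly from (\ref{eq_pperp_defn_fun}), (\ref{eq_pperp_defn_fu2n}), (\ref{eq_res_nm_kernel}) with $k = 0$: namely $\mathscr{P}_{n,m}^{\perp, 0}(Z, Z') = \mathscr{P}_m(Z_Y, Z'_Y) \cdot e^{-\frac{\pi}{2}(|z_N|^2 + |z'_N|^2)}$, $\mathscr{E}_{n,m}^{0}(Z, Z'_Y) = \mathscr{P}_m(Z_Y, Z'_Y) \cdot e^{-\frac{\pi}{2}|z_N|^2}$, and $\mathscr{R}_{n,m}^{0}(Z_Y, Z') = \mathscr{P}_m(Z_Y, Z'_Y) \cdot e^{-\frac{\pi}{2}|z'_N|^2}$. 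Thanks to these, each composition splits cleanly into a tangential $\comp^m$-convolution in $Z''_Y$ (handled by the core lemma) and, where applicable, a Gaussian integration in the normal variable $Z''_N$.

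For (\ref{eq_lem_comp_poly_1}), the integration over $Z''_N$ against $e^{-\pi|z''_N|^2}$ is a bare Gaussian moment integral that converts any polynomial dependence on $(z''_N, \bar{z}''_N)$ in $A_1(Z, Z'') A_2(Z'', Z')$ into polynomial-in-$(Z_N, Z'_N)$ coefficients, of degree and parity controlled by those of $A_1, A_2$; what remains is a tangential composition of the form addressed by the core lemma, and the surviving Gaussian factor $e^{-\frac{\pi}{2}(|z_N|^2 + |z'_N|^2)}$ reassembles with its output into $A_3 \cdot \mathscr{P}_{n,m}^{\perp, 0}$. For (\ref{eq_comp_ext_pmn22}) and (\ref{eq_comp_ext_pmn222}) there is no normal integration at all: the Gaussian factor in $z_N$ (resp.\ in $z_N, z'_N$) rides along unchanged, and only a tangential convolution must be carried out, yielding $A''_3 \cdot \mathscr{E}_{n,m}^{0}$ (resp.\ $A'''_3 \cdot \mathscr{P}_{n,m}^{\perp, 0}$). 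The only step that requires any actual work is the core tangential lemma, but the derivative-transfer trick makes it and the accompanying bookkeeping of degree and parity entirely routine; I do not expect a real obstacle.
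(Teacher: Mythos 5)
Your strategy coincides with the paper's: both reduce the three composition formulas to the tangential $\comp^m$-composition $\mathcal{K}_{m,m}$ by exploiting the product factorizations of the model kernels (visible from (\ref{eq_pperp_defn_fun}), (\ref{eq_pperp_defn_fu2n}), (\ref{eq_res_nm_kernel})) and decomposing $A(Z, Z'_Y) = \sum_\alpha Z_N^\alpha A^\alpha(Z_Y, Z'_Y)$ so that the normal Gaussian factor rides along untouched; for (\ref{eq_lem_comp_poly_1}) the normal $Z''_N$-integration is a Gaussian moment computation, exactly as you describe. The only real difference is that the paper cites prior work for two of the three identities and for the $n=m$ core (\cite[Lemma 3.1]{FinToeplImm} and Ma-Marinescu \cite[Lemma 7.1.1]{MaHol}) and then just spells out the reductions, whereas you sketch a self-contained derivation of the core $\comp^m$-composition via a derivative-transfer argument. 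That route is workable, but the specific identities you state are not correct for the paper's normalization of $\mathscr{P}_m$: from (\ref{eq_berg_k_expl}) one has $\tfrac{1}{\pi}\tfrac{\partial}{\partial z_i}\mathscr{P}_m(Z, Z'') = \big(\bar z''_i - \tfrac{1}{2}\bar z_i\big)\mathscr{P}_m(Z, Z'')$, so the right identity is $\bar z''_i\,\mathscr{P}_m(Z, Z'') = \big(\tfrac{1}{\pi}\tfrac{\partial}{\partial z_i} + \tfrac{1}{2}\bar z_i\big)\mathscr{P}_m(Z, Z'')$, not $\tfrac{1}{\pi}\tfrac{\partial}{\partial z_i}\mathscr{P}_m$ as you wrote, and analogously for $z''_i$ acting on the right-hand kernel. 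The extra multiplication terms are in the external variables and hence still commute past the $Z''$-integral, and they contribute degree-one, parity-one factors just as the derivatives do, so your degree and parity bookkeeping goes through unchanged; only the stated formulas need fixing.
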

		\begin{rem}
			The statement (\ref{eq_lem_comp_poly_1}) for $n = m$ is due to Ma-Marinescu \cite[Lemma 7.1.1, (7.1.6)]{MaHol}.
		\end{rem}
		\begin{proof}
			Statements (\ref{eq_lem_comp_poly_1}) and (\ref{eq_comp_ext_pmn222}) were proved in \cite[Lemma 3.1]{FinToeplImm}.
			Let us recall the relation between $\mathcal{K}_{n, m}^{EP}$ and $\mathcal{K}_{n, m}$, established in \cite[(3.20)]{FinToeplImm}.
			\par 
			We represent $A(Z, Z'_Y) := \sum Z_N^{\alpha} \cdot A^{\alpha}(Z_Y, Z'_Y)$. 
			Then, from (\ref{eq_berg_k_expl}) and (\ref{eq_pperp_defn_fun}), the following equation holds
			\begin{equation}
				(A^{\alpha} \cdot \mathscr{E}_{n, m}^0) \circ (B \cdot \mathscr{P}_{m})
				=
				 \exp \Big(- \frac{\pi}{2} |Z_N|^2 \Big)
				 \cdot
				(A^{\alpha} \cdot \mathscr{P}_{m}) \circ (B \cdot \mathscr{P}_{m}).
			\end{equation}
			By this and (\ref{eq_lem_comp_poly_1}), we clearly have (\ref{eq_comp_ext_pmn22}) for 
			\begin{equation}\label{eq_kep_formula}
				\mathcal{K}_{n, m}^{EP}[A, D]
				=
				\sum_{\alpha} Z_N^{\alpha} \cdot \mathcal{K}_{m, m}[A^{\alpha}, D].
			\end{equation}
			\par 
			To establish (\ref{eq_comp_ext_pmn222}), we decompose polynomials $A(Z, Z'_Y)$, $C(Z_Y, Z')$ as follows
			\begin{equation}\label{eq_compa_3}
				A(Z, Z'_Y)
				=
				\sum_{\alpha} Z_N^{\alpha} \cdot A^{\alpha}(Z_Y, Z'_Y), 
				\qquad
				C(Z_Y, Z')
				=
				\sum_{\alpha'} C^{\alpha'}(Z_Y, Z'_Y) Z'_N{}^{\alpha'},
			\end{equation}
			where $\alpha, \alpha' \in \nat^{2(n - m)}$ verify $|\alpha| \leq \deg A, |\alpha'| \leq \deg C$.
			\par 
			Now, we note that an easy verification, based on (\ref{eq_berg_k_expl}), (\ref{eq_pperp_defn_fun}) and (\ref{eq_res_nm_kernel}), shows that 
			\begin{multline}\label{eq_compa_4}
				\Big( (A^{\alpha} \cdot \mathscr{E}_{n, m}^0) \circ (C^{\alpha'} \cdot \mathscr{Res}_{n, m}^0) \Big) (Z, Z')
				=
				\exp \Big(- \frac{\pi}{2} \big( |Z_N|^2 + |Z'_N|^2 \big) \Big)
				 \cdot
				 \\
				 \cdot
				\Big( (A^{\alpha} \cdot \mathscr{P}_{m}) \circ (C^{\alpha'} \cdot \mathscr{P}_{m}) \Big) (Z_Y, Z'_Y).
			\end{multline}
			From (\ref{eq_compa_3}) and (\ref{eq_compa_4}), we see that (\ref{eq_comp_ext_pmn222}) holds for
			\begin{equation}\label{eq_ker_form}
				\mathcal{K}_{n, m}^{ER}[A, C]
				=
				\sum_{\alpha, \alpha'} Z_N^{\alpha} \cdot Z'_N{}^{\alpha'} \cdot \mathcal{K}_{m, m}[A^{\alpha}, C^{\alpha'}].
			\end{equation}
			This clearly finishes the proof, as the statements about the degree, the coefficients and the parity now follow from (\ref{eq_ker_form}).
		\end{proof}
		\begin{sloppypar}
			From the above, we see that to compute the polynomials from Lemma \ref{lem_comp_poly}, it suffices to give an algorithm for the calculation of $\mathcal{K}_{n, m}$.
			Below, we explain how to do this. 
			Directly from the definitions, we see that $\mathcal{K}_{n, m}[1 \cdot P(Z'), A] = \mathcal{K}_{n, m}[1, P(Z) \cdot A]$ for any polynomial $A$.
			Also, we trivially have $\mathcal{K}_{n, m}[P(Z) \cdot A(Z, Z'), A'(Z, Z')] = P(Z) \mathcal{K}_{n, m}[A(Z, Z'), A'(Z, Z')]$ for any polynomials $P, A, A'$.
			Hence, it is enough to give an algorithm for the calculation of $\mathcal{K}_{n, m}$ where the first argument is given by $1$.
			For this, remark that for any $i = 1, \ldots, n$, $a, b \in \nat$, we have
			\begin{equation}\label{eq_k_calc_2}
				\mathcal{K}_{n, m}[1, P_i(Z) z_i^a \overline{z}_i^b]
				=
				\mathcal{K}_{n, m}[1, P_i(Z)]
				\cdot
				\mathcal{K}_{n, m}[1, z_i^a \overline{z}_i^b],
			\end{equation}
			where the polynomial $P_i(Z)$ doesn't depend on $z_i$ and $\overline{z}_i$.
			Hence, to understand $\mathcal{K}_{n, m}$, it suffices to know how to calculate it for polynomials $z_i^a \overline{z}_i^b$.
			Let us recall the general formulas from \cite[(3.28) and (3.30)]{FinToeplImm}.
			For $i \leq m$, we have
			\begin{equation}\label{eq_kmn_poly}
				\mathcal{K}_{n, m}[1, z_i^a \overline{z}_i^b]
				=
				\sum_{l + k = b}
				\frac{1}{\pi^{k}}
				 \frac{a! b!}{(a - k)! l! k!}
				 z_i^{a - k} \overline{z}'_i{}^{l}
				.
			\end{equation}	 
			For $m+1 \leq i \leq n$, we have
			\begin{equation}\label{eq_kmn_poly23}
				\mathcal{K}_{n, m}[1, z_i^a \overline{z}_i^b]
				=
				\delta_{ab}
				\frac{a!}{\pi^a}
				.
			\end{equation}
		\end{sloppypar}
	
\subsection{Toeplitz operators with weak exponential decay and their properties}\label{sect_as_crit}
	The main goal of this section is to recall the definition of Toeplitz operators with weak exponential decay and to recall the asymptotic characterization of them in terms of their Schwartz kernels.
	\begin{sloppypar}
	More precisely, for a section $f \in \ccal^{\infty}_{b}(X, \enmr{F})$, we associate a sequence of linear operators $T_{f, p}^{X} \in \enmr{L^2(X, L^p \otimes F)}$, $p \in \nat$, called \textit{Berezin-Toeplitz operator}, by
	\begin{equation}
		T_{f, p}^{X}(g) := B_p^X (f \cdot B_p^X g).
	\end{equation}
	\end{sloppypar}
	We fix some Riemannian manifold $(Z, g^{TZ})$ and an embedding $\iota': X \to Z$, such that $(\iota')^* g^{TZ} = g^{TX}$, and such that the triple $(Z, X, g^{TZ})$ is of bounded geometry.
	\begin{defn}\label{defn_ttype}
		A sequence of linear operators $T_p^X \in \enmr{L^2(X, L^p \otimes F)}$, $p \in \nat$, verifying $B_p^X \circ T_p^X \circ B_p^X = T_p^X$, is called a \textit{Toeplitz operator with weak exponential decay with respect to $Z$} if there is a sequence $f_i \in \ccal^{\infty}_{b}(X, \enmr{F})$ and  $c > 0$, $p_1 \in \nat^*$, such that for any $k, l \in \nat$, there is $C > 0$, such that for any $p \geq p_1$, the Schwartz kernel, evaluated with respect to $dv_X$, for $x_1, x_2 \in X$, satisfies
		\begin{equation}\label{eq_toepl_off_diag}
			\Big|  
				T_p^X (x_1, x_2) 
				- 
				\sum_{r = 0}^{k}
				p^{-r}
				T_{f_r, p}^X(x_1, x_2)  
			\Big|_{\ccal^l} 
			\leq 
			C p^{n - k + \frac{l}{2}} 
			\cdot 
			\exp \big(- c \sqrt{p} \cdot \dist_Z(x_1, x_2) \big),
		\end{equation}
		where the pointwise $\ccal^{l}$-norm at a point $(x_1, x_2) \in X \times X$ is interpreted as in (\ref{eq_ext_log_bk}).
		The sections $f_i$ will later be denoted by $[T_p^X]_i$.
	\end{defn}
	\begin{rem}\label{rem_defn_tpl}
		a)
		From Proposition \ref{prop_norm_bnd_distk_expbnd}, we see that (\ref{eq_toepl_off_diag}) implies that for any $k \in \nat$, there is $C > 0$, such that for any $p \geq p_1$, we have $\|  
				T_p^X
				- 
				\sum_{r = 0}^{k}
				p^{-r}
				T_{f_r, p}^{X}
			\|
			\leq 
			C p^{- k}$.
		In particular, for compact $X$, the sequence of operators $T_p^X$, $p \in \nat$, forms a Toeplitz operator in the sense of Ma-Marinescu \cite[\S 7]{MaHol}.
		\par 
		b) 
		In \cite[Corollary 3.13]{FinToeplImm}, we showed that the sections $f_i$, $i \in \nat$, verifying (\ref{eq_toepl_off_diag}), are uniquely defined. Hence, the notation $[\cdot]_i$, $i \in \nat$, from Definition \ref{defn_ttype} is well-defined. 
	\end{rem}
	For the theorem below, we use the notational convention introduced after Definition \ref{defn_vb_bg}.
	Let us fix some further notation.
	Recall that geodesic coordinates were defined in (\ref{eq_phi_defn}).
	We fix $x_0 \in X$.
	Define the function $\kappa_{\phi, x_0}^{X} : B_0^{\real^{2n}}(r_X) \to \real$, by
	\begin{equation}\label{eq_defn_kappaxy10}
		((\phi_{x_0}^X)^* dv_X) (Z)
		=
		\kappa_{\phi, x_0}^{X}
		d Z_1 \wedge \cdots \wedge d Z_{2n}.
	\end{equation}
	\begin{thm}\label{thm_ma_mar_crit_exp_dec}
		A family of operators $T_p^X \in \enmr{ L^2(X, L^p \otimes F) }$, $p \in \nat$, forms a Toeplitz operator with weak exponential decay with respect to $Z$ if and only if the following conditions hold
		\begin{enumerate}
			\item For any $p \in \nat$,  $T_p^X = B_p^X \circ T_p^X \circ B_p^X$.
			\item There is $p_1 \in \nat$, such that for any $l \in \nat$, there is $C > 0$, such that for any $p \geq p_1$, the Schwartz kernel $T_p^X(x_1, x_2)$; $x_1, x_2 \in Y$, of $T_p^X$, evaluated with respect to $dv_X$, satisfies
			\begin{equation}\label{eq_exp_est_ass_1}
				\Big|  
					T_p^X (x_1, x_2) 
				\Big|_{\ccal^l} 
				\leq 
				C p^{n + \frac{l}{2}} 
				\cdot 
				\exp \big(- c \sqrt{p} \cdot \dist_Z(x_1, x_2) \big).
			\end{equation}
			\item
			For any $x_0 \in X$, $r \in \nat$, there are $I_r^X(Z, Z') \in \enmr{F_{x_0}}$ polynomials in $Z, Z' \in \real^{2n}$ of the same parity as $r$, such that the coefficients of $I_r^X$ lie in $\ccal^{\infty}_{b}(X, \enmr{F})$, and for $F_r := I_r^X \cdot \mathscr{P}_{n}$, the following holds.
			There are $\epsilon, c > 0$, $p_1 \in \nat^*$, such that for any $k, l, l' \in \nat$, there are $C, Q > 0$, such that for any $x_0 \in X$, $p \geq p_1$, $Z, Z' \in \real^{2n}$, $|Z|, |Z'| \leq \epsilon$, $\alpha, \alpha' \in \nat^{2n}$, $|\alpha|+|\alpha'| \leq l$, the following bound holds
			\begin{multline}\label{eq_tpy_defn_exp_tay12as}
				\bigg| 
					\frac{\partial^{|\alpha|+|\alpha'|}}{\partial Z^{\alpha} \partial Z'{}^{\alpha'}}
					\bigg(
						\frac{1}{p^n} T_p^X \big(\phi_{x_0}^{X}(Z), \phi_{x_0}^{X}(Z') \big)
						\\
						-
						\sum_{r = 0}^{k}
						p^{-\frac{r}{2}}						
						F_r(\sqrt{p} Z, \sqrt{p} Z') 
						\kappa_{\phi, x_0}^{X}(Z)^{-\frac{1}{2}}
						\kappa_{\phi, x_0}^{X}(Z')^{-\frac{1}{2}}
					\bigg)
				\bigg|_{\ccal^{l'}}
				\\
				\leq
				C p^{-\frac{k + 1 - l}{2}}
				\Big(1 + \sqrt{p}|Z| + \sqrt{p} |Z'| \Big)^{Q} \exp(- c \sqrt{p} |Z - Z'|),
			\end{multline}
			where the $\ccal^{l'}$-norm is taken with respect to $x_0$.
		\end{enumerate}		
		Moreover, for any sequence of operators $T_p^X$, verifying the above assumptions, the polynomial $I_0^X$ is constant and it is related to the expansion from Definition \ref{defn_ttype} by the identity $I_0^X(0, 0) = [T_p^X]_0$.
	\end{thm}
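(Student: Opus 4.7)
The plan is to prove both directions, with necessity being essentially computational and sufficiency requiring an inductive matching procedure. I will freely use the kernel calculus of Lemma \ref{lem_comp_poly}, the off-diagonal bound of Proposition \ref{prop_norm_bnd_distk_expbnd}, and the uniform (in $x_0$) Dai--Liu--Ma asymptotic expansion of $B_p^X$ in the normal coordinates $\phi_{x_0}^X$, which holds in our bounded geometry setting.

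For the necessity direction, condition (1) is part of the definition. To obtain condition (2), take $k=0$ in (\ref{eq_toepl_off_diag}) and add to the bound on the remainder the analogous exponential estimate for the Schwartz kernel of $T_{f_0,p}^X$; the latter follows from the standard off-diagonal exponential bound for $B_p^X$ (cf.\ Ma--Marinescu) together with the bounded geometry assumption on $f_0 \in \ccal_b^\infty(X,\enmr{F})$. For condition (3), I would write
\[
T_{f_r,p}^X(x_1,x_2) = \int_X B_p^X(x_1,y)\, f_r(y)\, B_p^X(y,x_2)\, dv_X(y),
\]
insert the Dai--Liu--Ma near-diagonal expansion of $B_p^X$ written in the coordinates $\phi_{x_0}^X$ with the rescaling $Z \mapsto Z/\sqrt{p}$, Taylor expand $f_r$ around $x_0$, and apply Lemma \ref{lem_comp_poly} to collapse the resulting convolutions of polynomials against $\mathscr{P}_n$ back into kernels of the form $I_r^X \cdot \mathscr{P}_n$. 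The parity statement comes from the parity properties of $\mathcal{K}_{n,n}$ recorded in Lemma \ref{lem_comp_poly} together with the fact that Taylor coefficients of $f_r$ of order $j$ contribute monomials of degree $j$. The remainder estimate (\ref{eq_tpy_defn_exp_tay12as}) is then obtained by combining the tail of the Dai--Liu--Ma expansion with the exponential decay of $B_p^X$ and Proposition \ref{prop_exp_bound_int} to handle the integration in $y$ outside a fixed neighborhood of $x_0$.

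For the sufficiency direction, I would construct the sections $f_r \in \ccal_b^\infty(X,\enmr{F})$ inductively. Given (1)--(3), the bounded geometry of the coefficients of $I_0^X$ allows the definition $f_0(x_0) := I_0^X(0,0) \in \enmr{F_{x_0}}$, which lies in $\ccal_b^\infty(X,\enmr{F})$. Write $R_p := T_p^X - T_{f_0,p}^X$. By the necessity direction already established, $T_{f_0,p}^X$ itself satisfies conditions (1), (2) and a version of (3) with its own polynomials $\tilde I_r^X$; the key observation is that the leading polynomial of $T_{f_0,p}^X$ at $x_0$ equals $f_0(x_0) = I_0^X(0,0)$, so $R_p$ still satisfies (1) and (2), but now its near-diagonal expansion (\ref{eq_tpy_defn_exp_tay12as}) begins with $r = 1$. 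Dividing by $p^{-1/2}$ (which does not change the class of operators with weak exponential decay once one accounts for the shift of index) and iterating produces the full sequence $f_0, f_1, \ldots$. To transfer the near-diagonal expansion (\ref{eq_tpy_defn_exp_tay12as}) into the global off-diagonal bound (\ref{eq_toepl_off_diag}), I would cut the manifold into a near-diagonal region $\dist_Z(x_1,x_2) \leq \epsilon$, where (\ref{eq_tpy_defn_exp_tay12as}) applies directly, and a far region, where the exponential bound of condition (2) together with (\ref{eq_exp_est_ass_1}) applied to the Toeplitz approximants dominates any polynomial power of $p$.

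The main obstacle will be justifying the inductive step rigorously: one must verify that the subtraction $T_p^X \mapsto T_p^X - T_{f_0,p}^X$ preserves conditions (1)--(3) with the expansion index shifted, and this requires knowing that the near-diagonal Taylor expansion of $T_{f_0,p}^X$ matches that of $T_p^X$ to leading order, uniformly in $x_0$. The uniformity is where the bounded geometry assumption is crucial, as it ensures that both the Dai--Liu--Ma expansion of $B_p^X$ and the Taylor expansion of $f_0$ have uniform $\ccal^k$-estimates, and that Proposition \ref{prop_norm_bnd_distk_expbnd} applies with constants depending only on the geometric data. The identification $I_0^X(0,0) = [T_p^X]_0$ then follows from evaluating the leading term in (\ref{eq_tpy_defn_exp_tay12as}) at $Z = Z' = 0$ and comparing with the leading term of (\ref{eq_toepl_off_diag}) on the diagonal, using $\mathscr{P}_n(0,0) = 1$.
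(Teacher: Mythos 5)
The paper's own proof of this theorem is a one-line citation to Ma–Marinescu \cite[Theorem 4.9]{MaMar08a} (compact case) and \cite[Theorem 3.18]{FinToeplImm} (bounded geometry with exponential decay). Your proposal attempts to reconstruct the argument, and the overall architecture — necessity via kernel calculus and the Dai–Liu–Ma expansion, sufficiency via an inductive subtraction of Berezin–Toeplitz approximants — is the correct Ma–Marinescu strategy. However, there is a genuine gap at the heart of the sufficiency direction.

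You set $f_0(x_0) := I_0^X(0,0)$ and claim that $R_p := T_p^X - T_{f_0,p}^X$ has a near-diagonal expansion starting at order $r = 1$. This is only true if the leading polynomial $I_0^X(Z,Z')$ equals the constant $I_0^X(0,0)$. The leading polynomial of $T_{f_0,p}^X$ in the expansion \eqref{eq_tpy_defn_exp_tay12as} is indeed the constant $f_0(x_0)$ (since $J_0^{X|X} = \mathrm{Id}$ and $\mathcal{K}_{n,n}[1, f_0(x_0)\cdot 1] = f_0(x_0)$), so the leading term of $R_p$ is $(I_0^X(Z,Z') - I_0^X(0,0))\cdot \mathscr{P}_n$, which vanishes only when $I_0^X$ is constant. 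But the constancy of $I_0^X$ is precisely the "moreover" clause of the theorem: it is a conclusion to be proved, not a hypothesis of condition (3). Without it the induction does not get started. The constancy is exactly the nontrivial lemma at the core of Ma–Marinescu's proof: one must exploit the idempotency constraint $B_p^X T_p^X B_p^X = T_p^X$ from condition (1), pass to the limiting kernel, and deduce that $F_0 = \mathscr{P}_n \circ F_0 \circ \mathscr{P}_n$ in the model calculus, which — in combination with the even parity of $I_0^X$ — forces $I_0^X$ to be constant via the explicit formulas \eqref{eq_kmn_poly}. (That this constancy holds even without the parity hypothesis is what underlies Proposition \ref{prop_parity_toepl} in the present paper.) Your outline identifies the location of the difficulty ("justifying the inductive step rigorously") but does not name or close this specific gap, and as written, the argument is circular: you use the constancy of $I_0^X$ to run the induction, and the induction is then used to establish the conclusions of the theorem, which include the constancy.
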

	\begin{proof}
		For Toeplitz operators in the sense of Ma-Marinescu \cite[\S 7]{MaHol}, cf. Remark \ref{rem_defn_tpl}.a), the analogous result was established by Ma-Marinescu in \cite[Theorem 4.9]{MaMar08a}.
		The proof for Toeplitz operators with weak exponential decay, analogous to the one of Ma-Marinescu, was done in \cite[Theorem 3.18]{FinToeplImm}.
	\end{proof}
	\begin{prop}\label{prop_parity_toepl}
		Assume that a family of linear operators $T_p^X \in \enmr{ L^2(X, L^p \otimes F) }$, $p \in \nat$ satisfies all the three assumptions of Theorem \ref{thm_ma_mar_crit_exp_dec}, with the only exception that the parity of $I_r^X$ is equal to the parity of $r + 1$ instead of $r$.
		Then $I_0^X = 0$, $I_1^X(Z, Z')$ is a constant polynomial, $\sqrt{p} T_p^X$ forms a Toeplitz operator with weak exponential decay with respect to $Z$, and we have $[\sqrt{p} T_p^X]_0 = I_1^X(0, 0)$.
	\end{prop}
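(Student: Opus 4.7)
My plan is to reduce Proposition \ref{prop_parity_toepl} to Theorem \ref{thm_ma_mar_crit_exp_dec} applied to the rescaled family $\sqrt{p}\, T_p^X$, after first establishing that the polynomial $I_0^X$ must vanish under the shifted-parity hypothesis.

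For the vanishing of $I_0^X$, I would revisit the derivation of the last sentence of Theorem \ref{thm_ma_mar_crit_exp_dec}, asserting that $I_0^X$ is a constant polynomial. That sub-argument relies only on the projection identity $B_p^X T_p^X B_p^X = T_p^X$ from condition 1 matched against the leading behavior of the expansion in condition 3, and does not invoke the parity hypothesis: after the rescaling $Z \mapsto Z/\sqrt{p}$, condition 1 converts at order $p^n$ into the model identity $\mathscr{P}_n \circ (I_0^X \cdot \mathscr{P}_n) \circ \mathscr{P}_n = I_0^X \cdot \mathscr{P}_n$, and Lemma \ref{lem_comp_poly} together with the explicit composition formulas (\ref{eq_kmn_poly}), (\ref{eq_kmn_poly23}) (specialized to $m = n$) forces $I_0^X$ to be a constant polynomial. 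Under the hypothesis of the proposition, $I_0^X$ is odd; a polynomial that is both constant and odd must vanish.

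With $I_0^X = 0$ in hand, I would then verify the three conditions of Theorem \ref{thm_ma_mar_crit_exp_dec} for $\sqrt{p}\, T_p^X$. Condition 1 is immediate from its validity for $T_p^X$. For condition 3, I reindex by setting $\tilde{I}_s^X := I_{s+1}^X$ (whose parity is $(s+1)+1 \equiv s \pmod{2}$, as required in the standard version) and $\tilde{F}_s := F_{s+1}$; multiplying (\ref{eq_tpy_defn_exp_tay12as}) by $\sqrt{p}$ and using $F_0 = 0$ produces precisely the Taylor expansion prescribed by the theorem, with leading term $\tilde{F}_0 = I_1^X \cdot \mathscr{P}_n$. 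For condition 2 on $\sqrt{p}\, T_p^X$, I would split into two regimes: when $\dist_Z(x_1, x_2) \geq \delta$ for a fixed small $\delta > 0$, I use condition 2 for $T_p^X$ directly and absorb the extra factor $\sqrt{p}$ into the exponential at the cost of a slightly smaller constant (valid for $p$ large); when $\dist_Z(x_1, x_2) < \delta$, I center the geodesic chart at a midpoint of $x_1, x_2$, so that the Fermi coordinates satisfy $|Z|, |Z'| \lesssim \dist_Z(x_1, x_2)/2$, and use (\ref{eq_tpy_defn_exp_tay12as}) with $F_0 = 0$; this bounds the polynomial factor $(1 + \sqrt{p}|Z| + \sqrt{p}|Z'|)^Q$ by $(1 + \sqrt{p}\, \dist_Z(x_1, x_2))^Q$, which is then absorbed into $\exp(-c\sqrt{p}\, \dist_Z(x_1, x_2))$ at a slightly smaller rate.

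Applying Theorem \ref{thm_ma_mar_crit_exp_dec} to $\sqrt{p}\, T_p^X$ then gives at once all the remaining conclusions of the proposition: $\sqrt{p}\, T_p^X$ forms a Toeplitz operator with weak exponential decay with respect to $Z$, its leading polynomial $\tilde{I}_0^X = I_1^X$ is constant, and $[\sqrt{p}\, T_p^X]_0 = \tilde{I}_0^X(0, 0) = I_1^X(0, 0)$. The main obstacle I anticipate lies in the first step, where one must isolate from the (external) proof of Theorem \ref{thm_ma_mar_crit_exp_dec} (or equivalently from \cite[Theorem 3.18]{FinToeplImm}) the precise sub-statement that the constancy of $I_0^X$ is forced by condition 1 alone, via the model composition algebra of Section \ref{sect_model_calc}, without using the parity hypothesis.
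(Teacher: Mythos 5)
Your overall strategy — establish that $I_0^X$ must be constant (hence zero by odd parity), then re-index and apply Theorem \ref{thm_ma_mar_crit_exp_dec} to $\sqrt{p}\,T_p^X$ — is the same as the paper's. Your verification of conditions 1 and 3 for $\sqrt{p}\,T_p^X$ and the two-regime argument for condition 2 are correct and in fact more explicit than the paper's one-line assertion.

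However, the sketch you give for the constancy of $I_0^X$ does not work as stated. You claim that the model identity $\mathscr{P}_n \circ (I_0^X \cdot \mathscr{P}_n) \circ \mathscr{P}_n = I_0^X \cdot \mathscr{P}_n$, derived from condition 1, forces $I_0^X$ to be constant via Lemma \ref{lem_comp_poly} and formula (\ref{eq_kmn_poly}). This is false: take $n=1$ and $I_0(Z,Z') = z_1$. By (\ref{eq_kmn_poly}) with $a=1$, $b=0$, one has $\mathcal{K}_{1,1}[1, z_1] = z_1$, and trivially $\mathcal{K}_{1,1}[z_1, 1] = z_1$, so $\mathscr{P}_1 \circ (z_1 \mathscr{P}_1) \circ \mathscr{P}_1 = z_1 \mathscr{P}_1$ and the model projection identity is satisfied by a non-constant polynomial. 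The constancy of $I_0^X$ in Ma-Marinescu's argument genuinely uses the exponential bound (condition 2, with the precise growth $p^{n + l/2}$) in conjunction with the projection identity; condition 2 rules out $I_0 = z_1$, since after rescaling, the near-diagonal kernel would grow like $p^{n}\sqrt{p}|z|$ for $|Z - Z'|$ small but $|Z|$ bounded away from $0$, violating the $p^n$ bound. You do recognize the need to extract the precise sub-statement from the external reference, which is exactly how the paper handles it — by direct citation of \cite[Theorem 4.9]{MaMar08a} and the observation that its proof does not use the parity assumption on the $I_r^X$. So the gap is in your attempted justification, not in the route itself; replacing your Lemma \ref{lem_comp_poly} sketch with the citation would make the argument match the paper's.
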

	\begin{proof}
		In the proof of Ma-Marinescu of \cite[Theorem 4.9]{MaMar08a}, authors established that regardless of the parity assumption on $I_r^X$, the polynomial $I_0^X(Z, Z')$ is always constant.
		This was proved in the compact setting, but the proof remains verbatim for manifolds of bounded geometry, cf. \cite[proof of Theorem 3.19]{FinToeplImm}.
		Now, since the parity of $I_0^X$ is odd by our assumption, the above result implies $I_0^X = 0$.
		Hence all the assumptions of Theorem \ref{thm_ma_mar_crit_exp_dec} are satisfied for the sequence of operators $\sqrt{p} T_p^X$, $p \in \nat^*$, which implies our statement.
	\end{proof}
	\par 
	Now, for technical reasons we will need to consider sequences of operators $T_p^X : L^2(X, L^p \otimes F_1) \to L^2(X, L^p \otimes F_2)$, $p \in \nat$, where $(F_i, h^F_i)$, $i = 1, 2$, are Hermitian vector bundles of bounded geometry over $X$.
	For such sequences of operators, we have a notion of $(F_1, F_2)$-Toeplitz operators with weak exponential decay, analogous to Definition \ref{defn_ttype}. 
	The only difference between this definition and the one for $(F_1, h^F_1) = (F_2, h^F_2)$ is that the Berezin-Toeplitz operators are now associated to $f \in \ccal^{\infty}_{b}(X, {\rm{Hom}}(F_1, F_2))$ as follows $T_{f, p}^{X}(g) := B_{2, p}^X (f \cdot B_{1, p}^X g)$, where $B_{i, p}^X$, $i = 1, 2$, are the Bergman kernels associated to $L^p \otimes F_i$.
	We similarly use the notation $[T_p^X]_i$, $i \in \nat$, to designate elements of $\ccal^{\infty}_{b}(X, {\rm{Hom}}(F_1, F_2))$, corresponding to the asymptotic expansion of $T_p^X$.
	\begin{prop}\label{prop_ma_mar_crit_exp_dec}
		A family of linear operators $T_p^X : L^2(X, L^p \otimes F_1) \to L^2(X, L^p \otimes F_2)$, $p \in \nat$, forms a $(F_1, F_2)$-Toeplitz operator with weak exponential decay with respect to $Z$ if and only if the following conditions hold
		\begin{enumerate}
			\item For any $p \in \nat$,  $T_p^X = B_{2, p}^X \circ T_p^X \circ B_{1, p}^X$.
			\item Exponential bound analogous to (\ref{eq_exp_est_ass_1}) holds.
			\item
			For any $x_0 \in X$, $r \in \nat$, there are polynomials $I_r^X(Z, Z') \in {\rm{Hom}}(F_{1, x_0}, F_{2, x_0})$, satisfying the same assumptions as in (\ref{eq_tpy_defn_exp_tay12as}), and for which an analogue of the expansion (\ref{eq_tpy_defn_exp_tay12as}) holds.
		\end{enumerate}		
		Moreover, for any sequence of operators $T_p^X$, verifying the above assumptions, the polynomial $I_0^X(Z, Z')$ is constant and we have $I_0^X(0, 0) = [T_p^X]_0$.
	\end{prop}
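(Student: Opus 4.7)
The plan is to reduce the statement to Theorem \ref{thm_ma_mar_crit_exp_dec} by the standard direct-sum trick. Consider the Hermitian vector bundle $\tilde{F} := F_1 \oplus F_2$ equipped with the orthogonal direct-sum Hermitian metric and the direct-sum of Chern connections. Since $(F_i, h^{F_i})$, $i = 1, 2$, are of bounded geometry, so is $(\tilde{F}, h^{\tilde{F}})$. A holomorphic section of $L^p \otimes \tilde{F}$ is just a pair of holomorphic sections of $L^p \otimes F_1$ and $L^p \otimes F_2$, hence the associated Bergman projector $B_p^{\tilde{F}}$ splits as $B_{1, p}^X \oplus B_{2, p}^X$. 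Define $\tilde{T}_p^X \in \enmr{L^2(X, L^p \otimes \tilde{F})}$ by setting $\tilde{T}_p^X(s_1, s_2) := (0, T_p^X s_1)$. The condition $\tilde{T}_p^X = B_p^{\tilde{F}} \circ \tilde{T}_p^X \circ B_p^{\tilde{F}}$ is manifestly equivalent to $T_p^X = B_{2, p}^X \circ T_p^X \circ B_{1, p}^X$.

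Next, I would translate the three assumptions between the two settings. The Schwartz kernel of $\tilde{T}_p^X$ at $(x_1, x_2)$ sits entirely inside the $(2, 1)$-block of $\enmr{\tilde{F}_{x_1} \oplus \tilde{F}_{x_2}}$, and under the natural identification of this block with ${\rm{Hom}}(F_{1, x_2}, F_{2, x_1})$ it coincides with the Schwartz kernel of $T_p^X$. Consequently, the exponential bound (\ref{eq_exp_est_ass_1}) for $T_p^X$ is equivalent to the same bound for $\tilde{T}_p^X$. For assumption (3), embedding a polynomial $I_r^X(Z, Z') \in {\rm{Hom}}(F_{1, x_0}, F_{2, x_0})$ into the $(2, 1)$-block of $\enmr{\tilde{F}_{x_0}}$ produces polynomials $\tilde{I}_r^X$ with coefficients in $\ccal^\infty_b(X, \enmr{\tilde{F}})$ of the correct parity, and an analogue of (\ref{eq_tpy_defn_exp_tay12as}) for $T_p^X$ is equivalent via the same identification to the one for $\tilde{T}_p^X$. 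Hence Theorem \ref{thm_ma_mar_crit_exp_dec} applies to $\tilde{T}_p^X$ and yields a sequence $\tilde{f}_r \in \ccal^\infty_b(X, \enmr{\tilde{F}})$ together with the constancy of $\tilde{I}_0^X$ and the identity $\tilde{I}_0^X(0, 0) = [\tilde{T}_p^X]_0 = \tilde{f}_0$.

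It remains to observe that the sections $\tilde{f}_r$ delivered by the theorem must themselves lie in the $(2, 1)$-block of $\enmr{\tilde{F}}$. This follows from the uniqueness of the expansion recalled in Remark \ref{rem_defn_tpl}.b): writing $\tilde{f}_r = \sum_{i, j} \tilde{f}_r^{(i, j)}$ with respect to the decomposition of $\enmr{\tilde{F}}$, the Berezin-Toeplitz operators $T_{\tilde{f}_r^{(i, j)}, p}^X$ preserve the corresponding block structure on sections, while the kernel of $\tilde{T}_p^X$ lies in the $(2, 1)$-block; hence the components $\tilde{f}_r^{(i, j)}$ with $(i, j) \neq (2, 1)$ must vanish by the uniqueness. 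Setting $f_r \in \ccal^\infty_b(X, {\rm{Hom}}(F_1, F_2))$ to be the image of $\tilde{f}_r^{(2, 1)}$ under the canonical identification, and noting that $T_{\tilde{f}_r, p}^X$ corresponds to $T_{f_r, p}^X$, the expansion for $\tilde{T}_p^X$ descends to the desired expansion (\ref{eq_toepl_off_diag}) for $T_p^X$, with $[T_p^X]_r = f_r$ and in particular $I_0^X(0, 0) = [T_p^X]_0$.

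The main obstacle I expect is the last step, that is, verifying that the block structure of $\tilde{T}_p^X$ forces $\tilde{f}_r$ to lie in the $(2, 1)$-block. This requires invoking the uniqueness of the asymptotic expansion for Toeplitz operators with weak exponential decay (which was established in \cite[Corollary 3.13]{FinToeplImm}, cited in Remark \ref{rem_defn_tpl}.b)) and observing that the four block components of $\tilde{f}_r$ give rise to Berezin-Toeplitz operators with disjoint block ranges, so that each block of the Schwartz kernel of $\sum_r p^{-r} T_{\tilde{f}_r, p}^X$ separately approximates the corresponding block of $\tilde{T}_p^X(x_1, x_2)$. The rest of the argument is a direct bookkeeping translation between the two settings.
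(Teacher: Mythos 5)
Your proposal takes essentially the same approach as the paper: the paper also passes to the direct-sum bundle $F_1 \oplus F_2$, embeds $T_p^X$ as an off-diagonal block of an operator on $L^2(X, L^p \otimes (F_1 \oplus F_2))$, and applies Theorem \ref{thm_ma_mar_crit_exp_dec} to that operator. The paper simply states that the equivalence of the hypotheses for the two operators is an "easy verification," whereas you spell out the block bookkeeping (including the uniqueness argument showing the asymptotic sections land in the correct block), and you place $T_p^X$ in the $(2,1)$-block, which is the type-correct placement (the paper writes it in the upper-right entry, apparently a minor typo).
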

	\begin{proof}
		Consider a sequence of operators $G_p^X \in \enmr{ L^2(X, L^p \otimes (F_1 \oplus F_2)) }$, $p \in \nat$, which in a matrix form associated to the decomposition $L^2(X, L^p \otimes (F_1 \oplus F_2)) = L^2(X, L^p \otimes F_1) \oplus L^2(X, L^p \otimes F_2)$ corresponds to
		\begin{equation}
			G_p^X
			=
			\Big(
			\begin{matrix}
				0 & T_p^X\\
				0 & 0
			\end{matrix}
			\Big).
		\end{equation}
		An easy verification shows that the assumptions of Theorem \ref{thm_ma_mar_crit_exp_dec} are satisfied for $G_p^X$ if and only if the corresponding assumptions from Proposition  \ref{prop_ma_mar_crit_exp_dec} are satisfied for $T_p^X$. We finish the proof by Theorem \ref{thm_ma_mar_crit_exp_dec}.
	\end{proof}
	\begin{rem}\label{rem_toepl_f1f2_par}
		From the above proof and Proposition \ref{prop_parity_toepl}, we see that a proposition, analogous to Proposition \ref{prop_parity_toepl} holds for $(F_1, F_2)$-Toeplitz operator with weak exponential decay.
	\end{rem}
	Later on, we will need a precise bound on the degrees of the polynomials from (\ref{eq_tpy_defn_exp_tay12as}).
	For this, we fix any $f \in \ccal^{\infty}_{b}(X, \enmr{F})$, and denote by $I_{f, r}^X$ the polynomials associated to $T_{f, p}^X$ as in (\ref{eq_tpy_defn_exp_tay12as}).
	\begin{prop}\label{prop_bound_deg_q}
		For any $f \in \ccal^{\infty}_{b}(X, \enmr{F})$, we have $\deg I_{f, r}^X \leq 3 r$.
	\end{prop}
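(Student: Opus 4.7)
The plan is to derive an explicit formula for $I_{f, r}^X$ by expanding the defining integral
\[
	T_{f, p}^X(x_1, x_2) = \int_X B_p^X(x_1, y) f(y) B_p^X(y, x_2) dv_X(y)
\]
in rescaled local coordinates around a fixed $x_0 \in X$, and then to read the degree bound off the three inputs that enter the expansion. First I invoke the Dai-Liu-Ma expansion for $B_p^X$ in the form of Theorem \ref{thm_ma_mar_crit_exp_dec}: there are polynomials $J_a^{B, X}(Z, Z')$, of parity of $a$ and degree at most $3a$, such that (\ref{eq_tpy_defn_exp_tay12as}) holds for the Bergman kernel with $F_a = J_a^{B, X} \cdot \mathscr{P}_n$. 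The bound $\deg J_a^{B, X} \leq 3a$ is classical in the Ma-Marinescu framework, cf.\ \cite{MaHol}; it stems from the recursive construction of the $J_a^{B, X}$ via the inverse of the model Kodaira Laplacian, each step of which adds at most three units to the total polynomial weight in $(Z, Z')$. This is the sole non-trivial input.

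Next I introduce rescaled coordinates $x_1 = \phi_{x_0}^X(Z/\sqrt{p})$, $x_2 = \phi_{x_0}^X(Z'/\sqrt{p})$ and change the integration variable to $y = \phi_{x_0}^X(W/\sqrt{p})$. I substitute three expansions: (a) the Dai-Liu-Ma expansions for $B_p^X(x_1, y)$ and $B_p^X(y, x_2)$; (b) the Taylor expansion $f(\phi_{x_0}^X(W/\sqrt{p})) = \sum_{b \geq 0} p^{-b/2} f_b(W) + O(p^{-(k+1)/2})$, in which $f_b(W) \in \enmr{F_{x_0}}$ is a polynomial in $W$ of degree at most $b$; and (c) the Taylor expansion of $\kappa_{\phi, x_0}^X(W/\sqrt{p})$ arising from $dv_X$. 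The two inner $\kappa^{-1/2}(W/\sqrt{p})$ factors produced by the Bergman expansions cancel against $\kappa_{\phi, x_0}^X(W/\sqrt{p})$ coming from $dv_X$, leaving exactly the outer prefactor $\kappa_{\phi, x_0}^X(Z/\sqrt{p})^{-1/2} \kappa_{\phi, x_0}^X(Z'/\sqrt{p})^{-1/2}$ demanded by (\ref{eq_tpy_defn_exp_tay12as}).

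Collecting terms at order $p^{-r/2}$, the polynomial $I_{f, r}^X$ is identified as a finite sum, over triples $(a, b, c) \in \nat^3$ with $a + b + c = r$, of polynomials arising as compositions of the convolution operators on $\comp^n$ with kernels $J_a^{B, X} \cdot \mathscr{P}_n$ and $f_b \cdot J_c^{B, X} \cdot \mathscr{P}_n$. By Lemma \ref{lem_comp_poly} applied with $m = n$, each such composition has kernel of the form $(\text{polynomial}) \cdot \mathscr{P}_n$ with polynomial degree bounded by $\deg J_a^{B, X} + \deg(f_b \cdot J_c^{B, X}) \leq 3a + b + 3c = 3r - 2b \leq 3r$. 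Summing over triples preserves the bound, yielding $\deg I_{f, r}^X \leq 3r$. The main obstacle is the input bound $\deg J_a^{B, X} \leq 3a$; once this is granted, the remainder of the argument is a direct bookkeeping using the degree-additivity property built into $\mathcal{K}_{n, n}$.
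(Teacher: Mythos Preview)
Your proposal is correct and follows essentially the same route as the paper: the paper cites the Ma--Marinescu identity \cite[Lemma 7.2.4]{MaHol} (your formula for $I_{f,r}^X$ as a sum of $\mathcal{K}_{n,n}[J_a^{B,X}, f_b \cdot J_c^{B,X}]$ over $a+b+c=r$) and then reads off the degree bound from $\deg J_a^{X|X}\le 3a$ (stated here as part of Theorem \ref{thm_berg_off_diag}) together with the degree-additivity of $\mathcal{K}_{n,n}$ in Lemma \ref{lem_comp_poly}. The only slip is that the Bergman kernel expansion you invoke is Theorem \ref{thm_berg_off_diag}, not Theorem \ref{thm_ma_mar_crit_exp_dec}.
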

	This result is definitely well-known to experts, but we were not able to find a precise reference for it anywhere in the literature.
	Due to this and to the fact that the preliminary statement, used in the proof of Proposition \ref{prop_bound_deg_q}, will later be used in this article several times, we will explain the proof below.
	To state the result in the generality we will need later on, we fix some further notation.
	Recall that geodesic coordinates were defined in (\ref{eq_phi_defn}).
	We assume as in Introduction that the triple $(X, Y, g^{TX})$ has bounded geometry.
	Similarly to (\ref{eq_defn_kappaxy10}), for $y_0 \in Y$, define the function $\kappa_{\phi, y_0}^Y : B_0^{\real^{2m}}(r_Y) \to \real$, by
	\begin{equation}\label{eq_defn_kappaxy1}
		((\phi_{y_0}^Y)^* dv_Y) (Z_Y)
		=
		\kappa_{\phi, y_0}^Y
		d Z_1 \wedge \cdots \wedge d Z_{2m}.
	\end{equation}
	Recall that Fermi coordinates were defined in (\ref{eq_defn_fermi}).
	Define the function $\kappa_{\psi, y_0}^{X|Y} : B_0^{\real^{2m}}(r_Y) \times B_0^{\real^{2(n - m)}}(r_{\perp}) \to \real$ by
	\begin{equation}\label{eq_defn_kappaxy2}
		((\psi_{y_0}^{X|Y})^* dv_X) (Z)
		=
		\kappa_{\psi, y_0}^{X|Y}
		d Z_1 \wedge \cdots \wedge d Z_{2n}.
	\end{equation}
	Recall that the function $\kappa_N$ was defined in (\ref{eq_kappan}).
	Clearly, for $Z = (Z_Y, Z_N) \in \real^{2n}$, $Z_Y \in \real^{2m}$, we have the following relation between different $\kappa$-functions
	\begin{equation}\label{eq_kappa_relation}
		\kappa_{\psi, y_0}^{X|Y}(Z)
		=
		\kappa_N(\psi_{y_0}(Z))
		\cdot
		\kappa_{\phi, y_0}^Y(Z_Y).
	\end{equation}
	Also, under assumptions (\ref{eq_comp_vol_omeg}), we have $\kappa_{\psi, y_0}^{X|Y}(0) = \kappa_{\phi, y_0}^Y(0) = 1$.
	\par 
	\begin{thm}\label{thm_berg_off_diag}
		For any $r \in \nat$, $y_0 \in Y$, there are $J_r^{X|Y}(Z, Z') \in \enmr{F_{y_0}}$ polynomials in $Z, Z' \in \real^{2n}$, with the same parity as $r$ and $\deg J_r^{X|Y} \leq 3r$, 
		whose coefficients are polynomials in $\omega$, $R^{TX}$, $A$, $R^F$, $(dv_X / dv_{g^{TX}})^{\pm \frac{1}{2n}}$,  $(dv_Y / dv_{g^{TY}})^{\pm \frac{1}{2n}}$,  and their derivatives of order $\leq 2r$, all evaluated at $y_0$, such that for the functions $F_r^{X|Y} := J_r^{X|Y} \cdot \mathscr{P}_{n}$ over $\real^{2n} \times \real^{2n}$, the following holds. 
		There are $\epsilon, c > 0$, $p_1 \in \nat^*$, such that for any $k, l, l' \in \nat$, there exists $C > 0$, such that for any $y_0 \in Y$, $p \geq p_1$, $Z, Z' \in \real^{2n}$, $|Z|, |Z'| \leq \epsilon$, $\alpha, \alpha' \in \nat^{2n}$, $|\alpha|+|\alpha'| \leq l$, $Q^1_{k, l, l'} := 3 (n + k + l' + 2) + l$:
			\begin{multline}\label{eq_berg_off_diag}
				\bigg| 
					\frac{\partial^{|\alpha|+|\alpha'|}}{\partial Z^{\alpha} \partial Z'{}^{\alpha'}}
					\bigg(
						\frac{1}{p^n} B_p^X\big(\psi_{y_0}(Z), \psi_{y_0}(Z') \big)
						\\
						-
						\sum_{r = 0}^{k}
						p^{-\frac{r}{2}}						
						F_r^{X|Y}(\sqrt{p} Z, \sqrt{p} Z') 
						\kappa_{\psi}^{X|Y}(Z)^{-\frac{1}{2}}
						\kappa_{\psi}^{X|Y}(Z')^{-\frac{1}{2}}
					\bigg)
				\bigg|_{\ccal^{l'}}
				\\
				\leq
				C p^{-\frac{k + 1 - l}{2}}
				\Big(1 + \sqrt{p}|Z| + \sqrt{p} |Z'| \Big)^{Q^1_{k, l, l'}}
				\exp\Big(- c \sqrt{p} |Z - Z'| \Big),
			\end{multline}
			where the $\ccal^{l'}$-norm is taken with respect to $y_0$.
			Also, the following identity holds
			\begin{equation}\label{eq_jo_expl_form}
				J_0^{X|Y}(Z, Z') = {\rm{Id}}_{F_{y_0}}.
			\end{equation}
			Moreover, under the assumption (\ref{eq_comp_vol_omeg}), we have
			\begin{multline}\label{eq_j1_expl_form}
				J_1^{X|Y}(Z, Z') = {\rm{Id}}_{F_{y_0}} \cdot \pi 
				\Big(
			 	g \big(z_N, A(\overline{z}_Y - \overline{z}'_Y) (\overline{z}_Y - \overline{z}'_Y) \big)
			 	\\
			 	+
			 	g \big(\overline{z}'_N, A(z_Y - z'_Y) (z_Y - z'_Y) \big)			 
			 	\Big).
			\end{multline}
	\end{thm}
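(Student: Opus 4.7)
My plan is to reduce to the classical off-diagonal asymptotic expansion of $B_p^X$ in geodesic coordinates due to Dai--Liu--Ma and Ma--Marinescu, and then transport it to Fermi coordinates via a controlled change of variables encoding the second fundamental form.

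First, I would invoke the Dai--Liu--Ma expansion in the geodesic chart $\phi_{y_0}^X$: there exist polynomials $I_r^X(W, W') \in \enmr{F_{y_0}}$ of parity $r$, whose coefficients depend polynomially on the jets of $\omega$, $R^{TX}$, $R^F$ and $dv_X/dv_{g^{TX}}$ at $y_0$, such that the analogue of (\ref{eq_berg_off_diag}) holds with $\phi_{y_0}^X, \kappa_{\phi, y_0}^X$ in place of $\psi_{y_0}^{X|Y}, \kappa_{\psi, y_0}^{X|Y}$. From the recursive construction of $I_r^X$ via the formal Bergman projector on the model tangent space (the argument underlying Proposition \ref{prop_bound_deg_q}), one gets $\deg I_r^X \leq 3 r$ together with $I_0^X = \mathrm{Id}_{F_{y_0}}$; under the Kähler assumption \eqref{eq_comp_vol_omeg}, the first-order Taylor coefficients of $g^{TX}$ at $y_0$ in geodesic coordinates vanish, which forces $I_1^X \equiv 0$.

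Second, I would set $\Phi_{y_0} := (\phi_{y_0}^X)^{-1} \circ \psi_{y_0}^{X|Y}$, a local diffeomorphism of $\real^{2n}$ near $0$ with $\Phi_{y_0}(0) = 0$ and $d\Phi_{y_0}(0) = \mathrm{Id}$ (both charts being normal at $y_0$). Expanding $\Phi_{y_0}(Z) = Z + S_2(Z) + S_3(Z) + \cdots$, each homogeneous term $S_k$ has coefficients that are polynomials in the jets of $R^{TX}$ and $A$ at $y_0$; explicitly, comparing the geodesic $\exp^X_{y_0}(tV)$ with the Fermi curve $\exp^X_{\exp^Y_{y_0}(tV_Y)}(t V_N(tV_Y))$ and using (\ref{eq_sec_fund_f}) gives the quadratic correction $S_2$ in terms of $A$ only. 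A parallel Taylor expansion yields the relation $\kappa_{\psi, y_0}^{X|Y} = (\kappa_{\phi, y_0}^X \circ \Phi_{y_0}) \cdot |\det d\Phi_{y_0}|$.

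Third, I would substitute $W = \Phi_{y_0}(Z)$, $W' = \Phi_{y_0}(Z')$ in the Dai--Liu--Ma expansion, rescale $Z \to Z/\sqrt{p}$, and Taylor expand $I_{r'}^X(\mathbf{Z}+\delta(\mathbf{Z}), \mathbf{Z}'+\delta(\mathbf{Z}'))\mathscr{P}_n(\mathbf{Z}+\delta(\mathbf{Z}), \mathbf{Z}'+\delta(\mathbf{Z}'))$ where the shift $\delta(\mathbf{Z}) = \sqrt{p}\bigl(\Phi_{y_0}(\mathbf{Z}/\sqrt{p}) - \mathbf{Z}/\sqrt{p}\bigr) = p^{-1/2} S_2(\mathbf{Z}) + p^{-1} S_3(\mathbf{Z}) + \cdots$ is of order $p^{-1/2}$. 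Collecting contributions of a fixed order $p^{-r/2}$ defines $J_r^{X|Y}$, manifestly with the stated coefficient dependence and parity. The degree bound follows from the bookkeeping: any term contributing to $J_r^{X|Y}$ is obtained by differentiating $I_{r'}^X \cdot \mathscr{P}_n$ (of total polynomial degree $\leq 3 r' + $ degree in $\mathbf{Z}$ picked up by Gaussian-derivatives) a total of $\alpha$ times and multiplying by $\alpha$ homogeneous shifts $S_{k_1}, \ldots, S_{k_\alpha}$ with $r = r' + \sum (k_j - 1)$; each such term has degree at most $3 r' + \sum k_j \leq 3 r$. Finally, $J_0^{X|Y} = I_0^X = \mathrm{Id}_{F_{y_0}}$, and under \eqref{eq_comp_vol_omeg}, since $I_1^X \equiv 0$ and $\kappa_{\psi, y_0}^{X|Y}$ contributes only at order $p^{-1}$, the whole $p^{-1/2}$-term comes from differentiating $I_0^X \cdot \mathscr{P}_n = \mathscr{P}_n$ along $S_2$; contracting $\partial_W \mathscr{P}_n + \partial_{W'} \mathscr{P}_n$ with the second fundamental form part of $S_2$ and passing to complex coordinates produces (\ref{eq_j1_expl_form}).

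The main obstacle is the degree bound $\deg J_r^{X|Y} \leq 3 r$: it does not follow formally from the convergence of the expansion, but relies on simultaneously tracking the degrees in the Dai--Liu--Ma polynomials $I_r^X$ (which is itself the content of Proposition \ref{prop_bound_deg_q} and needs to be proved by a careful inductive argument on the model space $\comp^n$) and in the Fermi-to-geodesic comparison map $\Phi_{y_0}$. The other delicate point is verifying the explicit form (\ref{eq_j1_expl_form}), which requires care with sign and normalization conventions relating $A$, the complex structure $J$, and the projections onto $N^{1,0}$.
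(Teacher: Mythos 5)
Your strategy --- starting from the Dai--Liu--Ma expansion in geodesic normal coordinates and transporting it to Fermi coordinates via $\Phi_{y_0} = (\phi_{y_0}^X)^{-1} \circ \psi_{y_0}^{X|Y}$ --- is the same route the paper takes: its proof is a citation to \cite[Theorem~5.5]{FinOTAs}, which the text describes as the Dai--Liu--Ma expansion combined with ``some local calculations,'' i.e.\ precisely this change of coordinates. Two details in your sketch need to be filled in. First, your degree count $3r' + \sum k_j$ drops the $\alpha$ extra linear factors produced when the $\alpha$ derivatives fall on the Gaussian $\mathscr{P}_n$; the correct count is $3r' + \alpha + \sum k_j = 2r' + r + 2\alpha$, and the desired bound $\leq 3r$ still holds because $r = r' + \sum(k_j - 1) \geq r' + \alpha$, but this should be carried through explicitly rather than elided. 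Second, and more substantively, you assert without proof that $\kappa_{\psi, y_0}^{X|Y}$ contributes only at order $p^{-1}$, i.e.\ that its linear Taylor coefficient at $0$ vanishes under assumption (\ref{eq_comp_vol_omeg}). This is true but not automatic: via (\ref{eq_kappa_relation}) and the normal-coordinate property $\kappa_{\phi, y_0}^Y(Z_Y) = 1 + O(|Z_Y|^2)$ it reduces to $\nabla\kappa_N|_Y = 0$, which requires both $\kappa_N|_Y \equiv 1$ and $\partial_{Z_N}\kappa_N = 0$ --- the latter is (\ref{eq_kappa_der_norm}), and ultimately rests on the vanishing of the mean curvature for complex (Kähler) embeddings, Proposition~\ref{prop_prop_sfndform}. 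Without that geometric input, your derivation of (\ref{eq_j1_expl_form}) would be missing a potential $p^{-1/2}$ contribution from the volume-density factors.
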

	\begin{proof}
		For $X = Y$, the result is due to Dai-Liu-Ma \cite{DaiLiuMa} and the calculation of $J_1^{X|X}$ is due to Ma-Marinescu \cite[Remark 4.1.26]{MaHol}. Related results were previously obtained by Tian \cite{TianBerg}, Bouche \cite{Bouche}, Zelditch \cite{ZeldBerg}, Catlin \cite{Caltin}, Lu \cite{LuBergman}, Wang \cite{WangBergmKern}.
		The proof of the general case is done in \cite[Theorem 5.5]{FinOTAs} by relying on the result of \cite{DaiLiuMa} and some local calculations.
	\end{proof}	
	\begin{proof}[Proof of Proposition \ref{prop_bound_deg_q}]
		In \cite[Lemma 7.2.4]{MaHol}, Ma-Marinescu proved that for compact manifolds, the following identity holds
		\begin{equation}\label{eq_jrf_expr}
			I_{f, r}^{X}
			:=
			\sum_{a + b + |\alpha| = r}
			\sum_{\alpha \in \nat^{2m}}
			\mathcal{K}_{m, m} \Big[ J_a^{X|X}, \frac{\partial^{\alpha} f(\phi_{x_0}^X(Z))}{\partial Z^{\alpha}}(0) \cdot \frac{Z^{\alpha}}{\alpha!} \cdot J_b^{X|X} \Big],
		\end{equation}
		where $\mathcal{K}_{m, m}$ was defined in Lemma \ref{lem_comp_poly}.
		As it was explained in \cite[(3.51)]{FinToeplImm}, since by the result of Ma-Marinescu \cite{MaMarOffDiag}, cf. Theorem \ref{thm_bk_off_diag}, the Bergman kernel decays exponentially away from the diagonal, the same proof holds for manifolds of bounded geometry.
		The result now follows from (\ref{eq_kmn_poly}), the bound on the degrees of $J_r^{X|X}$ from Theorem \ref{thm_berg_off_diag} and (\ref{eq_jrf_expr}).
	\end{proof}
	The final result about Toeplitz operators we will need is given below.
	\begin{lem}\label{lem_norm_toepl}
		For non-zero $f \in \ccal^{\infty}_{b}(X, \enmr{F})$, as $p \to \infty$, the following asymptotics holds
		\begin{equation}\label{eq_norm_toepl_calc}
			\| T_{f, p}^{X} \| \sim \sup_{x \in X} \| f(x) \|.
		\end{equation}
	\end{lem}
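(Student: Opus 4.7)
The plan is to prove $\|T_{f,p}^X\| \to \sup_{x \in X} \|f(x)\|$ as $p \to \infty$ by establishing matching asymptotic upper and lower bounds. The upper bound is immediate from the factorization $T_{f,p}^X = B_p^X \circ M_f \circ B_p^X$, where $M_f$ is multiplication by $f$: since $B_p^X$ is an orthogonal projection and $M_f$ has operator norm $\sup_x \|f(x)\|$, submultiplicativity gives $\|T_{f,p}^X\| \leq \sup_x \|f(x)\|$. The real work is the matching lower bound, which I would prove by the classical coherent-state argument --- testing $T_{f,p}^X$ against pairs of peak sections concentrated at a near-maximizer of $\|f(\cdot)\|$.

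More precisely, for fixed $\epsilon > 0$ I would pick $x_0 \in X$ with $\|f(x_0)\| \geq \sup_x \|f(x)\| - \epsilon$, and then choose unit vectors $v, w \in F_{x_0}$ realizing $\langle f(x_0) v, w\rangle_{x_0} = \|f(x_0)\|$ (possible by the very definition of the operator norm on $\enmr{F_{x_0}}$). Fixing also a unit $e_p \in (L^p)_{x_0}$, I consider the coherent states $s_p(\cdot) := B_p^X(\cdot, x_0)(e_p \otimes v)$ and $t_p(\cdot) := B_p^X(\cdot, x_0)(e_p \otimes w)$, both holomorphic $L^2$-sections. The reproducing property $\langle u, s_p\rangle_{L^2} = \langle u(x_0), e_p \otimes v\rangle_{x_0}$ for holomorphic $u$ (and the analogue for $t_p$), together with the on-diagonal Bergman expansion from Theorem \ref{thm_berg_off_diag} (noting $J_0^{X|Y} = {\rm{Id}}$), yield $\|s_p\|^2, \|t_p\|^2 = p^n \kappa_{\phi, x_0}^X(0)^{-1}(1 + o(1))$. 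Moreover, from $B_p^X \circ T_{f,p}^X = T_{f,p}^X$ together with the reproducing property I would derive the identity $\langle T_{f,p}^X s_p, t_p\rangle_{L^2} = \langle T_{f,p}^X(x_0, x_0)(e_p \otimes v), e_p \otimes w\rangle_{x_0}$.

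The key input is then the on-diagonal asymptotic $T_{f,p}^X(x_0, x_0) = p^n \kappa_{\phi, x_0}^X(0)^{-1} f(x_0) + o(p^n)$, uniformly in $x_0 \in X$. This follows either from Theorem \ref{thm_ma_mar_crit_exp_dec} applied to the Berezin-Toeplitz operator $T_{f,p}^X$, whose leading symbol $[T_{f,p}^X]_0 = f$ is tautological in view of Definition \ref{defn_ttype}, or directly by writing $T_{f,p}^X(x_0, x_0) = \int_X B_p^X(x_0, y) f(y) B_p^X(y, x_0) dv_X(y)$, substituting $f(y)$ by $f(x_0)$ up to a $o(1)$ factor (justified by smoothness of $f$ together with the concentration of the Bergman kernel at $y = x_0$ on scale $1/\sqrt{p}$, with off-diagonal exponential decay handling the tails), and using $\int_X B_p^X(x_0, y) B_p^X(y, x_0) dv_X(y) = B_p^X(x_0, x_0) \sim p^n \kappa_{\phi, x_0}^X(0)^{-1}$. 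Once this is in hand, dividing $|\langle T_{f,p}^X s_p, t_p\rangle_{L^2}|$ by $\|s_p\| \cdot \|t_p\|$ cancels the $\kappa$-factors and yields the limit $\langle f(x_0) v, w\rangle_{x_0} = \|f(x_0)\|$. By Cauchy-Schwarz, $\|T_{f,p}^X\|$ dominates this ratio, so $\liminf_p \|T_{f,p}^X\| \geq \|f(x_0)\| \geq \sup_x \|f(x)\| - \epsilon$; letting $\epsilon \to 0$ closes the argument. The main subtlety is the uniformity in $x_0$ of the above error terms, which is exactly where the bounded-geometry hypothesis intervenes.
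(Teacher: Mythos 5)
Your proof is correct and uses essentially the same argument as the references the paper defers to. The paper gives no independent proof of this lemma; it simply cites Bordemann--Meinrenken--Schlichenmaier and Ma--Marinescu for the compact case and points to the end of the proof of Theorem 1.1 in \cite{FinOTAs} for the bounded-geometry adaptation, and those proofs are precisely the coherent-state/peak-section lower bound combined with the trivial projection upper bound that you spell out.
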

	\begin{proof}
		For compact manifolds $X$, this result is due to Bordemann-Meinrenken-Schlichenmaier \cite[Theorem 4.1]{BordMeinSchl} (for $(F, h^F)$ trivial) and Ma-Marinescu \cite[Theorem 3.19, (3.91)]{MaMarToepl} (for any $(F, h^F)$).
		Essentially the same proof works in our more general situation.
		The details are given in the end of the proof of \cite[Theorem 1.1]{FinOTAs}.
	\end{proof}

\section{Multiplicative defect and optimal extensions}
	The main goal of this section is to make a comparison between the dual of the restriction operator and the extension operator.
	For this, we prove the existence of a sequence of operators, which we call the multiplicative defect.
	It will play a central role in most of the proofs of this article, as it relates the asymptotic expansions of the Schwartz kernels of the Bergman projector, which was studied previously by other authors in details, and the extension operator, which is the main object of this paper.
	\par 
	More precisely, for $k \in \nat$, let us denote by $B_p^{\perp, k}$ the orthogonal projection from $L^2(X, L^p \otimes F)$ to the orthogonal complement of $H^0_{(2)}(X, L^p \otimes F \otimes \mathcal{J}_Y^{k + 1})$ in $H^0_{(2)}(X, L^p \otimes F \otimes \mathcal{J}_Y^k)$.
	The operators $B_p^{\perp, k}$ will later be called the \textit{orthogonal Bergman kernel of order} $k$.
	Clearly, the logarithmic Bergman kernel of order $k$, introduced before Theorem \ref{thm_log_bk} relates to orthogonal Bergman kernels as follows
	\begin{equation}
		B_p^{X, k Y}
		=
		B_p^X
		-
		\sum_{l = 0}^{k - 1}
		B_p^{\perp, \l}.
	\end{equation}
	The main result of this section is as follows.
	\begin{sloppypar}
	\begin{thm}\label{thm_ex_ap}
		Assume that $(X, Y, g^{TX})$ is of bounded geometry.
		Then for any $k \in \nat$, there is $p_1 \in \nat^*$, such that for any $p \geq p_1$, there is a unique operator $A_{k, p} \in \enmr{H^{0}_{(2)}(Y, {\rm{Sym}}^k (N^{1, 0})^* \otimes \iota^* ( L^p \otimes F))}$, verifying
		\begin{equation}\label{eq_resp_ap_lem}
			(\res_Y \circ \nabla^k B_p^{X, k Y})^{*} = \ext_{k, p} \circ A_{k, p}.
		\end{equation}
	\end{thm}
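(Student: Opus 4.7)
The plan is to define the operator $A_{k, p}$ by the closed-form formula $A_{k, p} := \res_{k,p} \circ (\res_Y \circ \nabla^k B_p^{X, k Y})^*$, and then to verify the identity (\ref{eq_resp_ap_lem}) together with uniqueness. Throughout, I would take $p \geq p_1$ large enough so that Theorem \ref{thm_res_is_l2} applies to jets of order up to $k + 1$ and so that Theorem \ref{thm_ot_weak} guarantees surjectivity of $\res_{k, p}$ at level $k$. For brevity, write $S_p := \res_Y \circ \nabla^k B_p^{X, k Y}$.

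The crucial step is to locate the image of the adjoint $S_p^*$ inside $L^2(X, L^p \otimes F)$. For $g \in H^{0}_{(2)}(Y, {\rm{Sym}}^k (N^{1, 0})^* \otimes \iota^*(L^p \otimes F))$ and $f \in L^2(X, L^p \otimes F)$, the adjoint formula reads
\begin{equation*}
	\scal{S_p^* g}{f}_{L^2(X)} = \scal{g}{\res_{k, p}(B_p^{X, k Y} f)}_{L^2(Y)}.
\end{equation*}
This vanishes in two cases: when $f$ is $L^2$-orthogonal to $H^0_{(2)}(X, L^p \otimes F \otimes \mathcal{J}_Y^k)$, since then $B_p^{X, k Y} f = 0$; and when $f \in H^0_{(2)}(X, L^p \otimes F \otimes \mathcal{J}_Y^{k+1})$, since then $\res_{k, p}(f) = 0$. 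Hence $S_p^* g$ lies in $H^0_{(2)}(X, L^p \otimes F \otimes \mathcal{J}_Y^k)$ and, inside this space, is $L^2$-orthogonal to $H^0_{(2)}(X, L^p \otimes F \otimes \mathcal{J}_Y^{k+1})$.

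Next, I would observe that $\ext_{k, p}$ restricted to $H^{0}_{(2)}(Y, {\rm{Sym}}^k (N^{1, 0})^* \otimes \iota^*(L^p \otimes F))$ is injective with image equal precisely to this orthogonal complement. Injectivity, and the identity $\res_{k, p} \circ \ext_{k, p} = {\rm{Id}}$ on the holomorphic jet subspace, come from the fact that $B_{k, p}^Y g = g$ when $g$ is already holomorphic. The minimum $L^2$-norm condition in the definition of $\ext_{k, p}$ forces its image to lie in the orthogonal complement of $H^0_{(2)}(X, L^p \otimes F \otimes \mathcal{J}_Y^{k+1})$; conversely, any element of this complement is itself the minimum-norm extension of its own jet, yielding surjectivity onto the complement. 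Combined with the previous paragraph, this shows $S_p^* g \in {\rm{Im}}(\ext_{k, p}|_{H^{0}_{(2)}(Y, \cdots)})$ and hence $\ext_{k, p}(A_{k, p} g) = S_p^* g$, which is exactly (\ref{eq_resp_ap_lem}). Boundedness of $A_{k, p}$ is immediate: $B_p^{X, k Y}$ is an orthogonal projection and $\res_{k, p}$ is bounded by Theorem \ref{thm_res_is_l2}. Uniqueness follows at once from the injectivity of $\ext_{k, p}$ on $H^{0}_{(2)}(Y, \cdots)$.

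The only genuinely delicate input is Theorem \ref{thm_ot_weak}: it is what guarantees that $\ext_{k, p}$ is well-defined and bounded for large $p$, and thus that the clean orthogonal decomposition of $H^0_{(2)}(X, L^p \otimes F \otimes \mathcal{J}_Y^k)$ into ${\rm{Im}}(\ext_{k, p}|_{H^{0}_{(2)}(Y, \cdots)})$ and $H^0_{(2)}(X, L^p \otimes F \otimes \mathcal{J}_Y^{k+1})$ is available. Once this decomposition is in hand, the remainder of the argument is a purely formal verification via the adjoint identity.
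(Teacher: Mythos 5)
Your proposal is correct and takes essentially the same route as the paper: both rest on Theorem \ref{thm_res_is_l2}, Theorem \ref{thm_ot_weak}, and the observation that $\ext_{k,p}$ restricted to $H^0_{(2)}(Y,\ldots)$ is a bijection onto the orthogonal complement of $H^0_{(2)}(X, L^p\otimes F\otimes\mathcal{J}_Y^{k+1})$ inside $H^0_{(2)}(X, L^p\otimes F\otimes\mathcal{J}_Y^k)$. The one cosmetic difference is that you posit the formula $A_{k,p}=\res_{k,p}\circ(\res_Y\circ\nabla^k B_p^{X,kY})^*$ up front and verify it via the adjoint pairing, whereas the paper derives abstract existence and uniqueness of $A_{k,p}$ by matching the kernel and image of the two sides (and only states this explicit formula afterwards, in (\ref{eq_ap_form})); your variant has the small advantage of producing the containment $\mathrm{Im}\,S_p^*|_{H^0(Y)}\subseteq\mathrm{Im}(\ext_{k,p}|_{H^0(Y)})$ directly, without appealing to the closed range theorem to get equality of images.
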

	\end{sloppypar}
	\begin{rem}
		The sequence of operators $A_{k, p}$, will be later called “multiplicative defect".
		For $k = 0$, Theorem \ref{thm_ex_ap} was proved in \cite[Theorem 4.3]{FinToeplImm} in its relation with the problem of transitivity of optimal holomorphic extensions.
	\end{rem}
	To establish this result, we will rely on several statements, which are of independent interest.
	The proofs of those statements extend our methods from \cite{FinOTAs}, \cite{FinToeplImm}.
	The first result we need is the following semi-classical analogue of the trace theorem from the theory of Sobolev spaces. We prove it in Section \ref{sect_trace}.
	\begin{thm}\label{thm_res_is_l2}
		For any $k \in \nat$, $p \in \nat$, and $f \in H^{0}_{(2)}(X, L^p \otimes F \otimes \mathcal{J}_Y^{k})$ the section $(\nabla^k f)|_Y \in \ccal^{\infty}(Y, {\rm{Sym}}^k (N^{1, 0})^* \otimes \iota^*(L^p \otimes F))$ is holomorphic.
		Also, for any $k \in \nat$, there is $p_1 \in \nat^*$, such that for any $f \in H^{0}_{(2)}(X, L^p \otimes F \otimes \mathcal{J}_Y^{k})$, the $L^2$-norm of the above section is finite, i.e. we have $(\nabla^k f)|_Y \in H^{0}_{(2)}(Y, {\rm{Sym}}^k (N^{1, 0})^* \otimes \iota^*(L^p \otimes F))$.
		Moreover, there is $C > 0$, such that 
		\begin{equation}\label{eq_res_is_l211}
			\big\| \res_{k, p} (f)  \big\|_{L^2(Y)} \leq C p^{\frac{n - m + k}{2}} \big\| f \big\|_{L^2(X)}
		\end{equation}
	\end{thm}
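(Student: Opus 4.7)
The first assertion is local: near any $y \in Y$, choose holomorphic coordinates $(z_Y, z_N)$ adapted to $\iota$ and a holomorphic frame of $L^p \otimes F$. The Weierstrass division theorem (cf.\ (\ref{eq_weier_div})) then writes the local expression $\tilde f$ of $f$ as $\tilde f = \sum_{|\beta|=k} z_N^\beta \tilde g_\beta(z_Y, z_N)$ with $\tilde g_\beta$ holomorphic, so $(\nabla^k f)|_Y = k! \sum_\beta \tilde g_\beta(z_Y, 0) (dz_N)^{\odot \beta}$ is visibly a holomorphic section of ${\rm{Sym}}^k (N^{1,0})^* \otimes \iota^*(L^p \otimes F)$; independence of the choice of connection reflects the vanishing of $f$ to order $k$ along $Y$.

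For the $L^2$ bound (\ref{eq_res_is_l211}), the plan is to prove the pointwise estimate
\begin{equation*}
	|\res_{k,p}(f)|_h^2(y_0) \leq C\, p^{n+k} \int_{B^X_{y_0}(c'/\sqrt{p})} |f|^2_h \, dv_X,
\end{equation*}
with constants uniform in $y_0 \in Y$ for all $p \geq p_1$, and then to integrate. Fix $y_0 \in Y$ and work in Fermi coordinates $\psi_{y_0}^{X|Y}$ on a ball of fixed size (supplied by Proposition \ref{prop_bndg_tripl}), in a holomorphic trivialization of $L$ normalized so that the local weight $\varphi$ of $h^L$ satisfies $\varphi(0) = 0$, $d\varphi(0) = 0$, and hence $|\varphi(z)| \leq C|z|^2$ on the chart --- a normalization available by bounded geometry of $(L, h^L)$, cf.\ Definition \ref{defn_vb_bg}. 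Writing $|f|^2_h = |\tilde f|^2 e^{-p\varphi}$ with $\tilde f$ holomorphic, Cauchy's formula on the normal polydisk of polyradius $r := c'/\sqrt{p}$ gives $|\partial^\beta_{z_N} \tilde f(0, 0)| \leq \beta!\, r^{-k} \sup_{|z_N|=r} |\tilde f(0, z_N)|$ for $|\beta| = k$, contributing the factor $r^{-2k} = p^k$. The sub-mean value inequality for the plurisubharmonic function $|\tilde f|^2$ next bounds $|\tilde f(0, z_N)|^2$ for $|z_N| \leq r$ by $C r^{-2n} \int_{B^X(2r)} |\tilde f|^2 dv_X$; since $|\varphi| = O(1/p)$ on this ball, the weight factor $e^{p\varphi}$ stays bounded, and the bound becomes $\leq C p^n \int_{B^X(2r)} |f|^2_h \, dv_X$. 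Combining the two yields the pointwise estimate above.

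Integrating this bound over $Y$ and applying Fubini,
\begin{equation*}
	\|\res_{k,p}(f)\|_{L^2(Y)}^2 \leq C\, p^{n+k} \int_X |f|^2_h(x)\, \operatorname{vol}\!\bigl( Y \cap B^X_x(c'/\sqrt{p}) \bigr) dv_X(x).
\end{equation*}
By bounded geometry of $(Y, g^{TY})$ and the uniform collar of Definition \ref{defn_bnd_subm}, the volume in the integrand is $\leq C p^{-m}$ uniformly in $x$, producing the global factor $p^{n - m + k}$ and hence (\ref{eq_res_is_l211}) after taking square roots. The threshold $p_1$ is chosen so that $c'/\sqrt{p_1}$ is below the uniform injectivity and collar radii. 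The main technical point is the uniformity in $y_0$ of every constant above, which at each step reduces to the bounded geometry hypotheses on $(X, Y, g^{TX})$, $(L, h^L)$ and $(F, h^F)$, together with (\ref{eq_vol_comp_unif}) for the passage between the chosen volume forms $dv_X, dv_Y$ and the Riemannian ones.
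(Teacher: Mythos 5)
Your first half (holomorphicity of the $k$-jet via Weierstrass division) coincides exactly with the paper's argument. For the $L^2$ bound, however, you take a genuinely different route. The paper introduces the smoothing operator $T_p = \res_Y \circ (\nabla^{1,0})^k B_p^X$, observes that $T_p(f) = \res_{k,p}(f)$ for $f$ in the $k$-th jet ideal, and estimates $\|T_p\|^2 = \|T_p T_p^*\|$ by applying the off-diagonal exponential decay of the Bergman kernel (Theorem \ref{thm_bk_off_diag}, the Ma--Marinescu estimate) together with the Schur-type bound of Proposition \ref{prop_norm_bnd_distk_expbnd} to the Schwartz kernel $(\nabla^{1,0})^k B_p^X (\nabla^{1,0;*})^k(y,y')$. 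You instead prove a uniform pointwise estimate $|\res_{k,p}(f)|_h^2(y_0) \leq C p^{n+k}\int_{B^X_{y_0}(c'/\sqrt p)}|f|^2_h\,dv_X$ by combining Cauchy's inequality on a normal polydisk of radius $\sim 1/\sqrt p$ with the sub-mean value inequality for the plurisubharmonic function $|\tilde f|^2$ in a normalized local frame, and then integrate over $Y$ and use Fubini together with the volume bound $\operatorname{vol}_Y(Y\cap B^X_x(c'/\sqrt p))\leq Cp^{-m}$. Both approaches give the same power $p^{(n-m+k)/2}$ and both ultimately lean on bounded geometry for uniformity. Your argument is more elementary and self-contained, in that it does not require the (deep) Bergman kernel off-diagonal estimates; the paper's approach, by contrast, is tailored to its surrounding development, since the operator $\res_Y \circ \nabla^k B_p^{X,kY}$ and its Schwartz kernel play a central role later (e.g.\ in (\ref{eq_ap_form}) and the multiplicative defect arguments), so passing through $T_p$ here does double duty. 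Minor remarks on your write-up: in Cauchy's inequality the supremum is taken over the distinguished boundary of the polydisk rather than the Euclidean sphere $|z_N|=r$, which only changes constants; and one should also absorb the (fixed, $p$-independent) weight from $h^F$ alongside the $h^L$ weight $\varphi$, which again only affects the constant.
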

	The second result is an asymptotic version of Ohsawa-Takegoshi extension theorem for holomorphic jets, which we prove in Section \ref{sect_spec_low_bnd} by relying on the results from Section \ref{sect_tay_exp}.
	\begin{thm}\label{thm_ot_weak}
		For any $k \in \nat$, there are $C > 0$, $p_1 \in \nat^*$, such that for any $p \geq p_1$ and $g \in H^{0}_{(2)}(Y, {\rm{Sym}}^k (N^{1, 0})^* \otimes \iota^*(L^p \otimes F))$, there is $f \in H^{0}_{(2)}(X, L^p \otimes F \otimes \mathcal{J}_Y^k)$, such that $\res_{k, p} (f) = g$ and the following bound holds
		\begin{equation}\label{eq_ot_weak}
			\norm{f}_{L^2(X)} \leq \frac{C}{p^{\frac{n - m + k}{2}}} \norm{g}_{L^2(Y)}.
		\end{equation}
	\end{thm}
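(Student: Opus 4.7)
The strategy is to correct a natural approximate extension by a small $\dbar$-solution. Given $g$, set $\tilde f := \ext_{k,p}^{0} g$ from (\ref{eq_ext0_op}); this section is compactly supported in the Fermi tubular neighborhood $U$, belongs to $\mathcal{J}_Y^k$, and satisfies $\res_{k,p}(\tilde f) = g$ by construction. A Gaussian computation in Fermi coordinates, using (\ref{eq_brack_defn}), (\ref{eq_kappan}) and (\ref{eq_vol_comp_unif}), already yields $\|\tilde f\|_{L^2(X)} \leq C p^{-(n-m+k)/2} \|g\|_{L^2(Y)}$, which matches the target bound (\ref{eq_ot_weak}) up to an absolute constant.

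To promote $\tilde f$ to a true holomorphic extension, I would solve $\dbar u = \dbar \tilde f$ with $u \in \mathcal{J}_Y^{k+1}$ and $\|u\|_{L^2(X)} \leq C p^{-1/2} \|\dbar \tilde f\|_{L^2(X)}$, then set $f := \tilde f - u$. The vanishing order on $u$ guarantees that $f$ is holomorphic, lies in $\mathcal{J}_Y^k$, and has $k$-jet along $Y$ exactly equal to $g$. The desired estimate (\ref{eq_ot_weak}) then follows once we control $\|\dbar \tilde f\|_{L^2(X)} \leq C p^{-(n-m+k)/2} \|g\|_{L^2(Y)}$. This is obtained by unpacking $\dbar \tilde f$ in Fermi coordinates: terms in which $\dbar$ hits the cutoff $\rho$ are exponentially small in $p$ thanks to the Gaussian $\exp(-p\pi|Z_N|^2/2)$ evaluated on $|Z_N| \geq r_\perp/4$; the remaining contributions vanish identically in the flat Kähler model, where $\tilde f$ coincides with the exact model extension $\mathscr{E}_{n,m}^k$ of Section \ref{sect_model_calc}, and in general are produced by Taylor-expanding the metric, curvature, and second-fundamental-form data around $Y$.

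The central analytic input is a spectral lower bound $\laplcomp_p \geq c p$ for the Kodaira Laplacian acting on $(0,1)$-forms that are $L^2$-orthogonal to holomorphic sections and are adapted to the vanishing condition along $Y$. This gives the bound on $u := \dbardual (\dbar \dbardual)^{-1} \dbar \tilde f$ with the $p^{-1/2}$ gain provided by the gap. For $k = 0$, this spectral bound is the analytical core of the asymptotic Ohsawa-Takegoshi theorem for sections established in \cite{FinOTAs}. For general $k$, I would argue by induction on $k$, combining the Taylor expansion along $Y$ developed in Section \ref{sect_tay_exp} with the trace inequality (\ref{eq_res_is_l211}) of Theorem \ref{thm_res_is_l2}: a section in $\mathcal{J}_Y^{k+1}$ is decomposed via its Taylor profile in the normal direction, whose leading Hermite-type component is annihilated by the model Bochner operator only up to a term of size $cp$, while the remainder lies in $\mathcal{J}_Y^{k+2}$ and is absorbed by the inductive hypothesis.

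The main obstacle is precisely the requirement that $u$ vanish to order $k+1$ along $Y$ while retaining the $p^{-1/2}$ spectral-gap factor. A direct application of Hörmander's $L^2$-theorem produces only a solution orthogonal to the holomorphic sections, with no control on its jets along $Y$. The remedy — realized via the inductive scheme above — is to restrict the Kodaira Laplacian to sections with the prescribed vanishing and prove that the resulting self-adjoint operator still admits a spectral gap of order $p$, using that the extra curvature and second-fundamental-form contributions in the Bochner-Kodaira-Nakano identity remain subordinate to $cp$ on the allowed class for $p$ large. This decoupling of spectral estimates from the vanishing data is the technical heart of the proof.
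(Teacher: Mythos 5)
Your starting point matches the paper exactly: take $\tilde f := \ext_{k,p}^{0} g$ as the approximate extension, observe (via the rescaled Taylor expansion of $\dbar$ from Section \ref{sect_tay_exp}) that both $\mathcal{L}_N^{L^p\otimes F}$ and $\dbar_H^{L^p\otimes F}$ annihilate the model section, so $\alpha := \dbar\tilde f$ is subleading and vanishes to order $k+1$ along $Y$, and then correct by a $\dbar$-solution $u$ that must lie in $\mathcal{J}_Y^{k+1}$; your bookkeeping of the extra $p^{-1/2}$ gain is also correct. The divergence is in how the constraint $u\in\mathcal{J}_Y^{k+1}$ is enforced. The paper does not touch the Kodaira Laplacian: it passes to the complete Kähler manifold $X\setminus Y$ and solves $\dbar f_0 = \alpha$ in the weighted space $L^2(e^{-\delta_p})$, where $\delta_p = 2(n-m+k)\delta_Y - \epsilon p\alpha_Y$ as in (\ref{eq_delta_p_defn_wght}). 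The coefficient $2(n-m+k)$ makes $e^{-\delta_p}$ non-integrable along $Y$ to precisely the right order, so $L^2_{\delta_p}$-finiteness of $f_0$ automatically forces $f_0\in\mathcal{J}_Y^{k+1}$; the positivity (\ref{eq_pomg_deltap}), furnished by the quasi-plurisubharmonicity of $\delta_Y,\pm\alpha_Y$ (Theorem \ref{thm_plurisub}), supplies Demailly's $L^2$-estimate with the $p^{-1}$ gain, and undoing the weight gives the extra factor $p^{-(n-m+k+1)}$. No modified spectral operator is involved.

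Your proposal to restrict the Kodaira Laplacian to sections with prescribed vanishing along $Y$ and prove a spectral gap of order $p$ there is a genuinely different route and, as written, has a gap. The obstruction is concrete: the orthogonal projection onto the $\mathcal{J}_Y^{k+1}$-stratum does not commute with $\dbar$ or $\dbardual$, so ``restricting the Laplacian'' does not yield a self-adjoint operator to which the Bochner--Kodaira--Nakano identity can be applied directly, nor does it yield one whose kernel is the right thing. Your inductive sketch (peel off the top Hermite component of a section in $\mathcal{J}_Y^{k+1}$ and absorb the remainder into $\mathcal{J}_Y^{k+2}$) does not explain why, after solving at order $k+1$, the residual error still lies in the image of $\dbar$ on the next stratum---which is precisely where the induction would have to close. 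Making this rigorous would amount to rebuilding the Bismut--Lebeau localization scheme adapted to the vanishing filtration along $Y$, a far heavier undertaking than the singular-weight argument (standard since Ohsawa--Takegoshi and used for jets already by Popovici) that the paper employs and that sidesteps all of these difficulties.
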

	\begin{rem}
		For $k = 0$, Theorems \ref{thm_res_is_l2} and \ref{thm_ot_weak} were proved in \cite[\S 4.1 and Theorem 4.4]{FinOTAs}.
	\end{rem}
	\begin{proof}[Proof of Theorem \ref{thm_ex_ap}.]
	\begin{sloppypar}
		Clearly, it suffices to prove that the kernels and the images of the operators $(\res_Y \circ \nabla^k B_p^{X, k Y})^*$ and $\ext_{k, p}$ coincide for $p$ big enough.
		First of all, we have
		\begin{equation}\label{eq_eq_ker_im_dual}
			\ker (\res_Y \circ \nabla^k B_p^{X, k Y})^{*} = ( \Im (\res_Y \circ \nabla^k B_p^{X, k Y}) )^{\perp}.
		\end{equation}
		Now, in Theorem \ref{thm_res_is_l2}, we proved that there is $p_1 \in \nat$, such that for any $p \geq p_1$, $\res_Y \circ \nabla^k B_p^{X, k Y}$ has its image inside of $H^0_{(2)}(Y, {\rm{Sym}}^k (N^{1, 0})^* \otimes \iota^*(L^p \otimes F))$.
		In Theorem \ref{thm_ot_weak}, we proved that there is $p_1 \in \nat$, such that for any $p \geq p_1$, the image of $\res_Y \circ \nabla^k B_p^{X, k Y}$ coincides exactly with $H^0_{(2)}(Y, {\rm{Sym}}^k (N^{1, 0})^* \otimes  \iota^*(L^p \otimes F))$.
		From this, and (\ref{eq_eq_ker_im_dual}), we see that the kernels of $(\res_Y \circ \nabla^k B_p^{X, k Y})^{*}$ and $\ext_{k, p}$ coincide.
		\end{sloppypar}
		Similar reasoning shows that the images of those operators coincide as well.
		In particular, for $p \geq p_1$, there is a unique sequence of operators $A_{k, p}$ as in (\ref{eq_resp_ap_lem}).
	\end{proof}
	\begin{rem}
		As we see from the proof, we never used the precise calculations of the constants on the right-hand side of Theorems \ref{thm_res_is_l2} and \ref{thm_ot_weak}.
		We nevertheless decided to include the precise estimates there due to the fact that our proof becomes no easier if one wishes to get simply the existence.
	\end{rem}

\subsection{Bounds on jets of holomorphic sections}\label{sect_trace}
	The main goal of this section is to prove Theorem \ref{thm_res_is_l2}, i.e. to give an estimate of the $L^2$-norm of a jet of a holomorphic section in terms of the $L^2$-norm of the section.
	The following result is crucial for this proof and later arguments.
	\begin{thm}\label{thm_bk_off_diag}
		For any $r \in \nat$, there are $c, C > 0$, $p_1 \in \nat^*$ such that for $p \geq p_1$, we have
		\begin{equation}\label{eq_bk_off_diag}
			\Big|  B_p^X(x_1, x_2) \Big|_{\ccal^r(X \times X)} \leq C p^{n + \frac{r}{2}} \cdot \exp \big(- c \sqrt{p} \cdot \dist(x_1, x_2) \big).
		\end{equation}
	\end{thm}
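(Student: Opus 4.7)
The plan is to adapt the classical Ma-Marinescu approach via spectral gap and finite propagation speed to the bounded geometry setting; the result is due to \cite{MaMarOffDiag} and we essentially recall their strategy.

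The first step is to establish a \emph{uniform spectral gap} for the rescaled Kodaira Laplacian $\tfrac{1}{p}\Box_p$ acting on sections (and $(0,q)$-forms) of $L^p \otimes F$: there are $c_0 > 0$ and $p_1 \in \nat^*$ such that for all $p \geq p_1$ the spectrum of $\tfrac{1}{p}\Box_p$ on $\Omega^{0,>0}$ is contained in $[c_0,\infty)$, while $\ker \Box_p|_{\Omega^{0,0}} = H^0_{(2)}(X, L^p \otimes F)$. In the bounded geometry setting this follows from the Bochner-Kodaira-Nakano formula combined with the positivity of $\omega$, the uniform boundedness of $R^F$ and of the torsion terms attached to $g^{TX}$; uniformity in the base point is guaranteed by the bounded geometry assumptions on $(X,g^{TX})$, $(L,h^L)$ and $(F,h^F)$. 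Thus $B_p^X$ is the orthogonal projection onto $\ker(\tfrac{1}{p}\Box_p)$ separated from the rest of the spectrum by a $p$-independent gap.

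The second step is to represent $B_p^X$ via functional calculus. Pick an even Schwartz function $\varphi$ with $\varphi(0)=1$, $\varphi \equiv 0$ outside a neighbourhood of $0$ avoiding $[c_0,\infty)$, such that the Fourier transform $\hat\varphi$ satisfies $|\hat\varphi(\xi)| \leq C e^{-c|\xi|}$ (one can take $\varphi$ analytic in a strip, or use an approximation by such; a standard choice is $\varphi(x)=\psi(x)e^{-ax^2}$). The spectral gap gives $\|B_p^X - \varphi(\tfrac{1}{p}\Box_p)\| = O(p^{-\infty})$. Writing
\begin{equation*}
\varphi\bigl(\tfrac{1}{p}\Box_p\bigr) = \int_{\real} \hat\varphi(\xi)\,\cos\!\bigl(\xi\sqrt{\tfrac{1}{p}\Box_p}\bigr)\,d\xi,
\end{equation*}
we invoke the finite propagation speed of the wave equation associated with the Dirac/Kodaira operator: the Schwartz kernel of $\cos(\xi\sqrt{\Box_p/p})$ vanishes for $\dist(x_1,x_2) > |\xi|/\sqrt{p}$. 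Restricting to $|\xi| \leq c_1 \sqrt{p}\cdot \dist(x_1,x_2)$ in the integral thus contributes nothing to $B_p^X(x_1,x_2)$, and the remaining range is controlled by $|\hat\varphi(\xi)| \leq C e^{-c|\xi|}$, which produces the desired factor $\exp(-c\sqrt{p}\,\dist(x_1,x_2))$.

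The third step is to upgrade this $L^2$-operator estimate to a pointwise $\ccal^r$-estimate on the kernel; this is where bounded geometry enters decisively. One combines the above localization with uniform Sobolev embeddings and local elliptic regularity for $\Box_p$ in Fermi/exponential charts: the uniform lower bounds on injectivity radii and uniform bounds on the Riemann tensor and on $R^L,R^F$, give uniform local elliptic estimates of the form $\|s\|_{\ccal^r(B(x,r_0))} \leq C\sum_{j\leq r'} p^{-j/2} \|\Box_p^j s\|_{L^2}$ with $r' = r'(r,n)$ after rescaling in $\sqrt{p}$. Applying this twice (in $x_1$ and $x_2$) to the kernel of $B_p^X$ and using the $\Box_p B_p^X = 0$ identity together with the kernel of $\varphi(\Box_p/p)$ being close to $B_p^X$ in every Sobolev norm up to $O(p^{-\infty})$, produces the factor $p^{n+r/2}$ and preserves the exponential localization. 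The main technical point — and the step that requires the most care — is precisely this combination of finite propagation with uniform interior elliptic regularity on a non-compact manifold, where one must keep all constants independent of the basepoint thanks to bounded geometry.
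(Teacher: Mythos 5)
The paper does not actually give a proof of this statement: it simply cites Ma--Marinescu \cite{MaMarOffDiag} (for the bounded geometry case) and Dai--Liu--Ma \cite{DaiLiuMa} (for the compact case). Your outline is a faithful summary of the strategy of those references (spectral gap via Bochner--Kodaira--Nakano, representation of the spectral projector through functional calculus, finite propagation speed, and uniform elliptic regularity supplied by bounded geometry), so in that sense you are following the same approach the paper adopts by citation.

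There is, however, a genuine gap in your second step as written. You ask for a function $\varphi$ that simultaneously \emph{vanishes outside a neighbourhood of $0$} and has Fourier transform with $|\hat\varphi(\xi)| \leq C e^{-c|\xi|}$. By the Paley--Wiener theorem these two requirements are incompatible: a nonzero compactly supported function has an entire Fourier transform of exponential type, whose restriction to $\real$ cannot decay exponentially. And if you instead allow $\varphi$ to be analytic in a strip (so $\hat\varphi$ decays exponentially but $\varphi$ no longer has compact support), the comparison $\|B_p^X - \varphi(\tfrac{1}{p}\Box_p)\|$ given by the spectral gap is only bounded by $\sup_{\lambda \geq c_0}|\varphi(\lambda)|$, which is a fixed constant rather than $O(p^{-\infty})$. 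The actual arguments of Dai--Liu--Ma and Ma--Marinescu resolve this tension by using a \emph{one-parameter family} of even functions $F_{\tilde u}$ with $\widehat{F_{\tilde u}}$ supported in $[-\tilde u,\tilde u]$ (so finite propagation gives exact vanishing of the kernel outside a geodesic ball) and with $|F_{\tilde u}(a)| \leq C e^{-c\tilde u}$ for $|a|$ bounded away from $0$; choosing $\tilde u \sim \sqrt{p}\,\dist(x_1,x_2)$ one gets from the spectral gap an error that decays like $e^{-c\sqrt{p}\,\dist(x_1,x_2)}$, and this is where the exponential weight in the estimate actually comes from. Your third step (upgrading $L^2$ operator bounds to pointwise $\ccal^r$ bounds on the kernel via uniform Sobolev/elliptic estimates in exponential charts) is correct as stated and is indeed where bounded geometry is decisive.
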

	\begin{proof}[Proof of Theorem \ref{thm_bk_off_diag}]
		For compact manifolds, Theorem \ref{thm_bk_off_diag} was implicit in Dai-Liu-Ma \cite[Theorem 4.18]{DaiLiuMa}.
		For manifolds of bounded geometry it was proved by Ma-Marinescu in \cite[Theorem 1]{MaMarOffDiag}. For the summary of numerous previous works on the off-diagonal estimates of the Bergman kernel as in (\ref{eq_bk_off_diag}), refer to \cite[p. 1328]{MaMarOffDiag}.
	\end{proof}
	\begin{proof}[Proof of Theorem \ref{thm_res_is_l2}]
		The main idea of our proof is to study the Schwartz kernel of the operator sending a holomorphic section to its $k$-jet and to use the exponential bound on the Bergman kernel.
		\par 
		First of all, let us verify that for any $f$ as above, $(\nabla^k f)|_Y$ is a holomorphic section of the vector bundle ${\rm{Sym}}^k (N^{1, 0})^* \otimes \iota^*(L^p \otimes F)$. 
		As this is a local statement, let us fix a point $y_0 \in Y$ with holomorphic coordinates $t_1, \ldots, t_m$, $z_1, \ldots, z_{n - m}$, around $y_0$ in $X$ in such way that $z_1, \ldots, z_{n - m}$ vanish along $Y$. 
		From Weierstrass division theorem, cf. \cite[Theorem II.2.3]{DemCompl}, we see that in a local holomorphic trivialization of $L$ and $F$ around $y_0$, we may represent $f$ in the following way
		\begin{equation}\label{eq_weier_div}
			f = \sum_{\substack{ \beta \in \nat^{n - m} \\ |\beta| = k }} z^{\beta} \cdot g_{\beta}(t, z),
		\end{equation}
		where $g_{\beta}$ are some local holomorphic functions.
		Using this representation and (\ref{eq_nabla_sym_der_ident}), we see that in the induced local frame over $Y$, the following identity holds
		\begin{equation}
			(\nabla^k f)|_Y = k! \cdot \sum_{\substack{ \beta \in \nat^{n - m} \\ |\beta| = k }} (dz)^{\odot \beta} \cdot g_{\beta}(t, 0).
		\end{equation}
		Since $g_{\beta}$ are holomorphic, we see that $(\nabla^k f)|_Y$ is a holomorphic section.
		\par 
		Let us now verify that the restriction satisfies the stated $L^2$-bound.
		Consider the operator $T_p : L^2(X, L^p \otimes F) \to \ccal^{\infty}(Y, {\rm{Sym}}^k (\iota^* TX^{1, 0})^* \otimes \iota^*(L^p \otimes F))$, defined as follows
		\begin{equation}
			T_p = \res_Y \circ ((\nabla^{1, 0})^k B_p^X),
		\end{equation}
		where $\nabla^{1, 0}$ represents above the holomorphic component of the covariant derivative, induced by the Chern connection on $L$, $F$ and $TX$.
		Let us now calculate the norm of $T_p$.
		\par 
		For this, we remark that the Schwartz kernel $T_p(y, x)$, $y \in Y$, $x \in X$, evaluated with respect to the volume form $dv_X$, of $T_p$ is given by
		\begin{equation}\label{eq_relat_res_oper_bergm_kern}
			T_p(y, x) =  (\nabla^{1, 0})^k B_p^X(y, x).
		\end{equation}
		Consider now the operator $U_p := T_p \circ T_p^*$.
		Directly from (\ref{eq_relat_res_oper_bergm_kern}) and the fact that $B_p^X \circ B_p^X = B_p^X$, we conclude that the Schwartz kernel $U_p(y, y')$, $y, y' \in Y$, of $U_p$, evaluated with respect to the volume form $dv_Y$, satisfies
		 \begin{equation}\label{eq_up_schw}
		 	U_p(y, y') = ((\nabla^{1, 0})^k B_p^X (\nabla^{1, 0; *})^k)(y, y'),
		 \end{equation}
		 where $\nabla^{1, 0; *}$ is the induced connection on the duals of $L^p \otimes F$ and $T^{1, 0}X$.
		 Directly from Theorem \ref{thm_bk_off_diag}, Proposition \ref{prop_norm_bnd_distk_expbnd} and (\ref{eq_up_schw}), we deduce that there are $C > 0$ and $p_1 \in \nat$, such that for $p \geq p_1$, we have $\| U_p \| \leq C p^{n - m + k}$.
		 We deduce the bound (\ref{eq_res_is_l211}) by this, the trivial remark $\| U_p \| = \| T_p \|^2$ and the fact that for any $f \in H^{0}_{(2)}(X, L^p \otimes F \otimes \mathcal{J}_Y^{k})$, we have $T_p (f) = \res_{k, p}(f)$.
	\end{proof}
	
\subsection{Taylor expansion of the holomorphic differential near a submanifold}\label{sect_tay_exp}
	The main goal of this section is to recall the calculation of the first two terms of the Taylor expansion of $\dbar^{L^p \otimes F}$-operator, considered in a shrinking neighborhood of $Y$ of size $\frac{1}{\sqrt{p}}$, as $p \to \infty$. 
	This section is taken almost entirely from \cite[\S 4.1]{FinOTAs}.
	\par 
	We consider a triple $(X, Y, g^{TX})$ of bounded geometry.
	By means of the exponential map as in (\ref{eq_kappan}), we identify a neighborhood of the zero section $B_{r_{\perp}}(N)$ in the normal bundle $N$, to a neighborhood $U := B_Y^X(r_{\perp})$ of $Y$ in $X$.
	\par 
	Recall that the projection $\pi : U \to Y$ and the identifications of $L, F$ to $\pi^* (L|_Y), \pi^* (F|_Y)$ in $B_Y^X(r_{\perp})$ were defined before (\ref{eq_ext0_op}).
	We similarly identify $TX$ to $\pi^* (TX|_Y)$ over $B_Y^X(r_{\perp})$ using the parallel transport with respect to the Levi-Civita connection $\nabla^{TX}$.
	Remark that since $g^{TX}$ is Kähler by (\ref{eq_gtx_def}), the decomposition $TX \otimes_{\real} \comp = T^{1, 0} X \oplus T^{0, 1} X$ is preserved by $\nabla^{TX}$, cf. \cite[Theorem 1.2.8]{MaHol}. In other words, the identification of $TX$ with $\pi^* (TX|_Y)$ induces the identifications 
	\begin{equation}\label{eq_tgt_isom}
		\tau: \pi^* (T^{1, 0} X|_Y) \to T^{1, 0} X|_U, \qquad \tau: \pi^* (T^{0, 1} X|_Y) \to T^{0, 1} X|_U.
	\end{equation}
	\par 
	We define the $1$-form $\Gamma^{F}$ with values in $\enmr{\pi^* (F|_Y)}$
	\begin{equation}\label{eq_gamma_l_def}
		\Gamma^{E} = \nabla^E - \pi^*(\nabla^E|_{Y}),
	\end{equation}
	where we implicitly used the above isomorphism.
	Similarly, we define $\Gamma^L$. 
	Recall also that the connection $\nabla^N$ on $N$ was introduced before (\ref{eq_sec_fund_f}).
	\begin{comment}
	As both manifolds $(X, g^{TX})$, $(Y, g^{TY})$ are Kähler, and, hence, by \cite[Theorem 1.2.8]{MaHol}, the respective Levi-Civita connections coincide with the connections induced by the Chern connections on the holomorhic tangent bundles, the connection $\nabla^N$ coincides with the Chern connection on $(N, g^N)$ by the usual properties of short exact sequences for Hermitian vector bundles, cf. \cite[Proposition 1.6.6]{KobaVB}.
	\end{comment}
	The connection $\nabla^N$ induces the splitting 
	\begin{equation}
		TN = N \oplus T^H N
	\end{equation}
	of the tangent space of the total space of $N$. Here $T^H N$ is the horizontal part of $N$ with respect to the connection $N$. 
	If $U \in TY$, we denote by $U^H \in T^H N$ the horizontal lift of $U$ in $T^H N$.
	\par 
	For $\epsilon > 0$, we denote by $\mathbb{E}(\epsilon)$ (resp. $\mathbb{E}$) the set of smooth sections of $\pi^*(L^p|_Y\otimes F|_Y)$ on $B_{\epsilon}(N)$ (resp. on the total space of $N$).
	We also denote by $\mathbb{E}^{(0, 1)}(\epsilon)$ (resp. $\mathbb{E}^{(0, 1)}$) the set of smooth sections of $\pi^*(T^{*(0, 1)}X|_{Y}) \otimes \pi^*(L^p|_Y\otimes F|_Y)$ on $B_{\epsilon}(N)$ (resp. on the total space of $N$).
	\par 
	Clearly, the above isomorphisms allow us to see $\dbar^{L^p \otimes F}$ as an operator
	\begin{equation}\label{eq_dbar_op_interpr}
		\dbar^{L^p \otimes F} : \mathbb{E}(r_{\perp}) \to \mathbb{E}^{(0, 1)}(r_{\perp}).
	\end{equation}
	\par 
	We fix a point $y_0 \in Y$ and an orthonormal frame $(e_1, \ldots, e_{2m})$ (resp. $(e_{2m+1}, \ldots, e_{2n})$) in $(T_{y_0}Y, g^{TY})$ (resp. in $(N_{y_0}, g^{N}_{y_0})$) such that (\ref{eq_cond_jinv}) is satisfied.
	Using the exponential coordinates on $Y$ and the parallel transport of $(e_{2m+1}, \ldots, e_{2n})$ along the geodesics on $Y$, as in Fermi coordinates $\psi_{y_0}$ in (\ref{eq_defn_fermi}), we introduce complex coordinates $z_1, \ldots, z_m$ on $Y$ and linear “vertical" coordinates $z_{m + 1}, \ldots, z_n$ on $N$.
	Using those coordinates, we define the operators
	\begin{equation}\label{eq_defn_dbarhn}
		\dbar_H^{L^p \otimes F}, \mathcal{L}_N^{L^p \otimes F} : \mathbb{E} \to \mathbb{E}^{(0, 1)}, 
	\end{equation}
	by prescribing their action at a point $(y_0, Z_N)$, $Z_N \in \real^{2(n-m)}$, as follows
	\begin{equation}\label{eq_hor_norm_dop}
		\dbar_H^{L^p \otimes F} = \sum_{i = 1}^{m} d \overline{z}_i|_{y_0} \cdot \Big( \frac{\partial}{\partial \overline{z}_i} \big|_{y_0}\Big)^{H},
		\qquad 
		\mathcal{L}_N^{L^p \otimes F} = \sum_{i = m+1}^{n} d\overline{z}_i|_{y_0} \cdot \Big( \frac{\partial}{\partial \overline{z}_i} + \frac{\pi z_i}{2} \Big).
	\end{equation}
	The first differentiation in (\ref{eq_hor_norm_dop}) is well-defined because $\pi_* ( \frac{\partial}{\partial \overline{z}_i} |_{y_0} )^{H} =  \frac{\partial}{\partial \overline{z}_i} |_{y_0}$ is of type $(0, 1)$, and the second derivation is well-defined because the vector bundles are trivialized along fibers of $\pi$.
	\par 
	Below a variable $t \in \real$ is related to $p \in \nat$ by 
	\begin{equation}\label{eq_t_p_rel}
		t = \frac{1}{\sqrt{p}}.
	\end{equation}
	For any $\epsilon > 0$ define the rescaling operator $F_t : \mathbb{E}(\epsilon) \to  \mathbb{E}(\frac{\epsilon}{t})$ for $f \in \mathbb{E}(\epsilon)$ as follows
	\begin{equation}\label{eq_ft_defn}
		(F_t f )(y, Z_N) := f \big( y, t Z_N \big), \qquad (y, Z_N) \in B_{\frac{\epsilon}{t}}(N).
	\end{equation}
	The operator $F_t : \mathbb{E}^{(0, 1)}(\epsilon) \to  \mathbb{E}^{(0, 1)}(\frac{\epsilon}{t})$ is defined in an analogous way.
	\begin{thm}[{\cite[Theorem 4.3]{FinOTAs}}]\label{thm_tay_exp_dbar}
		As $p \to \infty$, we have
		\begin{equation}\label{eq_tay_exp_dbar}
			F_t \circ \dbar^{L^p \otimes F} \circ F_t^{-1}
			=
			\frac{1}{t}
			\mathcal{L}_N^{L^p \otimes F}
			+
			\dbar_H^{L^p \otimes F}
			+
			O\big( 
			t |Z_N|^2 \dbar_N + t |Z_N| \dbar_H + t|Z_N|
			\big),
		\end{equation}
		where $O(t |Z_N|^2 \dbar_N + t |Z_N| \dbar_H + t|Z_N|)$
		is an operator of the form 
		$
			\sum_{i = 1}^{m} a_i(t, y, Z_N) \cdot d \overline{z}_i|_{y_0} \cdot ( \frac{\partial}{\partial \overline{z}_i} |_{y_0})^{H},
			+
			\sum_{j = m+1}^{n} b_j(t, y, Z_N) \cdot d \overline{z}_j|_{y_0} \cdot \frac{\partial}{\partial \overline{z}_j},
			+
			c(t, y, Z_N),
		$
		such that there is a constant $C > 0$, for which $|a_i(t, y, Z_N)| \leq C t |Z_N|^2$, $|b_j(t, y, Z_N)| \leq C t |Z_N|$, $|c(t, y, Z_N)| \leq C t|Z_N|$ holds for any $y \in Y$, $|Z_N| < r_{\perp}$, $i = 1, \ldots, m$, and $j = m + 1, \ldots, n$. 
	\end{thm}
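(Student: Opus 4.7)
The plan is to unfold $\dbar^{L^p \otimes F}$ in the Fermi trivialization $\tau$ from (\ref{eq_tgt_isom}), decompose it into horizontal and normal pieces according to the splitting $TN = N \oplus T^HN$ induced by $\nabla^N$, and then track the effect of the rescaling $F_t$ directly by Taylor-expanding the associated connection $1$-forms $\Gamma^L, \Gamma^F$ in powers of $Z_N$ about $y_0 \in Y$. Under the identifications $\tau$, the operator splits as
\begin{equation*}
\dbar^{L^p \otimes F} = \dbar_H + \dbar_N + p \cdot (\Gamma^L)^{(0,1)} + (\Gamma^F)^{(0,1)},
\end{equation*}
where $\dbar_H, \dbar_N$ are the bare horizontal and vertical $\dbar$-operators on $\pi^*(L^p|_Y \otimes F|_Y)$-valued functions in this trivialization, while the $(0,1)$-parts of $\Gamma^L, \Gamma^F$ enter as zeroth-order multiplication operators. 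Since $F_t$ acts only on the normal coordinate, a direct computation yields $F_t \circ \partial/\partial Z_N^j \circ F_t^{-1} = t^{-1}\partial/\partial Z_N^j$, $F_t \circ (\partial/\partial \overline{z}_i)^H \circ F_t^{-1} = (\partial/\partial \overline{z}_i)^H$, whereas the multiplication operators $\Gamma^L, \Gamma^F$ get re-evaluated at $(y_0, tZ_N)$.

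The key input is then the Taylor expansion of $\Gamma^L$. By construction of the Fermi trivialization (parallel transport first along $Y$, then along normal geodesics), $\Gamma^L$ lies in the radial gauge along the normal direction, i.e.\ $\Gamma^L$ contracted with the radial vector field $Z_N$ vanishes identically, and $\Gamma^L|_Y = 0$ on tangential vectors. The Poincaré-type formula for connections in radial gauge then gives, at $(y_0, Z_N)$,
\begin{equation*}
\Gamma^L\big(\tfrac{\partial}{\partial Z_N^j}\big)\big|_{(y_0, Z_N)} = \tfrac{1}{2} R^L_{y_0}\big(Z_N, \tfrac{\partial}{\partial Z_N^j}\big) + O(|Z_N|^2),
\end{equation*}
for $j$ normal, with the analogous expansion for horizontal directions. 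Substituting into the rescaled expression and using $\omega = \tfrac{\imun}{2\pi}R^L$ together with the metric normalization $g(\partial/\partial z_j, \partial/\partial \overline{z}_j) = 1/2$, the leading normal contribution of $p \cdot (\Gamma^L)^{(0,1)}_N|_{(y_0, tZ_N)}$ equals $t^{-1} \cdot \pi z_j/2$ coupling to $d\overline{z}_j$, which combines with $t^{-1}\dbar_N$ to produce precisely $t^{-1}\mathcal{L}_N^{L^p \otimes F}$. For the horizontal direction, the linear-in-$Z_N$ coefficient involves $R^L(z_N, \partial/\partial \overline{z}_i)|_{y_0}$ for $i \leq m$, which \emph{vanishes} by the Kähler orthogonality $T^{1,0}Y \perp N^{1,0}$ combined with the $(1,1)$-type of $R^L$; hence $\dbar_H^{L^p \otimes F}$ emerges at leading order. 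The remaining terms of the Taylor expansions, scaled by $p = t^{-2}$ and re-evaluated at $tZ_N$, produce coefficients matching the three stated error terms, with uniformity in $y_0$ and in $(y, Z_N)$ following from the bounded-geometry hypotheses via Proposition \ref{prop_bndg_tripl} and Remark \ref{rem_bndg_tripl}.

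The main obstacle is ensuring that no spurious $O(1)$-in-$t$ remainder appears in the horizontal direction after the multiplication by $p = t^{-2}$, since the naive bound $p \cdot \Gamma^{L,H,(0,1)}|_{(y_0,tZ_N)} = p \cdot O(|tZ_N|^2) = O(|Z_N|^2)$ is not good enough. The resolution uses crucially that the linear-in-$Z_N$ term of the horizontal $\Gamma^L$ vanishes by the $(1,1)$-type of $R^L$ together with the orthogonal decomposition $T^{1,0}X|_Y = T^{1,0}Y \oplus N^{1,0}$, which is specific to complex submanifolds of Kähler manifolds; combined with the bound $|Z_N| < r_{\perp}$ from the tubular neighborhood, the surviving contributions can be absorbed into the stated error form. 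Careful book-keeping of all tensorial objects and orders of vanishing through the Fermi-coordinate computation is the bulk of the technical work.
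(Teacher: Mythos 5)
Your central observation --- that $R^L$ contracted with one normal and one tangential direction vanishes at $Y$, because of the $(1,1)$-type of $R^L$ combined with $\omega=\tfrac{\imun}{2\pi}R^L$ and the orthogonality $N^{1,0}\perp T^{1,0}Y$ (Proposition~\ref{prop_prop_sfndform}) --- is indeed the crucial mechanism that produces $\dbar_H^{L^p\otimes F}$ at order $O(1)$ instead of a $t^{-1}$-singular horizontal contribution, and your computation of the $\tfrac{\pi z_j}{2}$ term in $\mathcal{L}_N^{L^p\otimes F}$ from the radial-gauge Poincar\'e formula is correct. However, the treatment of the error has a genuine gap, which you yourself flag as ``the main obstacle'' but then fail to close. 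You note that the naive bound $p\cdot\Gamma^{L,H}|_{(y_0,tZ_N)} = p\cdot O(|tZ_N|^2) = O(|Z_N|^2)$ is not good enough, and then assert that the ``resolution'' is the vanishing of the linear term of $\Gamma^{L,H}$. But that vanishing is precisely what produces the $O(|tZ_N|^2)$ estimate in the first place; it is already used in your ``naive bound'' and therefore cannot resolve the problem. After removing the linear term the remaining contribution is still $O(|Z_N|^2)$, uniformly in $t$, and this does \emph{not} vanish as $t\to 0$ and cannot be absorbed into the claimed error form $|c|\le Ct|Z_N|$ merely because $|Z_N|<r_\perp$.

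What is actually required is that $\Gamma^L(\partial/\partial\overline{z}_i) = O(|Z_N|^3)$ for $i\le m$, i.e.\ that the \emph{quadratic} coefficient in $Z_N$ also vanishes. This is a further cancellation, not implied by the linear one: it depends on the deviation of the Fermi-coordinate frame from the $\nabla^{TX}$-parallel frame along normal geodesics. Concretely, the horizontal coordinate field $\partial/\partial\overline{z}_i$ at $(y_0, sZ_N)$ is a Jacobi field whose initial covariant derivative is $A(\partial/\partial\overline{z}_i)Z_N$, which is a \emph{tangential} vector since $A$ interchanges $TY$ and $N$; the resulting quadratic contribution to $\Gamma^L(\partial/\partial\overline{z}_i)$ is again of the form $R^L(\text{normal},\text{tangential})$, hence vanishes for the same reason as the linear term. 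Closing this argument, and controlling the remaining cubic and higher terms, uses in an essential way that $\omega$ is $\nabla^{TX}$-parallel (so $\nabla R^L=0$), which you nowhere invoke. Relatedly, the decomposition $\dbar^{L^p\otimes F}=\dbar_H+\dbar_N+p(\Gamma^L)^{(0,1)}+(\Gamma^F)^{(0,1)}$ is not an identity: the $(0,1)$-coframe $\tau(d\overline{z}_i|_{y_0})$ furnished by parallel transport does not coincide with the Fermi-coordinate coframe away from $Y$, and the $\dbar_H+\dbar_N$ piece carries further Jacobi-field corrections that also enter the $a_i,b_j,c$ estimates; these cannot be dismissed as ``bookkeeping'' without verifying their orders of vanishing, since they are of the same size as the terms you do track.
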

	\begin{rem}
		Bismut-Lebeau in \cite[Theorem 8.18]{BisLeb91} established an analogue of Theorem \ref{thm_tay_exp_dbar}, which corresponds to trivial $(L, h^L)$ in our setting. 
		In this case the operator $\mathcal{L}_N^{L^p \otimes F}$ from (\ref{eq_hor_norm_dop}) doesn't have an additional $\frac{\pi z_i}{2}$ term.
		Another closely related Taylor expansion is due to Dai-Liu-Ma \cite[Theorem 4.6]{DaiLiuMa}, and it corresponds to $Y$ equal to a point in our setting. 
	\end{rem}

\subsection{Asymptotic Ohsawa-Takegoshi extension theorem for holomorphic jets}\label{sect_spec_low_bnd}	
	The main goal of this section is to establish the asymptotic analogue of Ohsawa-Takegoshi extension theorem for holomorphic jets. In other words, we show that Theorem \ref{thm_ot_weak} holds.
	\par 
	The main idea of the proof of Theorem \ref{thm_ot_weak}, which follows rather closely our proof from \cite[Theorem 4.4]{FinOTAs}, corresponding to the case $k = 0$, is to pass through the general framework of the proof of Ohsawa-Takegoshi extension theorem. We choose a smooth extension of $g$ over $X$, and then obtain the holomorphic extension by modifying the smooth one using a solution of $\dbar$-equation with singular weight, which forces the solution to annihilate along $Y$.
	For holomorphic jets, such a strategy was applied in many places before, e.g. \cite{PopovNonredExt}.
	\par
	The novelty here as well as in \cite{FinOTAs} is that instead of choosing an arbitrary smooth extension, we choose a specific one, given by the operator (\ref{eq_ext0_op}). 
	This allows us to significantly simplify the original proof of Ohsawa-Takegoshi extension theorem (in our asymptotic setting).
	In particular, instead of considering a double sequence of singular weights (depending on $p$ and on additional parameter $\epsilon$) which would dampen the fact that we do not know much of an arbitrary smooth extension, it would suffice to consider a single sequence of weights (only depending on $p$).
	\par
	Recall that the function $\rho$ was defined in (\ref{defn_rho_fun}). 
	Let us now consider the functions $\delta_Y : X \setminus Y \to \real$, $\alpha_Y : X \to \real$,  defined as
	\begin{equation}\label{eq_delta_defn_y}
	\begin{aligned}
		&
		\delta_Y(x) := \log \big(\dist_X(x, Y) \big) \cdot \rho 
		\Big(
			 \frac{\dist_X(x, Y)}{r_{\perp}} 
		\Big),
		\\
		&
		\alpha_Y(x) := \dist_X(x, Y)^2 \cdot \rho 
		\Big(
			 \frac{\dist_X(x, Y)}{r_{\perp}} 
		\Big)
		+
		\Big(
		1
		-
		\rho 
		\Big(
			 \frac{\dist_X(x, Y)}{r_{\perp}} 
		\Big)
		\Big)
		.
	\end{aligned}
	\end{equation}
	\par 
	Now, recall that a function $f : X \to [-\infty, +\infty[$ on a complex Hermitian manifold $(X, \omega)$ is called quasi-plurisubharmonic if it is upper-semicontinuous, and there is a constant $C \in \real$, such that the following inequality holds in the distributional sense
	\begin{equation}\label{eq_distr_ineq}
		\imun \partial \dbar f \geq -C \omega.
	\end{equation}
	We denote by ${\rm{PSH}}(X, C \omega)$ the set of quasi-plurisubharmonic functions $f$, verifying (\ref{eq_distr_ineq}).
	\begin{thm}[{\cite[Theorem 2.31]{FinOTAs}}]\label{thm_plurisub}
		There is $C > 0$, such that $\delta_Y, \alpha_Y, -\alpha_Y \in {\rm{PSH}}(X, C \omega)$.
	\end{thm}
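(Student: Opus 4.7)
The plan is to reduce the statement to a local calculation using bounded geometry, then to separate the contributions from three regimes: the region near $Y$ (where $\rho \equiv 1$), the transition annulus (where $\rho$ varies), and the region far from $Y$ (where $\rho \equiv 0$). Outside the support of $\rho(\dist_X(\cdot, Y)/r_{\perp})$, one has $\delta_Y \equiv 0$ and $\alpha_Y \equiv 1$, so these are trivially (quasi-)plurisubharmonic. Hence only the tubular neighborhood $\{\dist_X(x, Y) \leq r_{\perp}/2\}$ requires work.

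The key geometric input is the following uniform local structure: at each $y_0 \in Y$, using the bounded geometry of the triple $(X, Y, g^{TX})$ together with the K\"ahler condition, I would choose local \emph{holomorphic} coordinates $(t_1, \ldots, t_m, z_1, \ldots, z_{n-m})$ on a neighborhood of uniform size, such that $Y = \{z = 0\}$ locally and the coefficients of $g^{TX}$ and their derivatives are uniformly bounded (analogously to Proposition \ref{prop_bndg_tripl} but in the holomorphic category). In these coordinates, I would establish a smooth expansion of the form $\dist_X(x, Y)^2 = H(t, z)\,|z|^2$, where $H$ is a smooth positive function whose $\ccal^k$-norms are bounded uniformly in $y_0$, with $H(t, 0)$ determined by the induced Hermitian metric on $N^{1,0}$ at the point $t$. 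This expansion holds because $Y$ is a complex submanifold and $g^{TX}$ is K\"ahler, so the leading-order normal direction is the Hermitian norm on $N^{1,0}$ and the correction is smooth in $(t, z)$.

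Granted this expansion, the treatment of $\delta_Y$ is straightforward. Where $\rho \equiv 1$, write
\begin{equation*}
\delta_Y = \tfrac{1}{2}\log|z|^2 + \tfrac{1}{2}\log H(t, z).
\end{equation*}
The first summand is plurisubharmonic as a standard log of a sum of moduli squared of holomorphic functions, and the second is smooth with $|\imun \partial \dbar \log H| \leq C \omega$ uniformly in $y_0$, by the bounds on $H$ and bounded geometry of $\omega$. In the transition annulus $r_{\perp}/4 \leq \dist_X(\cdot, Y) \leq r_{\perp}/2$, the function $\log \dist_X(\cdot, Y)$ is smooth and bounded, and $\rho(\dist_X(\cdot, Y)/r_{\perp})$ is smooth with uniformly bounded $\ccal^2$-norm (using that $\dist_X(\cdot, Y)$ is itself $\ccal^k$-controlled on this annulus by bounded geometry); therefore $|\imun \partial \dbar \delta_Y| \leq C \omega$ there. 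Combining the three regimes yields $\delta_Y \in {\rm{PSH}}(X, C\omega)$.

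For $\pm \alpha_Y$, the point is that $\alpha_Y$ is a \emph{globally smooth} function on $X$, including along $Y$, since $\dist_X(\cdot, Y)^2 = H|z|^2$ extends smoothly across $Y$. Using the uniform bounds on $H$ from bounded geometry, $\alpha_Y$ has uniformly bounded $\ccal^2$-norm on $X$, so $\pm \imun \partial \dbar \alpha_Y \leq C\omega$, giving $\alpha_Y, -\alpha_Y \in {\rm{PSH}}(X, C\omega)$. The main obstacle is the uniform expansion $\dist_X(x, Y)^2 = H(t, z)\,|z|^2$ with $H \in \ccal^{\infty}_b$: this requires a uniform choice of holomorphic Fermi-type coordinates adapted to $Y$, together with uniform estimates coming from the assumed bounds on the second fundamental form and the curvatures. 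Once this is in hand, the plurisubharmonicity of $\log|z|^2$ and bounded geometry finish the proof mechanically.
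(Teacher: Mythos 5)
Your treatment of $\pm\alpha_Y$ is essentially sound: $\dist_X(\cdot, Y)^2$ extends smoothly across $Y$ (this is classical from the existence of the tubular neighbourhood, and in Fermi coordinates it is literally $|Z_N|^2$), the cut-off and the constant piece patch together to a globally smooth function, and bounded geometry gives a uniform $\ccal^2$-bound, whence $\pm\imun\partial\dbar\alpha_Y \leq C\omega$. The appeal to a factorization $\dist^2 = H|z|^2$ is an unnecessary detour here — you don't need it to see that $\dist^2$ is smooth near $Y$.

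For $\delta_Y$, however, the argument has a genuine gap, and it sits exactly where you flag the "main obstacle." The claimed identity $\dist_X(x, Y)^2 = H(t, z)\,|z|^2$ with $H$ a \emph{smooth} (scalar) function is false in general. The quotient $\dist^2/|z|^2$ is bounded above and below, but it is not even $C^1$ across $\{z = 0\}$. Two independent obstructions occur. First, even at leading order, the normal Hermitian form need not be a scalar multiple of $|z|^2$: if $\dist^2 = a_1|z_1|^2 + a_2|z_2|^2 + O(|z|^3)$ with $a_1 \neq a_2$, the ratio has a discontinuous angular dependence at $z = 0$. (One can normalize the coordinates so that $a_i \equiv 1$ at each point of $Y$, which removes this particular issue.) Second, and unavoidably, the cubic and higher corrections spoil smoothness of the quotient: for instance, if the Taylor expansion contains a term $z_1|z_2|^2$, then $z_1|z_2|^2/|z|^2$ has a partial derivative $\partial_{z_1}$ that takes the value $0$ along $z_2 = 0$ and the value $1$ along $z_1 = 0$, so the ratio fails to be $C^1$ at the origin. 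Since your proof of quasi-plurisubharmonicity of $\delta_Y$ hinges on writing $\log\dist^2 = \log|z|^2 + \log H$ with $\log H$ smooth and $\ccal^2$-bounded, it does not go through.

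What is actually needed is an estimate of the form $\imun\partial\dbar\dist^2 \geq -C\,\dist^2\cdot\omega$ in a uniform tubular neighbourhood. This is obtained not by factoring, but by analysing the vanishing orders: writing $Q = \dist^2$, every monomial in the Taylor expansion of $Q$ along $Y$ has total degree $\geq 2$ in $(z_N, \bar z_N)$, so the tangential block $\partial^2 Q/\partial z_Y\partial\bar z_Y$ is $O(Q)$, the mixed block is $O(\sqrt{Q})$, and the normal block is uniformly positive definite; a Schur-complement argument then shows $\imun\partial\dbar Q + CQ\omega \geq 0$ for $C$ large, uniformly in $y_0$ by bounded geometry. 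Combined with the regularization $\log\sqrt{Q + \epsilon^2}$ and the identity $\partial\dbar\log(Q+\epsilon^2) = \frac{\partial\dbar Q}{Q+\epsilon^2} - \frac{\partial Q\wedge\dbar Q}{(Q+\epsilon^2)^2}$ (whose second term is nonpositive), this yields $\imun\partial\dbar\log\dist \geq -C\omega$ after taking $\epsilon\to 0$. Your three-region decomposition and the transition-annulus estimate are fine and would carry over unchanged once this step is replaced.
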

	\par 
	Another result we will use concerns the $L^2$-bounds of derivatives of holomorphic sections.
	\begin{prop}[{\cite[Proposition 4.5]{FinOTAs}}]\label{prop_der_bound}
		For any $k \in \nat$, there are $C > 0$, $p_1 \in \nat^*$, such that for any $p \geq p_1$ and $f \in H^{0}_{(2)}(X, L^p \otimes F)$, we have
		\begin{equation}\label{eq_der_bound}
			\big\| \nabla^k f \big\|_{L^2(X)} \leq C p^{\frac{k}{2}} \big\| f \big\|_{L^2(X)},
		\end{equation}
		where $\nabla$ is the covariant derivative with respect to the induced Chern and Levi-Civita connections.
	\end{prop}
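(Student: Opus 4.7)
The plan is to combine the reproducing property of the Bergman kernel with the off-diagonal decay from Theorem \ref{thm_bk_off_diag} and the operator-norm estimate from Proposition \ref{prop_norm_bnd_distk_expbnd}. For $f \in H^{0}_{(2)}(X, L^p \otimes F)$, the reproducing identity $f = B_p^X f$ gives $\nabla^k f = D_{k, p} f$, where $D_{k, p}$ is the integral operator from $L^2(X, L^p \otimes F)$ into $L^2(X, (T^*X)^{\otimes k} \otimes L^p \otimes F)$ whose Schwartz kernel is $\nabla_1^k B_p^X(x_1, x_2)$, with $\nabla_1$ denoting the covariant derivative in the first variable induced by the Chern connections on $L$, $F$ and the Levi-Civita connection on $TX$.

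By Theorem \ref{thm_bk_off_diag} applied with $r = k$, there exist $c, C_0 > 0$ and $p_1 \in \nat^*$ such that for all $p \geq p_1$ and $x_1, x_2 \in X$,
\[
\big| \nabla_1^k B_p^X(x_1, x_2) \big| \leq C_0 \, p^{n + \frac{k}{2}} \exp \big( -c \sqrt{p} \, \dist(x_1, x_2) \big).
\]
Since $X$ has real dimension $2n$, setting $l := c\sqrt{p}$ allows us to absorb the factor $p^{n}$ into $l^{2n}$, yielding
\[
\big| \nabla_1^k B_p^X(x_1, x_2) \big| \leq C_1 \, p^{\frac{k}{2}} \cdot l^{2n} \exp \big( -l \, \dist(x_1, x_2) \big),
\]
for a constant $C_1 > 0$ independent of $p$. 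Applying Proposition \ref{prop_norm_bnd_distk_expbnd} to the triple $(X, X, g^{TX})$ (so that $K = M = H = X$, the codimension vanishes and the polynomial loss $l^{(n-m)/2}$ is trivial) gives $\| D_{k, p} \| \leq C_2 \, p^{k/2}$, from which the claimed bound on $\| \nabla^k f \|_{L^2(X)}$ follows at once.

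The only slightly delicate point is that Proposition \ref{prop_norm_bnd_distk_expbnd} is phrased for an operator valued in sections of a single Hermitian vector bundle $(E, h^E)$, while $D_{k, p}$ has a different bundle in the domain and target. This extension is routine: the cited proof in \cite{FinToeplImm} proceeds via Heintze-Karcher and Young's inequality, both of which use only the scalar pointwise norm of the kernel and the volume growth of geodesic balls in the underlying Riemannian manifold. Since $(T^*X)^{\otimes k}$ and $F$ are of bounded geometry, inherited from $(X, g^{TX})$ and $(F, h^F)$, the argument applies verbatim, and no genuine obstacle arises.
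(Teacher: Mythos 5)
Your proof is correct and follows the same template that the paper itself uses for the closely related Theorem \ref{thm_res_is_l2}: write $f=B_p^Xf$ for $f$ holomorphic and $L^2$, observe that $\nabla^k f$ is given by the integral operator with kernel $\nabla_1^k B_p^X(x_1,x_2)$, control that kernel by the off-diagonal exponential bound of Theorem \ref{thm_bk_off_diag}, and convert the kernel bound into an operator-norm bound via Proposition \ref{prop_norm_bnd_distk_expbnd} with $l=c\sqrt{p}$. Since here the source and target manifold are both $X$, the codimension exponent in Proposition \ref{prop_norm_bnd_distk_expbnd} drops out and the net power of $p$ is $p^{k/2}$, as required.

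The one point worth noting, which you correctly flag, is the mismatch of bundles between domain and target of $D_{k,p}$. The fix you give is fine: the Heintze--Karcher/Young argument underlying Proposition \ref{prop_norm_bnd_distk_expbnd} uses only the scalar pointwise norm of the kernel and the volume growth of $(X,g^{TX})$, and is insensitive to which bundles the kernel acts between. Two alternatives worth knowing, both used elsewhere in this paper, are (i) the $T T^*$ trick employed in the proof of Theorem \ref{thm_res_is_l2}, which reduces the norm estimate for an operator between two bundles to one for a self-map of a single bundle, and (ii) the direct-sum/off-diagonal-block trick from the proof of Proposition \ref{prop_ma_mar_crit_exp_dec}, which packages $T_p^X:L^2(F_1)\to L^2(F_2)$ as the $(1,2)$-block of an endomorphism of $L^2(F_1\oplus F_2)$. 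Either would discharge the bundle point without reproving Proposition \ref{prop_norm_bnd_distk_expbnd} itself.
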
 
	\begin{proof}[Proof of Theorem \ref{thm_ot_weak}]
		Recall that the operator $\ext_{k, p}^{0}$ was defined in (\ref{eq_ext0_op}). 
		We would like to verify that for any $g \in H^{0}_{(2)}(Y, {\rm{Sym}}^k (N^{1, 0})^* \otimes \iota^*(L^p \otimes F))$, the form $\alpha := \dbar^{L^p \otimes F} (\ext_{k, p}^{0} g)$ vanishes at least up to order $k + 1$ over $Y$.
		\par 
		Indeed, let us work in a neighborhood $V := B_Y^X(\frac{r_{\perp}}{4})$ of $Y$ in $X$. 
		Recall that $t \in \real_+$ and $F_t$ were defined in (\ref{eq_ft_defn}).
		Then in the notations of (\ref{eq_ext0_op}), on $V$, we have
		\begin{equation}\label{eq_gtild_d}
			\ext_{k, p}^{0} g = \frac{F_t^{-1} \tilde{g}}{\sqrt{p}^k}, \qquad \tilde{g}(y, Z_N) = g(y) \cdot z_N^{\otimes k} \cdot  \exp \Big(- \frac{\pi}{2} |Z_N|^2 \Big).
		\end{equation}
		Recall that $\dbar_H^{L^p \otimes F}, \mathcal{L}_N^{L^p \otimes F}$ were defined in (\ref{eq_hor_norm_dop}).
		A trivial calculation shows that on $V$, we have
		\begin{equation}\label{eq_gtild_n}
			\mathcal{L}_N^{L^p \otimes F} \tilde{g} = 0.
		\end{equation}
		Also, since $\nabla^N$ preserves $g^N$, in the notations of (\ref{eq_hor_norm_dop}), similarly to \cite[(8.97)]{BisLeb91}, we have
		\begin{equation}\label{eq_gtild_h}
			\Big( \Big( \frac{\partial}{\partial \overline{z}_i} \big|_{y_0}\Big)^{H} \tilde{g}\Big) (y_0, Z_N)
			=
			\Big( \frac{\partial}{\partial \overline{z}_i} \big( g \cdot z_N^{\otimes k} \big) \Big)(y_0)  \exp \Big(- \frac{\pi}{2} |Z_N|^2 \Big).
		\end{equation}
		As a consequence of (\ref{eq_gtild_h}) and the fact that $g$ is holomorphic, we obtain 
		\begin{equation}\label{eq_gtild_h2}
			\dbar_H^{L^p \otimes F}  \tilde{g} = 0.
		\end{equation}
		From (\ref{eq_gtild_d}), (\ref{eq_gtild_n}), (\ref{eq_gtild_h2}) and the fact that all the residue terms in Theorem \ref{thm_tay_exp_dbar} contain $|Z_N|$, we deduce that $\alpha$ vanishes at least up to order $k + 1$ along $Y$.
		\par 
		Now, using the $L^2$-estimates, let us construct a holomorphic perturbation of $\ext_{k, p}^{0} g$, satisfying the assumptions of Theorem \ref{thm_ot_weak}.
		Recall that $\delta_Y : X \setminus Y \to \real$, $\alpha_Y : X \to \real$,  were defined in (\ref{eq_delta_defn_y}).
		For $\epsilon > 0$, let us now define the weight $\delta_p : X \setminus Y \to \real$ as follows
		\begin{equation}\label{eq_delta_p_defn_wght}
			\delta_p := 2(n - m + k)\delta_Y - \epsilon p \alpha_Y.
		\end{equation}
		By taking $\epsilon$ small, by Theorem \ref{thm_plurisub}, we see that there exists $p_1 \in \nat^*$, such that for any $p \geq p_1$, over $X$, the following inequality holds in the distributional sense
		\begin{equation}\label{eq_pomg_deltap}
			p \omega + \frac{\imun}{2 \pi}  \partial \dbar \delta_p >  \frac{p}{2}\omega.
		\end{equation} 
		Let us fix $\epsilon$ small enough, so that it verifies the above inequality and for any $|Z_N| < r_{\perp}$, we have
		\begin{equation}\label{eq_zn_diff_phi}
			\frac{\pi}{2}  |Z_N|^2 - \epsilon \alpha_Y(y, Z_N) \geq \frac{\pi}{4} |Z_N|^{2}.
		\end{equation}
		\par 
		\begin{sloppypar}
		We will now prove that there are $C > 0$, $p_1 \in \nat^*$, such that for any $p \geq p_1$, $g \in H^{0}_{(2)}(Y, {\rm{Sym}}^k (N^{1, 0})^* \otimes \iota^*(L^p \otimes F))$ and $\alpha := \dbar^{L^p \otimes F} (\ext_{k, p}^{0} g)$, we have
		\begin{equation}\label{eq_int_alph}
			\int_{X \setminus Y} | \alpha |^2 e^{-\delta_p} dv_X \leq C \| g \|_{L^2(Y)}^{2}.
		\end{equation}
		\end{sloppypar}
		\par 
		By Proposition \ref{prop_bndg_tripl}, cf. \cite[proof of (4.33)]{FinOTAs}, we see that there are $c, C > 0$, $p_1 \in \nat^*$, such that for any $p \geq p_1$, we have
		\begin{equation}\label{eq_int_alph222}
			\int_{X \setminus B_Y^X(\frac{r_{\perp}}{4})} | \alpha |^2 e^{-\delta_p} dv_X \leq C \exp(-cp) \Big( \| g \|^2_{L^2(Y)} + \| \nabla g \|^2_{L^2(Y)}	 \Big).
		\end{equation}
		Now, as $\alpha$ has support in $B_Y^X(\frac{r_{\perp}}{2})$, it is enough to work in $(y, Z_N)$, $y \in Y$, $Z_N \in N_y$ coordinates.
		To estimate the integral over $B_Y^X(\frac{r_{\perp}}{4})$, we use (\ref{eq_gtild_d}) and make the change of variables by $F_t$ to get
		\begin{multline}\label{eq_int_rescalling}
			\int_{B_Y^X(\frac{r_{\perp}}{4})} | \alpha |^2 e^{-\delta_p} dv_Y \wedge dv_N
			\\
			=
			\int_{B_{\frac{r_{\perp}}{4t}}(N)} \Big| \big(F_t \circ \dbar^{L^p \otimes F} \circ F_t^{-1} \big) \tilde{g} \Big|^2(y, Z_N) 
			 \frac{e^{\epsilon p \alpha_Y(y, tZ_N)}}{|Z_N|^{2(n - m + k)}} dv_Y \wedge dv_N.
		\end{multline}
		We see that to estimate the right-hand side of (\ref{eq_int_rescalling}), we may apply Theorem \ref{thm_tay_exp_dbar}.
		From (\ref{eq_gtild_n}), (\ref{eq_gtild_h2}), we see that the first two terms of the asymptotic expansion of $(F_t \circ \dbar^{L^p \otimes F} \circ F_t^{-1} ) \tilde{g}$ vanish.
		From this and the trivial fact that there is $C > 0$, such that for $j = 1, 2$, we have
		\begin{equation}
			\int_{\real^{2(n-m)}} \frac{|Z_N|^{2k + j} \exp (- \frac{\pi}{4} |Z_N|^2) dv_{\real^{2(n - m)}}(Z_N)}{|Z_N|^{2(n - m + k)}} < C,
		\end{equation}		 
		we conclude that there are $C > 0$, $p_1 \in \nat$, such that for any $p \geq p_1$, $g \in H^{0}_{(2)}(Y, {\rm{Sym}}^k (N^{1, 0})^* \otimes \iota^*(L^p \otimes F))$ and $\alpha := \dbar^{L^p \otimes F} (\ext_{k, p}^{0} g)$, we have
		\begin{equation}\label{eq_int_alph2}
			\int_{B_Y^X(\frac{r_{\perp}}{4})} | \alpha |^2 e^{-\delta_p} dv_X 
			\leq 
			\frac{C}{\sqrt{p}} \Big( \| g \|^2_{L^2(Y)} + \| \nabla g \|^2_{L^2(Y)}	 \Big).
		\end{equation}
		From Proposition \ref{prop_der_bound}, (\ref{eq_int_alph222}) and (\ref{eq_int_alph2}), we deduce (\ref{eq_int_alph}).
		\par 
		By \cite[Theorem 1.5]{Dem82}, $X \setminus Y$ is a complete Kähler manifold.
		Hence, by (\ref{eq_pomg_deltap}), we may resolve the $\dbar$-equation on $X \setminus Y$, see \cite[Proposition 13.4]{DemBookAnMet}. From this, the trivial fact that $\dbar^{L^p \otimes F} \alpha = 0$ and (\ref{eq_int_alph}), we see that there are $C > 0$, $p_1 \in \nat^*$, such that for any $p \geq p_1$, $g \in H^{0}_{(2)}(Y, {\rm{Sym}}^k (N^{1, 0})^* \otimes \iota^*(L^p \otimes F))$, there is a section $f_0 \in \ccal^{\infty}(X \setminus Y, L^p \otimes F)$, such that
		\begin{equation}\label{eq_dbar_sol}
			\dbar^{L^p \otimes F} f_0 = \alpha, \qquad \int_{X \setminus Y} | f_0 |^2 e^{-\delta_p} dv_X \leq  \frac{C}{p} \| g \|_{L^2(Y)}^{2}.
		\end{equation}
		Let us prove that $f := \ext_{k, p}^{0} g - f_0$ verifies the assumptions of Theorem \ref{thm_ot_weak}.
		\par From (\ref{eq_dbar_sol}), we see that over $X \setminus Y$, $\dbar^{L^p \otimes F} f = 0$. 
		Also, by (\ref{eq_dbar_sol}), we easily get that $f \in L^2(X, L^p \otimes F)$. 
		By the standard regularity result, \cite[Lemme 6.9]{Dem82}, $f$ extends smoothly and the equation $\dbar^{L^p \otimes F} f = 0$ holds on $X$.
		In particular, $f_0$ extends smoothly as well.
		However, since $\exp(-2(n - m + k)\delta_Y)$ is not integrable, the $L^2$-bound (\ref{eq_dbar_sol}) implies that $f_0$ has to vanish at least up to order $k + 1$ along $Y$.
		Hence, we conclude that $(\nabla^k f)|_Y = g$.
		It is only left to verify that $f$ satisfies the needed $L^2$-bound (\ref{eq_ot_weak}).
		\par 
		An easy calculation, using Proposition \ref{prop_bndg_tripl} and (\ref{eq_vol_comp_unif}), cf. \cite[(4.38)]{FinOTAs}, shows that there are $c, C > 0$, such that we have
		\begin{equation}\label{eq_exp0_bnd_norm}
			\frac{c}{p^{\frac{n - m + k}{2}}} \norm{g}_{L^2(Y)} \leq \big \| \ext_{k, p}^{0} g \big \| _{L^2(X)} \leq \frac{C}{p^{\frac{n - m + k}{2}}} \norm{g}_{L^2(Y)}.
		\end{equation}
		Let us now prove the following bound
		\begin{equation}\label{eq_l2_est_fin}
			\int_{X} | f_0 |^2 e^{-\delta_p} dv_X  \geq C p^{n-m + k} \int_{X} | f_0 |^2 dv_X.
		\end{equation}
		This will be clearly enough for our needs, as from the $L^2$-bound in (\ref{eq_dbar_sol}), (\ref{eq_exp0_bnd_norm}) and (\ref{eq_l2_est_fin}), we would deduce the $L^2$-bound (\ref{eq_ot_weak}).
		\par 
		First of all, since $\alpha_Y \geq \min\{  \frac{1}{2} (\frac{r_{\perp}}{4})^2, \frac{1}{2} \}$ on $X \setminus B_Y^X(\frac{r_{\perp}}{4})$, there are $c, C > 0$, such that 
		\begin{equation}\label{eq_l2_est_fin0}
			\int_{X \setminus B_Y^X(\frac{r_{\perp}}{4})} | f_0 |^2 e^{-\delta_p} dv_X 
			\geq
			C \exp(\epsilon c p) \int_{X \setminus B_Y^X(\frac{r_{\perp}}{4})} | f_0 |^2 dv_X 
		\end{equation}
		\par 
		It is now only left to give the lower bound for the integrand on the left-hand side of (\ref{eq_l2_est_fin}), where the integration is done over $B_Y^X(\frac{r_{\perp}}{4})$.
		But remark that from (\ref{eq_delta_defn_y}) and (\ref{eq_delta_p_defn_wght}), over $B_Y^X(\frac{r_{\perp}}{4})$, there is $C > 0$, such that for any $p \in \nat^*$, we have $e^{-\delta_p} \geq C p^{n - m + k}$.
		From this, we deduce 
		\begin{equation}\label{eq_l2_est_fin132}
			\int_{B_Y^X(\frac{r_{\perp}}{4})} | f_0 |^2 e^{-\delta_p} dv_X  \geq C p^{n - m + k} \int_{B_Y^X(\frac{r_{\perp}}{4})} | f_0 |^2 dv_X.
		\end{equation}
		From (\ref{eq_l2_est_fin0}) and (\ref{eq_l2_est_fin132}), we obtain (\ref{eq_l2_est_fin}).
	\end{proof}
	\begin{rem}
	\begin{sloppypar}
		Our proof shows that there is $\ext_{k, p}^{1} :  H^{0}_{(2)}(Y, {\rm{Sym}}^k (N^{1, 0})^* \otimes  \iota^*(L^p \otimes F)) \to H^{0}_{(2)}(X, L^p \otimes F \otimes \mathcal{J}_Y^k)$, verifying  $(\nabla^k \ext_{k, p}^{1} g) |_Y = g$ for $g \in  H^{0}_{(2)}(Y, {\rm{Sym}}^k (N^{1, 0})^* \otimes \iota^*(L^p \otimes F))$, and such that (\ref{eq_ext_as}) holds for $\ext_{k, p}^{1}$ instead of $\ext_{k, p}^{0}$.
	\end{sloppypar}
	\end{rem}

\section{Asymptotics of the extension operator for holomorphic jets}\label{sect_asy_main_op}

The main goal of this section is to study the asymptotic expansion of the extension operator for holomorphic jets.
For this, in Section \ref{sect_schw_assss}, we prove the exponential bounds for the Schwartz kernels of the extension operator and orthogonal Bergman kernel of order $k$, and study their asymptotics.
We also deduce from those statements Theorems \ref{thm_high_term_ext} and \ref{thm_log_bk}.
Then, in Section \ref{sect_adj_t_oper}, we establish the announced exponential bounds and asymptotics, and deduce Theorem \ref{thm_isom} from our methods.
Finally, in Section \ref{sect_peak_ho}, we apply the results from Section \ref{sect_schw_assss} to the case when the submanifold corresponds to a fixed point and deduce the asymptotics of higher order peak sections.

\subsection{Schwartz kernels of the extension and orthogonal Bergman projectors}\label{sect_schw_assss}
The main goal of this section is to study the Schwartz kernels of the extension operator and orthogonal Bergman kernels of order $k \in \nat$.
In particular, we prove the exponential estimates for the Schwartz kernels of those operators, and show that those Schwartz kernels admit a full asymptotic expansion, as powers of the line bundle tend to infinity.
\par 
We use notations from Section \ref{sect_intro} and assume that the triple $(X, Y, g^{TX})$ is of bounded geometry.
Let us fix $k \in \nat$ once and for all. Our first main result goes as follows.

	\begin{thm}\label{thm_ext_exp_dc}
		There are $c > 0$, $p_1 \in \nat^*$,  such that for any $r, l \in \nat$, there is $C > 0$, such that for any $p \geq p_1$, $x_1, x_2 \in X$, $y \in Y$, the following estimates hold
		\begin{equation}\label{eq_ext_exp_dc}
		\begin{aligned}
			&
			a) \, \big|  \ext_{k, p}(x_1, y) \big|_{\ccal^r} \leq C p^{m + \frac{r - k}{2}} \exp \big(- c \sqrt{p} \cdot \dist(x_1, y) \big),
			\\
			&
			b) \, \big|  B_{k, p}^{\perp}(x_1, x_2) \big|_{\ccal^r} \leq C p^{n + \frac{r}{2}} \exp \big(- c \sqrt{p} \cdot (  \dist(x_1, x_2) + \dist(x_1, Y) + \dist(x_2, Y) ) \big).
		\end{aligned}
		\end{equation}
	\end{thm}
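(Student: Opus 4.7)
The plan is to establish (a) and (b) simultaneously by induction on $k$. The base case $k = 0$ was essentially treated in \cite{FinOTAs} and \cite{FinToeplImm}, so assume both assertions hold for all $k' < k$, and let us prove them for $k$. The first step is a uniform off-diagonal bound for the logarithmic Bergman projector $B_p^{X, kY}$. Using the telescoping identity $B_p^{X, kY} = B_p^X - \sum_{l = 0}^{k - 1} B_p^{\perp, l}$, combined with the off-diagonal decay of $B_p^X$ provided by Theorem \ref{thm_bk_off_diag} and the inductive hypothesis (b) applied to $B_p^{\perp, l}$, $l < k$, one obtains an off-diagonal $\ccal^r$-estimate of order $p^{n + r/2}$ for the kernel of $B_p^{X, kY}$ decaying exponentially in $\dist(x_1, x_2)$. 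Combined with Theorem \ref{thm_ex_ap}, this gives an analogous exponential decay in $\dist(x, y)$ for the Schwartz kernel of $\ext_{k, p} \circ A_{k, p} = (\res_Y \circ \nabla^k B_p^{X, kY})^{*}$ at $(x, y) \in X \times Y$, since restricting to $Y$ and taking $k$ covariant derivatives along the first variable preserves exponential decay at the cost of higher polynomial growth in $p$.

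The second step, which will be the main obstacle, is to invert $A_{k, p}$. The strategy is to show that, after multiplication by $p^{-(n - m + k)}$, the sequence $A_{k, p}$ forms a Toeplitz operator with weak exponential decay on $Y$, valued in $\enmr{{\rm{Sym}}^k (N^{1, 0})^* \otimes \iota^* F}$, in the sense of Definition \ref{defn_ttype}. Its leading symbol should be read off from the model situation of Section \ref{sect_model_calc}: in the model case $(\comp^n, \comp^m)$ the relations $(\mathscr{Res}_{n, m}^{k})^* = (2\pi)^k \cdot k! \cdot \mathscr{E}_{n, m}^k$ and $\mathscr{P}_{n, m}^{\perp, k} = \mathscr{E}_{n, m}^k \circ \mathscr{Res}_{n, m}^{k}$ recorded in (\ref{eq_res_enmdual}) force the rescaled leading term of $A_{k, p}$ to be the invertible scalar $(2\pi)^k k! \cdot {\rm{Id}}$. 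Once this is verified via the asymptotic criterion of Proposition \ref{prop_ma_mar_crit_exp_dec} applied to $A_{k, p}$, a Neumann series argument inside the algebra of Toeplitz operators with weak exponential decay (whose stability under such inversions is a standard consequence of Theorem \ref{thm_ma_mar_crit_exp_dec}) produces $A_{k, p}^{-1}$ in the same class, with a Schwartz kernel exponentially decaying in $\dist_X(y_1, y_2)$.

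In the third step, we compose. Writing $\ext_{k, p} = (\ext_{k, p} \circ A_{k, p}) \circ A_{k, p}^{-1}$, and combining the two off-diagonal exponential estimates obtained above by the convolution machinery of Lemma \ref{lem_bnd_prod_aloc} and Proposition \ref{prop_exp_bound_int} (integration over $Y$ contributing the $p^{-m}$ reduction in the polynomial prefactor), we obtain assertion (a). For assertion (b), we exploit the identity $B_{k, p}^{\perp} = \ext_{k, p} \circ \res_Y \circ \nabla^k \circ B_p^{X, kY}$, which holds because every element in the image of $B_{k, p}^{\perp}$ is equal to $\ext_{k, p}$ applied to its own $k$-jet. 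A further convolution of the bound (a) with the exponential bound for $\res_Y \circ \nabla^k B_p^{X, kY}$ yields (b); the extra factors $\dist(x_1, Y) + \dist(x_2, Y)$ in the exponent arise automatically from the constraint that the intermediate variable lies on $Y$ together with the triangle inequality.

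The core difficulty is the second step. Verifying the Toeplitz structure of $A_{k, p}$ requires transporting the off-diagonal asymptotics of $B_p^{X, kY}$ on $X \times X$ to a controlled asymptotic expansion of $A_{k, p}$ on $Y \times Y$ in Fermi coordinates, and identifying the leading term by a model computation that collapses a restriction followed by an optimal extension into a scalar multiple of the identity. Once this identification is in hand, the remaining algebraic manipulations (inversion, composition, integration over $Y$) follow the same template as in the case $k = 0$ of \cite{FinToeplImm}.
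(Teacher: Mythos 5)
Your proposal follows the same skeleton as the paper: induction on $k$, the telescoping identity for $B_p^{X, kY}$, the Toeplitz structure of the multiplicative defect $A_{k, p}$ (Theorem \ref{thm_mult_def_toepl_k}) and its inversion inside the class of Toeplitz operators with weak exponential decay (Lemma \ref{lem_inverse_toepl}), and finally the factorizations $\ext_{k, p} = (\res_Y \circ \nabla^k B_p^{X, kY})^{*} \circ A_{k, p}^{-1}$ and $B_{k, p}^{\perp} = \ext_{k, p} \circ \res_Y \circ \nabla^k B_p^{X, kY}$. So the architecture is right.

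However, there is a genuine gap in your third step, where you claim the convolution of the two exponential estimates already yields (a). It does not: as the paper makes explicit in the derivation of \eqref{eq_ext_exp_dc_weak}, the naive composition of the kernel bounds for $(\res_Y \circ \nabla^k B_p^{X, kY})^{*}$ and $A_{k, p}^{-1}$ delivers only $|\ext_{k, p}(x, y)|_{\ccal^r} \leq C p^{m + \frac{r}{2}} \exp(-c\sqrt{p}\,\dist(x, y))$, which misses the claimed bound by a factor of $p^{k/2}$. The extra gain $p^{-k/2}$ comes from the fact that the Schwartz kernel of $\ext_{k, p}$ vanishes to order $k$ along $Y$ in the $X$-variable, and this must be propagated through the whole near-diagonal asymptotic expansion: the paper first establishes Theorem \ref{thm_ext_as_expk} (with the vanishing-to-order-$k$ property of each coefficient polynomial $J^E_{k,r}$ along $\real^{2m} \times \real^{2m}$, itself proved by a nontrivial Toeplitz-calculus contradiction argument in Lemma \ref{lem_ext_as_expk_new}), and only then reads off the sharp exponent from the expansion. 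Your proposal never engages with this structural vanishing, so it cannot produce the exponent $p^{m + \frac{r - k}{2}}$. The same deficit would then propagate into your derivation of (b) via the composition with $\res_Y \circ \nabla^k B_p^{X, kY}$.

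Two smaller remarks. First, the leading symbol of the rescaled $A_{k, p}$ is $(2\pi)^k\, k!\, \kappa_N^{-1}|_Y \cdot \mathrm{Id}$, not $(2\pi)^k\, k! \cdot \mathrm{Id}$; the factor $\kappa_N^{-1}|_Y$ only trivializes under assumption \eqref{eq_comp_vol_omeg}. Second, establishing the Toeplitz structure of $A_{k,p}$ really rests on Lemma \ref{lem_ext_as_expk_new} (the expansion of $\res_Y \circ \nabla^k B_p^{X, kY}$ in Fermi coordinates) rather than merely on the model identities \eqref{eq_res_enmdual}; that expansion is precisely where the inductive hypothesis on $B_{l,p}^{\perp}$ for $l < k$ enters, so you should flag it as a required intermediate input rather than as a consequence of the model computation.
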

	\begin{rem}
		For $k = 0$, the corresponding result was proved in \cite[Theorems 1.5, 1.8]{FinOTAs}.
		Our proof here is different even in the case $k = 0$.
	\end{rem}
	Theorem \ref{thm_ext_exp_dc} implies that to understand fully the asymptotics of the Schwartz kernel of the extension operator and orthogonal Bergman kernels of order $k$, it suffices to do so in a neighborhood of a fixed point $(y_0, y_0) \in Y \times Y$ in $X \times Y$ and $X \times X$.
	Our next result shows that after a reparametrization by a homothety with factor $\sqrt{p}$ in Fermi coordinates around $(y_0, y_0)$, the kernel of those operators admit a complete asymptotic expansion in integer powers of $\sqrt{p}$, as $p \to \infty$.
	\par 
	\begin{thm}\label{thm_ext_as_expk}
		For any $r \in \nat$, $y_0 \in Y$, there are polynomials $J_{k, r}^{E}(Z, Z'_Y) \in {\rm{Sym}}^k (N_{y_0}^{1, 0}) \otimes \enmr{F_{y_0}}$ in $Z \in \real^{2n}$, $Z'_Y \in \real^{2m}$, of parity $k + r$, $\deg J_{k, r}^{E} \leq k^2 + k^2 r + 3 r$, whose coefficients have the same properties as the coefficients of the polynomials from Theorem \ref{thm_berg_off_diag}, and which vanish at least up to order $k$ along $\real^{2m} \times \real^{2m} \subset \real^{2n} \times \real^{2m}$, such that for $F_{k, r}^{E} := J_{k, r}^{E} \cdot \mathscr{E}_{n, m}^{0}$, the following holds.
		\par 
		There are $\epsilon, c > 0$, $p_1 \in \nat^*$, such that for any $r, l, l' \in \nat$, there are $C, Q  > 0$, such that for any $y_0 \in Y$, $p \geq p_1$, $Z = (Z_Y, Z_N)$, $Z_Y, Z'_Y \in \real^{2m}$, $Z_N \in \real^{2(n - m)}$, $|Z|, |Z'_Y| \leq \epsilon$, $\alpha \in \nat^{2n}$, $\alpha' \in \nat^{2m}$, $|\alpha| + |\alpha'| \leq l$, we have
		\begin{multline}\label{eq_ext_as_expk}
			\bigg| 
				\frac{\partial^{|\alpha|+|\alpha'|}}{\partial Z^{\alpha} \partial Z'_Y{}^{\alpha'}}
				\bigg(
					\frac{1}{p^{m - \frac{k}{2}}} \ext_{k, p} \big(\psi_{y_0}^{X|Y}(Z), \phi_{y_0}^Y(Z'_Y) \big)
					\\
					-
					\sum_{j = 0}^{r}
					p^{-\frac{j}{2}}						
					F_{k, j}^{E}(\sqrt{p} Z, \sqrt{p} Z'_Y) 
					\kappa_{\psi}^{X|Y}(Z)^{-\frac{1}{2}}
					\kappa_{\phi}^Y(Z'_Y)^{-\frac{1}{2}}
				\bigg)
			\bigg|_{\ccal^{l'}}
			\\
			\leq
			C p^{- \frac{r + 1 - l}{2}}
			\cdot
			\Big(1 + \sqrt{p}|Z| + \sqrt{p} |Z'_Y| \Big)^{Q}
			\exp\Big(- c \sqrt{p} \big( |Z_Y - Z'_Y| + |Z_N| \big) \Big),
		\end{multline}
		where the $\ccal^{l'}$-norm is taken with respect to $y_0$.
		Also, the following identity holds
		\begin{equation}\label{eq_je0_expk}
			J_{k, 0}^{E}(Z, Z'_Y) = 
			\kappa_N^{\frac{1}{2}}(y_0) \cdot
			\sum_{\substack{\beta \in \nat^{n - m} \\ |\beta| = k}}
			\frac{1}{\beta!} \cdot
			z_N^{\beta} \cdot \big( \frac{\partial}{\partial z'_N} \big)^{\odot \beta}.
		\end{equation}
		Moreover, under the assumption (\ref{eq_comp_vol_omeg}), we have
		\begin{multline}\label{eq_je1_expk}
			J_{k, 1}^{E}(Z, Z'_Y) = {\rm{Id}}_{F_{y_0}} \cdot 
			g \big(z_N, A(\overline{z}_Y - \overline{z}'_Y) (\overline{z}_Y - \overline{z}'_Y) \big)
		 	\cdot
		 	\sum_{\substack{\beta \in \nat^{n - m} \\ |\beta| = k}}
			\frac{1}{\beta!} \cdot
			z_N^{\beta} \cdot \big( \frac{\partial}{\partial z'_N} \big)^{\odot \beta},
		\end{multline}
		where $A$ is the second fundamental form of $\iota$, introduced in (\ref{eq_sec_fund_f}).
 	\end{thm}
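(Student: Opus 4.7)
The plan is to exploit the multiplicative defect relation from Theorem \ref{thm_ex_ap} in order to reduce the analysis of $\ext_{k, p}$ to that of the Bergman kernel $B_p^X$, whose full expansion is given by Theorem \ref{thm_berg_off_diag}. Setting $R_{k, p} := \res_Y \circ \nabla^k B_p^{X, k Y}$, the identity $R_{k, p}^* = \ext_{k, p} \circ A_{k, p}$ gives $\ext_{k, p} = R_{k, p}^* \circ A_{k, p}^{-1}$, so it suffices to expand both factors asymptotically and compose.

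The first step is to expand $R_{k, p}$ in Fermi coordinates. Using the splitting $B_p^{X, k Y} = B_p^X - \sum_{l = 0}^{k - 1} B_p^{\perp, l}$, one treats the main term $\res_Y \circ \nabla^k B_p^X$ by inserting the expansion of Theorem \ref{thm_berg_off_diag}: since each term $J_r^{X|Y} \cdot \mathscr{P}_n$ is holomorphic in the $Y$-variable, $\nabla^k$ acts as $(\nabla^{1, 0})^k$ in that variable, and one computes directly from (\ref{eq_berg_k_expl}) that differentiating $k$ times in normal $z_N$-directions and restricting to $Z_N = 0$ produces exactly a polynomial prefactor times $\mathscr{Res}_{n, m}^k$, with the leading contribution (from $J_0^{X|Y} = \mathrm{Id}$) equal to $\mathscr{Res}_{n, m}^k$ up to the $\kappa$-factors already built into Theorem \ref{thm_berg_off_diag}. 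The correction terms $\res_Y \circ \nabla^k B_p^{\perp, l}$ for $l < k$ are controlled by Theorem \ref{thm_ext_exp_dc}(b), whose extra $\exp(-c\sqrt{p}\,\dist(\cdot, Y))$ factor is absorbed into the polynomial-weighted remainder in rescaled coordinates. Taking adjoints and using the model identity (\ref{eq_res_enmdual}) yields an analogous expansion for $R_{k, p}^*$ whose leading term is a scalar multiple of $\mathscr{E}_{n, m}^k$.

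The second step is to analyze $A_{k, p}$. Since $\ext_{k, p}$ maps into $H^0_{(2)}(X, L^p \otimes F \otimes \mathcal{J}_Y^k) = \Im(B_p^{X, k Y})$ and satisfies $\res_Y \circ \nabla^k \circ \ext_{k, p} = B_{k, p}^Y$ by construction, a direct calculation gives $R_{k, p} \circ R_{k, p}^* = B_{k, p}^Y \circ A_{k, p}$, which coincides with $A_{k, p}$ on its domain. By the previous step and the composition rules of Lemma \ref{lem_comp_poly}, the Schwartz kernel of $R_{k, p} \circ R_{k, p}^*$ admits an asymptotic expansion of the form required by Proposition \ref{prop_ma_mar_crit_exp_dec} (applied with $F_1 = F_2 = {\rm{Sym}}^k (N^{1, 0})^* \otimes F$), so $A_{k, p}$ is a Toeplitz operator with weak exponential decay on $Y$. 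Its leading coefficient is computed from the model: $\mathscr{Res}_{n, m}^k \circ \mathscr{E}_{n, m}^k = \mathrm{Id}$ together with (\ref{eq_res_enmdual}) gives $\mathscr{Res}_{n, m}^k \circ (\mathscr{Res}_{n, m}^k)^* = (2\pi)^k k! \cdot \mathrm{Id}$; tracking the Jacobian $\kappa_N(y_0)$ appearing between $dv_X$ and $dv_Y \wedge dv_N$ in Fermi coordinates yields $[A_{k, p}]_0 = \kappa_N(y_0) \cdot (2\pi)^k k! \cdot \mathrm{Id}$. Since this leading term is a non-zero scalar multiple of the identity, a Neumann series inside the algebra of such Toeplitz operators produces an asymptotic inverse $A_{k, p}^{-1}$ of the same type.

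The composition $\ext_{k, p} = R_{k, p}^* \circ A_{k, p}^{-1}$ then inherits the desired asymptotic expansion, and the polynomials $J_{k, r}^E$ are produced term by term via the composition rule $\mathcal{K}_{n, m}^{EP}$ of Lemma \ref{lem_comp_poly}. The announced parity $k + r$ and the degree bound follow from Lemma \ref{lem_comp_poly}, Proposition \ref{prop_bound_deg_q}, and the bound $\deg J_r^{X|Y} \leq 3r$ of Theorem \ref{thm_berg_off_diag} together with the cost of $k$ normal derivatives; the vanishing to order $k$ along $\real^{2m} \times \real^{2m}$ is carried through the composition from the $z_N^\beta$ factor of $\mathscr{E}_{n, m}^k$. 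Formula (\ref{eq_je0_expk}) follows from the leading-term computation above after dividing by $[A_{k, p}]_0$, and (\ref{eq_je1_expk}) is obtained under the normalization (\ref{eq_comp_vol_omeg}) by substituting $J_1^{X|Y}$ from (\ref{eq_j1_expl_form}), differentiating $k$ times in the normal direction, and checking that the term symmetric in $(z_N, \overline{z}'_N)$ contributes only via the $z_N^\beta$ factor. The main obstacle is verifying the Toeplitz structure of $A_{k, p}$ in the second step: one must execute a careful two-stage kernel calculus, first convolving over $X$ with kernels of the form $(\text{poly}) \cdot \mathscr{E}_{n, m}^k$ and $(\text{poly}) \cdot \mathscr{Res}_{n, m}^k$, then extracting the resulting expansion on $Y$ subject to the parity and degree constraints of Lemma \ref{lem_comp_poly}, while simultaneously absorbing the exponentially small corrections produced by $B_p^{\perp, l}$, $l < k$.
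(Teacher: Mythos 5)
Your overall strategy coincides with the paper's: the paper also derives the expansion of $\ext_{k,p}$ from the identity $\ext_{k,p} = (\res_Y \circ \nabla^k B_p^{X,kY})^* \circ A_{k,p}^{-1}$ (this is equation (\ref{eq_resp_ap_lem_ap_inv})), establishes the Toeplitz property of $A_{k,p}$ via Proposition \ref{prop_ma_mar_crit_exp_dec} (Theorem \ref{thm_mult_def_toepl_k}), inverts it through Lemma \ref{lem_inverse_toepl}, and composes the Schwartz kernel expansions using the $\mathcal{K}_{n,m}^{EP}$ calculus of Lemma \ref{lem_comp_poly}. The whole thing is wrapped inside an induction on $k$ (Lemma \ref{lem_ind_step}), which you implicitly invoke when you cite Theorem \ref{thm_ext_exp_dc}.b) for $l < k$; this should be made explicit.

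The serious gap is in your handling of the terms $\res_Y \circ \nabla^k B_p^{\perp, l}$, $l < k$. You claim they are ``controlled by Theorem \ref{thm_ext_exp_dc}(b), whose extra $\exp(-c\sqrt{p}\,\dist(\cdot,Y))$ factor is absorbed into the polynomial-weighted remainder,'' and later describe them as ``exponentially small corrections.'' This is not so: the Schwartz kernel of $B_p^{\perp, l}$ near the diagonal of $Y$ is of the same order $O(p^n)$ as $B_p^X$ itself, and its rescaled exponential factor $\exp(-c(|\sqrt{p}Z_N| + |\sqrt{p}Z'_N|))$ is $O(1)$ in the region $|Z|, |Z'| = O(1/\sqrt{p})$ where the expansion is taken. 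Since $J_{l, r}^{\perp}$ vanishes only to order $l < k$ in $Z_N$ but generally has Taylor coefficients of order $k$ and higher, $\nabla^k J_{l,r}^{\perp}|_{Z_N=0}$ does not vanish for $r \geq 1$, and these terms contribute genuine coefficients to the expansion of $\res_Y \circ \nabla^k B_p^{X,kY}$ --- see the explicit subtraction $-\sum_{i=0}^{k-1} J_{i,a}^{\perp}(Z,Z') \cdot \mathscr{P}_{n-m}^{\perp,0}(Z_N,0)$ inside formula (\ref{eq_defn_jra22}). Relatedly, your ``main term $\res_Y \circ \nabla^k B_p^X$'' is not by itself a jet operator into ${\rm{Sym}}^k(N^{1,0})^* \otimes \iota^*(L^p\otimes F)$: $\nabla^k$ followed by restriction only factors through ${\rm{Sym}}^k(N^{1,0})^*$ (independently of the choice of connection) on sections vanishing to order $k$ along $Y$, which is why the paper works directly with $B_p^{X,kY}$ rather than splitting into a main term plus small corrections.

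Two smaller points. Your claimed leading coefficient $[A_{k,p}]_0 = \kappa_N(y_0)\cdot (2\pi)^k k! \cdot \mathrm{Id}$ has the wrong power of $\kappa_N$: the paper's (\ref{eq_akp_first_term}) gives $\kappa_N^{-1}|_Y$. The two $\kappa_{\psi}^{X|Y}(\cdot)^{-1/2}$ factors coming from the Bergman expansion in Fermi coordinates, combined with (\ref{eq_kappa_relation}) and the identification of $\kappa_N$ under (\ref{eq_comp_vol_omeg}), produce $\kappa_N^{-1}$, not $\kappa_N$. This sign also matters for deriving (\ref{eq_je0_expk}), where the correct factor is $\kappa_N^{+1/2}(y_0)$. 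Finally, ``a Neumann series inside the algebra'' for inverting $A_{k,p}$ is morally right but one needs to know that the inverse stays in the class of Toeplitz operators with weak exponential decay; this is exactly what Lemma \ref{lem_inverse_toepl} provides and should be cited.
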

	\begin{rem}
		In particular, from (\ref{eq_pperp_defn_fu2n}), we have $F_{k, 0}^{E}(Z, Z'_Y) = \mathscr{E}_{n, m}^{k}$ and 
		\begin{equation}\label{eq_f1e_form}
			F_{k, 1}^{E}(Z, Z'_Y) = {\rm{Id}}_{F_{y_0}} \cdot 
			g \big(z_N, A(\overline{z}_Y - \overline{z}'_Y) (\overline{z}_Y - \overline{z}'_Y) \big)
		 	\cdot
		 	\mathscr{E}_{n, m}^{k}.
		\end{equation}
	\end{rem}
	\begin{thm}\label{thm_berg_perp_off_diagk}
		For any $r \in \nat$, $y_0 \in Y$, there are polynomials $J_{k, r}^{\perp}(Z, Z') \in \enmr{F_{y_0}}$, $Z, Z' \in \real^{2n}$, of parity $k$, $\deg J_{k, r}^{\perp} \leq 2k^2 + k^2 r + 3 r$, whose coefficients have the same properties as the coefficients of the polynomials from Theorem \ref{thm_berg_off_diag}, and which vanish at least up to order $k$ along $\real^{2m} \times \real^{2n} \subset \real^{2n} \times \real^{2n}$ and $\real^{2n} \times \real^{2m} \subset \real^{2n} \times \real^{2n}$, such that for $F_{k, r}^{\perp} := J_{k, r}^{\perp} \cdot \mathscr{P}_{n, m}^{\perp, 0}$, the following holds.
		\par 
		There are $\epsilon, c > 0$, $p_1 \in \nat^*$, such that for any $r, l, l' \in \nat$, there are $C, Q  > 0$, such that for any $y_0 \in Y$, $p \geq p_1$, $Z = (Z_Y, Z_N)$, $Z' = (Z'_Y, Z'_N)$, $Z_Y, Z'_Y \in \real^{2m}$, $Z_N, Z'_N \in \real^{2(n-m)}$, $|Z|, |Z'| \leq \epsilon$, $\alpha, \alpha' \in \nat^{2n}$, $|\alpha|+|\alpha'| \leq l$, we have
		\begin{multline}\label{eq_berg_perp_off_diagk}
			\bigg| 
				\frac{\partial^{|\alpha|+|\alpha'|}}{\partial Z^{\alpha} \partial Z'{}^{\alpha'}}
				\bigg(
					\frac{1}{p^n} B_{k, p}^{\perp} \big(\psi_{y_0}(Z), \psi_{y_0}(Z') \big)
					\\
					-
					\sum_{j = 0}^{r}
					p^{-\frac{j}{2}}						
					F_{k, j}^{\perp}(\sqrt{p} Z, \sqrt{p} Z') 
					\kappa_{\psi}^{X|Y}(Z)^{-\frac{1}{2}}
					\kappa_{\psi}^{X|Y}(Z')^{-\frac{1}{2}}
				\bigg)
			\bigg|_{\ccal^{l'}}
			\\
			\leq
			C p^{- \frac{r + 1 - l}{2}}
			\cdot
			\Big(1 + \sqrt{p}|Z| + \sqrt{p} |Z'| \Big)^{Q}
			\exp\Big(- c \sqrt{p} \big( |Z_Y - Z'_Y| + |Z_N| + |Z'_N| \big) \Big),
		\end{multline}
		where the $\ccal^{l'}$-norm is taken with respect to $y_0$.
		Also, we have
		\begin{equation}\label{eq_jopep_0}
			J_{k, 0}^{\perp}(Z, Z')
			=
			\pi^{k}
			\cdot
			\sum_{\substack{\beta \in \nat^{n - m} \\ |\beta| = k}}
			\frac{ z_N^{\beta} \cdot (\overline{z}'_N)^{\beta} }{\beta!}.
		\end{equation}
		Moreover, under the assumption (\ref{eq_comp_vol_omeg}), we have
		\begin{multline}\label{eq_jopep_1}
			J_{k, 1}^{\perp}(Z, Z') = 
			\pi^{k + 1}
			\cdot
			{\rm{Id}}_{F_{y_0}} \cdot 
			\Big(
			 	g \big(z_N, A(\overline{z}_Y - \overline{z}'_Y) (\overline{z}_Y - \overline{z}'_Y) \big)
			 	\\
			 	+
			 	g \big(\overline{z}'_N, A(z_Y - z'_Y) (z_Y - z'_Y) \big)			 
			\Big)
		 	\cdot
			\sum_{\substack{\beta \in \nat^{n - m} \\ |\beta| = k}}
			\frac{ z_N^{\beta} \cdot (\overline{z}'_N)^{\beta} }{\beta!}.
		\end{multline}
	\end{thm}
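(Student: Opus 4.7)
The plan is to realize $B_{k,p}^\perp$ as a composition involving the extension operator and the multiplicative defect, and then extract the asymptotic by composing the known expansions through the kernel calculus of Lemma \ref{lem_comp_poly}. To set this up, I first observe that for any $s \in L^2(X, L^p \otimes F)$ the section $\ext_{k,p}(\res_{k,p}(B_p^{X,kY} s))$ is, by the minimality that defines $\ext_{k,p}$, exactly the orthogonal projection of $B_p^{X,kY} s$ onto the orthogonal complement of $H^0_{(2)}(X, L^p \otimes F \otimes \mathcal{J}_Y^{k+1})$ inside $H^0_{(2)}(X, L^p \otimes F \otimes \mathcal{J}_Y^k)$. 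Hence $B_{k,p}^\perp = \ext_{k,p} \circ \res_{k,p} \circ B_p^{X,kY}$, and substituting the adjoint of the identity in Theorem \ref{thm_ex_ap} produces the key factorization
\[
B_{k,p}^\perp = \ext_{k,p} \circ A_{k,p}^* \circ \ext_{k,p}^*.
\]

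Next I would control $A_{k,p}^*$ as an operator on $Y$. Combining the above factorization with the identity $\res_{k,p} \circ \ext_{k,p} = B_{k,p}^Y$, which holds by the defining property of $\ext_{k,p}$, gives $A_{k,p}^* \circ (\ext_{k,p}^* \circ \ext_{k,p}) = B_{k,p}^Y$. Using Theorem \ref{thm_ext_as_expk} together with the model identity $(\mathscr{E}_{n,m}^k)^* \circ \mathscr{E}_{n,m}^k = \frac{1}{(2\pi)^k k!} \mathscr{P}_m \cdot {\rm{Id}}_{{\rm{Sym}}^k}$, which follows from (\ref{eq_res_enmdual}) and (\ref{eq_verif_res_ext}), a direct kernel-calculus computation shows that a suitable rescaling of $\ext_{k,p}^* \circ \ext_{k,p}$ is a Toeplitz operator with weak exponential decay on $Y$ (in the sense of Proposition \ref{prop_ma_mar_crit_exp_dec}) with leading symbol $\frac{\kappa_N}{(2\pi)^k k!} {\rm{Id}}$. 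Positivity of this scalar symbol makes the operator asymptotically invertible on the Bergman image, and a Neumann-type iteration within the Toeplitz calculus, paralleling arguments from \cite{FinToeplImm}, shows that a corresponding rescaling of $A_{k,p}^*$ is itself a Toeplitz operator with weak exponential decay on $Y$, with leading symbol $\frac{(2\pi)^k k!}{\kappa_N} {\rm{Id}}$ and a computable full asymptotic expansion.

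Substituting the expansions of $\ext_{k,p}$, $\ext_{k,p}^*$ and $A_{k,p}^*$ into the factorization and composing in Fermi coordinates via the rules $\mathcal{K}_{n,m}^{EP}$ and $\mathcal{K}_{n,m}^{ER}$ of Lemma \ref{lem_comp_poly} converts products of the form ${\rm{(polynomial)}} \cdot \mathscr{E}_{n,m}^{0}$, ${\rm{(polynomial)}} \cdot \mathscr{Res}_{n,m}^{0}$ and ${\rm{(polynomial)}} \cdot \mathscr{P}_m$ into ${\rm{(polynomial)}} \cdot \mathscr{P}_{n,m}^{\perp, 0}$. Collecting contributions at each order $p^{-r/2}$ produces polynomials $J_{k,r}^\perp$; their degree bound and coefficient regularity follow by tracking degrees in Lemma \ref{lem_comp_poly} and invoking Proposition \ref{prop_bound_deg_q}. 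The order-$k$ vanishing of $J_{k,r}^\perp$ along $\real^{2m} \times \real^{2n}$ and $\real^{2n} \times \real^{2m}$ is inherited from the analogous vanishing of $J_{k,r}^E$ in Theorem \ref{thm_ext_as_expk}. The leading identity (\ref{eq_jopep_0}) follows from the model equality $\mathscr{E}_{n,m}^k \circ \mathscr{Res}_{n,m}^k = \mathscr{P}_{n,m}^{\perp, k}$ together with the factorization $\mathscr{P}_{n,m}^{\perp, k} = \pi^k \sum_{|\beta|=k} \frac{z_N^\beta \overline{z}_N'^\beta}{\beta!} \cdot \mathscr{P}_{n,m}^{\perp, 0}$, while (\ref{eq_jopep_1}) under the assumption (\ref{eq_comp_vol_omeg}) is obtained by substituting (\ref{eq_je1_expk}) and its adjoint into the composition and symmetrizing, the symmetry $z_N \leftrightarrow \overline{z}_N'$, $\overline{z}_Y - \overline{z}_Y' \leftrightarrow z_Y - z_Y'$ reflecting the self-adjointness of $B_{k,p}^\perp$. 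Finally, the uniform exponential bound in (\ref{eq_berg_perp_off_diagk}) follows from the estimates of Theorems \ref{thm_ext_exp_dc} and \ref{thm_ext_as_expk} combined with Lemma \ref{lem_bnd_prod_aloc}.

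The principal obstacle is the asymptotic inversion of $\ext_{k,p}^* \circ \ext_{k,p}$ on the Bergman image on $Y$: positivity of its leading scalar symbol guarantees the inversion, but writing down the explicit recursive procedure that produces the full asymptotic expansion of $A_{k,p}^*$ requires a careful iteration inside the class of Toeplitz operators with weak exponential decay. Once this is accomplished, the rest reduces to kernel-calculus bookkeeping in which the rigidity of the model composition identities enforces the cancellations producing the clean forms (\ref{eq_jopep_0}) and (\ref{eq_jopep_1}).
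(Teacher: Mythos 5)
Your proof is essentially correct, but it routes through a more elaborate factorization than the paper uses, and in doing so introduces an inversion step that the paper avoids at this stage of the argument. The paper (Lemma \ref{lem_ind_step}.c)) works directly from the two-factor identity
\[
\frac{1}{p^n} B_{k,p}^{\perp} \;=\; \frac{1}{p^{m - k/2}}\,\ext_{k,p}\;\circ\;\frac{1}{p^{n-m+k/2}}\bigl(\res_Y \circ \nabla^k B_p^{X,kY}\bigr),
\]
where the expansion of the second factor is already in hand from Lemma \ref{lem_ext_as_expk_new} (it is extracted from the expansions of $B_p^X$ and of $B_{l,p}^\perp$ for $l<k$ via $B_p^{X,kY}=B_p^X-\sum_{l<k}B_{l,p}^\perp$), and the expansion of the first factor is Theorem \ref{thm_ext_as_expk}, established in part b) of the inductive step. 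The entire proof is then a single application of the composition rule $\mathcal{K}_{n,m}^{ER}$, yielding $J_{k,r}^\perp=\sum_{a+b=r}\mathcal{K}_{n,m}^{ER}[J_{k,a}^{E},J_{k,b}^{R,0}]$ and, by direct substitution, the formulas (\ref{eq_jopep_0}) and (\ref{eq_jopep_1}). Your factorization $B_{k,p}^\perp=\ext_{k,p}\circ A_{k,p}^*\circ\ext_{k,p}^*$ is algebraically the same thing once one notices that $A_{k,p}^*\circ\ext_{k,p}^*=\res_Y\circ\nabla^k B_p^{X,kY}$ by Theorem \ref{thm_ex_ap}, but you expend a separate effort to show that $A_{k,p}^*$ is a Toeplitz operator by inverting $\ext_{k,p}^*\circ\ext_{k,p}$. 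This effort is not wasted --- it in effect re-derives Theorem \ref{thm_mult_def_toepl_k} from a dual vantage point --- but it is redundant, because the paper proves that result without any inversion (Lemma \ref{lem_ind_step}.a) establishes the Toeplitz property of $A_{k,p}$ directly from the explicit kernel formula $A_{k,p}=\res_Y\circ\nabla^k(\res_Y\circ\nabla^k B_p^{X,kY})^*$ and the criterion of Theorem \ref{thm_ma_mar_crit_exp_dec}). Also, the obstacle you flag in your last paragraph --- carrying out the recursive inversion within the class of Toeplitz operators with weak exponential decay --- is exactly what Lemma \ref{lem_inverse_toepl} provides, so no new iterative argument is needed; positivity of the leading scalar symbol $\kappa_N/(2\pi)^k k!$ (which is indeed bounded away from zero by bounded geometry) suffices. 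With that lemma in hand, your three-factor composition using $\mathcal{K}_{n,m}^{EP}$ and then $\mathcal{K}_{n,m}^{ER}$ does give the same expansion, with the same degree bound $2k^2+k^2 r+3r$ after tracking degrees through Proposition \ref{prop_bound_deg_q}. The two routes are thus logically equivalent; the paper's is shorter because it composes two known operators rather than three, and it performs the single required Toeplitz inversion earlier in the induction (in part b), not here.
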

	\begin{rem}\label{rem_bpper_refin}
		a) In particular, from (\ref{eq_pperp_defn_fun}), we have $F_{k, 0}^{\perp}(Z, Z') = \mathscr{P}_{n, m}^{\perp, k}$ and 
		\begin{multline}
			F_{k, 1}^{\perp}(Z, Z') = 
			\pi^{k + 1}
			\cdot
			{\rm{Id}}_{F_{y_0}} \cdot 
			\Big(
			 	g \big(z_N, A(\overline{z}_Y - \overline{z}'_Y) (\overline{z}_Y - \overline{z}'_Y) \big)
			 	\\
			 	+
			 	g \big(\overline{z}'_N, A(z_Y - z'_Y) (z_Y - z'_Y) \big)			 
			\Big)
		 	\cdot
			\mathscr{P}_{n, m}^{\perp, k}.
		\end{multline}
		\par 
		b) Theorems \ref{thm_ext_as_expk} and \ref{thm_berg_perp_off_diagk} were established for $k = 0$ in \cite[Theorems 1.6, 1.8]{FinOTAs}.
		Our proof here is different from the one from \cite{FinOTAs}, even for $k = 0$.
		\par 
		c) Our methods allow to give a precise estimate for $Q$ from Theorems \ref{thm_ext_as_expk} and \ref{thm_berg_perp_off_diagk}, but as the derivation is quite lengthy and cumbersome, and we never use those estimates, we leave their derivation to the interested reader.
		\par 
		d)
		By Taylor expansion due to the obvious vanishing properties of the Schwartz kernels, instead of the estimate $p^{- \frac{r + 1 - l}{2}}$ on the right-hand side of (\ref{eq_ext_as_expk}) (resp. of (\ref{eq_berg_perp_off_diagk})) one can put $p^{- \frac{r + 1 - l - k}{2}} |Z_N|^k$ (resp. $p^{- \frac{r + 1 - l - 2k}{2}} |Z_N \cdot Z'_N|^k$).
	\end{rem}
	\begin{proof}[Proof of Theorem \ref{thm_high_term_ext} assuming Theorems \ref{thm_ext_exp_dc}.a) and \ref{thm_ext_as_expk}]
		The main idea of the proof is to compare the Schwartz kernels of $\ext_{k, p}$ and $\ext_{k, p}^{0}$.
		For this, we introduce $K_p := \ext_{k, p} - \ext_{k, p}^{0}$. 
		From Theorems \ref{thm_bk_off_diag}, \ref{thm_ext_exp_dc}.a) and (\ref{eq_ext0_op}), we conclude that there are $c > 0$, $p_1 \in \nat^*$,  such that for any $r \in \nat$, there is $C > 0$, such that for any $p \geq p_1$, $x \in X$, $y \in Y$, the following estimate holds
		\begin{equation}\label{eq_ext_kp}
			\Big|  K_p(x, y) \Big| \leq C p^{m - \frac{k}{2}} \exp \big(- c \sqrt{p} \dist(x, y) \big).
		\end{equation}
		\par 
		Now, by studying the asymptotic expansion of the Schwartz kernel $K_p(x, y)$, we will improve the estimate (\ref{eq_ext_kp}) by lowering the degree of $p$.
		For $r \in \nat$, let us expand $\kappa_{N}(\psi(Z))^{\frac{1}{2}}$ in a neighborhood of $Z = 0$ as follows
		\begin{equation}\label{eq_kappa_N_tay_exp}
			\kappa_{N}(\psi(Z))^{\frac{1}{2}} = \sum_{i = 0}^{r} \kappa_{N, [i]}^{\frac{1}{2}}(Z) + O(|Z|^{r + 1}),
		\end{equation}
		where $\kappa_{N, [i]}^{\frac{1}{2}}(Z)$ are homogeneous polynomials of degree $i$.
		We denote by $J^{Y|Y}_{k, r}$ the polynomials from Theorem \ref{thm_berg_off_diag} associated to $X = Y$ and $F = {\rm{Sym}}^k (N^{1, 0})^* \otimes \iota^* F$.
		From Theorems \ref{thm_berg_off_diag}, \ref{thm_ext_as_expk} and (\ref{eq_pperp_defn_fu2n}), we deduce that for polynomials $J_{r, K}^E(Z, Z'_Y)$, defined for $r \in \nat$ as follows
		\begin{multline}\label{eq_jrke}
			J_{r, K}^E(Z, Z'_Y) := J_{k, r}^E(Z, Z'_Y) -  \sum_{\substack{\beta \in \nat^{n - m} \\ |\beta| = r}}
			\frac{1}{\beta!} \cdot
			z_N^{\beta} \cdot \big( \frac{\partial}{\partial z'_N} \big)^{\odot \beta}
			\cdot
			\\
			\cdot
			\sum_{a + b = r}  \kappa_{N, [a]}^{\frac{1}{2}}(Z) \cdot J^{Y|Y}_{k, b}(Z_Y, Z'_Y),
		\end{multline}
		and the functions $F_{r, K}^E := J_{r, K}^E \cdot \mathscr{E}_{n, m}^{0}$ over $\real^{2n} \times \real^{2m}$, the following holds.
		There are $\epsilon, c > 0$, $p_1 \in \nat^*$, such that for any $r \in \nat$, there are $C, Q > 0$, such that for any $y_0 \in Y$, $p \geq p_1$, $Z = (Z_Y, Z_N)$, $Z_Y, Z'_Y \in \real^{2m}$, $Z_N \in \real^{2(n - m)}$, $|Z|, |Z'_Y| \leq \epsilon$, we have
		\begin{multline}\label{eq_kp_as_exp}
			\bigg| 
					\frac{1}{p^{m - \frac{k}{2}}} K_p \big(\psi_{y_0}(Z), \phi_{y_0}^Y(Z'_Y) \big)
					-
					\sum_{j = 0}^{r}
					p^{-\frac{j}{2}}						
					F_{j, K}^E(\sqrt{p} Z, \sqrt{p} Z'_Y) 
					\kappa_{\psi}^{X|Y}(Z)^{-\frac{1}{2}}
					\kappa_{\phi}^Y(Z'_Y)^{-\frac{1}{2}}
			\bigg|
			\\
			\leq
			C p^{- \frac{r + 1}{2}}
			\Big(1 + \sqrt{p}|Z| + \sqrt{p} |Z'_Y| \Big)^{Q}
			\exp\Big(- c \sqrt{p} \big( |Z_Y - Z'_Y| + |Z_N| \big) \Big).
		\end{multline}
		From (\ref{eq_jo_expl_form}), (\ref{eq_je0_expk}) and (\ref{eq_jrke}), we deduce 
		\begin{equation}\label{eq_jk0_exp}
			J_{0, K}^E(Z, Z'_Y) = 0.
		\end{equation}
		In particular, from (\ref{eq_kp_as_exp}) and (\ref{eq_jk0_exp}), we see that we can improve (\ref{eq_ext_kp}) as follows
		\begin{equation}\label{eq_ext_kp_impro}
			\Big|  K_p(x, y) \Big| \leq C p^{m - \frac{1 + k}{2}} \exp \big(- c \sqrt{p} \dist(x, y) \big).
		\end{equation} 
		From Proposition \ref{prop_norm_bnd_distk_expbnd} and (\ref{eq_ext_kp_impro}), we deduce that there are $C > 0$, $p_1 \in \nat^*$, such that for any $p \geq p_1$, we have  $\| K_p \| \leq \frac{C}{p^{\frac{n - m + k + 1}{2}}}$, which is exactly (\ref{eq_ext_as}) by the definition of $K_p$.
		\par 
		Now, similarly to the derivation of (\ref{eq_jk0_exp}), we see that under assumption (\ref{eq_comp_vol_omeg}), from (\ref{eq_j1_expl_form}), (\ref{eq_jrke}) and the fact from \cite[(5.35)]{FinOTAs}, stating 
		\begin{equation}\label{eq_kappa_der_norm}
			\frac{\partial}{\partial Z_N} \kappa_N = 0,
		\end{equation}
		we deduce that we have
		\begin{equation}\label{eq_jk1_exp}
			J_{1, K}^E(Z, Z'_Y) =  J_{k, 1}^E(Z, Z'_Y).
		\end{equation}
		\par 
		Let us now prove (\ref{eq_norm_asymp}). Recall the following calculation: for $\beta \in \nat^{n - m}$, $|\beta| = k$, we have
		\begin{equation}
			\int_{\real^{2(n - m)}} |z_N^{2 \beta}| \cdot  \exp (- \pi |Z_N|^2 ) \cdot dZ_{2m + 1} \wedge \cdots \wedge dZ_{2n}
			=
			\frac{\beta!}{\pi^k}
			.
		\end{equation}
		Remark also from (\ref{eq_sym_emb_tens}) and the description below (\ref{eq_pairing_sym}) that 
		\begin{equation}
			\| dz_N^{\odot \beta} \|^2 = 2^k \cdot \frac{\beta!}{k!}.
		\end{equation}
		In particular, we have
		\begin{multline}\label{eq_int_kapp_betta_etc}
			\int_{\real^{2(n - m)}} \kappa_N(y, \sqrt{p} Z_N) \cdot \frac{|z_N^{2 \beta}|}{k!^2} \cdot \exp (- p \pi |Z_N|^2 ) \rho \Big(  \frac{|Z_N|}{r_{\perp}} \Big)^2 dZ_{2m + 1} \wedge \cdots \wedge dZ_{2n}
			\\
			=
			\frac{\kappa_N(y)}{p^{n - m + k}} \cdot \frac{1}{k! \cdot (2 \pi)^{k}} \cdot \| dz_N^{\odot \beta} \|^2   + O \Big( \frac{1}{p^{n - m + k + \frac{1}{2}}} \Big).
		\end{multline}
		\par 
		An easy calculation, using (\ref{eq_kappan}) and (\ref{eq_int_kapp_betta_etc}), cf. \cite[(4.38)]{FinOTAs} for the case $k = 0$, shows that for any $g \in L^2(Y, {\rm{Sym}}^k (N^{1, 0})^* \otimes \iota^*(L^p \otimes F))$, we have
		\begin{equation}\label{eq_norm_1stterm_er}
			\big\| \ext_{k, p}^{0} g \big\|_{L^2(dv_X)}^2
			=
			\frac{1}{p^{n - m + k}} \cdot \frac{1}{k! \cdot (2 \pi)^k} \cdot
			\Big\| \kappa_N(y)^{\frac{1}{2}} \cdot B_{k, p}^Y g \Big\|_{L^2(Y)}^2
			+
			O
			\Big(
			\frac{\| g \|_{L^2(Y)}^2}{p^{n - m + k + \frac{1}{2}}}
			\Big)
			.
		\end{equation}
		\par 
		\begin{sloppypar}
		Consider the Toeplitz operator $T_{\kappa_N, p}^Y \in \enmr{ L^2(Y, {\rm{Sym}}^k (N^{1, 0})^* \otimes \iota^*( L^p \otimes F))}$, given by
		\begin{equation}
			T_{\kappa_N, p}^Y g := B_{k, p}^Y ( \kappa_N \cdot B_{k, p}^Y g).
		\end{equation}
		Then, we clearly have
		\begin{equation}\label{eq_reextprs_toep}
			\big\langle T_{\kappa_N, p}^Y g,  g  \big\rangle_{L^2(Y)}
			=
			\big\langle \kappa_N \cdot B_{k, p}^Y g,  B_{k, p}^Y  g  \big\rangle_{L^2(Y)}.
		\end{equation}
		Thus, by (\ref{eq_norm_1stterm_er}) and (\ref{eq_reextprs_toep}), we have 
		\begin{equation}\label{eq_top_final_1}
			\big\| \ext_{k, p}^{0} \big\|
			=
			\frac{1}{p^{\frac{n - m + k}{2}}}
			\cdot
			\frac{1}{\sqrt{k! \cdot (2 \pi)^k}} 
			\cdot
			\big\| T_{\kappa_N, p}^Y \big\|^{\frac{1}{2}} + O \Big( \frac{1}{p^{\frac{n - m + k + 1}{2}}} \Big).
		\end{equation}
		From Lemma \ref{lem_norm_toepl} and (\ref{eq_top_final_1}), we deduce (\ref{eq_norm_asymp}).
		\end{sloppypar}
		\par 
		Now it is only left to prove that if $A \neq 0$, then under additional assumption (\ref{eq_comp_vol_omeg}), one can not replace $p^{-\frac{n - m + k + 1}{2}}$ by an asymptotically better estimate.
		For this, remark that as long as $A \neq 0$, by (\ref{eq_f1e_form}), the operator, acting on $\comp^n$ with the kernel $F_{k, 1}^E(Z, Z'_Y)$, has non-zero norm.
		Then, by the calculations, similar to (\ref{eq_norm_1stterm_er}), we see that the operator, acting on $\comp^n$ with the kernel $F_{k, 1}^E(\sqrt{p} Z, \sqrt{p} Z'_Y)$, has norm of order $p^{-\frac{n - m + k}{2}}$, as $p \to \infty$.
		We deduce from this and Theorem \ref{thm_ext_as_expk} the needed result.
	\end{proof}
	\begin{proof}[Proof of Theorem \ref{thm_log_bk} assuming Theorems \ref{thm_ext_exp_dc}.b) and \ref{thm_berg_perp_off_diagk}]
		First of all, remark that by definition
		\begin{equation}\label{eq_berg_k_logbk}
			B_p^{X} - B_p^{X, k Y}
			=
			\sum_{l = 0}^{k - 1} B_{l, p}^{\perp}.
		\end{equation}
		Clearly, now the estimate (\ref{eq_ext_log_bk}) follows from Theorems \ref{thm_bk_off_diag}, \ref{thm_ext_exp_dc}.b) and (\ref{eq_berg_k_logbk}).
		The statement (\ref{eq_ext_log_bk2}) follows from Theorems \ref{thm_berg_off_diag}, \ref{thm_berg_perp_off_diagk},  the use of Remark \ref{rem_bpper_refin}.d) and (\ref{eq_berg_k_logbk}) by using considerations similar to the ones from (\ref{eq_jrke}) and (\ref{eq_jk0_exp}). 
	\end{proof}

\subsection{Multiplicative defect as a bridge from Bergman projections to extension operators}\label{sect_adj_t_oper}

	The main goal of this section is to prove Theorems \ref{thm_ext_exp_dc}, \ref{thm_ext_as_expk} and \ref{thm_berg_perp_off_diagk}.
	As a consequence of our method of the proof, we also establish Theorem \ref{thm_isom}.
	To do so, we rely heavily on the existence of the multiplicative defect operator, $A_{k, p} \in \enmr{H^{0}_{(2)}(Y, {\rm{Sym}}^k (N^{1, 0})^* \otimes \iota^* ( L^p \otimes F))}$, $p \geq p_1$, established in Theorem \ref{thm_ex_ap}.
	More precisely, the following statement plays a crucial role in our approach.
	\begin{thm}\label{thm_mult_def_toepl_k}
		The sequence of operators $\frac{1}{p^{n -m + k}} A_{k, p}$, $p \geq p_1$, viewed as a sequence of elements from $\enmr{L^2(Y, {\rm{Sym}}^k (N^{1, 0})^* \otimes \iota^* (L^p \otimes F))}$ by precomposing with the Bergman projector $B_{k, p}^Y$, forms a Toeplitz operator with weak exponential decay with respect to $X$.
		Moreover, the first term of this asymptotic expansion can be calculated as follows
		\begin{equation}\label{eq_akp_first_term}
			\Big[ \frac{1}{p^{n -m + k}} A_{k, p} \Big]_0 
			=
			(2 \pi)^k \cdot k! \cdot \kappa_N^{-1}|_Y \cdot {\rm{Id}}_{{\rm{Sym}}^k N^{1, 0}_{y_0}} \otimes {\rm{Id}}_{F_{y_0}}.
		\end{equation}
	\end{thm}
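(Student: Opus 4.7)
The plan is to rewrite the operator $\tilde A_{k,p} := A_{k, p} \circ B_{k, p}^Y$ as a composition $R \circ R^*$ of operators whose Schwartz kernels are already under control, and then invoke Proposition \ref{prop_ma_mar_crit_exp_dec}. Here $R := \res_Y \circ \nabla^k \circ B_p^{X, kY}$. Composing the defining relation (\ref{eq_resp_ap_lem}) on the left by $R$ gives $R R^* = R \circ \ext_{k, p} \circ A_{k, p}$. Since $\ext_{k, p}$ takes values in $H^{0}_{(2)}(X, L^p \otimes F \otimes \mathcal{J}_Y^k)$, on which $B_p^{X, kY}$ acts as the identity, and since $\res_{k, p} \circ \ext_{k, p} = B_{k, p}^Y$ by the very definition of the extension operator, we obtain $R \circ \ext_{k, p} = B_{k, p}^Y$, hence $R R^* = B_{k, p}^Y A_{k, p} = A_{k, p} B_{k, p}^Y = \tilde A_{k, p}$. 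Using the decomposition $B_p^{X, kY} = \sum_{l \geq k} B_{l, p}^{\perp}$ and the fact that, for $l \geq k + 1$, the image of $B_{l, p}^{\perp}$ lies in $H^{0}_{(2)}(X, \mathcal{J}_Y^{k+1})$ (so its $k$-jet along $Y$ vanishes), we further simplify $R = \res_Y \circ \nabla^k B_{k, p}^{\perp}$.

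With the normalized operator $T_p := p^{-(n - m + k)} \tilde A_{k, p}$, we verify the three hypotheses of Proposition \ref{prop_ma_mar_crit_exp_dec} with the ambient manifold $Z := X$. Hypothesis $(i)$, $B_{k, p}^Y T_p B_{k, p}^Y = T_p$, is immediate from $R = B_{k, p}^Y \circ R$. For the exponential decay in hypothesis $(ii)$, Theorem \ref{thm_ext_exp_dc}.b) together with the standard $p^{k/2}$-gain from $\nabla^k$ yields
\begin{equation*}
|R(y, x)|_{\ccal^l} \leq C p^{n + (k + l)/2} \exp\bigl(-c \sqrt{p}(\dist(y, x) + \dist(x, Y))\bigr)
\end{equation*}
for $y \in Y$, $x \in X$. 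Inserting this into the integral representation $\tilde A_{k, p}(y_1, y_2) = \int_X R(y_1, x) R(y_2, x)^* dv_X(x)$ and invoking Lemma \ref{lem_bnd_prod_aloc} (the extra factor $\exp(-c \sqrt{p} \dist(x, Y))$ confines $x$ to a $1/\sqrt{p}$-tube around $Y$ and contributes a volume factor of order $p^{m - n}$ via Proposition \ref{prop_exp_bound_int} applied to the normal bundle), we obtain $|T_p(y_1, y_2)|_{\ccal^l} \leq C p^{m + l/2} \exp(-c' \sqrt{p} \dist_X(y_1, y_2))$, as required.

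The asymptotic expansion of hypothesis $(iii)$ is obtained by inserting the expansion of $B_{k, p}^{\perp}$ from Theorem \ref{thm_berg_perp_off_diagk} into $R$ and then composing two such expansions through $\int_X$. The change of variables $Z \mapsto W/\sqrt{p}$ in Fermi coordinates together with $dv_X = \kappa_{\psi}^{X|Y}(Z) dZ$ absorbs the product of the two $\kappa_{\psi}^{X|Y}(Z)^{-1/2}$-factors attached to the integration variable, leaving a clean Lebesgue integration on $\real^{2n}$ whose polynomial structure is controlled by Lemma \ref{lem_comp_poly}. The surviving $\kappa_{\psi}^{X|Y}(Z_{Y, i}, 0)^{-1/2}$-factors, attached to the two $Y$-variables, split via (\ref{eq_kappa_relation}) as $\kappa_N(\phi_{y_0}^Y Z_{Y, i})^{-1/2} \cdot \kappa_{\phi}^Y(Z_{Y, i})^{-1/2}$; the $\kappa_{\phi}^Y$-part matches the Toeplitz template of Proposition \ref{prop_ma_mar_crit_exp_dec}, while the $\kappa_N$-part is Taylor-expanded at $Z_Y = 0$, its constant value $\kappa_N(y_0)^{-1/2}$ contributing to the leading symbol and the higher Taylor coefficients being absorbed into later orders of the expansion.

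The leading symbol is then computed from the model identities of Section \ref{sect_model_calc}. The model leading term of $R$ is $\mathscr{Res}_{n, m}^k$ (by (\ref{eq_jopep_0}) and the definition of $\mathscr{Res}_{n, m}^k$). Using (\ref{eq_res_enmdual}), $(\mathscr{Res}_{n, m}^k)^* = (2\pi)^k k! \mathscr{E}_{n, m}^k$, and a direct calculation based on (\ref{eq_nabla_sym_der_ident}) and (\ref{eq_id_sym_ident}) yields $\mathscr{Res}_{n, m}^k \circ \mathscr{E}_{n, m}^k = \mathscr{P}_m \cdot {\rm{Id}}_{{\rm{Sym}}^k (V^{1, 0})^*}$, whence
\begin{equation*}
\mathscr{Res}_{n, m}^k \circ (\mathscr{Res}_{n, m}^k)^* = (2\pi)^k \cdot k! \cdot \mathscr{P}_m \cdot {\rm{Id}}_{{\rm{Sym}}^k (V^{1, 0})^*}.
\end{equation*}
Combined with the $\kappa_N(y_0)^{-1}$ factor produced by the two residual $\kappa_N^{-1/2}$-contributions evaluated at $Z_Y = 0$, the constant polynomial part of the leading Toeplitz symbol becomes $(2\pi)^k k! \kappa_N(y_0)^{-1} {\rm{Id}}_{{\rm{Sym}}^k N_{y_0}^{1, 0}} \otimes {\rm{Id}}_{F_{y_0}}$; by the last assertion of Proposition \ref{prop_ma_mar_crit_exp_dec}, its value at $(0, 0)$ is $[T_p]_0$, proving (\ref{eq_akp_first_term}). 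The main difficulty lies in the bookkeeping of the preceding paragraph: the Toeplitz template of Proposition \ref{prop_ma_mar_crit_exp_dec} requires $\kappa_{\phi}^Y$-factors rather than $\kappa_{\psi}^{X|Y}$-factors, so the residual $\kappa_N$-contributions must be expanded in a Taylor series at $y_0$ and re-absorbed into the correct order of the $p^{-1/2}$-expansion at each step.
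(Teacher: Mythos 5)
Your reduction of $A_{k,p}$ to the operator $R R^*$ with $R := \res_Y \circ \nabla^k \circ B_p^{X,kY}$ is correct and is exactly the paper's formula (\ref{eq_ap_form}) (the paper writes it as $\res_Y \circ \nabla^k (\res_Y \circ \nabla^k B_p^{X, kY})^*$, which agrees with $R R^*$ since $B_p^{X,kY}$ acts as the identity on the image of $R^*$). Your leading-symbol computation, giving $(2\pi)^k k! \kappa_N(y_0)^{-1} \cdot {\rm{Id}}$ via $(\mathscr{Res}_{n,m}^k)^* = (2\pi)^k k! \mathscr{E}_{n,m}^k$ and (\ref{eq_verif_res_ext}), also reproduces (\ref{eq_jo_expl_form12121}).

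However, there is a genuine gap in the form of circular reasoning. You simplify $R = \res_Y \circ \nabla^k B_{k,p}^\perp$ and then feed Theorem \ref{thm_ext_exp_dc}.b) and Theorem \ref{thm_berg_perp_off_diagk} \emph{for $B_{k,p}^\perp$ at the current level $k = k_0$} into the exponential-bound and asymptotic-expansion steps. But Theorem \ref{thm_mult_def_toepl_k} at level $k_0$ is precisely Lemma \ref{lem_ind_step}.a), and Theorems \ref{thm_ext_exp_dc}.b), \ref{thm_berg_perp_off_diagk} at level $k_0$ are Lemma \ref{lem_ind_step}.c), which is proven \emph{afterwards} and whose proof (see (\ref{eq_eq_resp_ap_lem_ap_inv}), (\ref{eq_b_f_aa})) uses the multiplicative defect $A_{k_0, p}$. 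At the stage where Theorem \ref{thm_mult_def_toepl_k} must be established, only the $k < k_0$ versions of \ref{thm_ext_exp_dc}.b) and \ref{thm_berg_perp_off_diagk} are available. The paper avoids this by writing $B_p^{X, k_0 Y} = B_p^X - \sum_{l < k_0} B_{l,p}^\perp$ and using Theorem \ref{thm_berg_off_diag} for $B_p^X$ plus the inductive hypotheses for $l < k_0$; this yields a slightly weaker exponential bound (\ref{eq_exp_dc_akp11}) for $B_p^{X,k_0Y}$, without the $\dist(x_i,Y)$ decay, which is nevertheless sufficient for the composition estimate. Your argument can be repaired by making the same substitution — and the tubular volume gain you invoke is in fact not needed, since the on-diagonal factor $p^{-n}$ from $\int_X \exp(-c\sqrt{p}(\dist(y_1,x)+\dist(y_2,x)))\,dv_X$ already gives the correct order — but as written the proof invokes results not yet at hand.
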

	\begin{rem}
		For $k = 0$, Theorem \ref{thm_mult_def_toepl_k} was established in \cite[Theorem 4.3]{FinToeplImm}.
	\end{rem}
	\par 
	The core of our argument in the proofs of Theorems \ref{thm_ext_exp_dc}, \ref{thm_ext_as_expk} and \ref{thm_berg_perp_off_diagk} lies in the following inductive step.
	We fix $k_0 \in \nat$.
	\begin{lem}\label{lem_ind_step}
		Assume that for any $k < k_0$, Theorems \ref{thm_ext_exp_dc}.b) and \ref{thm_berg_perp_off_diagk} hold.
		Then for $k := k_0$,
		\begin{tasks}[style=enumerate](2)
			\task Theorem \ref{thm_mult_def_toepl_k} holds,
			\task Theorems \ref{thm_ext_exp_dc}.a) and \ref{thm_ext_as_expk} hold,
			\task Theorems \ref{thm_ext_exp_dc}.b) and \ref{thm_berg_perp_off_diagk} hold.
		\end{tasks}
	\end{lem}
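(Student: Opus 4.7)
The whole argument hinges on exploiting the defining relation \eqref{eq_resp_ap_lem}. Taking adjoints gives \(B_p^{X,kY} \circ \res_{k,p}^{*} = \ext_{k,p} \circ A_{k,p}\), and composing with \(\res_{k,p}\) on the left, together with the identity \(\res_{k,p} \circ \ext_{k,p} = B_{k,p}^{Y}\) which is immediate from the definition of \(\ext_{k,p}\), yields the key formula
\begin{equation*}
A_{k,p} = \res_{k,p} \circ B_p^{X,kY} \circ \res_{k,p}^{*}
\end{equation*}
on the Bergman space of \(Y\), together with its companions \(\ext_{k,p} = B_p^{X,kY} \circ \res_{k,p}^{*} \circ A_{k,p}^{-1}\) and \(B_{k,p}^{\perp} = \ext_{k,p} \circ \res_{k,p} \circ B_p^{X,kY}\). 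The inductive hypothesis, combined with Theorems \ref{thm_bk_off_diag} and \ref{thm_berg_off_diag}, gives complete control of \(B_p^{X,kY} = B_p^{X} - \sum_{l < k} B_{l,p}^{\perp}\): Gaussian off-diagonal decay plus a full Taylor-type expansion in rescaled Fermi coordinates around any point of \(Y\).

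For part (a), I would verify the three hypotheses of Proposition \ref{prop_ma_mar_crit_exp_dec} for \(p^{-(n-m+k)} A_{k,p}\). The projection condition is immediate. The exponential bound follows from the decay of \(B_p^{X,kY}\) and the local nature of \(\res_{k,p}\), \(\res_{k,p}^{*}\), via Lemma \ref{lem_bnd_prod_aloc}. The crucial Taylor expansion is obtained by expanding \(B_p^{X,kY}\) in Fermi coordinates, rescaling by \(Z \mapsto Z/\sqrt{p}\) so that each derivative contributes a factor \(\sqrt{p}\) and \(\res_{k,p}\), \(\res_{k,p}^{*}\) asymptotically rescale to \(p^{k/2}\) times the model operators \(\mathscr{Res}_{n,m}^{k}\), \((\mathscr{Res}_{n,m}^{k})^{*}\), and then combining via the kernel calculus of Lemma \ref{lem_comp_poly}. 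The leading polynomial reduces to \(\mathscr{Res}_{n,m}^{k} \circ \mathscr{P}_{n,m}^{\comp^{n},k\comp^{m}} \circ (\mathscr{Res}_{n,m}^{k})^{*}\), which by \eqref{eq_verif_res_ext} and \eqref{eq_res_enmdual} equals \((2\pi)^{k} k! \cdot \mathscr{P}_{m} \otimes \mathrm{Id}\); the Jacobian \eqref{eq_kappan} entering \(\res_{k,p}^{*}\) through the mismatch between \(dv_X\) and \(dv_Y \wedge dv_N\) then accounts for the factor \(\kappa_N^{-1}|_Y\) in \eqref{eq_akp_first_term}.

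For part (b), uniform invertibility of the leading symbol \((2\pi)^{k} k! \kappa_N^{-1}|_Y\) allows a Neumann-series argument on the Bergman space, analogous to the \(k=0\) treatment of \cite[\S 4]{FinToeplImm}, showing that \(p^{n-m+k} A_{k,p}^{-1}\) is a Toeplitz operator with weak exponential decay and leading symbol \(\kappa_N|_Y / ((2\pi)^{k} k!)\). Feeding this into \(\ext_{k,p} = B_p^{X,kY} \circ \res_{k,p}^{*} \circ A_{k,p}^{-1}\) and applying Lemma \ref{lem_bnd_prod_aloc} yields Theorem \ref{thm_ext_exp_dc}.a); the rescaled expansion \eqref{eq_ext_as_expk} comes from composing the three rescaled expansions via the rule \eqref{eq_comp_ext_pmn22} of Lemma \ref{lem_comp_poly}. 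The leading polynomial \eqref{eq_je0_expk} follows from the model relation \(\mathscr{P}_{n,m}^{\comp^{n},k\comp^{m}} \circ (\mathscr{Res}_{n,m}^{k})^{*} = (2\pi)^{k} k! \mathscr{E}_{n,m}^{k}\) combined with the leading symbol of \(A_{k,p}^{-1}\), producing the \(\kappa_N^{1/2}(y_0)\) factor, and \eqref{eq_je1_expk} then follows by inserting \eqref{eq_j1_expl_form} into the expansion of \(B_p^{X,kY}\) and using \eqref{eq_kappa_der_norm} under \eqref{eq_comp_vol_omeg}. Part (c) is then a direct application of \(B_{k,p}^{\perp} = \ext_{k,p} \circ \res_{k,p} \circ B_p^{X,kY}\): the Gaussian factor \(\exp(-p\pi |Z_N|^{2}/2)\) present in the kernel of \(\ext_{k,p}\) propagates through the composition to give the extra \(\dist(x_i, Y)\) decay of Theorem \ref{thm_ext_exp_dc}.b), while the asymptotic expansion \eqref{eq_berg_perp_off_diagk} follows via \eqref{eq_comp_ext_pmn222} with leading term \eqref{eq_jopep_0} matching the model identity \(\mathscr{E}_{n,m}^{k} \circ \mathscr{Res}_{n,m}^{k} = \mathscr{P}_{n,m}^{\perp,k}\), and \eqref{eq_jopep_1} emerging from the next-order corrections of \(\ext_{k,p}\) and \(B_p^{X,kY}\).

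The main obstacle is part (a): verifying the Taylor-type condition \eqref{eq_tpy_defn_exp_tay12as} for the triple composition \(\res_{k,p} \circ B_p^{X,kY} \circ \res_{k,p}^{*}\) and isolating the leading model contribution through the layers of Jacobian factors, derivative rescalings, and higher-order corrections. Each \(\nabla^{k}\) raises polynomial degrees and interacts delicately with the Bergman-kernel expansion through \eqref{eq_kmn_poly} and \eqref{eq_kmn_poly23}, and the precise form of the leading symbol \(\kappa_N^{-1}\) requires careful bookkeeping of the volume-form Jacobians entering the adjoint. Once part (a) is established, parts (b) and (c) reduce to essentially formal applications of the same compositional calculus.
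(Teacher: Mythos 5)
Your plan follows essentially the same route as the paper: exploit $\res_{k,p} \circ \ext_{k,p} = B_{k,p}^{Y}$ and the defining relation \eqref{eq_resp_ap_lem} to rewrite $A_{k,p}$, $\ext_{k,p}$ and $B_{k,p}^{\perp}$ as compositions with the logarithmic Bergman kernel $B_p^{X,kY}$ (controlled by the inductive hypothesis), then verify the criteria of Theorem~\ref{thm_ma_mar_crit_exp_dec} / Proposition~\ref{prop_ma_mar_crit_exp_dec} and propagate expansions via the kernel calculus of Lemma~\ref{lem_comp_poly}. The composition identities you write, the invertibility of $A_{k,p}$ via its constant leading symbol, and the computation of the leading symbol from \eqref{eq_verif_res_ext} and \eqref{eq_res_enmdual} all match the paper.

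There is, however, one genuinely important point you do not address, and it cannot be absorbed into ``careful bookkeeping.'' When you expand $\res_Y \circ \nabla^k B_p^{X,kY}$ in rescaled Fermi coordinates, the coefficient polynomials $J_{k,r}^{R,0}$ (and consequently $J_{k,r}^{E}$ and $J_{k,r}^{\perp}$) must be shown to vanish to order $k$ along the appropriate diagonal in the normal directions. This vanishing is not automatic from the naive expansion of $B_p^{X,kY}$ plus $k$-fold differentiation: the individual terms of the Dai--Liu--Ma expansion of $B_p^{X}$ do not separately vanish to high order along $Y$, and the cancellation coming from the subtracted $\sum_{l<k} B_{l,p}^{\perp}$ is far from transparent. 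Without this vanishing, the exponent $p^{-\frac{r+1-l}{2}}$ in \eqref{eq_ext_as_expk} and \eqref{eq_berg_perp_off_diagk} would be off by a shift of $k/2$, and the parity/leading-symbol identifications in part (a) would fail. The paper establishes this vanishing by an indirect argument: introduce, for each $k' < k$, the operator $T_p^Y := p^{m-n-\frac{k+k'}{2}}\res_Y \circ \nabla^{k'}(\res_Y \circ \nabla^k B_p^{X,kY})^*$, observe that it vanishes identically (the adjoint lands in sections vanishing to order $k$ along $Y$, so the $k'$-jet restriction kills it), and check that, were the vanishing property of $J_{k,r}^{E,0}$ to fail at some order $k' < k$ and some first index $r_0$, the sequence $p^{r_0/2} T_p^Y$ would meet the hypotheses of Proposition~\ref{prop_ma_mar_crit_exp_dec} (or Remark~\ref{rem_toepl_f1f2_par}, depending on parity) with a nonzero leading symbol --- contradicting $T_p^Y = 0$. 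You should either reproduce this contradiction argument or supply an alternative direct proof of the order-$k$ vanishing; as written, your plan silently assumes it.

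Two smaller remarks. First, part (a) uses Theorem~\ref{thm_ma_mar_crit_exp_dec} (the $F_1 = F_2$ version with a parity condition on the $I_r^X$), and the parity tracking of $J_{k,r}^{R,0}$, $J_{k,r}^{E}$, $J_{k,r}^{\perp}$ through the compositions is what lets you invoke Proposition~\ref{prop_parity_toepl} when $k+k'$ is odd; your plan does not mention parity at all. Second, for inverting $A_{k,p}$ the paper cites Lemma~\ref{lem_inverse_toepl} directly rather than running a Neumann series, but this is only a presentational difference.
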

	The following statement will be useful in our proof of Lemma \ref{lem_ind_step}.
	\begin{lem}\label{lem_ext_as_expk_new}
		Assume that for any $k < k_0$, Theorems \ref{thm_ext_exp_dc}.b) and \ref{thm_berg_perp_off_diagk} hold.
		Then for $k := k_0$, and any $r \in \nat$, $y_0 \in Y$, there are polynomials $J_{k, r}^{R, 0}(Z_Y, Z') \in {\rm{Sym}}^k (N_{y_0}^{1, 0})^* \otimes \enmr{F_{y_0}}$ in $Z' \in \real^{2n}$, $Z_Y \in \real^{2m}$, with the same properties as those from Theorem \ref{thm_ext_as_expk} (including the analogous vanishing, degree and parity requirements), such that for $F_{k, r}^{R, 0} := J_{k, r}^{R, 0} \cdot \mathscr{Res}_{n, m}^{0}$, the following holds.
		\par 
		There are $\epsilon, c > 0$, $p_1 \in \nat^*$, such that for any $r, l, l' \in \nat$, there are $C, Q  > 0$, such that for any $y_0 \in Y$, $p \geq p_1$, $Z = (Z_Y, Z_N)$, $Z_Y, Z'_Y \in \real^{2m}$, $Z_N \in \real^{2(n - m)}$, $|Z|, |Z'_Y| \leq \epsilon$, $\alpha \in \nat^{2n}$, $\alpha' \in \nat^{2m}$, $|\alpha| + |\alpha'| \leq l$, the following bound holds
		\begin{multline}\label{eq_ext_as_expk_new}
			\bigg| 
				\frac{\partial^{|\alpha|+|\alpha'|}}{\partial Z^{\alpha} \partial Z'_Y{}^{\alpha'}}
				\bigg(
					\frac{1}{p^{n + \frac{k}{2}}} \cdot (\res_Y \circ \nabla^k B_p^{X, k Y})  \big(\phi_{y_0}^Y(Z_Y), \psi_{y_0}(Z') \big)
					\\
					-
					\sum_{i = 0}^{r}
					p^{-\frac{i}{2}}						
					F_{k, i}^{R, 0}(\sqrt{p} Z_Y, \sqrt{p} Z') 
					\kappa_{\phi}^Y(Z_Y)^{-\frac{1}{2}}
					\kappa_{\psi}^{X|Y}(Z')^{-\frac{1}{2}}
				\bigg)
			\bigg|_{\ccal^{l'}}
			\\
			\leq
			C p^{- \frac{k + 1 - l}{2}}
			\Big(1 + \sqrt{p}|Z_Y| + \sqrt{p} |Z'| \Big)^{Q}
			\exp\Big(- c \sqrt{p} \big( |Z_Y - Z'_Y| + |Z'_N| \big) \Big),
		\end{multline}
		where the $\ccal^{l'}$-norm is taken with respect to $y_0$.
		Also, the following identity holds
		\begin{equation}\label{eq_jr000_expk}
			J_{k, 0}^{R, 0}(Z_Y, Z') = 
			\pi^{k}
			\cdot
			\kappa_N^{-\frac{1}{2}}(y_0)
			\cdot
			\sum_{\substack{\beta \in \nat^{n - m} \\ |\beta| = k}}
			\frac{k!}{\beta!} \cdot
			 (d z_N)^{\odot \beta} \cdot (\overline{z}'_N)^{\beta}.
		\end{equation}
		In particular, from (\ref{eq_res_nm_kernel}), we have $F_{k, 0}^{R, 0}(Z_Y, Z') = \mathscr{Res}_{n, m}^{k}$.
		Moreover, under the assumption (\ref{eq_comp_vol_omeg})
		\begin{multline}\label{eq_jr000111_expk}
			J_{k, 1}^{R, 0}(Z_Y, Z') = 
			{\rm{Id}}_{F_{y_0}} \cdot \pi^{k + 1}
		 	g \big(\overline{z}'_N, A(z_Y - z'_Y) (z_Y - z'_Y) \big)			 
			\cdot
			\\
			\cdot
			\sum_{\substack{\beta \in \nat^{n - m} \\ |\beta| = k}}
			\frac{k!}{\beta!} \cdot
			 (d z_N)^{\odot \beta} \cdot (\overline{z}'_N)^{\beta}.
		\end{multline}
 	\end{lem}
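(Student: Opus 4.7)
The plan is to derive the expansion from the decomposition
\[
	\res_Y \circ \nabla^k B_p^{X, k Y}
	=
	\res_Y \circ \nabla^k B_p^X
	-
	\sum_{l = 0}^{k - 1} \res_Y \circ \nabla^k B_{l, p}^{\perp},
\]
combining Theorem \ref{thm_berg_off_diag} for the expansion of $B_p^X$ in Fermi coordinates $\psi_{y_0}^{X|Y}$ with the induction hypothesis, i.e.\ Theorem \ref{thm_berg_perp_off_diagk} for $l < k_0$, in the same coordinates. Applying $\res_Y \circ \nabla^k$ to the first variable is a local operation: in the parallel-transport trivialization of $L^p \otimes F$ along normal geodesics, the Chern connection differs from the trivial one by a $\ccal^\infty_b$ connection $1$-form, so $\nabla^k$ applied to a term of the form $J(Z, Z') \cdot \mathscr{P}_n(\sqrt{p} Z, \sqrt{p} Z')$, after setting $Z_N = 0$, again takes the form (polynomial) $\cdot \mathscr{Res}_{n, m}^0$ with the polynomial of controlled degree, and the rescaling provides the announced $p^{n + k/2}$ normalization. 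The exponential decay in $|Z_Y - Z_Y'| + |Z_N'|$ and the degree and parity bounds on $J_{k, i}^{R, 0}$ follow by direct inheritance from Theorems \ref{thm_berg_off_diag}, \ref{thm_berg_perp_off_diagk} together with Lemma \ref{lem_bnd_prod_aloc}.

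The crucial model identity is the following. Expanding the Gaussian factor in $\mathscr{P}_n$ as a power series in $z_N \overline{z}'_N$ gives $\mathscr{P}_n = \sum_{l = 0}^{\infty} \mathscr{P}_{n, m}^{\perp, l}$, and a direct derivative computation, using (\ref{eq_nabla_sym_der_ident}) and the fact that the $Z_N$-derivatives of the Gaussian exponent vanish at $Z_N = 0$, yields $\res_{\comp^m} \circ \nabla^k \mathscr{P}_{n, m}^{\perp, l}|_{\comp^m} = 0$ for $l \neq k$ and equals $\mathscr{Res}_{n, m}^k$ for $l = k$, since $\nabla^k z_N^\beta|_{Z_N = 0} = k! \cdot dz_N^{\odot \beta}$ if $|\beta| = k$ and vanishes otherwise. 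Consequently, at order $r = 0$ only the contribution from $F_0^{X|Y} = \mathscr{P}_n$ survives (all $B_{l, p}^\perp$ terms with $l < k$ give zero), and the conversion $\kappa_\psi^{X|Y}(Z)^{-1/2}|_{Z_N = 0} = \kappa_N(y_0)^{-1/2} \cdot \kappa_\phi^Y(Z_Y)^{-1/2}$ via (\ref{eq_kappa_relation}), up to rescaled-Taylor corrections absorbed into higher orders, yields (\ref{eq_jr000_expk}).

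At order $r = 1$ under (\ref{eq_comp_vol_omeg}), the contributions come from $F_1^{X|Y}$ given by (\ref{eq_j1_expl_form}) and from $F_{l, 1}^\perp$ with $l < k$ given by (\ref{eq_jopep_1}). The summand $\pi \cdot g(z_N, A(\overline{z}_Y - \overline{z}'_Y)(\overline{z}_Y - \overline{z}'_Y))$ in $F_1^{X|Y}$ pairs with $\mathscr{P}_{n, m}^{\perp, k - 1}$ (from the Taylor expansion of $\mathscr{P}_n$) to reach $z_N$-degree $k$, producing a contribution that is cancelled exactly by the analogous summand of $F_{k - 1, 1}^\perp$; the summand $\pi \cdot g(\overline{z}'_N, A(z_Y - z'_Y)(z_Y - z'_Y))$ in $F_1^{X|Y}$ pairs with $\mathscr{P}_{n, m}^{\perp, k}$ to reach $z_N$-degree $k$ and survives, since no $F_{l, 1}^\perp$ with $l < k$ provides a matching term (it would require $l = k$). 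Using $\partial \kappa_N / \partial Z_N = 0$ from \cite[(5.35)]{FinOTAs} for the $\kappa$-bookkeeping yields (\ref{eq_jr000111_expk}).

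The main obstacle is the algebraic bookkeeping of the cancellations and the passage through $\kappa$ factors: verifying that the pieces of the Taylor expansion of $\mathscr{P}_n$ with $z_N$-degree different from $k$ vanish after $\nabla^k|_{Z_N = 0}$, that the residual contributions from the $F_{l, 1}^\perp$ cancel all the ``wrong-degree'' monomials in $z_N$ coming from $F_1^{X|Y}$, and that the parity constraint is preserved after the $k$ derivatives in the $z_N$ direction. The derivative bounds and higher-order remainders in (\ref{eq_ext_as_expk_new}) then follow routinely from Lemma \ref{lem_bnd_prod_aloc}.
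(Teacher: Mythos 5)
Your plan — decompose $B_p^{X, k Y} = B_p^X - \sum_{l < k_0} B_{l, p}^{\perp}$, plug in the expansions from Theorems \ref{thm_berg_off_diag} and \ref{thm_berg_perp_off_diagk}, and apply $\res_Y \circ \nabla^k$ in the Fermi-coordinate trivialization — is exactly the paper's route for building the polynomials $J_{k, r}^{R, 0}$, and your $r = 0$, $r = 1$ model computations (including the observation that $\res_{\comp^m} \circ \nabla^k \mathscr{P}_{n, m}^{\perp, l}|_{Z_N = 0}$ vanishes for $l \neq k$) match the paper's (\ref{eq_jr000_expk}) and (\ref{eq_jr000111_expk}).

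However, there is a genuine gap in how you handle the vanishing property. You need $J_{k, r}^{R, 0}(Z_Y, Z')$ to vanish at least to order $k$ in $Z'_N$ for \emph{every} $r$, and you frame this as ``algebraic bookkeeping of the cancellations.'' For $r = 0, 1$ the cancellation can be read directly off the explicit forms of $J_0^{X|Y}$, $J_1^{X|Y}$ and $J_{i, 1}^{\perp}$, but for general $r$ those polynomials encode higher derivatives of curvature and the second fundamental form in ways that give no tractable closed form, and there is no visible combinatorial mechanism forcing the cancellation. The paper does \emph{not} prove this step by algebra. It dualizes via $J_{k, r}^{E, 0}(Z, Z'_Y) := (J_{k, r}^{R, 0}(Z'_Y, Z))^*$, assumes for contradiction that the vanishing fails first at order $r_0$ with actual vanishing order $k' < k$, and introduces the auxiliary operator $T_p^Y := p^{m - n - \frac{k + k'}{2}} \cdot \res_Y \circ \nabla^{k'} (\res_Y \circ \nabla^k B_p^{X, k Y})^*$. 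Since $(\res_Y \circ \nabla^k B_p^{X, k Y})^*$ maps into sections vanishing to order $k > k'$ along $Y$, one has $T_p^Y \equiv 0$. On the other hand, the already-established kernel expansion shows that $p^{r_0/2} T_p^Y$ verifies the hypotheses of the Toeplitz criterion (Proposition \ref{prop_ma_mar_crit_exp_dec} or Remark \ref{rem_toepl_f1f2_par}, depending on parity) with nonzero leading symbol $I_{r_0}^Y = \nabla^{k'}_{z_N} J_{k, r_0}^{E, 0}|_{Z_N = 0}$, contradicting $T_p^Y = 0$. This operator-theoretic contradiction — not a direct cancellation — is what carries the lemma, and your proposal leaves precisely that step unaddressed.
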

 	\begin{proof}
 		We define the $Z_Y$-polynomials $\nabla^l \kappa_{N, [i]}^{-  \frac{1}{2}}(Z)|_{Z_N = 0}$ for $l \in \nat$ similarly to (\ref{eq_kappa_N_tay_exp}).
		From Section \ref{sect_model_calc}, remark that the trivial identities 
		\begin{equation}\label{eq_trrrr_id}
		\begin{aligned}
			&
			\mathscr{P}_n(Z, Z') = \mathscr{P}_m(Z_Y, Z'_Y) \cdot \mathscr{P}_{n - m}(Z_N, Z'_N),
			\\
			&
			\mathscr{P}_{n, m}^{\perp, 0}(Z, Z') = \mathscr{P}_m(Z_Y, Z'_Y) \cdot \mathscr{P}_{n - m, 0}^{\perp, 0}(Z_N, Z'_N).
		\end{aligned}
		\end{equation}
		\begin{sloppypar}
		From the fact that in the notations introduced in the end of Section \ref{sect_sff}, we have $\res_Y( \tilde{f}_1^{X|Y}, \ldots, \tilde{f}_r^{X|Y}) = \tilde{f}'_1{}^{Y}, \ldots, \tilde{f}'_r{}^{Y}$, the fact that $\frac{\partial \psi}{\partial Z_j}|_{Z_N = 0}$, $j = 2m + 1, \ldots, 2n$, are parallel along $B_0^{\real^{2m}}(r_Y) \subset \real^{2n}$, (\ref{eq_kappa_relation}), (\ref{eq_berg_off_diag}), (\ref{eq_berg_perp_off_diagk}), (\ref{eq_berg_k_logbk}) and (\ref{eq_trrrr_id}), we see that the expansion (\ref{eq_ext_as_expk_new}) holds for the polynomials $J_{k, r}^{R, 0}(Z_Y, Z')$, $Z_Y \in \real^{2m}$, $Z' \in \real^{2n}$, defined as follows
		\begin{multline}\label{eq_defn_jra22}
			J_{k, r}^{R, 0}(Z_Y, Z')
			:=
			\sum_{a + b + d = r} \sum_{c + d = k} 
			\nabla^c \Big( J_{a}^{X|Y}(Z, Z') \cdot \frac{\mathscr{P}_{n - m}(Z_N, Z'_N)}{\mathscr{P}_{n-m}^{\perp, 0}(0, Z'_N)}
			\\
			-
			\sum_{i = 0}^{k - 1} J_{i, a}^{\perp}(Z, Z') \cdot \mathscr{P}_{n-m}^{\perp, 0}(Z_N, 0)
			\Big) 
			\cdot 
			(\nabla^d \kappa_{N, [b]}^{-\frac{1}{2}})(Z)^{-\frac{1}{2}} \Big|_{Z_N = 0}.
		\end{multline}
		In particular, from (\ref{eq_defn_jra22}), we obtain that 
		\begin{multline}\label{eq_defn_jra2232}
			J_{k, 0}^{R, 0}(Z_Y, Z')
			=
			\kappa_N^{- \frac{1}{2}}(y_0)
			\nabla^k \Big( J_{0}^{X|Y}(Z, Z') \cdot \frac{\mathscr{P}_{n - m}(Z_N, Z'_N)}{\mathscr{P}_{n-m}^{\perp, 0}(0, Z'_N)}
			\\
			-
			\sum_{i = 0}^{k - 1} J_{i, 0}^{\perp}(Z, Z') \cdot \mathscr{P}_{n-m}^{\perp, 0}(Z_N, 0)
			\Big) \Big|_{Z_N = 0}.
		\end{multline}
		From this, (\ref{eq_jo_expl_form}), (\ref{eq_jopep_0}) and the calculation similar to the one used in (\ref{eq_res_nm_kernel}), we deduce (\ref{eq_jr000_expk}).
		Moreover, under the assumption (\ref{eq_comp_vol_omeg}), from (\ref{eq_kappa_der_norm}) and (\ref{eq_defn_jra22}), we also obtain that 
		\begin{multline}\label{eq_defn_jra22324}
			J_{k, 1}^{R, 0}(Z_Y, Z')
			=
			\kappa_N^{- \frac{1}{2}}(y_0)
			\nabla^k \Big( J_{1}^{X|Y}(Z, Z') \cdot \frac{\mathscr{P}_{n - m}(Z_N, Z'_N)}{\mathscr{P}_{n-m}^{\perp, 0}(0, Z'_N)}
			\\
			-
			\sum_{i = 0}^{k - 1} J_{i, 1}^{\perp}(Z, Z') \cdot \mathscr{P}_{n-m}^{\perp, 0}(Z_N, 0)
			\Big) \Big|_{Z_N = 0}.
		\end{multline}
		From this, as in the proof of (\ref{eq_jr000_expk}), but now using (\ref{eq_j1_expl_form}) and (\ref{eq_jopep_1}), we deduce (\ref{eq_jr000111_expk}).
		\end{sloppypar}
		\par 
		The fact that the parity of $J_{k, r}^{R, 0}$ coincides with the parity of $k + r$ follows from the analogous statements from Theorems \ref{thm_berg_off_diag}, \ref{thm_berg_perp_off_diagk} and (\ref{eq_defn_jra22}).
		From  the bounded geometry assumption and the boundness properties of the coefficients of $J_{a}^{X|Y}$, $J_{i, a}^{\perp}$ from Theorems \ref{thm_berg_off_diag}, \ref{thm_berg_perp_off_diagk}, we see that the coefficients of $J_{k, r}^{R, 0}$ are bounded with all their derivatives.
		From (\ref{eq_defn_jra22}), we see that
		\begin{equation}\label{eq_deg_bound_jkrr0}
			\deg (J_{k, r}^{R, 0}) 
			\leq
			\max 
			\Big\{ 
			 \deg(J_{a}^{X|Y}) + k,
			 \deg(J_{i, a}^{\perp}) + k
			\Big\},
		\end{equation}
		where the maximum is taken over $a = 0, \ldots, k$ and $i = 0, \ldots, k - 1$.
		From (\ref{eq_deg_bound_jkrr0}) and the corresponding bounds on the degrees of $J_{a}^{X|Y}$ and $J_{i, a}^{\perp}$ from Theorems \ref{thm_berg_off_diag} and \ref{thm_berg_perp_off_diagk}, we deduce the needed bound on the degree of $J_{k, r}^{R, 0}$.
		\par 
		It is only left to establish the vanishing of $J_{k, r}^{R, 0}$ at least up to order $k$ along $\real^{2m} \times \real^{2m} \subset \real^{2m} \times \real^{2n}$.
		For this, let us define the polynomials $J_{k, r}^{E, 0}(Z, Z'_Y) \in {\rm{Sym}}^k N_{y_0}^{1, 0} \otimes \enmr{F_{y_0}}$ in $Z \in \real^{2n}$, $Z'_Y \in \real^{2m}$ as follows
		\begin{equation}\label{eq_defn_jkre0}
			J_{k, r}^{E, 0}(Z, Z'_Y) := (J_{k, r}^{R, 0}(Z'_Y, Z))^*.
		\end{equation}
		Clearly, it is enough to establish the analogous vanishing property for the polynomials $J_{k, r}^{E, 0}$.
		Assume that $r_0 \in \nat$ is the first index, for which this vanishing property doesn't hold.
		We denote by $k' < k$ the order of vanishing of $J_{k, r_0}^{E, 0}$ along $\real^{2m} \times \real^{2m} \subset \real^{2n} \times \real^{2m}$.
		\par 
		Let us consider the operator $T_p^Y : L^2(Y, {\rm{Sym}}^k (N^{1, 0})^* \otimes \iota^*( L^p \otimes F)) \to L^2(Y, {\rm{Sym}}^{k'} (N^{1, 0})^* \otimes \iota^*( L^p \otimes F))$, given by the following formula
		\begin{equation}\label{eq_tpy_exp}
			T_p^Y := p^{m - n - \frac{k + k'}{2}} \cdot \res_Y \circ \nabla^{k'} (\res_Y \circ \nabla^k B_p^{X, k Y})^*.
		\end{equation}
		\par 
		We will now establish that the sequence of operators $T_p^Y$ satisfies the assumptions of Proposition \ref{prop_ma_mar_crit_exp_dec} or Remark \ref{rem_toepl_f1f2_par}, depending on the parity of $k + k'$.
		Clearly, the first property from Proposition \ref{prop_ma_mar_crit_exp_dec} follows from Theorem \ref{thm_res_is_l2} and (\ref{eq_tpy_exp}). 
		\par 
		From Theorem \ref{thm_bk_off_diag} and Theorem \ref{thm_ext_exp_dc}.b), applied for $k \leq k_0 - 1$, we deduce that there are $c > 0$, $p_1 \in \nat^*$,  such that for any $r \in \nat$, there is $C > 0$, such that for any $p \geq p_1$, $x_1, x_2 \in X$, the following estimate holds
		\begin{equation}\label{eq_exp_dc_akp11}
			\Big|  B_p^{X, k Y}(x_1, x_2) \Big|_{\ccal^r} \leq C p^{n + \frac{r}{2}} \exp \big(- c \sqrt{p} \cdot \dist(x_1, x_2) \big).
		\end{equation}		
		From (\ref{eq_exp_dc_akp11}), we deduce that there are $c > 0$, $p_1 \in \nat^*$,  such that for any $r \in \nat$, there is $C > 0$, such that for any $p \geq p_1$, $y_1, y_2 \in Y$, the following estimate holds
		\begin{equation}\label{eq_exp_dc_akp}
			\Big|  T_p^Y(y_1, y_2) \Big|_{\ccal^r} \leq C p^{m + \frac{r}{2}} \exp \big(- c \sqrt{p} \cdot \dist_X(y_1, y_2) \big).
		\end{equation}		
		This implies the second property from Proposition \ref{prop_ma_mar_crit_exp_dec}  with respect to $X$.
		\par 
		We will now show that the third property is a direct consequence of Theorems \ref{thm_berg_off_diag} and \ref{thm_berg_perp_off_diagk} for $k \leq k_0 - 1$.
		From the reasoning similar to (\ref{eq_defn_jra22}), we see that the expansion (\ref{eq_tpy_defn_exp_tay12as}) for $T_p^Y$ as above holds for the polynomials $I_r^Y(Z_Y, Z'_Y)$, $Z_Y, Z'_Y \in \real^{2m}$, defined as follows
		\begin{multline}\label{eq_I_r_y_form_for_ext}
			I_r^Y(Z_Y, Z'_Y) =
			\sum_{a + b + d = r}
			\sum_{c + d = k'} 
			\nabla^c \Big(  J_{k, a}^{E, 0}(Z, Z'_Y) \cdot \mathscr{P}_{n - m}(Z_N, 0) 
			\Big)
			\cdot
			\\
			\cdot 
			(\nabla^d \kappa_{N, [b]}^{-\frac{1}{2}})(Z)^{-\frac{1}{2}} \Big|_{Z_N = 0}.
		\end{multline}
		From the parity statements on $J_{k, r}^{R, 0}$ from above (implying the corresponding statements for $J_{k, r}^{E, 0}$), we see that the parity of $I_r^Y$ is equal to the parity of $r + k + k'$.
		From (\ref{eq_I_r_y_form_for_ext}), by our choice of $r_0$, we also see that $I_r^Y(Z_Y, Z'_Y) = 0$ for $r < r_0$ and $I_{r_0}^Y(Z_Y, Z'_Y) = \nabla^{k'}_{z_N} J_{k, r_0}^{E, 0}(Z, Z'_Y)|_{Z_N = 0}$.
		Hence the sequence of operators $p^{\frac{r_0}{2}} \cdot T_p^Y$ still satisfies the assumptions of either Proposition \ref{prop_ma_mar_crit_exp_dec} or Remark \ref{rem_toepl_f1f2_par}, and the first term of the associated asymptotic expansion (\ref{eq_tpy_defn_exp_tay12as}) holds with nonzero term by our choice of $k'$ and $r_0$.
		This, however, goes in contradiction with the last part of Proposition \ref{prop_ma_mar_crit_exp_dec} or Remark \ref{rem_toepl_f1f2_par} and the trivial fact that $T_p^Y = 0$.
 	\end{proof}

	\begin{proof}[Proof of Lemma \ref{lem_ind_step}.a)]
		Let us verify that the sequence of operators $\frac{1}{p^{n -m + k}} A_{k, p}$, $p \geq p_1$, for $k := k_0$, satisfies all the properties of Theorem \ref{thm_ma_mar_crit_exp_dec} once we know the validity of Theorems \ref{thm_ext_exp_dc}.b) and \ref{thm_berg_perp_off_diagk} for any $k < k_0$.
		\par 
		In fact, from (\ref{eq_resp_ap_lem}) and the fact that for $p \geq p_1$, we have $\res_Y \circ \nabla^k \ext_{k, p} = B_{k, p}^Y$, we obtain the explicit formula
		\begin{equation}\label{eq_ap_form}
			A_{k, p} = \res_Y \circ \nabla^k  (\res_Y \circ \nabla^k B_p^{X, k Y})^{*}.
		\end{equation}
		From the above identity, we see that $\frac{1}{p^{n -m + k}} A_{k, p}$ coincides with the operator $T_p^Y$, considered in (\ref{eq_tpy_exp}) for $k' = k$.
		By the proof of Lemma \ref{lem_ext_as_expk_new}, we see that the sequence of operators $\frac{1}{p^{n -m + k}} A_{k, p}$, $p \geq p_1$, for $k = k_0$, forms a Toeplitz operator with weak exponential decay with respect to $X$.
		\par 
		From (\ref{eq_jr000_expk}), (\ref{eq_defn_jkre0}) and the calculation similar to (\ref{eq_res_enmdual}), we obtain that
		\begin{equation}\label{eq_jk0e0form_ex}
			J_{k, 0}^{E, 0}(Z, Z'_Y) 
			=
			(2 \pi)^k \cdot \kappa_N^{-\frac{1}{2}}(y_0) \cdot
			\sum_{\substack{\beta \in \nat^{n - m} \\ |\beta| = k}}
			\frac{1}{\beta!} \cdot
			 z_N^{\beta} \cdot \big( \frac{\partial}{\partial z_N} \big)^{\odot \beta}.
		\end{equation}
		From this, (\ref{eq_id_sym_ident}), (\ref{eq_jo_expl_form}) and (\ref{eq_I_r_y_form_for_ext}), similarly to (\ref{eq_verif_res_ext}), we deduce that
		\begin{equation}\label{eq_jo_expl_form12121}
			I_0^Y(Z_Y, Z'_Y) = (2 \pi)^k \cdot k! \cdot \kappa_N^{-1}(y_0) \cdot {\rm{Id}}_{{\rm{Sym}}^k (N^{1, 0}_{y_0})^*} \otimes {\rm{Id}}_{F_{y_0}}.
		\end{equation}
		From the last statement of Theorem \ref{thm_ma_mar_crit_exp_dec}, we deduce (\ref{eq_akp_first_term}), which finishes our proof.
	\end{proof}
	In the proof of Lemma \ref{lem_ind_step}.b), we will need to express the extension operator in terms of the orthogonal Bergman projector. 
	For this, we need to invert the operators $A_{k, p}$.
	The following result gives a sufficient condition for inverting Toeplitz operators with weak exponential decay.
	\begin{lem}[{\cite[Lemma 4.5]{FinToeplImm}}]\label{lem_inverse_toepl}
		Assume that a sequence of operators $G_p$, $p \in \nat$, forms a Toeplitz operator with weak exponential decay with respect to a manifold $Z$ in the notations from Definition \ref{defn_ttype}.
		Assume that for $f := [G_p]_0$, and any $y \in Y$, the element $f(y)$ is invertible and $f^{-1} \in \ccal^{\infty}_{b}(Y, \enmr{\iota^* F})$.
		Then there is $p_1 \in \nat$, such that for $p \geq p_1$, the operators $G_p$ are invertible.
		Moreover, the sequence of operators $G_p^{-1}$, $p \geq p_1$, forms a Toeplitz operator with weak exponential decay with respect to the same manifold $Z$ and we have $[(G_p)^{-1}]_0 = f^{-1}$.
	\end{lem}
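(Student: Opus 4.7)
The plan is to construct the inverse of $G_p$ in two stages: first an asymptotic parametrix built from the pointwise inverse $f^{-1}$ using the Toeplitz calculus, and then a Neumann-series correction to pass from an approximate identity to a genuine inverse, verifying at every step that we stay inside the class of Toeplitz operators with weak exponential decay.

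To begin, I would take the Berezin--Toeplitz operator $H_p^{(0)} := T_{f^{-1}, p}^Y$. By hypothesis $f^{-1} \in \ccal^{\infty}_b(Y, \enmr{\iota^* F})$, so by definition $H_p^{(0)}$ is a Toeplitz operator with weak exponential decay, with leading symbol $f^{-1}$. Using the asymptotic criterion of Theorem \ref{thm_ma_mar_crit_exp_dec} together with the kernel calculus of Section \ref{sect_model_calc} (in particular Lemma \ref{lem_comp_poly}, which governs composition of polynomial-times-Bergman-kernel products at the model level) and the off-diagonal control of Lemma \ref{lem_bnd_prod_aloc}, one checks that the composition of two Toeplitz operators with weak exponential decay is again of the same class, with leading symbol equal to the product of the leading symbols. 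Consequently $G_p \circ H_p^{(0)}$ is a Toeplitz operator with weak exponential decay whose leading symbol equals $f \cdot f^{-1} = \mathrm{Id}$, so $G_p \circ H_p^{(0)} - B_p$ has vanishing zeroth symbol and operator norm $O(1/p)$ by Remark \ref{rem_defn_tpl}.a).

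Next, one bootstraps by induction: given $H_p^{(k)}$ Toeplitz with weak exponential decay and $S_p^{(k)}$ Toeplitz with weak exponential decay with leading symbol $s_k$ such that $G_p H_p^{(k)} - B_p = p^{-(k+1)} S_p^{(k)}$, define $H_p^{(k+1)} := H_p^{(k)} - p^{-(k+1)} T_{f^{-1} s_k, p}^Y$; multiplicativity of leading symbols kills the next symbol and yields $G_p H_p^{(k+1)} - B_p = p^{-(k+2)} S_p^{(k+1)}$. A standard Borel-type summation---made legitimate by the equivalence, in Theorem \ref{thm_ma_mar_crit_exp_dec}, between prescribed asymptotic expansions of Schwartz kernels (with uniform exponential decay) and membership in the Toeplitz class---produces $H_p$, a Toeplitz operator with weak exponential decay whose full symbol sequence agrees with that of the $H_p^{(k)}$, such that the Schwartz kernel of $R_p := B_p - G_p H_p$ decays faster than any power of $p$ together with the required exponential factor in the distance.

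Because $\|R_p\| \le C_N p^{-N}$ for every $N$, and $R_p = B_p R_p B_p$ maps the image of $B_p$ to itself, the Neumann series $\sum_{j \ge 0} R_p^j$ converges in operator norm for $p \ge p_1$ to an inverse of $G_p H_p$ on $\Im B_p$; the same composition closure shows each $R_p^j$ (and then the telescoping sum) is Toeplitz with weak exponential decay with symbols vanishing to arbitrary order, so the sum is a Toeplitz operator with weak exponential decay with leading symbol $\mathrm{Id}$. Then $G_p^{-1} = H_p \cdot (G_p H_p)^{-1}$ is Toeplitz with weak exponential decay and has leading symbol $f^{-1} \cdot \mathrm{Id} = f^{-1}$. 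The main obstacle is establishing the composition calculus used throughout: that the class is closed under composition, that the leading symbol is multiplicative, and that Borel summation plus the Neumann tail remain in the class. This rests on combining the local Taylor criterion of Theorem \ref{thm_ma_mar_crit_exp_dec} with the uniform exponential off-diagonal decay of the Bergman kernel from Theorem \ref{thm_bk_off_diag} and the composition bounds of Lemma \ref{lem_bnd_prod_aloc}, all done on manifolds of bounded geometry so that constants are uniform in the base point.
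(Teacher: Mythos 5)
Your overall blueprint -- build a parametrix from $T_{f^{-1},p}^Y$, bootstrap symbols by induction, Borel-sum, and clean up the residue with a Neumann series -- is the right one and, for compact manifolds, exactly reproduces the Ma--Marinescu proof of invertibility of elliptic Berezin--Toeplitz operators. The two preliminary ingredients you invoke (composition closure of the class with multiplicative leading symbols, and the existence of a Borel-summed parametrix) are genuine prerequisites and you point to plausible sources for them. However, the Neumann-series step has a real gap in the present setting.

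The problem is condition (2) of Theorem \ref{thm_ma_mar_crit_exp_dec}: membership in the class of Toeplitz operators with weak exponential decay requires the uniform pointwise Schwartz-kernel bound $|T_p(x_1,x_2)|_{\ccal^l}\le C p^{n+l/2}\exp(-c\sqrt p\,\dist_Z(x_1,x_2))$ with a \emph{fixed} constant $c>0$. Your argument establishes $\|R_p\|=O(p^{-\infty})$ and appeals to ``composition closure'' to conclude that $\sum_{j\ge 0}R_p^j$ remains in the class. But the composition lemmas (such as Lemma \ref{lem_bnd_prod_aloc}) degrade the exponential constant when iterated, and, more fundamentally, an $O(p^{-\infty})$ operator-norm bound on a holomorphic kernel gives only a polynomial pointwise bound $|S(x,y)|\le C_N p^{n-N}$ uniformly in $(x,y)$, which does \emph{not} dominate $Cp^n\exp(-c\sqrt p\,\dist(x,y))$ once $\dist(x,y)\gg \log p/\sqrt p$. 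On a compact manifold this is harmless because the criterion there only demands $O(p^{-\infty})$ decay off-diagonal; on manifolds of bounded geometry the exponential weight is essential and your Neumann tail does not inherit it. So the claim ``the sum is a Toeplitz operator with weak exponential decay'' is unjustified as written.

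To close the gap one needs an additional device: a Combes--Thomas/weighted-conjugation estimate. Concretely, fix a bounded $1$-Lipschitz weight $\phi$ (e.g.\ a truncated distance to a point $x_0$), and show that for $\epsilon$ small (depending only on the constants $c,C$ in the kernel bound for $G_p$ and on $\|G_p^{-1}\|$, uniformly in $p$) the conjugated operators $e^{\pm\epsilon\sqrt p\,\phi}G_p e^{\mp\epsilon\sqrt p\,\phi}$ remain uniformly bounded and boundedly invertible; reading off the Schwartz kernel of $e^{-\epsilon\sqrt p\,\phi}G_p^{-1}e^{\epsilon\sqrt p\,\phi}$ and using the reproducing-kernel pointwise bound then yields $|G_p^{-1}(x,y)|\le Cp^n e^{-\epsilon\sqrt p\,\dist(x,y)}$. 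With this exponential bound in hand, your parametrix argument does supply the near-diagonal expansion (condition (3)), and the lemma follows from Theorem \ref{thm_ma_mar_crit_exp_dec}. Without the weighted estimate, the proposal proves invertibility and the correct leading symbol, but not that $G_p^{-1}$ lies in the class of Toeplitz operators with weak exponential decay.
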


	\begin{proof}[Proof of Lemma \ref{lem_ind_step}.b)]
		Remark that by  Lemma \ref{lem_ind_step}.a) and Lemma \ref{lem_inverse_toepl}, for any $k \in \nat$, there is $p_1 \in \nat$, such that the operator $A_{k, p}$ is invertible for $p \geq p_1$.
		The main idea of our proof is to use the following formula
		\begin{equation}\label{eq_resp_ap_lem_ap_inv}
			 \frac{1}{p^{m - \frac{k}{2}}} \ext_{k, p} = \frac{1}{p^{n + \frac{k}{2}}} \cdot (\res_Y \circ \nabla^k B_p^{X, k Y})^{*} \circ \Big( \frac{1}{p^{n -m + k}} A_{k, p} \Big)^{-1},
		\end{equation}
		which follows from (\ref{eq_resp_ap_lem}).
		On the level of Schwartz kernels, the identity (\ref{eq_resp_ap_lem_ap_inv}) means
		\begin{multline}\label{eq_b_f}
			\frac{1}{p^{m - \frac{k}{2}}} \ext_{k, p}(x, y)
			=
			\int_{y_ 1 \in Y}
			 \frac{1}{p^{n + \frac{k}{2}}} \cdot (\res_Y \circ \nabla^k B_p^{X, k Y})^{*}(x, y_1)
			 \cdot
			 \\
			 \cdot
			 \Big( \frac{1}{p^{n -m + k}} A_{k, p} \Big)^{-1}(y_1, y)
			 dv_Y(y_1).
		\end{multline}
		\par 
		From Theorem \ref{thm_mult_def_toepl_k}, Proposition \ref{prop_norm_bnd_distk_expbnd}, Lemma \ref{lem_inverse_toepl} and (\ref{eq_exp_dc_akp11}), we see already that there are $c > 0$, $p_1 \in \nat^*$,  such that for any $r, l \in \nat$, there is $C > 0$, such that for any $p \geq p_1$, $x \in X$, $y \in Y$, the following estimate holds
		\begin{equation}\label{eq_ext_exp_dc_weak}
			\, \big|  \ext_{k, p}(x, y) \big|_{\ccal^r} \leq C p^{m + \frac{r}{2}} \exp \big(- c \sqrt{p} \cdot \dist(x, y) \big).
		\end{equation}
		To improve (\ref{eq_ext_exp_dc_weak}) to (\ref{eq_ext_exp_dc}), we will need to study the Schwartz kernel of $\ext_{k, p}$ more precisely.
		\par 
		Now, let $\epsilon > 0$ be the minimum of the corresponding values from Theorems \ref{thm_berg_off_diag} and \ref{thm_berg_perp_off_diagk} for $k \leq k_0 - 1$.
		We put $\epsilon_0 := \frac{\epsilon}{2}$.
		Let $y_0 \in Y$ and $y_1, y_2 \in B_{y_0}^{Y}(\epsilon_0)$.
		We decompose the integral in (\ref{eq_b_f}) into two parts: the first one over $B_{y_0}^{Y}(\epsilon)$, and the second one is over its complement, which we denote by $Q$.
		Clearly, for $y_3 \in Q$, we get 
		\begin{equation}\label{eq_triangle}
			\dist(y_1, y_3) + \dist(y_3, y_2) \geq \epsilon.
		\end{equation}
		Hence, from Theorems \ref{thm_bk_off_diag}, \ref{thm_ext_exp_dc}.b), \ref{thm_mult_def_toepl_k}, Proposition \ref{prop_exp_bound_int} and (\ref{eq_vol_comp_unif}), we see that the contribution from the integration over $Q$ is smaller than $\exp(- c \sqrt{p}(1 + \dist(y_1, y_2)))$ for some constant $c > 0$.
		Consequently, only the integration over $y_1 \in B_{y_0}^{Y}(\epsilon)$ is non-negligible.
		To evaluate it, we apply Theorem \ref{thm_bk_off_diag}.
		We calculate the integral over the pull-back with respect to the exponential map of our differential forms.
		We use the notations introduced before Theorem \ref{thm_berg_off_diag}.
		After the change of variables $Z \mapsto \sqrt{p} Z$, an estimate, similar to the one which bounded the integral over $Q$, Lemma \ref{lem_bnd_prod_aloc} and the second part of Lemma \ref{lem_comp_poly}, applied for $n := m$, we see that (\ref{eq_ext_as_expk}) holds for
		\begin{equation}\label{eq_jrf_expr2}
			J_{k, r}^{E}(Z, Z'_Y)
			:=
			\sum_{a + b = r}
			\mathcal{K}_{n, m}^{EP} \big[ J_{k, a}^{E, 0}, I_{A_{k, p}^{-1}, b}^{Y} \big],
		\end{equation}
		where $I_{A_{k, p}^{-1}, b}^{Y}$ are the polynomials associated to $( \frac{1}{p^{n - m + k}} A_{k, p} )^{-1}$ as in Theorem \ref{thm_ma_mar_crit_exp_dec}.
		From the corresponding statements about the parity of $J_{k, r}^{E, 0}$, $I_{A_{k, p}^{-1}, r}^{Y}$ from Theorem \ref{thm_ma_mar_crit_exp_dec} and Lemmas \ref{lem_comp_poly}, \ref{lem_ext_as_expk_new}, we deduce that the parity of  $J_{k, r}^{E}$ coincides with $k + r$.
		From Proposition \ref{prop_bound_deg_q} and (\ref{eq_jrf_expr2}), we deduce that
		\begin{equation}\label{eq_deg_bound_jkrr01}
			\deg (J_{k, r}^{E}) 
			\leq
			\max 
			\Big\{ 
			 \deg(J_{k, a}^{E, 0})+
			 \deg(I_{A_{k, p}^{-1}, b}^{Y})
			\Big\},
		\end{equation}
		where the maximum is taken over $a + b = r$.
		From Proposition \ref{prop_bound_deg_q}, Lemma \ref{lem_ext_as_expk_new}, (\ref{eq_defn_jkre0}), (\ref{eq_deg_bound_jkrr01}) and the bound on the degree of $J_{k, a}^{R, 0}$ from Lemma \ref{lem_ext_as_expk_new}, we deduce the needed bound on the degree of $J_{k, r}^{E}$.
		\par 
		\begin{sloppypar}
			Directly from (\ref{eq_pperp_defn_fu2n}), (\ref{eq_res_nm_kernel0001}), (\ref{eq_res_nm_kernel}), (\ref{eq_jk0e0form_ex}) and (\ref{eq_jrf_expr2}), we see that the formula (\ref{eq_je0_expk})  holds for $J_{k, 0}^{E}$.
			Moreover, under assumption (\ref{eq_comp_vol_omeg}), by Theorem \ref{thm_mult_def_toepl_k}, for $i = 0, 1$, we have 
			\begin{equation}
				I_{A_{k, p}^{-1}, i}^{Y} = \frac{1}{(2 \pi)^k \cdot k!} \cdot J_{k, i}^{Y|Y},
			\end{equation}
			where $J_{k, i}^{Y|Y}$ are the polynomials from Theorem \ref{thm_berg_off_diag}, associated to $X, Y := Y$ and $F := {\rm{Sym}}^k (N^{1, 0})^* \otimes \iota^*(F)$.
			From this, (\ref{eq_jo_expl_form}), (\ref{eq_j1_expl_form}), we conclude that
			\begin{equation}
				J_{k, 1}^{E}(Z, Z'_Y)
				=
				\frac{1}{(2 \pi)^k \cdot k!}
				\mathcal{K}_{n, m}^{EP} \big[ J_{k, 1}^{E, 0}, 1 \big].
			\end{equation}
			We deduce (\ref{eq_je1_expk}) from this, Lemma \ref{lem_ext_as_expk_new}, (\ref{eq_kep_formula}) and (\ref{eq_defn_jkre0}).
		\end{sloppypar}
		\par 
		The proof of the fact that $J_{k, r}^{E}$ vanishes at least up to order $k$ along $\real^{2m} \times \real^{2m} \subset \real^{2n} \times \real^{2m}$ proceeds along the same lines as the corresponding result from Lemma \ref{lem_ext_as_expk_new}.
		In total, this finishes the proof of Theorem \ref{thm_ext_as_expk} for $k := k_0$.
		\par 
		From the expansion (\ref{eq_ext_as_expk}), we see that we can improve (\ref{eq_ext_exp_dc_weak}) to (\ref{eq_ext_exp_dc}).
		This finishes completely the proof of Theorem \ref{thm_ext_exp_dc}.a) for $k := k_0$.
	\end{proof}
	\begin{proof}[Proof of Lemma \ref{lem_ind_step}.c)]
		First of all, an easy verification shows the following identity
		\begin{equation}\label{eq_eq_resp_ap_lem_ap_inv}
			\frac{1}{p^n}
			B_p^{\perp, k}
			=
			\frac{1}{p^{m - \frac{k}{2}}}
			\ext_{k, p}
			\circ
			\frac{1}{p^{n - m + \frac{k}{2}}}
			(\res_Y \circ \nabla^k B_p^{X, k Y}).
		\end{equation}
		On the level of Schwartz kernels, the identity (\ref{eq_eq_resp_ap_lem_ap_inv}) basically means that
		\begin{equation}\label{eq_b_f_aa}
			\frac{1}{p^n}
			B_p^{\perp, k}(x_1, x_2)
			=
			\int_{y \in Y}
			 \frac{1}{p^{m - \frac{k}{2}}} \ext_{k, p}(x_1, y)
			 \cdot
			 \frac{1}{p^{n - m + \frac{k}{2}}} (\res_Y \circ \nabla^k B_p^{X, k Y}) (y, x_2)
			 dv_Y(y).
		\end{equation}
		From Theorems \ref{thm_berg_off_diag} and \ref{thm_ext_exp_dc}, for $k \leq k_0 - 1$, Proposition \ref{prop_exp_bound_int} and (\ref{eq_b_f_aa}), we see that Theorem \ref{thm_ext_exp_dc}.b) holds for $k = k_0$.
		\par 
		From (\ref{eq_b_f_aa}), the expansion (\ref{thm_berg_perp_off_diagk}) follows from the same reasoning as we used in (\ref{eq_jrf_expr2}), except that we need to rely on the third part of Lemma \ref{lem_comp_poly} and $J_{k, r}^{\perp}$ are defined as follows
		\begin{equation}\label{eq_defn_jra2233}
			J_{k, r}^{\perp}(Z, Z')
			:=
			\sum_{a + b = r}
			\mathcal{K}_{n, m}^{ER} \big[ J_{k, a}^{E}, J_{k, b}^{R, 0} \big].
		\end{equation}
		From (\ref{eq_ker_form}), (\ref{eq_kmn_poly}), (\ref{eq_je0_expk}), (\ref{eq_je1_expk}), (\ref{eq_jr000_expk}), (\ref{eq_jr000111_expk}) and (\ref{eq_defn_jra2233}), we deduce (\ref{eq_jopep_0}) and (\ref{eq_jopep_1}).
		The parity statement about the polynomials $J_{k, r}^{\perp}$ follows from Lemma \ref{lem_comp_poly} and the corresponding statements for $J_{k, a}^{E}$ and $J_{k, b}^{R, 0}$, proved in Theorem \ref{thm_ext_as_expk} and Lemma \ref{lem_ext_as_expk_new}.
		The vanishing up to order $k$ of the polynomials $J_{k, r}^{\perp}$ follows from (\ref{eq_ker_form}), (\ref{eq_defn_jra2233}) and the corresponding statements for $J_{k, a}^{E}$ and $J_{k, b}^{R, 0}$, proved in Theorem \ref{thm_ext_as_expk} and Lemma \ref{lem_ext_as_expk_new}.
		From Proposition \ref{prop_bound_deg_q} and (\ref{eq_defn_jra2233}), we deduce that
		\begin{equation}\label{eq_deg_bound_jkrr02}
			\deg (J_{k, r}^{\perp}) 
			\leq
			\max 
			\Big\{ 
			 \deg(J_{k, a}^{E}) +
			 \deg(J_{k, b}^{R, 0})
			\Big\},
		\end{equation}
		where the maximum is taken over $a + b = r$.
		From Lemma \ref{lem_ext_as_expk_new}, (\ref{eq_deg_bound_jkrr02}) and the bound on the degree of $J_{k, a}^{R, 0}$ (resp. $J_{k, a}^{E}$)  from Lemma \ref{lem_ext_as_expk_new} (resp. Theorem \ref{thm_ext_as_expk}), we deduce the needed bound on the degree of $J_{k, r}^{\perp}$.
	\end{proof}
	\begin{proof}[Proof of Theorems \ref{thm_ext_exp_dc}, \ref{thm_ext_as_expk}, \ref{thm_berg_perp_off_diagk}, \ref{thm_mult_def_toepl_k}]
	It follows directly from Lemma \ref{lem_ind_step} by induction and the fact that for $k < 0$, the statements of Theorems \ref{thm_ext_exp_dc}.b) and \ref{thm_berg_perp_off_diagk} are void.
	\end{proof}
	\begin{proof}[Proof of Theorem \ref{thm_isom} ]
		First of all, let us establish that under the assumption (\ref{eq_comp_vol_omeg}), there is $C > 0$, such that as $p \to \infty$, we have
		\begin{equation}\label{eq_ep_norm}
		\begin{aligned}
			&
			\Big|
				\big\|
				\ext_{k, p}
				\big\|
				-
				\frac{1 }{p^{\frac{n - m + k}{2}}} \cdot \frac{1}{\sqrt{k! \cdot (2 \pi)^k}}
			\Big|
			\leq
			\frac{C}{p^{\frac{n - m + k + 2}{2}}},
			\\
			& 
			\Big|
				\big\|
				\res_{k, p}
				\big\|
				-
				p^{\frac{n - m + k}{2}} \cdot \sqrt{k! \cdot (2 \pi)^k}
			\Big|
			\leq
			C p^{\frac{n - m + k - 2}{2}}.
		\end{aligned}
		\end{equation}
		\par 
		Indeed, remark that from Theorem \ref{thm_mult_def_toepl_k} and Proposition \ref{prop_norm_bnd_distk_expbnd} that for any $k \in \nat$, there are $p_1 \in \nat$, $C > 0$, such that for any $p \geq p_1$, we have
		\begin{equation}
			\Big\|
				\frac{1}{p^{n - m + k}} A_{k, p} - k! \cdot (2 \pi)^k \cdot B_{k, p}^Y
			\Big\|
			\leq
			\frac{C}{p}.
		\end{equation}
		In particular, we conclude that 
		\begin{equation}\label{eq_akp_norm}
			\Big|
				\big\|
				A_{k, p} 
				\big\|
				-
				p^{n - m + k} \cdot k! \cdot (2 \pi)^k
			\Big|
			\leq
			C p^{n - m + k - 1}.
		\end{equation}
		Using Lemma \ref{lem_inverse_toepl} and similar arguments, we see that
		\begin{equation}\label{eq_akpmin1_norm}
			\Big|
				\big\|
				A_{k, p}^{-1}
				\big\|
				-
				\frac{1}{p^{n - m + k}} \cdot \frac{1}{k! \cdot (2 \pi)^k}
			\Big|
			\leq
			\frac{C}{p^{n - m + k + 1}}.
		\end{equation}
		\par 
		From (\ref{eq_resp_ap_lem}) and the fact that $\res_{k, p} \circ \ext_{k, p} = B_{k, p}^Y$, we have the following identities
		\begin{equation}\label{eq_comp_ext_oper}
			(\ext_{k, p})^* \circ \ext_{k, p} = \big( (A_{k, p})^* \big)^{-1},
			\qquad
			\res_{k, p} \circ (\res_{k, p})^{*} = A_{k, p}.
		\end{equation}
		Clearly, we have $ \| (\ext_{k, p})^* \circ \ext_{k, p} \| = \| \ext_{k, p} \|^2$ and $ \| \res_{k, p} \circ (\res_{k, p})^{*} \| = \|  \res_{k, p} \|^2$.
		The bounds (\ref{eq_ep_norm}) now follows from this observation, (\ref{eq_akp_norm}), (\ref{eq_akpmin1_norm}) and (\ref{eq_comp_ext_oper}).
		\par 
		Now, from (\ref{eq_ep_norm}) and the fact that $\res_{k, p} \circ \ext_{k, p} = B_{k, p}^Y$, we conclude that there are $p_1 \in \nat$, $C > 0$, such that for any $k \in \nat$, $p \geq p_1$, $g \in H^0_{(2)}(Y, {\rm{Sym}}^k (N^{1, 0})^* \otimes \iota^*( L^p \otimes F))$, $g \neq 0$, we have
		\begin{equation}\label{eq_ext_op_as_isom}
			\bigg|
			\frac{\| \ext_{k, p} (g) \|_{L^2(X, L^p \otimes F)}}{\| g \|_{k, L^2(Y, L^p \otimes F)}}
			-
			\frac{1 }{p^{\frac{n - m + k}{2}}} \cdot \frac{1}{\sqrt{k! \cdot (2 \pi)^k}}
			\bigg|
			\leq
			\frac{C}{p^{\frac{n - m + k + 2}{2}}}.
		\end{equation}		 
		\par 
		Now, let us fix $f \in H^0_{(2)}(X, L^p \otimes F)$, and denote by $[f]$ the element it represents in the quotient space $H^0_{(2)}(X, L^p \otimes F) / H^0_{(2)}(X, L^p \otimes F \otimes \mathcal{J}_Y^{k + 1})$.
		Directly from the definition, we have
		\begin{equation}\label{eq_f_norm_quot}
			\big\| [f] \big\|_{L^2(X, L^p \otimes F)}^2
			=
			\sum_{l = 0}^{k} \| B_{l, p}^{\perp} f \|_{L^2(X, L^p \otimes F)}^2.
		\end{equation}
		We let ${\rm{Jet}}_{k, p}(f) = (g_0, \ldots, g_k)$, where $g_i \in H^0_{(2)}(Y, {\rm{Sym}}^i (N^{1, 0})^* \otimes \iota^*( L^p \otimes F))$.
		From the definition of the map ${\rm{Jet}}_{k, p}$ from (\ref{eq_defn_jetmap}) and the characterization of $g_i$ from (\ref{eq_jet_char}), we see that 
		\begin{equation}\label{eq_gi_defn}
			g_i = \res_{i, p} (B_{i, p}^{\perp} f).
		\end{equation}
		We conclude from (\ref{eq_ep_norm}) and (\ref{eq_gi_defn}) that
		\begin{equation}\label{eq_gi_norm111}
			\big\| g_i \big\|_{k, L^2(Y, L^p \otimes F)} = 
			\Big( 
				p^{\frac{n - m + k}{2}} \cdot \sqrt{k! \cdot (2 \pi)^k} 
				+
				O(p^{\frac{n - m + k - 2}{2}})
			\Big)
			\cdot \big\| B_{i, p}^{\perp} f \big\|_{L^2(X, L^p \otimes F)}.
		\end{equation}
		The result now follows from (\ref{eq_ext_op_as_isom}), (\ref{eq_f_norm_quot}), (\ref{eq_gi_norm111}) and the definition of the scalar product $\scal{\cdot}{\cdot}_{{\rm{Jet}}_{k, p}}$ from (\ref{eq_scal_prod_jet}).
	\end{proof}

\subsection{Higher order peak sections as a special case of extension theorem}\label{sect_peak_ho}
	The main goal of this section is to illustrate Theorems \ref{thm_ext_exp_dc}, \ref{thm_ext_as_expk} on a simple example when the submanifold corresponds to a fixed point.
	\par 
	Let us denote the extension operator in this case by $\ext_{k, p}^{\{ x \}}$. 
	It corresponds to
	\begin{equation}
		\ext_{k, p}^{\{ x \}} : {\rm{Sym}}^k T^{1, 0}X_x^* \otimes (L^p \otimes F)_x \to H^0_{(2)}(X, L^p \otimes F \otimes \mathcal{J}_x^k).
	\end{equation}
	Clearly, for any $v \in {\rm{Sym}}^k (N_{x}^{1, 0})^* \otimes (L^p \otimes F)_x$, the section $s_{k, p}^{x, v} := \ext_{k, p}^{\{ x \}}(v)$ minimizes the $L^2$-norm among all sections from $H^0_{(2)}(X, L^p \otimes F \otimes \mathcal{J}_x^k)$, having $k$-th jet equal to $v$.
	\par 
	The sections $s_{k, p}^{x, v}$ were defined in complex geometry by Tian \cite{TianBerg}.
	For $k = 0$, they bear the name “peak sections". 
	Due to this reason, for $k \geq 1$, we call $s_{k, p}^{x, v}$ the \textit{higher order peak sections}.
	\par 
	\begin{thm}\label{thm_peak_sect_ho}
		There are $c, C > 0$, such that for any $x, y \in X$, $p \in \nat$, $p \geq p_1$, $v \in {\rm{Sym}}^k (N_{x}^{1, 0})^* \otimes (L^p \otimes F)_x$, we have
		\begin{equation}
			\big| s_{k, p}^{x, v}(y) \big|_{\ccal^r}
			\leq
			C p^{\frac{r - k}{2} } \exp(- c \sqrt{p} \dist(x, y)).
		\end{equation}
		Moreover, there are $\epsilon > 0$, $c, C > 0$, such that for any $x \in X$, for any $Z \in \real^{2n}$, $|Z| < \epsilon$, $p \in \nat$, $p \geq p_1$, $v \in {\rm{Sym}}^k (N_{x}^{1, 0})^* \otimes (L^p \otimes F)_x$, we have
		\begin{equation}
			\Big| s_{k, p}^{x, v}(\phi_x^X(Z)) - (v \cdot Z^{\otimes k}) \cdot \exp \big( - \frac{\pi}{2} p |Z|^2 \big) \Big|_{\ccal^r}
			\leq
			C \cdot |v| \cdot p^{\frac{r}{2} - 1 } \cdot |Z|^k \cdot \exp(- c \sqrt{p} |Z|),
		\end{equation}
		where the $\ccal^r$-norm is taken with respect to $x, Z$.
	\end{thm}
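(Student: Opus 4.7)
The plan is to obtain both parts of the statement as direct specializations of Theorems \ref{thm_ext_exp_dc}.a) and \ref{thm_ext_as_expk} to the trivial submanifold $Y := \{x\}$, in which case $m = 0$. The triple $(X, \{x\}, g^{TX})$ is vacuously of bounded geometry, the Fermi coordinates $\psi_x^{X|\{x\}}$ collapse to the geodesic coordinates $\phi_x^X$, the auxiliary variable $Z'_Y$ disappears, and the Schwartz kernel of $\ext_{k, p}^{\{x\}}$ evaluated against $v$ (with the counting measure on the point) reproduces the section $s_{k, p}^{x, v}(y) = \ext_{k, p}^{\{x\}}(y, x) \cdot v$. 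With this dictionary, the first inequality is immediate from Theorem \ref{thm_ext_exp_dc}.a) with $m = 0$, which reads $\big| \ext_{k, p}^{\{x\}}(y, x) \big|_{\ccal^r} \leq C p^{(r - k)/2} \exp(-c\sqrt{p}\,\dist(y, x))$; pairing with $v$ and absorbing $|v|$ into $C$ gives the claim.

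For the second inequality, the plan is to apply Theorem \ref{thm_ext_as_expk} at expansion order $r = 0$: the leading term $F_{k, 0}^E(\sqrt{p} Z, 0) \cdot \kappa_\phi^X(Z)^{-1/2}$ is computed from the explicit formula (\ref{eq_je0_expk}) for $J_{k, 0}^E$ together with $\mathscr{E}_{n, 0}^0 = \exp(-\pi|Z|^2/2)$ coming from (\ref{eq_pperp_defn_fu2n}). Pairing with $v$ through the identification (\ref{eq_pairing_sym}) and unwinding the rescaling $Z \mapsto \sqrt{p} Z$ recovers $(v \cdot Z^{\otimes k}) \exp(-\pi p |Z|^2 / 2)$, modulo the smooth multiplier $\kappa_N^{1/2}(x) \kappa_\phi^X(Z)^{-1/2}$, which equals $1 + O(|Z|^2)$; its deviation from $1$ is absorbed into the $|Z|^k$ remainder. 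The $|Z|^k$ prefactor itself is inherited from the vanishing of each $J_{k, r}^E$ of order $k$ along $\real^{2m} \times \real^{2m}$ that is built into Theorem \ref{thm_ext_as_expk}, and which at $m = 0$ becomes vanishing of order $k$ at $Z = 0$.

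The reason the remainder appears at the sharp scale $p^{r/2 - 1}$ rather than the generic $p^{r/2 - 1/2}$, and the main point to verify cleanly, is the observation that for $Y = \{x\}$ one has $TY = 0$, so the second fundamental form $A$ of $\iota$ vanishes identically; by the explicit description (\ref{eq_je1_expk}) this kills the $p^{-1/2}$ correction $F_{k, 1}^E$, pushing the first non-trivial correction to order $p^{-1}$ --- which is exactly the stated scale once the $p^{r/2}$ arising from $\ccal^r$-derivatives of the Gaussian is factored in. The bookkeeping obstacle is tracking the $1/k!$ factors implicit in the pairing (\ref{eq_pairing_sym}), the value $\kappa_N(x)$, and the second-order Taylor expansion of $\kappa_\phi^X(Z)^{-1/2}$, so as to confirm that the product of these three collapses to the leading form $(v \cdot Z^{\otimes k}) \exp(-\pi p |Z|^2 / 2)$ on the nose.
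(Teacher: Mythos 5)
Your proposal is essentially the same argument as the paper's: specialize Theorems \ref{thm_ext_exp_dc}.a) and \ref{thm_ext_as_expk} to $Y = \{x\}$ (so $m=0$), observe the second fundamental form vanishes and hence $F_{k,1}^E = 0$ by (\ref{eq_je1_expk}), and read off the leading term from (\ref{eq_je0_expk}). Two small points of precision are worth flagging. First, you write that you ``apply Theorem \ref{thm_ext_as_expk} at expansion order $r = 0$'' and then separately observe that $F_{k,1}^E$ vanishes; what is really needed is to apply the theorem at expansion order $r = 1$, whose remainder is $O(p^{-1})$, and then note that the $j=1$ term of the finite sum drops out because $A = 0$ — this is exactly how the paper phrases it. Second, your justification for the $|Z|^k$ prefactor (``inherited from the vanishing of each $J_{k, r}^E$ of order $k$'') only controls the expansion terms in the finite sum, not the remainder on the right-hand side of (\ref{eq_ext_as_expk}); to put $|Z|^k$ in front of the remainder as well one needs the sharper version of the estimate recorded in Remark \ref{rem_bpper_refin}.d), which the paper invokes explicitly. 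Since the underlying reason (Taylor expansion in $Z_N$ using the known vanishing of both the true kernel and the model kernels) is the same, this is a citation gap rather than a conceptual one, but it should be made explicit, as otherwise the step from ``expansion terms vanish'' to ``error term carries a $|Z|^k$ factor'' is not justified.
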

	\begin{rem}
		For $r \leq 2$, similar results were obtained by Tian \cite[Lemma 1.2]{TianBerg}. 
		For $k = 0$, this statement follows directly from Dai-Liu-Ma \cite{DaiLiuMa}.
	\end{rem}
	\begin{proof}
		The first statement follows directly by applying Theorem \ref{thm_ext_exp_dc}.a) for $Y := \{ y_0 \}$.
		Remark that for $Y$ as above, the second fundamental form vanishes, so the second term of the asymptotic expansion from Theorem \ref{thm_ext_as_expk} vanishes according to (\ref{eq_je1_expk}).
		The second statement of Theorem \ref{thm_peak_sect_ho} then follows from this remark, the application of Theorem \ref{thm_ext_as_expk} for $Y := \{ y_0 \}$ and $r = 1$ and the use of Remark \ref{rem_bpper_refin}.d).
	\end{proof}

\bibliography{bibliography}

		\bibliographystyle{abbrv}

\Addresses

\end{document}